\documentclass[10pt,a4paper,reqno]{amsart}
\usepackage{amsmath,amssymb,amsthm,graphicx,mathrsfs,url}
\usepackage[usenames,dvipsnames]{color}
\definecolor{darkred}{rgb}{0.4,0.1,0.1}

\definecolor{darkblue}{rgb}{0.1,0.1,0.7}
\usepackage[usenames,dvipsnames]{color}
\usepackage{tikz}

\newcommand\comm[1]{}
\newcommand\AB{A_{[B]}}


\numberwithin{equation}{section}
\theoremstyle{plain}
\newtheorem{theorem}{Theorem}[section]
\newtheorem*{theorem*}{Theorem}

\newtheorem{lemma}[theorem]{Lemma}
\newtheorem{proposition}[theorem]{Proposition}
\newtheorem{corollary}[theorem]{Corollary}

\newtheorem{definition}[theorem]{Definition}
\theoremstyle{remark}
\newtheorem{remark}[theorem]{Remark}
\theoremstyle{definition}
\newtheorem{example}[theorem]{Example}

\newcommand{\drm}{{\mathrm d}}

\newcommand{\R}{\mathbb{R}}
\newcommand{\C}{\mathbb{C}}
\newcommand{\Z}{\mathbb{Z}}

\DeclareMathOperator{\dist}{dist}
\DeclareMathOperator{\diag}{diag}

\newcommand\cB{\mathcal B}

\newcommand\cF{\mathcal F}
\newcommand\cG{\mathcal G}
\newcommand\cH{\mathcal H}

\newcommand\cL{\mathcal L}

\newcommand\cP{\mathcal P}

\newcommand\cS{\mathcal S}
\newcommand\CC{\mathbb C}
\newcommand\NN{\mathbb N}
\newcommand\RR{\mathbb R}
\newcommand\bbS{\mathbb S}
\newcommand\bbU{\mathbb U}
\newcommand\ZZ{\mathbb Z}

\newcommand\eps{\varepsilon}
\renewcommand{\phi}{\varphi}

\newcommand\ov{\overline}
\newcommand\wt{\widetilde}

\newcommand{\defeq}{\mathrel{\mathop:}=}

\newcommand\sess{\sigma_{\rm ess}}

\newcommand{\rmO}{{\rm O}}
\newcommand{\rmo}{{\rm o}}

\newcommand\AD{A_{\rm D}}
\newcommand\AN{A_{\rm N}}

\newcommand\Hlocunif{H_{\rm loc,unif}}

\DeclareMathOperator\dom{dom}
\DeclareMathOperator\ran{ran}

\DeclareMathOperator\real{Re}
\DeclareMathOperator\imag{Im}
\renewcommand\Re{\real}
\renewcommand\Im{\imag}

\newcommand\void[1]{}

\newcounter{counter_a}
\newenvironment{myenum}{\begin{list}{{\rm(\roman{counter_a})}}%
{\usecounter{counter_a} \setlength{\itemsep}{0.5ex}\setlength{\topsep}{0.7ex}
\setlength{\leftmargin}{6ex}\setlength{\labelwidth}{6ex}}}{\end{list}}

\newcounter{counter_b}
\newenvironment{myenuma}{\begin{list}{{\rm(\alph{counter_b})}}%
{\usecounter{counter_b} \setlength{\itemsep}{0.5ex}\setlength{\topsep}{0.7ex}
\setlength{\leftmargin}{6ex}\setlength{\labelwidth}{6ex}}}{\end{list}}



\def\sS{{\mathfrak S}}

      \def\dC{{\mathbb C}}

   \def\dN{{\mathbb N}}   
      \def\dR{{\mathbb R}}
      \def\dU{{\mathbb U}}
\def\dV{{\mathbb V}}   \def\dW{{\mathbb W}}   
   \def\dZ{{\mathbb Z}}

   \def\cB{{\mathcal B}}   
      \def\cF{{\mathcal F}}
\def\cG{{\mathcal G}}   \def\cH{{\mathcal H}}   
      \def\cL{{\mathcal L}}
      
\def\cP{{\mathcal P}}      
\def\cS{{\mathcal S}}   \def\cT{{\mathcal T}}

\title[Spectral enclosures for non-self-adjoint extensions]{Spectral
enclosures for non-self-adjoint extensions of symmetric operators}



\author[J.~Behrndt]{Jussi Behrndt}
\address{Technische Universit\"{a}t Graz,
Institut f\"{u}r Angewandte Mathematik, \newline
Steyrergasse 30,
8010 Graz, Austria}
\email{behrndt@tugraz.at}

\author[M.~Langer]{Matthias Langer}
\address{Department of Mathematics and Statistics,
University of Strathclyde, \newline
26 Richmond Street, Glasgow G1 1XH, United Kingdom}
\email{m.langer@strath.ac.uk}

\author[V.~Lotoreichik]{Vladimir Lotoreichik}
\address{Department of Theoretical Physics, Nuclear Physics Institute CAS, \newline
250 68 \v{R}e\v{z} near Prague, Czech Republic}
\email{lotoreichik@ujf.cas.cz}

\author[J.~Rohleder]{Jonathan Rohleder}
\address{Stockholms universitet, Matematiska institutionen, 10691 Stockholm, Sweden}
\email{jonathan.rohleder@math.su.se}






\begin{document}

\begin{abstract}
The spectral properties of non-self-adjoint extensions $A_{[B]}$ of a symmetric operator
in a Hilbert space are studied with the help of ordinary and quasi boundary triples
and the corresponding Weyl functions.  These extensions are given in terms of
abstract boundary conditions involving an (in general non-symmetric) boundary operator $B$.
In the abstract part of this paper, sufficient conditions for sectoriality
and m-sectoriality as well as sufficient conditions for $A_{[B]}$ to have a non-empty
resolvent set are provided in terms of the parameter $B$ and the Weyl function.
Special attention is paid to Weyl functions that decay along the negative real line
or inside some sector in the complex plane, and spectral enclosures for $A_{[B]}$
are proved in this situation.
The abstract results are applied to elliptic differential operators with
local and non-local Robin boundary conditions on unbounded domains,
to Schr\"{o}dinger operators with $\delta$-potentials of complex strengths
supported on unbounded hypersurfaces or infinitely many points on the real line,
and to quantum graphs with non-self-adjoint vertex couplings.
\\[1ex]
\textit{Keywords:} non-self-adjoint extension, spectral enclosure,
differential operator, Weyl function.
\\[1ex]
\textit{Mathematics Subject Classification (MSC2010):} 47A10;
35P05, 35J25, 81Q12, 35J10, 34L40, 81Q35.
\end{abstract}

\maketitle


\section*{Contents}

\noindent
\begin{tabular}{rlr}
1. & Introduction & \pageref{sec:intro} \\
2. & Quasi boundary triples and their Weyl functions & \pageref{sec:QBT} \\
3. & Sectorial extensions of symmetric operators & \pageref{sec:sectorial} \\
4. & Sufficient conditions for closed extensions with non-empty resolvent set & \pageref{sec:closedoperators} \\
5. & Consequences of the decay of the Weyl function & \pageref{sec:consequences} \\
6. & Sufficient conditions for decay of the Weyl function & \pageref{sec:suffconddecay} \\
7. & Elliptic operators with non-local Robin boundary conditions & \pageref{sec:Robin} \\
8. & Schr\"odinger operators with $\delta$-interaction on hypersurfaces & \pageref{sec:delta} \\
9. & Infinitely many point interactions on the real line & \pageref{sec:pointinteractions} \\
10. & Quantum graphs with $\delta$-type vertex couplings & \pageref{sec:graphs}
\end{tabular}

\section{Introduction}
\label{sec:intro}

\noindent
Spectral problems for differential operators in Hilbert spaces and related
boundary value problems have attracted a lot of attention in the last decades
and have strongly influenced the development of modern functional analysis and operator theory.
For example, the classical treatment of Sturm--Liouville operators
and the corresponding Titchmarsh--Weyl theory in Hilbert spaces have led
to the abstract concept of boundary triples and their Weyl functions (see \cite{B76,DM91,GG91,K75}),
which is an efficient and well-established tool to investigate closed extensions
of symmetric operators and their spectral properties via abstract boundary maps and an
analytic function; see, e.g.\
\cite{AGW14, ABMN05,BGW09,BMNW08,BMNW17,BGP08,DHMS09,DM95,M10,MN11,S12}.
The more recent notion of quasi boundary triples and their Weyl functions are
inspired by PDE analysis in a similar way. This abstract concept from \cite{BL07,BL12}
is tailor-made for spectral problems involving elliptic partial differential operators
and the corresponding boundary value problems; the Weyl function of a quasi boundary triple
is the abstract counterpart of the Dirichlet-to-Neumann map.
For different abstract treatments of elliptic PDEs and Dirichlet-to-Neumann maps
we refer to the classical works~\cite{G68,V52} and the more recent approaches
\cite{AE11, AE15, AEKS14, BR15, DHMS12, GM08, GM11, GMZ07, GMZ09, GK14, ILP15, MPS16, P08, P16}.

To recall the notions of ordinary and quasi boundary triples in more detail,
let $S$ be a densely defined, closed, symmetric operator in a
Hilbert space $(\cH,(\cdot,\cdot)_\cH)$ and let $S^*$ denote its adjoint;
then $\{\cG,\Gamma_0,\Gamma_1\}$ is said to be an \emph{ordinary boundary triple}
for $S^*$ if $\Gamma_0,\Gamma_1:\dom S^*\rightarrow\cG$ are linear mappings from
the domain of $S^*$ into an auxiliary Hilbert space $(\cG,(\cdot,\cdot)_\cG)$
that satisfy the abstract Lagrange or Green identity
\begin{equation}\label{gijussi}
  (S^*f,g)_\cH - (f,S^*g)_\cH = (\Gamma_1 f,\Gamma_0 g)_\cG - (\Gamma_0 f,\Gamma_1 g)_\cG
  \quad \text{for all}\;\; f,g\in\dom S^*
\end{equation}
and a certain maximality condition.  The corresponding \emph{Weyl function} $M$
is an operator-valued function in $\cG$, which is defined by
\begin{equation}\label{mmjussi}
  M(\lambda)\Gamma_0 f = \Gamma_1 f,
  \qquad f\in\ker(S^*-\lambda),\; \lambda\in\rho(A_0),
\end{equation}
where $A_0=S^*\upharpoonright\ker\Gamma_0$ is a self-adjoint operator in $\cH$.
For a singular Sturm--Liouville expression $-\frac{\drm^2}{\drm x^2}+V$ in $L^2(0,\infty)$
with a real-valued potential $V\in L^\infty(0,\infty)$ the operators $S$ and $S^*$
can be chosen as the minimal and maximal operators, respectively, together
with $\cG=\dC$ and $\Gamma_0 f=f(0)$, $\Gamma_1 f= f'(0)$ for $f \in \dom S^*$;
in this case the corresponding abstract Weyl function coincides with the
classical Titchmarsh--Weyl $m$-function.

The notion of \emph{quasi boundary triples} is a natural generalization of the
concept above, inspired by, and developed for, the treatment of elliptic differential operators.
The main difference is, that the boundary maps $\Gamma_0$ and $\Gamma_1$ are
only defined on a subspace $\dom T$ of $\dom S^*$, where $T$ is an operator
in $\cH$ which satisfies $\overline T=S^*$.
The identities \eqref{gijussi} and \eqref{mmjussi} are only required to hold
for elements in $\dom T$; see Section~\ref{sec:QBT} for precise definitions.
For the Schr\"{o}dinger operator $-\Delta + V$ in $L^2(\Omega)$
with a real-valued potential $V \in L^\infty(\Omega)$
on a domain $\Omega\subset \dR^n$ with a sufficiently regular boundary $\partial\Omega$,
the operators $S$ and $S^*$
can again be taken as the minimal and maximal operator, respectively, and a
convenient choice for the domain of $T=-\Delta+V$ is $H^2(\Omega)$.
Then $\cG=L^2(\partial\Omega)$ and
$\Gamma_0 f=\partial_\nu f\vert_{\partial\Omega}$, $\Gamma_1 f=f\vert_{\partial\Omega}$
(where the latter denote the normal derivative and trace)
form a quasi boundary triple, and the corresponding Weyl function
is the energy-dependent Neumann-to-Dirichlet map.

The main focus of this paper is on non-self-adjoint extensions of $S$ that are
restrictions of $S^*$ parameterized by an ordinary or quasi boundary triple and
an (in general non-self-adjoint) boundary parameter, and to describe
their spectral properties.
For a quasi boundary triple $\{\cG,\Gamma_0,\Gamma_1\}$ and a linear operator $B$
in $\cG$ we consider the operator
\begin{equation}\label{ABjussi}
  \AB f = Tf, \qquad
  \dom \AB = \bigl\{f\in\dom T:\Gamma_0 f= B\Gamma_1 f\bigr\}
\end{equation}
in $\cH$. The principal results of this paper include (a) a sufficient condition
for $\AB$ to be m-sectorial and (b) enclosures for the numerical range
and the spectrum of the operator $\AB$ in parabola-type regions.
The latter make use of decay properties of the Weyl function $M$
along the negative half-axis or inside sectors in the complex plane;
in order to make these results easily applicable, we provide (c) an abstract
sufficient condition for the Weyl function to decay appropriately.
We point out that, to the best of our knowledge, these results are also new in
the special case of ordinary boundary triples.
While the operator $A_{[B]}$ can be regarded as a perturbation of
the self-adjoint operator $A_0$ in the resolvent sense, let us mention
that the spectra of additive non-self-adjoint perturbations of
self-adjoint operators were studied recently in, e.g.\ \cite{C17,CT16,D02,DHK09,F11}.
In the second half of the present paper, we provide applications of
these results to several classes of operators, namely to elliptic differential operators
with local and non-local Robin boundary conditions on domains with
possibly non-compact boundaries, to Schr\"odinger operators with $\delta$-interactions
of complex strength supported on hypersurfaces, to infinitely many point $\delta$-interactions
on the real line, and to quantum graphs with non-self-adjoint vertex couplings.

Let us explain in more detail the structure, methodology, and results of this paper.
After the preliminary Section~\ref{sec:QBT}, our first main result is
Theorem~\ref{thmsec}, where it is shown that, under certain assumptions on the
Weyl function and the boundary parameter $B$, the operator $\AB$ in \eqref{ABjussi} is sectorial,
and a sector containing the numerical range of $\AB$  is specified.
However, in applications it is essential to ensure that a sectorial
operator is m-sectorial; hence the next main objective is to prove that the resolvent set
of the operator $\AB$ in \eqref{ABjussi} is non-empty, which is a non-trivial
question particularly for quasi boundary triples.
This problem is treated in Section~\ref{sec:closedoperators}.
The principal result here is Theorem~\ref{mainthm},
in which we provide sufficient conditions for $\lambda_0\in\rho(\AB)$
in terms of the operator $M(\lambda_0)$ and the parameter $B$.
In this context also a Krein-type resolvent formula is obtained, and the adjoint of $\AB$
is related to a dual parameter $B'$; cf.\ \cite{BLL13IEOT, BLLR17}
for the special case of symmetric~$B$. We list various corollaries of
Theorem~\ref{mainthm} for more specialized situations.
We point out that an alternative description of sectorial and m-sectorial extensions
of a symmetric operator can be found in~\cite{A00,M06}; see also the review article~\cite{A12}
and \cite{AKT12,AP13,AP17}.
Section~\ref{sec:closedoperators} is complemented by two propositions
on Schatten--von Neumann properties for the resolvent difference of $\AB$ and $A_0$;
cf.\ \cite{BLL13IEOT, DM91} for related abstract results and,
e.g.\ \cite{BGLL15, BLL13delta, BS80, G84, LR12, M10} for applications to differential operators.
Such estimates can be used, for instance, to get bounds on the discrete spectrum of $\AB$;
cf.\ \cite{DHK09}.
In Section~\ref{sec:consequences} we consider the situation when the
Weyl function $M$ converges to $0$ in norm along the negative half-axis
or in some sector in the complex plane.
The most important result in this section is Theorem~\ref{thm:parabola}
where, under the assumption that $\|M(\lambda)\|$ decays like a power of $\frac{1}{|\lambda|}$,
the numerical range and the spectrum of $A_{[B]}$ are contained in a parabola-type region.
Spectral enclosures of this type with more restrictive assumptions on $B$ were obtained
for elliptic partial differential operators in~\cite{BF62, B65, F62};
similar enclosures for Schr\"odinger operators with complex-valued regular
potentials can be found in~\cite{AAD01, F11, LS09}.
They also appear in the abstract settings of so-called $p$-subordinate perturbations~\cite{W10}.
Finally, as the last topic within the abstract part of this paper,
we prove in Theorem~\ref{th:weyl_decay} that the Weyl function decays
along the negative real line or in suitable complex sectors with a certain
rate if the map $\Gamma_1\vert A_0-\mu\vert^{-\alpha}$ is bounded
for some $\alpha\in (0,\tfrac{1}{2}]$ and some $\mu\in\rho(A_0)$,
where the rate of the decay depends on $\alpha$. Example~\ref{exampleoptimal}
shows the sharpness of this result.

Our abstract results are applied in Section~\ref{sec:Robin} to elliptic
partial differential operators with (in general non-local) Robin boundary conditions
on domains with possibly non-compact boundaries; the class of admissible unbounded domains includes,
for instance, domains of waveguide-type as considered in~\cite{BK08, EK15}.
In Section~\ref{sec:delta} we apply our abstract results to Schr\"{o}dinger operators
in $\dR^n$ with $\delta$-potentials of complex strength supported on
(not necessarily bounded) hypersurfaces. We indicate also how our abstract methods
can be combined with very recent norm estimates from~\cite{GS15} in order to
obtain further spectral enclosures and to establish absence of non-real spectrum
for `weak' complex $\delta$-interactions in space dimensions $n\ge 3$ for compact hypersurfaces.
Finally, we apply our machinery to Schr\"{o}dinger operators on the real line
with non-Hermitian $\delta$-interactions supported on infinitely many points
in Section~\ref{sec:pointinteractions}, and to Laplacians on finite
(not necessarily compact) graphs with non-self-adjoint vertex couplings
in Section~\ref{sec:graphs}.
Each of these sections has the same structure: after the problem under
consideration is explained, first a quasi (or ordinary) boundary triple
and its Weyl function are provided; next a lemma on the decay of the Weyl
function is proved, and then a main result on spectral properties and enclosures
is formulated, which can be derived easily from that decay together with
the abstract results in the first part of this paper in each particular situation.
To illustrate the different types of boundary conditions and interactions,
more specialized cases and explicit examples are included in
Sections~\ref{sec:Robin}--\ref{sec:graphs}.

Finally, let us fix some notation.  By $\sqrt{\cdot}$ we denote the branch
of the complex square root such that $\imag \sqrt{\lambda} > 0$
for all $\lambda\in\dC\setminus[0,\infty)$.
Let us set $\dR_+\defeq[0,\infty)$ and
$\dC^\pm \defeq \{\lambda\in\dC: \pm \Im\lambda >0\}$.
Moreover, for any bounded, complex-valued function $\alpha$ we use
the abbreviation $\|\alpha\|_\infty \defeq \sup|\alpha|$.
The space of bounded, everywhere defined operators from a
Hilbert space $\cH_1$ to another Hilbert space $\cH_2$
is denoted by $\cB(\cH_1,\cH_2)$,
and we set $\cB(\cH_1)\defeq\cB(\cH_1,\cH_1)$.
The Schatten--von Neumann ideal that consists of all compact operators from $\cH_1$
to $\cH_2$ whose singular values are $p$-summable is denoted by $\sS_p(\cH_1,\cH_2)$,
and we set $\sS_p(\cH_1)\defeq\sS_p(\cH_1,\cH_1)$;
see, e.g.\ \cite{GK69} for a detailed study of the $\sS_p$-classes.
Furthermore, for each densely defined operator $A$ in a Hilbert space
we write $\Re A \defeq \frac{1}{2}(A+A^*)$ and $\Im A \defeq \frac{1}{2i}(A-A^*)$
for its real and imaginary part, respectively, and,
if $A$ is closed, we denote by $\rho(A)$ and $\sigma(A)$ its
resolvent set and spectrum, respectively.

\section{Quasi boundary triples and their Weyl functions} \label{sec:QBT}

\noindent
In this preparatory section we first recall the notion and some properties of
quasi boundary triples and their Weyl functions from \cite{BL07,BL12}.
Moreover, we discuss some elementary estimates and decay properties of the Weyl function.

In the following let $S$ be a densely defined, closed, symmetric operator
in a Hilbert space $\cH$.

\begin{definition}\label{qbtdef}
Let $T\subset S^*$ be a linear operator in $\cH$ such that $\overline T=S^*$.
A triple $\{\cG,\Gamma_0,\Gamma_1\}$ is called a \emph{quasi boundary triple}
for $T\subset S^*$ if $\cG$ is a Hilbert space and $\Gamma_0,\Gamma_1:\dom T\rightarrow\cG$
are linear mappings such that
\begin{myenum}
  \item the abstract Green identity
    \begin{equation}\label{green}
      (T f,g) - (f,Tg)
      = (\Gamma_1 f,\Gamma_0 g)
      - (\Gamma_0 f,\Gamma_1 g)
    \end{equation}
    holds for all $f,g\in\dom T$,
    where $(\,\cdot,\,\cdot)$ denotes the inner product both in $\cH$ and $\cG$;
  \item the map $\Gamma \defeq (\Gamma_0,\Gamma_1)^\top :
    \dom T \to \mathcal G \times \mathcal G$ has dense range;
  \item $A_0 \defeq T\upharpoonright \ker\Gamma_0$ is a self-adjoint operator in $\cH$.
\end{myenum}
If condition {\rm (ii)} is replaced by the condition
\begin{myenum}
 \item[{\rm(ii)'}] the map $\Gamma_0: \dom T \to \mathcal G$ is onto,
\end{myenum}
then $\{\cG,\Gamma_0,\Gamma_1\}$ is called a \emph{generalized boundary triple}
for $T\subset S^*$.
\end{definition}

The notion of quasi boundary triples was introduced in \cite[Definition~2.1]{BL07}.
The concept of generalized boundary triples appeared first
in \cite[Definition~6.1]{DM95}.  It follows from \cite[Lemma~6.1]{DM95}
that each generalized boundary triple is also a quasi boundary triple.
We remark that the converse is in general not true.
A quasi or generalized boundary triple reduces to an ordinary boundary triple
if the map $\Gamma$ in condition (ii) is onto (see \cite[Corollary~3.2]{BL07}).
In this case $T$ is closed and coincides with $S^*$,
and $A_0$ in condition (iii) is automatically self-adjoint. For the
convenience of the reader we recall the usual definition of ordinary boundary triples.

\begin{definition}\label{obtdef}
A triple $\{\cG,\Gamma_0,\Gamma_1\}$ is called an \emph{ordinary boundary triple}
for $S^*$ if $\cG$ is a Hilbert space and $\Gamma_0,\Gamma_1:\dom S^*\rightarrow\cG$
are linear mappings such that
\begin{myenum}
  \item the abstract Green identity
    \begin{equation}\label{green2}
      (S^* f,g) - (f,S^*g)
      = (\Gamma_1 f,\Gamma_0 g)
      - (\Gamma_0 f,\Gamma_1 g)
    \end{equation}
    holds for all $f,g\in\dom S^*$;
  \item the map $\Gamma \defeq (\Gamma_0,\Gamma_1)^\top : \dom S^* \rightarrow
    \mathcal G \times \mathcal G$ is onto.
\end{myenum}
\end{definition}

\medskip

We refer the reader to \cite{BL07,BL12} for a detailed study of quasi boundary triples,
to \cite{DHMS06,DM95} for generalized boundary triples
and to \cite{B76,BGP08,DM91,GG91,K75} for ordinary boundary triples.
For later purposes we recall the following result,
which is useful to determine the adjoint and a (quasi) boundary triple
for a given symmetric operator; see \cite[Theorem~2.3]{BL07}.

\begin{theorem}\label{thm:ratetheorem}
Let $\cH$ and $\cG$ be Hilbert spaces and let $T$ be a linear operator in $\cH$.
Assume that $\Gamma_0,\Gamma_1: \dom T\rightarrow\cG$ are linear mappings such
that the following conditions hold:
\begin{myenum}
\item
the abstract Green identity
\[
  (Tf,g)-(f,Tg)=(\Gamma_1 f,\Gamma_0 g)-(\Gamma_0 f,\Gamma_1 g)
\]
holds for all $f,g\in\dom T$;
\item
the map $(\Gamma_0,\Gamma_1)^\top: \dom T\rightarrow\cG\times\cG$
has dense range and $\ker\Gamma_0\cap\ker\Gamma_1$ is dense in $\cH$;
\item
$T\upharpoonright \ker\Gamma_0$ is an extension of a self-adjoint
operator $A_0$.
\end{myenum}
Then the restriction
\[
  S \defeq T\upharpoonright(\ker\Gamma_0\cap\ker\Gamma_1)
\]
is a densely defined closed symmetric operator in $\cH$,
$\overline T= S^*$, and $\{\cG,\Gamma_0,\Gamma_1\}$
is a quasi boundary triple for $T \subset S^*$ with $A_0=T\upharpoonright \ker\Gamma_0$.
If, in addition, the operator $T$ is closed or, equivalently, the map
$(\Gamma_0,\Gamma_1)^\top: \dom T\rightarrow\cG\times\cG$ is onto,
then $T=S^*$ and  $\{\cG,\Gamma_0,\Gamma_1\}$ is an ordinary boundary triple
for $S^*$ with $A_0=T\upharpoonright \ker\Gamma_0$.
\end{theorem}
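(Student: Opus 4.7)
The overall plan is to reduce the entire statement to proving the single identity $T^*=\ov S$, since this forces $T$ to be closable with $\ov T=(T^*)^*=(\ov S)^*=S^*$, after which the three axioms of Definition~\ref{qbtdef} are immediate from the hypotheses. The preliminaries are routine and I would dispatch them first: density of $\dom S=\ker\Gamma_0\cap\ker\Gamma_1$ is part of~(ii); symmetry of $S$ drops out of Green's identity since both boundary terms vanish on $\dom S$; and Green's identity restricted to $\ker\Gamma_0$ shows that $T\upharpoonright\ker\Gamma_0$ is symmetric, so its containment of the self-adjoint (hence maximally symmetric) operator $A_0$ from~(iii) forces $A_0=T\upharpoonright\ker\Gamma_0$.

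The inclusion $\ov S\subset T^*$ is the easy half: for $f\in\dom S$ and arbitrary $g\in\dom T$, Green's identity collapses to $(Tg,f)=(g,Sf)$, so $S\subset T^*$ and $\ov S\subset T^*$ because $T^*$ is closed. The opposite inclusion is where the work lies. For $g\in\dom T^*$ I would first test the defining identity $(Tf,g)=(f,T^*g)$ against $f\in\dom A_0\subset\dom T$, where $Tf=A_0f$; this yields $(A_0f,g)=(f,T^*g)$ for every $f\in\dom A_0$, so by self-adjointness $g\in\dom A_0$ with $A_0g=T^*g$, and consequently $g\in\dom T$ with $Tg=A_0g=T^*g$. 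Inserting this specific $g$ back into Green's identity against an arbitrary $f\in\dom T$ gives
\[
(\Gamma_1 f,\Gamma_0 g)-(\Gamma_0 f,\Gamma_1 g)=(Tf,g)-(f,Tg)=(f,T^*g)-(f,Tg)=0.
\]
The joint density of $\ran(\Gamma_0,\Gamma_1)^\top$ in $\cG\times\cG$ from~(ii) now lets me approximate the boundary data of $f$ by pairs $(0,\psi)$ and $(\phi,0)$ separately, extracting $\Gamma_0 g=0$ and $\Gamma_1 g=0$; hence $g\in\dom S$ and $T^*g=Sg$, so $T^*\subset\ov S$. Combining the two inclusions yields $T^*=\ov S$, hence $\ov T=S^*$, and the remaining conditions of Definition~\ref{qbtdef} hold by hypothesis.

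For the final assertion, once $T$ is closed one has $T=\ov T=S^*$ and the task reduces to upgrading density of $\ran(\Gamma_0,\Gamma_1)^\top$ to surjectivity onto $\cG\times\cG$. I expect this to come from a closed-range-type argument based on the Hilbert space structure of $\dom T$ in the graph norm together with the non-degenerate (anti)symplectic pairing supplied by Green's identity. The main obstacle of the proof is the extraction step above: one really needs \emph{joint} density of $\ran(\Gamma_0,\Gamma_1)^\top$, not just density of $\ran\Gamma_0$ and $\ran\Gamma_1$ separately, and this is the only place where the full strength of~(ii) is used.
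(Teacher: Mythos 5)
Your core argument---establishing $T^*=S$ (equivalently $T^*=\ov S$) and deducing $\ov T=S^*$---is correct and is essentially the route taken in the cited source \cite{BL07}. Each step checks out: $S$ is densely defined and symmetric, and $S\subset T^*$ follows from Green's identity; $A_0=T\upharpoonright\ker\Gamma_0$ because the latter is symmetric (by Green's identity on $\ker\Gamma_0$) and contains the maximal symmetric operator $A_0$; and for $g\in\dom T^*$, testing the adjoint relation against $\dom A_0$ forces $g\in\dom A_0$ with $A_0g=T^*g$, after which Green's identity and density of the boundary data give $\Gamma_1 g=0$. One remark on that last extraction: once you know $g\in\dom A_0\subset\ker\Gamma_0$ you already have $\Gamma_0 g=0$ for free, so $(\Gamma_0 f,\Gamma_1 g)=0$ for all $f\in\dom T$ together with density of $\ran\Gamma_0$ (which follows from density of $\ran\Gamma$) suffices; joint density is not really the bottleneck at this step, and the other half of assumption (ii), density of $\dom S$, is what you implicitly use so that $T$ is densely defined and closable with $\ov T=T^{**}=(T^*)^*$. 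After this, the three axioms of Definition~\ref{qbtdef} hold by hypothesis and the identification $A_0=T\upharpoonright\ker\Gamma_0$.

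The final assertion, however, is a genuine gap, and you essentially concede this by writing that you \emph{expect} a closed-range argument rather than giving one. The theorem claims two things: that $T$ is closed if and only if $\Gamma=(\Gamma_0,\Gamma_1)^\top$ is onto, and that in this case the triple is an ordinary boundary triple for $S^*$. You prove neither direction of the equivalence. The implication ``$T$ closed $\Rightarrow T=\ov T=S^*$'' is indeed immediate, but upgrading dense range of $\Gamma$ to surjectivity requires real work: one first needs that $\Gamma$ is bounded on $\dom T$ equipped with the graph norm (a closed-graph argument through the splitting $\dom T=\dom A_0\,\dot+\,\ker(T-\lambda)$ for nonreal $\lambda\in\rho(A_0)$), and then one must exploit the nondegenerate Green pairing to rule out a proper closed subspace with dense range. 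The converse ``$\Gamma$ onto $\Rightarrow T$ closed'' likewise needs an argument, typically by showing that $\ker(T-\lambda)$ already exhausts $\ker(S^*-\lambda)$. These steps are the content of \cite[Theorem~2.3 and Corollary~3.2]{BL07} and cannot be waved at; without them the second half of the theorem is unproved.
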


In the following let $\{\cG,\Gamma_0,\Gamma_1\}$ be a quasi boundary triple
for $T \subset S^*$.  Since $A_0=T\upharpoonright\ker\Gamma_0$ is self-adjoint,
we have $\dC\setminus\dR\subset\rho(A_0)$, and
for each $\lambda\in\rho(A_0)$  the direct sum decomposition
\[
  \dom T = \dom A_0\,\dot +\,\ker(T-\lambda)
  = \ker\Gamma_0\,\dot +\,\ker(T-\lambda)
\]
holds. In particular, the restriction of the map $\Gamma_0$ to $\ker(T-\lambda)$
is injective.  This allows the following definition.

\begin{definition}\label{gmdef}
The \emph{$\gamma$-field} $\gamma$ and the \emph{Weyl function} $M$ corresponding to the
quasi boundary triple $\{\mathcal G,\Gamma_0,\Gamma_1\}$ are defined by
\[
  \lambda \mapsto \gamma(\lambda)\defeq\bigl(\Gamma_0\upharpoonright\ker(T-\lambda)\bigr)^{-1},
  \qquad \lambda \in \rho(A_{0}),
\]
and
\[
  \lambda \mapsto M(\lambda) \defeq \Gamma_1 \gamma(\lambda),
  \qquad \lambda \in \rho(A_{0}),
\]
respectively.
\end{definition}

The values $\gamma(\lambda)$ of the $\gamma$-field are operators defined
on the dense subspace $\ran\Gamma_0\subset\cG$ which map
onto $\ker(T-\lambda)\subset\cH$.  The values $M(\lambda)$ of the Weyl function
are densely defined operators in $\cG$ mapping $\ran\Gamma_0$ into $\ran\Gamma_1$.
In particular, if $\{\mathcal G,\Gamma_0,\Gamma_1\}$ is a generalized
or ordinary boundary triple, then $\gamma(\lambda)$ and $M(\lambda)$
are defined on $\cG=\ran\Gamma_0$, and it can be shown
that $\gamma(\lambda)\in\cB(\cG,\cH)$ and $M(\lambda)\in\cB(\cG)$
in this case.

Next we list some important properties of the $\gamma$-field and the
Weyl function corresponding to a quasi boundary triple $\{\mathcal G,\Gamma_0,\Gamma_1\}$,
which can be found in \cite[Proposition~2.6]{BL07} or \cite[Propositions~6.13 and 6.14]{BL12}.
These properties are well known for the $\gamma$-field and Weyl function
corresponding to a generalized or ordinary boundary triple.
Let $\lambda,\mu\in\rho(A_0)$.  Then the adjoint operator $\gamma(\lambda)^*$
is bounded and satisfies
\begin{equation}\label{gammastar}
  \gamma(\lambda)^*=\Gamma_1(A_0 - \overline \lambda)^{-1}\in\cB(\cH,\cG);
\end{equation}
hence also $\gamma(\lambda)$ is bounded and
$\overline{\gamma(\lambda)}=\gamma(\lambda)^{**}\in\cB(\cG,\cH)$.
One has the useful identity
\begin{equation}\label{gammalamu}
  \gamma(\lambda) = \bigl(I+(\lambda-\mu)(A_0-\lambda)^{-1}\bigr)\gamma(\mu)
  = (A_0-\mu)(A_0-\lambda)^{-1}\gamma(\mu)
\end{equation}
for $\lambda,\mu\in\rho(A_0)$, which implies
\begin{equation}\label{gammalm}
  \overline{\gamma(\lambda)}
  = \bigl(I+(\lambda-\mu)(A_0-\lambda)^{-1}\bigr)\overline{\gamma(\mu)}
  = (A_0-\mu)(A_0-\lambda)^{-1}\overline{\gamma(\mu)}.
\end{equation}
With the help of the functional calculus of the self-adjoint operator $A_0$
one can conclude from~\eqref{gammalm} that
\begin{equation}\label{eq:gammaWunder}
  \big\| \ov{\gamma(\ov\lambda)} \big\|
  = \big\| \ov{\gamma(\lambda)} \big\| \qquad \text{for all}\;\; \lambda \in \rho(A_0).
\end{equation}
The values $M(\lambda)$ of the Weyl function satisfy
$M(\lambda)\subset M(\overline\lambda)^*$ and,
in particular, the operators $M(\lambda)$ are closable.
In general, the operators $M(\lambda)$ and their closures $\overline{M(\lambda)}$
are not bounded.
However, if $M(\lambda_0)$ is bounded for some $\lambda_0\in\rho(A_0)$,
then $M(\lambda)$ is bounded for all $\lambda\in\rho(A_0)$; see Lemma~\ref{le:mbdd} below.
The function $\lambda\mapsto M(\lambda)$ is holomorphic in the sense that
for any fixed $\mu \in \rho(A_0)$ it can be written as the sum of the possibly
unbounded operator $\Re M(\mu)$ and a $\cB(\cG)$-valued holomorphic function,
\[
  M(\lambda) = \Re M(\mu)
  +\gamma(\mu)^*\bigl[(\lambda-\Re\mu)
  +(\lambda-\mu)(\lambda-\overline\mu)(A_0-\lambda)^{-1}
  \bigr] \overline{\gamma(\mu)}
\]
for all $\lambda \in\rho(A_0)$.  In particular, $\Im M(\lambda)$ is a bounded operator
for each $\lambda\in\rho(A_0)$.

Further, for every $x\in\ran\Gamma_0$ we have
\[
  \frac{\drm^n}{\drm\lambda^n}\bigl( M(\lambda)x\bigr)
  = \frac{\drm^n}{\drm\lambda^n}\bigl( \gamma(\mu)^*\bigl[(\lambda-\Re\mu)
  +(\lambda-\mu)(\lambda-\overline\mu)(A_0-\lambda)^{-1}\bigr]\ov{\gamma(\mu)}x\bigr)
\]
for all $\lambda\in\rho(A_0)$ and all $n\in\NN$, and hence the $n$th strong
derivative $M^{(n)}(\lambda)$ (viewed as an operator defined on $\ran \Gamma_0$)
admits a continuous extension $\ov{M^{(n)}(\lambda)} \in \cB(\cG)$.
It satisfies
\begin{equation}\label{der_M}
  \ov{M^{(n)}(\lambda)} = n! \, \gamma(\ov\lambda)^*(A_0-\lambda)^{-(n-1)}\ov{\gamma(\lambda)},
  \qquad \lambda\in\rho(A_0),\;n\in\NN;
\end{equation}
see~\cite[Lemma~2.4\,(iii)]{BLL13trace}.

The Weyl function also satisfies (see \cite[Proposition~2.6\,(v)]{BL07})
\begin{equation}\label{diffM}
  M(\lambda)-M(\mu) = (\lambda-\mu)\gamma(\ov\mu)^*\gamma(\lambda),
\end{equation}
and with $\mu=\ov\lambda$ and the
relation $M(\ov\lambda)\subset M(\lambda)^*$ it follows that
\begin{equation}\label{gammmm}
  \Im M(\lambda) = (\Im\lambda)\gamma(\lambda)^*\gamma(\lambda)
  \qquad\text{and}\qquad
  \overline{\Im M(\lambda)} = (\Im\lambda)\gamma(\lambda)^*\overline{\gamma(\lambda)}.
\end{equation}
In the case when the values of $M$ are bounded operators we provide
a simple bound for the norms $\|\overline{M(\lambda)}\|$
in the next lemma.

\begin{lemma}\label{le:mbdd}
Let $\{\cG,\Gamma_0,\Gamma_1\}$ be a quasi boundary triple for $T\subset S^*$
with corresponding Weyl function $M$.  Assume that $M(\lambda)$ is bounded
for one $\lambda\in\rho(A_0)$.
Then $M(\lambda)$ is bounded for all $\lambda\in\rho(A_0)$, and the estimate
\begin{equation}\label{estMlaMmu}
  \bigl\|\ov{M(\lambda)}\bigr\|
  \le \biggl(1+\frac{|\lambda-\mu|}{|\Im\mu|}
  +\frac{|\lambda-\mu| |\lambda - \ov \mu|}{|\Im\lambda|\cdot|\Im\mu|}\biggr)\bigl\|\ov{M(\mu)}\bigr\|
\end{equation}
holds for all $\lambda,\mu\in\CC\setminus\RR$.
\end{lemma}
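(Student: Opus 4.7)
\smallskip

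The plan is to reduce everything to a single explicit identity that expresses $\ov{M(\lambda)}$ in terms of $\ov{M(\mu)}$, the closed $\gamma$-field at $\ov{\mu}$, and the resolvent of $A_0$, and then to bound each piece separately. All the necessary tools are already in the excerpt: \eqref{gammastar}, \eqref{gammalm}, \eqref{diffM} and \eqref{gammmm}.

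For the qualitative first assertion, starting from \eqref{diffM} with the initial point at which $M$ is assumed bounded, one reads off that $M(\lambda) - M(\mu)$ equals $(\lambda-\mu)\gamma(\ov\mu)^{*}\gamma(\lambda)$ on $\ran\Gamma_0$. By \eqref{gammastar} the factor $\gamma(\ov\mu)^{*}=\Gamma_1(A_0-\mu)^{-1}$ lies in $\cB(\cH,\cG)$, and $\ov{\gamma(\lambda)}\in\cB(\cG,\cH)$ is always bounded; hence the right-hand side extends by continuity to a bounded operator on $\cG$, and boundedness of $M(\mu)$ at one point propagates to every $\lambda\in\rho(A_0)$.

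For the quantitative bound with $\lambda,\mu\in\CC\setminus\RR$, I would insert the first identity of \eqref{gammalm}, namely $\ov{\gamma(\lambda)}=\bigl(I+(\lambda-\ov\mu)(A_0-\lambda)^{-1}\bigr)\ov{\gamma(\ov\mu)}$, into the closure of \eqref{diffM}. This yields the key representation
\[
  \ov{M(\lambda)} \;=\; \ov{M(\mu)} \;+\; (\lambda-\mu)\,\gamma(\ov\mu)^{*}\ov{\gamma(\ov\mu)}
  \;+\; (\lambda-\mu)(\lambda-\ov\mu)\,\gamma(\ov\mu)^{*}(A_0-\lambda)^{-1}\ov{\gamma(\ov\mu)}.
\]
Now estimate the three summands. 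By \eqref{gammmm},
\[
  \bigl\|\gamma(\ov\mu)^{*}\ov{\gamma(\ov\mu)}\bigr\| \;=\; \bigl\|\ov{\gamma(\ov\mu)}\bigr\|^{2}
  \;=\; \frac{\bigl\|\ov{\Im M(\ov\mu)}\bigr\|}{|\Im\mu|}.
\]
The relation $M(\ov\mu)\subset M(\mu)^{*}$, combined with boundedness of $M(\mu)$, gives $\ov{M(\ov\mu)}=M(\mu)^{*}$, so $\|\ov{M(\ov\mu)}\|=\|\ov{M(\mu)}\|$ and, using $\|\Im A\|\le\|A\|$ for bounded operators, $\|\ov{\Im M(\ov\mu)}\|\le\|\ov{M(\mu)}\|$. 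The third summand is then bounded by $\|\ov{\gamma(\ov\mu)}\|^{2}\cdot\|(A_0-\lambda)^{-1}\|$, where self-adjointness of $A_0$ gives $\|(A_0-\lambda)^{-1}\|\le 1/|\Im\lambda|$. Collecting the three contributions yields \eqref{estMlaMmu} verbatim.

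The one point requiring a little care is the closure step: one has to verify that passing from \eqref{diffM} and \eqref{gammalm} to the corresponding identity for the closures is legitimate on all of $\cG$. This is routine because on the right-hand side every factor other than $\ov{\gamma(\ov\mu)}$ is either bounded and everywhere defined ($\gamma(\ov\mu)^{*}$ and $(A_0-\lambda)^{-1}$) or the closure of a densely defined operator with bounded extension; hence the right-hand side already defines a bounded operator on $\cG$, and it necessarily agrees with $\ov{M(\lambda)}$ on the dense subspace $\ran\Gamma_0$. No further subtlety arises, so the main obstacle is simply bookkeeping of closures rather than any analytical difficulty.
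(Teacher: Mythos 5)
Your proof is correct and follows essentially the same route as the paper's: you arrive at the same key representation $\ov{M(\lambda)} = \ov{M(\mu)} + (\lambda-\mu)\gamma(\ov\mu)^{*}\ov{\gamma(\ov\mu)} + (\lambda-\mu)(\lambda-\ov\mu)\gamma(\ov\mu)^{*}(A_0-\lambda)^{-1}\ov{\gamma(\ov\mu)}$ (which is precisely \eqref{MlaMmu} in the paper) by combining \eqref{diffM} with \eqref{gammalamu} evaluated at $\ov\mu$, and then estimate the three terms using $\|\ov{\gamma(\ov\mu)}\|^{2}=\|\ov{\Im M(\mu)}\|/|\Im\mu|\le\|\ov{M(\mu)}\|/|\Im\mu|$ and $\|(A_0-\lambda)^{-1}\|\le 1/|\Im\lambda|$, exactly as in the paper. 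Your remarks on the closure step and the qualitative propagation of boundedness are also consistent with the paper's argument.
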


\begin{proof}
It follows from \eqref{diffM}, the relation $\ov{\gamma(\lambda)}\in\cB(\cG,\cH)$
and \eqref{gammastar}
that $M(\lambda)$ is bounded for all $\lambda\in\rho(A_0)$ if it is bounded for one $\lambda\in\rho(A_0)$.
Moreover, from the second identity in \eqref{gammmm} we conclude that
\begin{equation}\label{normgamma}
  \begin{aligned}
    \bigl\|\ov{\gamma(\ov \mu)}\bigr\|
    &= \bigl\|\gamma(\ov \mu)^*\ov{\gamma(\ov \mu)}\bigr\|^{1/2}
      \\[1ex]
    &= \frac{\bigl\|\ov{\Im M(\ov \mu)}\bigr\|^{1/2}}{|\Im \ov \mu|^{1/2}}
    = \frac{\bigl\|\ov{\Im M(\mu)}\bigr\|^{1/2}}{|\Im \mu|^{1/2}}\,,
  \hspace*{10ex} \mu\in\CC\setminus\RR,
  \end{aligned}
\end{equation}
where we have used that $\overline{M(\ov\mu)} = M(\mu)^*$.
If we replace $\gamma(\lambda)$ on the right-hand side of \eqref{diffM}
with the right-hand side of \eqref{gammalamu}, we obtain the representation
\begin{equation}\label{MlaMmu}
  M(\lambda) = M(\mu) + (\lambda-\mu)\gamma(\ov\mu)^*\gamma(\ov\mu)
  + (\lambda-\mu)(\lambda-\ov\mu) \gamma(\ov\mu)^*(A_0-\lambda)^{-1}\gamma(\ov\mu).
\end{equation}
By combining~\eqref{normgamma} and~\eqref{MlaMmu}, for $\lambda,\mu\in\CC\setminus\RR$
we obtain the estimate
\begin{align*}
  \bigl\|\ov{M(\lambda)}\bigr\|
  &\le \bigl\|\ov{M(\mu)}\bigr\| + \Bigl( |\lambda-\mu|
  + |\lambda-\mu| |\lambda - \ov \mu| \bigl\|(A_0-\lambda)^{-1}\bigr\| \Bigr)
  \bigl\|\ov{\gamma(\ov \mu)}\bigr\|^2
    \\[1ex]
  &\le \bigl\|\ov{M(\mu)}\bigr\|
  + \biggl(|\lambda-\mu|+\frac{|\lambda-\mu| |\lambda - \ov \mu|}{|\Im\lambda|}\biggr)
  \frac{\bigl\|\ov{\Im M(\mu)}\bigr\|}{|\Im\mu|}
    \\[1ex]
  &\le \biggl(1+\frac{|\lambda-\mu|}{|\Im\mu|}
  +\frac{|\lambda-\mu| |\lambda - \ov \mu|}{|\Im\lambda|\cdot|\Im\mu|}\biggr)\bigl\|\ov{M(\mu)}\bigr\|.
  \qedhere
\end{align*}
\end{proof}

\medskip

Decay properties of the Weyl function play an important role in this paper.
The next lemma shows that a decay of the Weyl function along a non-real ray
implies a uniform decay in certain sectors.

\begin{lemma}\label{le:raysector}
Let $\{\cG,\Gamma_0,\Gamma_1\}$ be a quasi boundary triple for $T\subset S^*$
with corresponding Weyl function $M$.  Assume that $M(\lambda)$ is bounded
for one (and hence for all) $\lambda\in\rho(A_0)$
and fix $\varphi\in(-\pi,0)\cup(0,\pi)$.
Then for every interval $[\psi_1,\psi_2]\subset(-\pi,0)$
or $[\psi_1,\psi_2]\subset(0,\pi)$ one has
\begin{equation}\label{Mdecaysect}
  \bigl\|\ov{M(re^{i\psi})}\bigr\| = \rmO\Bigl(\bigl\|\ov{M(re^{i\varphi})}\bigr\|\Bigr)
  \qquad\text{as}\;\; r\to\infty \;\; \text{uniformly in}\;\; \psi\in[\psi_1,\psi_2].
\end{equation}
In particular, if $\|\ov{M(re^{i\varphi})}\| \to 0$ as $r\to\infty$,
then $\|\ov{M(re^{i\psi})}\| \to 0$ as $r\to\infty$ uniformly
in $\psi\in[\psi_1,\psi_2]$.
\end{lemma}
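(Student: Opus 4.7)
The plan is to derive \eqref{Mdecaysect} as a direct consequence of the estimate \eqref{estMlaMmu} from Lemma~\ref{le:mbdd}, applied with $\lambda=re^{i\psi}$ and $\mu=re^{i\varphi}$ of equal modulus. The key observation is that when $|\lambda|=|\mu|=r$, the factor $r$ appearing in each of $|\lambda-\mu|$, $|\lambda-\overline\mu|$, $|\Im\lambda|$ and $|\Im\mu|$ cancels in every term on the right-hand side of \eqref{estMlaMmu}, so the prefactor in front of $\|\ov{M(\mu)}\|$ is independent of $r$ and can be controlled uniformly in $\psi$.

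I would first carry out a symmetry reduction to place both $\varphi$ and all $\psi\in[\psi_1,\psi_2]$ in the upper half-plane. Since $M(\lambda)\subset M(\overline\lambda)^*$ entails $\ov{M(\overline\lambda)}=M(\lambda)^*$, taking norms gives $\|\ov{M(\overline\lambda)}\|=\|\ov{M(\lambda)}\|$ for every $\lambda\in\rho(A_0)$. Consequently, replacing $\psi$ by $-\psi$ does not change the left-hand side of \eqref{Mdecaysect}, and independently replacing $\varphi$ by $-\varphi$ does not change the right-hand side. I may therefore assume $\varphi\in(0,\pi)$ and $[\psi_1,\psi_2]\subset(0,\pi)$.

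Inserting $\lambda=re^{i\psi}$ and $\mu=re^{i\varphi}$ into \eqref{estMlaMmu} and using the elementary identities $|\Im\lambda|=r\sin\psi$, $|\Im\mu|=r\sin\varphi$, $|\lambda-\mu|=r|e^{i\psi}-e^{i\varphi}|$, $|\lambda-\overline\mu|=r|e^{i\psi}-e^{-i\varphi}|$, Lemma~\ref{le:mbdd} yields $\|\ov{M(re^{i\psi})}\|\le C(\psi)\,\|\ov{M(re^{i\varphi})}\|$ with
\[
  C(\psi)=1+\frac{|e^{i\psi}-e^{i\varphi}|}{\sin\varphi}+\frac{|e^{i\psi}-e^{i\varphi}|\cdot|e^{i\psi}-e^{-i\varphi}|}{\sin\psi\,\sin\varphi},
\]
which is independent of $r$. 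Since $\psi\mapsto C(\psi)$ is continuous on $[\psi_1,\psi_2]$ and $\sin\psi\ge\min\{\sin\psi_1,\sin\psi_2\}>0$ there, one has $\sup_{\psi\in[\psi_1,\psi_2]}C(\psi)<\infty$; this is precisely \eqref{Mdecaysect}, and the final uniform-decay statement is an immediate consequence.

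I do not foresee any genuine obstacle. The only non-routine point is recognising that the choice $|\lambda|=|\mu|$ renders the estimate of Lemma~\ref{le:mbdd} scale-invariant in $r$; once this is observed, the symmetry of $M$ under complex conjugation together with the compactness of $[\psi_1,\psi_2]$ finish the argument.
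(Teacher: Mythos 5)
Your proposal is correct and follows essentially the same route as the paper's proof: both plug $\lambda=re^{i\psi}$, $\mu=re^{i\varphi}$ into Lemma~\ref{le:mbdd} and observe that the factor of $r$ cancels, leaving a prefactor continuous and hence bounded on $[\psi_1,\psi_2]$ since $\sin\psi$ stays away from zero. The preliminary symmetry reduction you perform via $\|\ov{M(\ov\lambda)}\|=\|\ov{M(\lambda)}\|$ is harmless but unnecessary, as the bound \eqref{estMlaMmu} already involves only $|\Im\lambda|$ and $|\Im\mu|$ and so handles opposite half-planes directly.
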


\begin{proof}
Let $\mu=re^{i\varphi}$ and $\lambda=re^{i\psi}$ with $\psi\in[\psi_1,\psi_2]$ and $r>0$.
Then
\[
  |\lambda-\mu| = r\Bigl|e^{i\frac{\psi+\varphi}{2}}
  \Bigl(e^{i\frac{\psi-\varphi}{2}}-e^{-i\frac{\psi-\varphi}{2}}\Bigr)\Bigr|
  = 2r\Bigl|\sin \Big( \frac{\psi-\varphi}{2} \Big) \Bigr|
\]
and
\[
  |\lambda-\ov\mu|
  = 2r\Bigl|\sin \Big( \frac{\psi+\varphi}{2} \Big) \Bigr|.
\]
Now \eqref{estMlaMmu} yields
\begin{equation}\label{boundMsector}
  \bigl\|\ov{M(re^{i\psi})}\bigr\|
  \le \biggl(1+\frac{2|\sin\frac{\psi-\varphi}{2}|}{|\sin\varphi|}
  +\frac{4|\sin\frac{\psi-\varphi}{2}| |\sin\frac{\psi+\varphi}{2}|}{|\sin\psi|\cdot|\sin\varphi|}\biggr)
  \bigl\|\ov{M(re^{i\varphi})}\bigr\|,
\end{equation}
which shows \eqref{Mdecaysect}
since the expression in the brackets on the right-hand side of \eqref{boundMsector}
is uniformly bounded in $\psi\in[\psi_1,\psi_2]$.
\end{proof}

In the context of the previous lemma we remark that
$\lambda \mapsto \|\overline{M(\lambda)}\|$ decays at
most as $|\lambda|^{-1}$
since $\lambda\mapsto-(M(\lambda)x,x)^{-1}$ grows at most linearly
as it is a Nevanlinna function for every $x\in\ran\Gamma_0$.
We also recall from \cite[Lemma~2.3]{BLLR17} that for $x\in\ran\Gamma_0\setminus\{0\}$
the function
\[
  \lambda \mapsto \bigl(M(\lambda)x,x\bigr)
\]
is strictly increasing on each interval in $\rho(A_0)\cap\RR$; moreover, if
$A_0$ is bounded from below and
\[
  \bigl(M(\lambda)x,x\bigr) \to 0 \qquad\text{as}\;\;\lambda\to-\infty
\]
for all $x\in\ran\Gamma_0$, then
\begin{equation}\label{mmm}
  \bigl(M(\lambda)x,x\bigr) > 0, \qquad x\in\ran\Gamma_0\setminus\{0\},\;
  \lambda<\min\sigma(A_0).
\end{equation}

In the next proposition the case when the self-adjoint
operator $A_0=T\upharpoonright\ker\Gamma_0$ is bounded from below
and $\|\ov{M(\lambda)}\|\to0$ as $\lambda\to-\infty$ is considered.
Here the extension
\begin{equation}\label{defA1}
  A_1 \defeq T\upharpoonright \ker\Gamma_1
\end{equation}
is investigated.  Observe that the abstract Green identity \eqref{green2} yields
that $A_1$ is symmetric in $\cH$, but in the setting of quasi boundary triples
or generalized boundary triples $A_1$ is not necessarily self-adjoint
(in contrast to the case of ordinary boundary triples).

\begin{proposition}\label{pr:A1bddbelow}
Let $\{\cG,\Gamma_0,\Gamma_1\}$ be a quasi boundary triple for $T\subset S^*$
with corresponding Weyl function $M$ and suppose that $A_1=T\upharpoonright \ker\Gamma_1$
is self-adjoint and that $A_0$ and $A_1$ are bounded from below.
Further, assume that $M(\lambda)$ is bounded for one
(and hence for all) $\lambda\in\rho(A_0)$ and
that $\|\ov{M(\lambda)}\|\to0$ as $\lambda\to-\infty$.
Then
\begin{equation}\label{min10}
  \min\sigma(A_0) \le \min\sigma(A_1).
\end{equation}
\end{proposition}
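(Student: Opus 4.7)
The plan is to argue by contradiction. Suppose $\min\sigma(A_1)<\min\sigma(A_0)=:a_0$ and set $\lambda_0:=\min\sigma(A_1)$, so that $\lambda_0<a_0$ and $\lambda_0\in\sigma(A_1)\cap\rho(A_0)$. The goal is to produce points of $\sigma(A_1)$ strictly below $\lambda_0$, which contradicts the semi-boundedness of $A_1$.

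The first phase converts an approximate eigensequence for $A_1$ at $\lambda_0$ into an approximate null vector of $\overline{M(\lambda_0)}$. By self-adjointness of $A_1$ we take $(f_n)\subset\dom A_1$ with $\|f_n\|=1$ and $(A_1-\lambda_0)f_n\to 0$, and decompose $f_n=u_n+h_n$ along $\dom A_0\,\dot+\,\ker(T-\lambda_0)$ via $u_n:=(A_0-\lambda_0)^{-1}(A_1-\lambda_0)f_n$ and $h_n:=f_n-u_n$. Then $u_n\to 0$, $\|h_n\|\to 1$, $\Gamma_0 h_n=\Gamma_0 f_n$, and, using \eqref{gammastar} together with $\Gamma_1 f_n=0$, $\Gamma_1 h_n=-\gamma(\lambda_0)^{*}(A_1-\lambda_0)f_n\to 0$. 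The identity $h_n=\overline{\gamma(\lambda_0)}\Gamma_0 h_n$ bounds $\|\Gamma_0 h_n\|$ below by $\|h_n\|/\|\overline{\gamma(\lambda_0)}\|$, so the normalised vectors $x_n:=\Gamma_0 h_n/\|\Gamma_0 h_n\|\in\ran\Gamma_0$ satisfy $\|x_n\|=1$ and $\overline{M(\lambda_0)}x_n=\Gamma_1 h_n/\|\Gamma_0 h_n\|\to 0$ (the subcase $\|\Gamma_0 h_n\|\to\infty$ is handled by a parallel renormalisation of $h_n$). This shows $0\in\sigma(\overline{M(\lambda_0)})$.

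In the second phase I would propagate this to all $\lambda\le\lambda_0$ and then translate back to $A_1$. By~\eqref{mmm} (applicable thanks to the decay of $M$) and density of $\ran\Gamma_0$, the bounded self-adjoint operator $\overline{M(\lambda)}$ is non-negative for each $\lambda<a_0$; the strict increase of $\lambda\mapsto(M(\lambda)x,x)$ recalled before the proposition then gives $0\le\overline{M(\lambda)}\le\overline{M(\lambda_0)}$ whenever $\lambda\le\lambda_0$. The elementary bound $\|Ax\|^2\le\|A\|(Bx,x)$, valid for $0\le A\le B$ self-adjoint, applied with $A=\overline{M(\lambda)}$ and $B=\overline{M(\lambda_0)}$, yields $\overline{M(\lambda)}x_n\to 0$, hence $0\in\sigma(\overline{M(\lambda)})$ for every $\lambda\le\lambda_0$. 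A Krein-type converse --- that for $\lambda\in\rho(A_0)$ the condition $0\in\sigma(\overline{M(\lambda)})$ forces $\lambda\in\sigma(A_1)$ --- then produces $(-\infty,\lambda_0]\subset\sigma(A_1)$, contradicting $\min\sigma(A_1)=\lambda_0$. The main obstacle is justifying this final implication in the quasi boundary triple setting; the boundedness of $M(\lambda)$ is the crucial hypothesis that reduces matters to an effectively generalised-boundary-triple situation and makes the standard Krein resolvent formula applicable, so that the translation from $\overline{M(\lambda)}$ to $A_1$ can be carried out along the lines of the resolvent analysis in Section~\ref{sec:closedoperators}.
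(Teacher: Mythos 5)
Your Phase 1 correctly extracts from an approximate eigensequence of $A_1$ at $\lambda_0=\min\sigma(A_1)$ a sequence of unit vectors $x_n\in\ran\Gamma_0$ with $\ov{M(\lambda_0)}x_n\to 0$, so $0\in\sigma(\ov{M(\lambda_0)})$, and the monotonicity argument propagating $0\in\sigma(\ov{M(\lambda)})$ to all $\lambda\le\lambda_0$ is also sound. The fatal problem is the ``Krein-type converse'' you invoke at the end --- that $0\in\sigma(\ov{M(\lambda)})$ for $\lambda\in\rho(A_0)$ should force $\lambda\in\sigma(A_1)$. This is simply \emph{false} for quasi boundary triples, even when $M(\lambda)$ is bounded. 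Concretely, in the elliptic setting of Section~\ref{sec:Robin} with $\partial\Omega$ compact, $\ov{M(\lambda)}$ is the (compact) Neumann-to-Dirichlet map, so $0\in\sigma(\ov{M(\lambda)})$ for \emph{every} $\lambda\in\rho(\AN)$; nevertheless $A_1=\AD$ has $(-\infty,\min\sigma(\AD))\subset\rho(\AD)\ne\emptyset$. The Krein formula \eqref{resformulaA1} only requires the composite operator $M(\lambda)^{-1}\gamma(\ov\lambda)^*$ to be bounded, which is perfectly compatible with $\ov{M(\lambda)}$ failing to have a bounded inverse on all of $\cG$. Your remark that boundedness of $M(\lambda)$ ``reduces matters to an effectively generalised-boundary-triple situation'' does not repair this: even for generalized boundary triples $\gamma(\ov\lambda)^*$ need not be surjective, so $0\in\sigma(\ov{M(\lambda)})$ does not obstruct $\lambda\in\rho(A_1)$.

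The paper's proof is direct and sidesteps this issue entirely. For $\lambda$ strictly below both $\min\sigma(A_0)$ and $\min\sigma(A_1)$, \eqref{mmm} (which follows from the decay of $M$) gives $\bigl(M(\lambda)x,x\bigr)>0$ for $x\ne 0$; inserting this into the Krein formula of \cite[Theorem~3.8]{BLL13IEOT} yields
\[
  \bigl((A_1-\lambda)^{-1}f,f\bigr)
  = \bigl((A_0-\lambda)^{-1}f,f\bigr)
  - \bigl(M(\lambda)\,y_f,\,y_f\bigr)
  \le \bigl((A_0-\lambda)^{-1}f,f\bigr)
\]
with $y_f=M(\lambda)^{-1}\gamma(\lambda)^*f$. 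Comparing the tops of the spectra of the two non-negative resolvents then gives $\min\sigma(A_0-\lambda)\le\min\sigma(A_1-\lambda)$, which is \eqref{min10}. If you want to salvage your outline you should abandon the contradiction/spectral-point-of-$\ov{M(\lambda)}$ route and instead exploit the \emph{quantitative} positivity of $M(\lambda)$ through the Krein formula, as the paper does.
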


\begin{proof}
The assumption $\|\ov{M(\lambda)}\|\to0$ as $\lambda\to-\infty$ implies
that \eqref{mmm} holds for all $x\in\ran\Gamma_0\setminus\{0\}$.
Fix $\lambda\in\RR$ such that $\lambda<\min\sigma(A_0)$ and $\lambda<\min\sigma(A_1)$.
It follows from \cite[Theorem~3.8]{BLL13IEOT} and \eqref{mmm} that
\begin{align*}
  \bigl((A_1-\lambda)^{-1}f,f\bigr) &= \bigl((A_0-\lambda)^{-1}f,f\bigr)
  - \bigl(M(\lambda)^{-1}\gamma(\lambda)^*f,\gamma(\lambda)^*f\bigr)
  \\[0.5ex]
  &= \bigl((A_0-\lambda)^{-1}f,f\bigr)
  - \bigl(M(\lambda)M(\lambda)^{-1}\gamma(\lambda)^*f,M(\lambda)^{-1}\gamma(\lambda)^*f\bigr)
  \\[0.5ex]
  &\le \bigl((A_0-\lambda)^{-1}f,f\bigr)
\end{align*}
for $f\in\cH$.  Since $(A_1-\lambda)^{-1}$ and $(A_0-\lambda)^{-1}$ are
bounded non-negative operators, we conclude that
\[
  \max\sigma\bigl((A_1-\lambda)^{-1}\bigr)\le\max\sigma\bigl((A_0-\lambda)^{-1}\bigr)
\]
and hence
\begin{align*}
 \min\sigma(A_0-\lambda)\le\min\sigma(A_1-\lambda),
\end{align*}
which is equivalent to~\eqref{min10}.
\end{proof}

\section{Sectorial extensions of symmetric operators}\label{sec:sectorial}

\noindent
Let $S$ be a densely defined, closed, symmetric operator in a Hilbert space $\cH$
and let $\{\cG,\Gamma_0,\Gamma_1\}$ be a quasi boundary triple for $T\subset S^*$.
For a linear operator $B$ in $\cG$ we define the operator $\AB$ in $\cH$ by
\begin{equation}\label{AB}
  \AB f = Tf, \qquad
  \dom \AB = \bigl\{f\in\dom T:\Gamma_0 f= B\Gamma_1 f\bigr\},
\end{equation}
where the boundary condition $\Gamma_0 f=B\Gamma_1 f$ is understood
in the sense that $\Gamma_1f\in\dom B$ and $\Gamma_0f=B\Gamma_1f$ holds.
Clearly, $\AB$ is a restriction of $T$ and hence of $S^*$.
Moreover, $\AB$ is an extension of $S$
since $S=T\upharpoonright(\ker\Gamma_0\cap\ker\Gamma_1)$
by \cite[Proposition~2.2]{BL07}.
Recall that in the special case of an ordinary boundary triple there is a
one-to-one correspondence between closed linear relations $B$ in $\cG$
and closed extensions $\AB$ of $S$ that are restrictions of $S^*$ via \eqref{AB};
for proper relations $B$ the definition of $\AB$ has to be interpreted accordingly.
For generalized and quasi boundary triples one has to impose
additional assumptions on $B$ to guarantee that $\AB$ is closed.
In this and the following sections we study the operators $\AB$ thoroughly;
in particular, we are interested in their spectral properties.

In the next theorem it is shown that under additional assumptions on $B$
and the Weyl function $M$ that corresponds to $\{\cG,\Gamma_0,\Gamma_1\}$
the operator $\AB$ is sectorial.
Recall first that the \emph{numerical range}, $W(A)$, of a linear operator $A$
is defined as
\[
  W(A) \defeq \bigl\{(Af,f): f\in\dom A,\;\|f\|=1\bigr\},
\]
and that $A$ is called \emph{sectorial} if $W(A)$ is contained
in a sector of the form
\begin{equation}\label{sector}
  \big\{z \in \C : \Re z \ge \eta, |\Im z| \le \kappa(\Re z-\eta) \big\}
\end{equation}
for some $\eta \in \RR$ and $\kappa > 0$. An operator $A$ is called \emph{m-sectorial}
if $W(A)$ is contained in a sector~\eqref{sector} and the complement
of \eqref{sector} has a non-trivial intersection with~$\rho(A)$.
In this case the spectrum of $A$ is contained in the closure of $W(A)$;
see, e.g.\ \cite[Propositions~2.8 and 3.19]{S12}.
Note that if $A$ is m-sectorial, then $-A$ generates an analytic semigroup;
see, e.g.\ \cite[Theorem~IX.1.24]{K95}.

\begin{theorem}\label{thmsec}
Let $\{\cG,\Gamma_0,\Gamma_1\}$ be a quasi boundary triple for $T\subset S^*$
with corresponding Weyl function $M$ such
that $A_1=T\upharpoonright \ker\Gamma_1$ is self-adjoint and bounded
from below and  $\rho(A_0)\cap(-\infty,\min\sigma(A_1))\not=\emptyset$.
Moreover, suppose that $M(\lambda)$ is bounded for one
(and hence for all) $\lambda\in\rho(A_0)$ and that
\begin{equation}\label{Meta01}
  M(\eta)\geq 0\quad \text{for some}\,\,\eta<\min\sigma(A_1),\,\,\eta\in\rho(A_0).
\end{equation}
Let $B$ be a closable operator in $\cG$ and assume that there exists $b\in\dR$ such that
\begin{myenum}
\item $\Re(Bx,x) \leq b\|x\|^2 $ \, for all $x \in \dom B$;
\item $ b\bigl\|\ov{M(\eta)}\bigr\| < 1$;
\item
  $\ran\ov{M(\eta)}^{1/2}\subset\dom B$.
\end{myenum}
Then the operator $\AB$ is sectorial and the numerical range $W(\AB)$ is
contained in the sector
\begin{equation}
  \cS_{\eta}(B) \defeq\Bigl\{z\in\CC: \Re z \ge \eta,\,
  |\Im z| \le \kappa_B(\eta)\bigl(\Re z-\eta\bigr)\Bigr\},
\end{equation}
where
\begin{equation}\label{kappaeta}
  \kappa_B(\eta) \defeq
  \frac{\bigl\|\Im\bigl(\ov{M(\eta)}^{1/2}B\ov{M(\eta)}^{1/2}\bigr)\bigr\|}{1-b\bigl\|\ov{M(\eta)}\bigr\|}\,.
\end{equation}
In particular,  if $\rho(\AB)\cap(\dC\setminus \cS_{\eta}(B))\not=\emptyset$,
then the operator $\AB$ is m-sectorial and $\sigma(\AB)$ is contained
in the sector $\cS_{\eta}(B)$.
\end{theorem}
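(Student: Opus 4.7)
My plan is to work with an arbitrary $f\in\dom\AB$ and compute $((\AB-\eta)f,f)$ explicitly. Since $\eta\in\rho(A_0)$, the direct-sum decomposition $\dom T = \dom A_0\,\dot +\,\ker(T-\eta)$ gives $f = f_D + f_\eta$ with $f_D\in\dom A_0$ and $f_\eta = \gamma(\eta)\Gamma_0 f$, and hence $(T-\eta)f = (A_0-\eta)f_D$. Combining this with the Green identity \eqref{green}, the identity $\gamma(\eta)^* = \Gamma_1(A_0-\eta)^{-1}$ from~\eqref{gammastar} (noting that $\eta$ is real), and $\Gamma_1 f_D = \Gamma_1 f - M(\eta)\Gamma_0 f$, one arrives at
\begin{equation*}
  ((\AB-\eta)f,f) = ((A_0-\eta)f_D,f_D) + (\Gamma_1 f,\Gamma_0 f) - (M(\eta)\Gamma_0 f,\Gamma_0 f).
\end{equation*}
Setting $h \defeq \Gamma_1 f$ and inserting the boundary condition $\Gamma_0 f = Bh$ produces the fundamental identity
\begin{equation*}
  ((\AB-\eta)f,f) = ((A_0-\eta)f_D,f_D) + (h,Bh) - (M(\eta)Bh,Bh),
\end{equation*}
which yields $\Re((\AB-\eta)f,f) = ((A_0-\eta)f_D,f_D) + \Re(Bh,h) - (M(\eta)Bh,Bh)$ and $\Im((\AB-\eta)f,f) = -\Im(Bh,h)$, because $M(\eta) = M(\eta)^*\ge 0$.

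Next I exploit the positivity of $M(\eta)$ by introducing $K \defeq \ov{M(\eta)}^{1/2} \in \cB(\cG)$; this is a bounded, non-negative, self-adjoint operator satisfying $K^2 = \ov{M(\eta)}$, so that $(M(\eta)Bh,Bh) = \|KBh\|^2$. Let $\ov B$ denote the closure of $B$. By hypothesis (iii), $\ran K \subset \dom\ov B$, so $\ov B K$ is everywhere defined on $\cG$; since $\ov B K$ is also closed (being the composition of the closed operator $\ov B$ with the bounded operator $K$), the closed graph theorem yields $\ov B K \in \cB(\cG)$. Consequently $\widetilde B \defeq K \ov B K \in \cB(\cG)$ is a well-defined bounded operator with imaginary part $\Im\widetilde B = \tfrac{1}{2i}(\widetilde B - \widetilde B^*) \in \cB(\cG)$, whose operator norm is exactly the numerator of $\kappa_B(\eta)$.

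The heart of the argument is to prove the sectoriality estimate
\begin{equation*}
  |\Im(Bh,h)| \le \kappa_B(\eta)\bigl[((A_0-\eta)f_D,f_D) + \Re(Bh,h) - \|KBh\|^2\bigr]
\end{equation*}
together with the non-negativity of the bracket, uniformly in $f\in\dom\AB$. Condition (i) enters as $\Re(\widetilde B w,w) = \Re(\ov B Kw,Kw) \le b\|Kw\|^2 \le b\|\ov{M(\eta)}\|\|w\|^2$ for $w\in\cG$, and condition (ii) guarantees the strict positivity of the coercivity factor $1 - b\|\ov{M(\eta)}\|$ that becomes the denominator of $\kappa_B(\eta)$. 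The numerator $\|\Im\widetilde B\|$ provides the operator-norm bound on the imaginary contribution once $(Bh,h)$ is recast in $K$-weighted form. Applied to the fundamental identity of the first paragraph, this yields $W(\AB)\subset \cS_\eta(B)$, proving that $\AB$ is sectorial.

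For the final assertion, if $\rho(\AB) \cap (\CC \setminus \cS_\eta(B)) \neq \emptyset$ then $\AB$ is m-sectorial by definition, and the standard inclusion $\sigma(A)\subset\ov{W(A)}$ for m-sectorial $A$ (see, e.g., \cite[Proposition~3.19]{S12}) gives $\sigma(\AB)\subset \cS_\eta(B)$. The main obstacle is the sectoriality estimate itself: since $B$ is merely closable (and typically unbounded), the pairing $(Bh,h)$ cannot be controlled by $\|h\|^2$ alone, so the coupling $\Gamma_1 f_D = h - M(\eta)Bh$ between $f_D$ and the boundary data must be combined with the positivity of $M(\eta)$ and with hypothesis (iii) in order to recast every boundary contribution through the bounded operator $\widetilde B$ and to verify the real-part lower bound and the imaginary-part upper bound in a single coordinated estimate.
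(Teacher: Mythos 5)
Your "fundamental identity" is computed correctly, but the choice of decomposition dooms the estimate. Splitting $f=f_D+f_\eta$ with $f_D\in\dom A_0$ (rather than $f_1\in\dom A_1$, as the paper does) produces
\begin{equation*}
\bigl((\AB-\eta)f,f\bigr)=\bigl((A_0-\eta)f_D,f_D\bigr)+(h,Bh)-\bigl\|\ov{M(\eta)}^{1/2}Bh\bigr\|^2,
\qquad h=\Gamma_1 f,
\end{equation*}
and this has the wrong sign structure for the coercivity argument. Three things go wrong at once. First, the theorem never assumes $A_0\ge\eta$; as the remark following the theorem notes, $\min\sigma(A_0)\ge\eta$ is a \emph{consequence} of the theorem (applied with $B=0$), so invoking $\bigl((A_0-\eta)f_D,f_D\bigr)\ge0$ is circular without an independent argument (one would need, e.g., the resolvent formula $(A_1-\eta)^{-1}-(A_0-\eta)^{-1}=-\ov{\gamma(\eta)}M(\eta)^{-1}\gamma(\eta)^*$ and the positivity of $M(\eta)^{-1}$, neither of which you mention). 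Second, the boundary contribution $\Re(Bh,h)-\|\ov{M(\eta)}^{1/2}Bh\|^2$ sits with a $-\|\cdot\|^2$ that is negative and a $\Re(Bh,h)$ that hypothesis~(i) bounds from \emph{above}, not below; there is no positive $\|\ov{M(\eta)}^{1/2}(\cdot)\|^2$ anchor against which to absorb it. Third, and most structurally: $h=\Gamma_1 f$ is not forced to lie in $\ran\ov{M(\eta)}^{1/2}$, so $\Im(Bh,h)$ cannot be rewritten as $\bigl(\Im(\ov{M(\eta)}^{1/2}B\ov{M(\eta)}^{1/2})w,w\bigr)$ for any $w$; the operator $\widetilde B$ you construct never enters the pairing that actually appears, and the ``single coordinated estimate'' you gesture at is never carried out.

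The paper's proof avoids all this by decomposing with respect to $\dom A_1=\ker\Gamma_1$: then $\Gamma_1 f=\Gamma_1 f_\eta=M(\eta)\Gamma_0 f_\eta$, so every boundary expression factors through $\ov{M(\eta)}^{1/2}$ applied to $\Gamma_0 f_\eta$, and the identity acquires the positive term $+\|\ov{M(\eta)}^{1/2}\Gamma_0 f_\eta\|^2$ needed to absorb the $B$-contribution via~(i), while the semiboundedness term $\bigl((A_1-\eta)f_1,f_1\bigr)\ge0$ is immediate from the \emph{hypothesis} $\eta<\min\sigma(A_1)$. If you want to keep the $A_0$-split you must first prove $A_0\ge\eta$ non-circularly, and even then you would still need a new mechanism to control $(Bh,h)$, since $h$ is not $\ov{M(\eta)}^{1/2}$-weighted; the cleanest fix is simply to switch to the $A_1$-decomposition.
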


\begin{proof}
Let $\eta<\min\sigma(A_1)$ be such that $\eta\in\rho(A_0)$ and $M(\eta)\ge0$,
which exists by \eqref{Meta01}.
Moreover, let $f\in\dom \AB$ with $\|f\|=1$.
Based on the decomposition
\[
  \dom T = \dom A_1  \dotplus \ker(T - \eta)
  = \ker \Gamma_1 \dotplus \ker(T - \eta)
\]
we can write $f$ in the form $f = f_1 + f_\eta$ with $f_1\in\ker\Gamma_1 = \dom A_1$
and $f_\eta\in\ker(T-\eta)$.  This yields
\begin{equation}\label{formula1}
\begin{split}
  (\AB f,f) &= \bigl(T(f_1+f_\eta),f_1+f_\eta\bigr) \\[0.5ex]
  &= (A_1f_1,f_1)
  + (Tf_1,f_\eta) + (Tf_\eta,f_\eta) + (Tf_\eta,f_1) \\[0.5ex]
  &= (A_1f_1,f_1) + (T f_1,f_\eta) + \eta\big[\|f_\eta\|^2 + (f_\eta,f_1)\big].
\end{split}
\end{equation}
Making use of the abstract Green identity~\eqref{green} we obtain
\begin{equation}\label{formula2}
\begin{split}
  (Tf_1,f_\eta) &=  (f_1,Tf_\eta)
  + (\Gamma_1f_1,\Gamma_0f_\eta)-(\Gamma_0f_1,\Gamma_1f_\eta) \\[0.5ex]
  & = \eta(f_1,f_\eta) -  (\Gamma_0f_1,\Gamma_1f_\eta).
\end{split}
\end{equation}
Moreover, since $f\in\dom \AB$ and $f_1\in\ker\Gamma_1$,
we have $\Gamma_1f_\eta\in\dom B$ and
\begin{equation}\label{formula3}
  \Gamma_0f_1 = B \Gamma_1f
  - \Gamma_0f_\eta = B\Gamma_1f_\eta - \Gamma_0f_\eta.
\end{equation}
Combining \eqref{formula2} and \eqref{formula3} we can rewrite
the right-hand side of \eqref{formula1}
in the form
\begin{align*}
  (\AB f,f) &= (A_1f_1,f_1) + \eta(f_1,f_\eta)
  - \bigl(B\Gamma_1f_\eta-\Gamma_0f_\eta,\Gamma_1f_\eta\bigr)
  + \eta\bigl[\|f_\eta\|^2+(f_\eta,f_1)\bigr]
  \\[0.5ex]
  &= (A_1f_1,f_1)+\eta\bigl[\|f_\eta\|^2+2\Re(f_\eta,f_1)\bigr]
  - \bigl(B\Gamma_1f_\eta-\Gamma_0f_\eta,\Gamma_1f_\eta\bigr).
\end{align*}
Next we use
\[
  \|f_\eta\|^2 + 2\Re(f_\eta,f_1) = \|f\|^2 - \|f_1\|^2 = 1-\|f_1\|^2
\]
and the definition of $M(\eta)$ to obtain
\begin{equation}\label{scalar}
\begin{split}
  (\AB f,f) &= (A_1f_1,f_1) + \eta - \eta\|f_1\|^2
    \\[0.5ex]
  &\quad  - \bigl(B M(\eta)\Gamma_0f_\eta,M(\eta)\Gamma_0f_\eta\bigr) +
  \bigl(\Gamma_0f_\eta, M(\eta)\Gamma_0f_\eta\bigr)
    \\[0.5ex]
  &= \bigl((A_1-\eta)f_1,f_1\bigr) + \eta
    \\[0.5ex]
  &\quad  - \bigl(B\overline{M(\eta)}\Gamma_0f_\eta,\overline{M(\eta)}\Gamma_0f_\eta\bigr)
  + \bigl\|\overline{M(\eta)}^{1/2}\Gamma_0f_\eta\bigr\|^2;
\end{split}
\end{equation}
recall that $\ov{M(\eta)}$ is a bounded, self-adjoint, non-negative operator.
Using assumption (i) we obtain
\begin{equation}\label{eq:Realteil}
\begin{split}
  \Re\bigl(B\ov{M(\eta)}\Gamma_0f_\eta,\ov{M(\eta)}\Gamma_0f_\eta\bigr)
  &\le b\bigl\|\ov{M(\eta)}\Gamma_0f_\eta\bigr\|^2
    \\[0.5ex]
  &\le b\bigl\|\ov{M(\eta)}^{1/2}\bigr\|^2\bigl\|\ov{M(\eta)}^{1/2}\Gamma_0f_\eta\bigr\|^2
    \\[0.5ex]
  &= b\bigl\|\ov{M(\eta)}\bigr\|\,\bigl\|\ov{M(\eta)}^{1/2}\Gamma_0f_\eta\bigr\|^2.
\end{split}
\end{equation}
From this, \eqref{scalar} and the fact that $\eta<\min\sigma(A_1)$ we conclude that
\begin{equation}\label{okgut}
\begin{split}
  \Re(\AB f,f) &\ge \eta - \Re\bigl(B\ov{M(\eta)}\Gamma_0f_\eta,
  \ov{M(\eta)}\Gamma_0f_\eta\bigr) + \bigl\|\ov{M(\eta)}^{1/2}\Gamma_0f_\eta\bigr\|^2 \\[0.5ex]
  &\ge \eta + \bigl(1-b\bigl\|\ov{M(\eta)}\bigr\|\bigr)
  \bigl\|\ov{M(\eta)}^{1/2}\Gamma_0f_\eta\bigr\|^2.
\end{split}
\end{equation}
This, together with assumption (ii), implies that
\begin{equation}\label{Regeeta}
  \Re(\AB f,f)\geq \eta.
\end{equation}
Moreover, it follows with assumption (iii) that the operator $B\ov{M(\eta)}^{1/2}$
is everywhere defined and closable since $B$ is closable.  Hence
\begin{equation}\label{bbmclos}
  B\ov{M(\eta)}^{1/2}\in\cB(\cG) \qquad\text{and}\qquad
  \ov{M(\eta)}^{1/2} B \ov{M(\eta)}^{1/2}\in\cB(\cG).
\end{equation}
With \eqref{scalar} we obtain that
\begin{align*}
  \big|\Im(\AB f,f)\big|
  &= \big|\Im\bigl(B\ov{M(\eta)}\Gamma_0f_\eta,\ov{M(\eta)}\Gamma_0f_\eta\bigr)\big|
    \\[0.5ex]
  &= \Big|\Im\Bigl(\ov{M(\eta)}^{1/2}B\ov{M(\eta)}^{1/2}\ov{M(\eta)}^{1/2}\Gamma_0f_\eta,
  \ov{M(\eta)}^{1/2}\Gamma_0f_\eta\Bigr)\Big|
    \\[0.5ex]
  &= \Big|\Bigl(\Im\Bigl(\ov{M(\eta)}^{1/2} B \ov{M(\eta)}^{1/2}\Bigr)\ov{M(\eta)}^{1/2}\Gamma_0 f_\eta,
  \ov{M(\eta)}^{1/2}\Gamma_0 f_\eta\Bigr)\Big|
    \\[0.5ex]
  &\le \big\|\Im\bigl(\ov{M(\eta)}^{1/2}B\ov{M(\eta)}^{1/2}\bigr)\big\|\,
  \big\|\ov{M(\eta)}^{1/2}\Gamma_0f_\eta\big\|^2.
\end{align*}
This, together with \eqref{okgut}, implies that
\begin{equation}\label{okgut2}
  \big|\Im(\AB f,f)\big|
  \le \frac{\big\|\Im\bigl(\ov{M(\eta)}^{1/2}B\ov{M(\eta)}^{1/2}\bigr)\big\|}{1-b\big\|\ov{M(\eta)}\big\|}
  \Bigl(\Re(\AB f,f)-\eta\Bigr).
\end{equation}
The inequalities \eqref{Regeeta} and \eqref{okgut2} show that the
numerical range of $\AB$ is contained in the
sector $\cS_{\eta}(B)$, and hence the operator $\AB$ is sectorial.
The last statement of the theorem is well known;
see, e.g.\ \cite[Proposition~3.19]{S12}.
\end{proof}

\begin{remark}
In Theorem~\ref{thmsec} it is not assumed explicitly that the
self-adjoint extension $A_0=T\upharpoonright\ker\Gamma_0$ is bounded from below.
However, the operator $B=0$ satisfies assumptions (i)--(iii)
in Theorem~\ref{thmsec} with $b=0$, which yields $\kappa_B(\eta)=0$.
Thus the spectrum of the operator $A_0=A_{[0]}$ is contained in $[\eta,\infty)$
and therefore $A_0$ is bounded from below by $\eta$.
\end{remark}

Theorem~\ref{thmsec} provides explicit sufficient conditions for
the extension $\AB$ in \eqref{AB} to be sectorial.
However, in applications it is essential to ensure that $\AB$ is m-sectorial,
i.e.\ to guarantee that $\rho(\AB)\cap (\dC\setminus\cS_\eta(B)) \ne \emptyset$.
We consider one particular situation in the next proposition, but deal in
more detail with this question in the next section.

In the next proposition we specialize Theorem~\ref{thmsec} to the situation
of an ordinary boundary triple, where we can actually prove that the operator
$\AB$ is m-sectorial; to the best of our knowledge the assertion is new.
We remark that in the following proposition it is possible
to choose $b = \max\sigma(\Re B)$.

\begin{proposition}\label{corsec}
Let $\{\cG,\Gamma_0,\Gamma_1\}$ be an ordinary boundary triple for $S^*$
with corresponding Weyl function $M$ and assume that $A_1$ is bounded from below
and that $\rho(A_0)\cap(-\infty,\min\sigma(A_1)) \ne \emptyset$.
Moreover, assume that
\[
  M(\eta) \ge 0 \qquad\text{for some}\;\; \eta<\min\sigma(A_1),\; \eta\in\rho(A_0).
\]
Let $B \in \cB(\cG)$, let $b \in \R$ be such that $\Re(Bx,x) \le b\|x\|^2$
for all $x\in\cG$, and assume that $b\|M(\eta)\|<1$.
Then the operator $\AB$ is m-sectorial and we have
\begin{equation}\label{obtsect}
  \sigma(\AB) \subset \ov{W(\AB)} \subset \Bigl\{z\in\CC: \Re z \ge \eta,\,
  |\Im z| \le \kappa_B(\eta)\bigl(\Re z-\eta\bigr)\Bigr\},
\end{equation}
where
\[
  \kappa_B(\eta) \defeq
  \frac{\bigl\|\Im\bigl(M(\eta)^{1/2}B M(\eta)^{1/2}\bigr)\bigr\|}{1-b\bigl\|M(\eta)\bigr\|}\,.
\]
\end{proposition}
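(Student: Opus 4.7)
The plan is to reduce the sectoriality and numerical-range enclosure directly to Theorem~\ref{thmsec}, and then to upgrade the conclusion to m-sectoriality by a duality argument specific to ordinary boundary triples.

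First I would verify that the hypotheses of Theorem~\ref{thmsec} apply with the given $B$, $b$, and $\eta$. Since an ordinary boundary triple is in particular a quasi boundary triple, $A_1 = S^*\!\upharpoonright\!\ker\Gamma_1$ is automatically self-adjoint and $M(\lambda)$ is bounded for every $\lambda\in\rho(A_0)$. Conditions (i) and (ii) of Theorem~\ref{thmsec} are the present assumptions, while condition (iii) is trivial because $B\in\cB(\cG)$ gives $\dom B=\cG\supset\ran\ov{M(\eta)}^{1/2}$; closability of $B$ is automatic. Theorem~\ref{thmsec} therefore yields that $A_{[B]}$ is sectorial with $W(A_{[B]})\subset\cS_\eta(B)$, and in particular $\Re(A_{[B]}f,f)\ge\eta\|f\|^2$ for every $f\in\dom A_{[B]}$. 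The hypotheses are symmetric under the replacement $B\leftrightarrow B^*$, since $\Re(B^*x,x)=\Re(Bx,x)\le b\|x\|^2$, so the same theorem also gives $\Re(A_{[B^*]}g,g)\ge\eta\|g\|^2$ for all $g\in\dom A_{[B^*]}$.

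Next I would establish the duality identity $A_{[B]}^*=A_{[B^*]}$, which is standard for ordinary boundary triples. The inclusion $A_{[B^*]}\subset A_{[B]}^*$ is immediate from the Green identity \eqref{green2}, while the reverse inclusion uses the surjectivity of $(\Gamma_0,\Gamma_1)^\top:\dom S^*\to\cG\times\cG$: this guarantees that $\Gamma_1$ maps $\dom A_{[B]}$ onto $\cG$, so the requirement $(\Gamma_1 f,\Gamma_0 g-B^*\Gamma_1 g)=0$ for all $f\in\dom A_{[B]}$ forces $\Gamma_0 g=B^*\Gamma_1 g$. This identification also shows that $A_{[B]}$ is closed, since $A_{[B]}^{**}=A_{[(B^*)^*]}=A_{[B]}$.

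Finally, for any real $\lambda<\eta$ the lower bound on the real part of the numerical range, combined with Cauchy--Schwarz and the closedness of $A_{[B]}$, gives $\|(A_{[B]}-\lambda)f\|\ge(\eta-\lambda)\|f\|$, so $A_{[B]}-\lambda$ is injective with closed range. The analogous bound for $A_{[B^*]}-\lambda=(A_{[B]}-\lambda)^*$ yields injectivity of the adjoint, hence $\ran(A_{[B]}-\lambda)=\cH$ and $\lambda\in\rho(A_{[B]})$. Since $(-\infty,\eta)$ is disjoint from $\cS_\eta(B)$, the last statement of Theorem~\ref{thmsec} implies that $A_{[B]}$ is m-sectorial and $\sigma(A_{[B]})\subset\cS_\eta(B)$; the intermediate containment $\sigma(A_{[B]})\subset\ov{W(A_{[B]})}$ in \eqref{obtsect} is the standard spectral bound for m-sectorial operators. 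The principal obstacle in this plan is the duality identity $A_{[B]}^*=A_{[B^*]}$: it is the one step that genuinely uses that $\{\cG,\Gamma_0,\Gamma_1\}$ is ordinary rather than merely quasi, and it is precisely this identity that permits the passage from sectoriality (which Theorem~\ref{thmsec} alone delivers) to m-sectoriality.
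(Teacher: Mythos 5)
Your proposal is correct, but it reaches m-sectoriality by a genuinely different route from the paper. The paper obtains $\eta\in\rho(\AB)$ operator-theoretically: after noting that $\AB$ is sectorial by Theorem~\ref{thmsec}, it bounds the numerical range of $M(\eta)^{1/2}BM(\eta)^{1/2}$ by $b\|M(\eta)\|<1$, concludes $1\in\rho(M(\eta)^{1/2}BM(\eta)^{1/2})$ and hence $1\in\rho(BM(\eta))$, and then invokes the standard criterion \cite[Proposition~1.6]{DM95} which, for an ordinary boundary triple, equates $1\in\rho(BM(\eta))$ with $\eta\in\rho(\AB)$. Your argument instead goes through the duality $\AB^*=A_{[B^*]}$ (using surjectivity of $(\Gamma_0,\Gamma_1)^\top$ for the non-trivial inclusion) and the observation that the hypotheses are invariant under $B\mapsto B^*$, so that both $\AB-\lambda$ and its adjoint are bounded below for $\lambda<\eta$; the resulting injectivity-plus-closed-range argument gives $\lambda\in\rho(\AB)$ directly. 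Both are valid. The paper's approach is tightly coupled to the Krein-type resolvent-formula machinery used later (Section~\ref{sec:closedoperators}), which makes the criterion $1\in\rho(BM(\lambda))$ the natural pivot. Your approach is somewhat more elementary and self-contained at the level of ordinary boundary triples, avoiding the external reference, at the modest cost of having to prove the duality identity (which the paper does use elsewhere, e.g.\ in Step~6 of the proof of Theorem~\ref{mainthm}, but does not invoke here). One small stylistic point: the closedness of $\AB$ follows for free once $\lambda\in\rho(\AB)$ is known, so the detour through $\AB^{**}=A_{[(B^*)^*]}$ is not strictly necessary, though it is harmless and correct.
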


\begin{proof}
The fact that $\AB$ is sectorial and the second inclusion in \eqref{obtsect}
follow directly from Theorem~\ref{thmsec}.
To prove that $\AB$ is m-sectorial we show that $\eta\in\rho(\AB)$.
Without loss of generality we can assume that $b\ge0$.
Observe that $M(\eta)^{1/2}$ is well defined since $M(\eta)\ge0$ by assumption.
For $x \in \cG$ with $\|x\| = 1$ we have
\begin{align*}
  \Re \bigl(M(\eta)^{1/2}BM(\eta)^{1/2} x, x\bigr)
  &= \Re \bigl(BM(\eta)^{1/2}x,M(\eta)^{1/2}x\bigr)
  \\[0.5ex]
  & \le b \bigl\|M(\eta)^{1/2} x\bigr\|^2
  = b \bigl(M(\eta)x,x\bigr)
  \le b\bigl\|M(\eta)\bigr\|,
\end{align*}
which implies that
\begin{align*}
  \sigma\bigl(M(\eta)^{1/2}BM(\eta)^{1/2}\bigr)
  &\subset \ov{W\bigl(M(\eta)^{1/2}BM(\eta)^{1/2}\bigr)} \\[0.5ex]
  &\subset \bigl\{z\in\CC:\Re z\le b\|M(\eta)\|\bigr\}.
\end{align*}
Since $b\|M(\eta)\|<1$, this yields
\[
  1\in\rho\bigl(M(\eta)^{1/2}BM(\eta)^{1/2}\bigr)
\]
and hence $1\in\rho(BM(\eta))$.
Now \cite[Proposition~1.6]{DM95} implies that $\eta\in\rho(\AB)$,
and therefore $\AB$ is m-sectorial, which also proves the
first inclusion in \eqref{obtsect}.
\end{proof}

\section{Sufficient conditions for closed extensions with non-empty resolvent set}
\label{sec:closedoperators}

\noindent
Let $S$ be a densely defined, closed, symmetric operator in a Hilbert space $\cH$
and let $\{\cG,\Gamma_0,\Gamma_1\}$ be a quasi boundary triple for $T\subset S^*$.
In this section we provide some abstract sufficient conditions on
the (boundary) operator $B$ in $\cG$ such that the operator $\AB$
defined in \eqref{AB} is closed and has a non-empty resolvent set.

\begin{theorem}\label{mainthm}
Let $\{\cG,\Gamma_0,\Gamma_1\}$ be a quasi boundary triple for $T\subset S^*$
with corresponding $\gamma$-field $\gamma$ and Weyl function $M$.
Let $B$ be a closable operator in $\cG$ and assume that there
exists $\lambda_0\in\rho(A_0)$ such that the following conditions are satisfied:
\begin{myenum}
  \item $1\in\rho\bigl(B\ov{M(\lambda_0)}\bigr)$;
  \item $B\bigl(\ran\ov{M(\lambda_0)}\cap\dom B\bigr)\subset\ran\Gamma_0$;
  \item $\ran\Gamma_1\subset\dom B$;
  \item $B(\ran\Gamma_1)\subset\ran\Gamma_0$ \, or \,  $\lambda_0\in\rho(A_1)$.
\end{myenum}
Then the operator
\begin{equation}\label{ab}
  \AB f=Tf, \qquad
  \dom \AB=\bigl\{f\in\dom T:\Gamma_0 f= B\Gamma_1 f\bigr\},
\end{equation}
is a closed extension of $S$ in $\cH$ such that $\lambda_0\in\rho(\AB)$, and
\begin{equation}\label{resformula}
  (\AB-\lambda)^{-1}=(A_0-\lambda)^{-1}
  +\gamma(\lambda)\bigl(I-BM(\lambda)\bigr)^{-1}B\gamma(\overline\lambda)^*
\end{equation}
holds for all $\lambda\in\rho(\AB)\cap\rho(A_0)$.

Further, let $B'$ be a linear operator in $\cG$ that satisfies {\rm(i)--(iv)}
with $B$ replaced by $B'$ and $\lambda_0$ replaced by $\ov{\lambda_0}$,
and assume that
\begin{equation}\label{BBprime}
  (Bx,y) = (x,B'y) \qquad\text{for all}\;\;x\in\dom B,\;y\in\dom B'.
\end{equation}
Then $A_{[B']}$ is closed and
\begin{equation}\label{ABprimeABstar}
  A_{[B']} = \AB^*.
\end{equation}
In particular, $\ov{\lambda_0} \in \rho(A_{[B']})$.
\end{theorem}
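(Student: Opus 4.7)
The plan is to produce a bounded two-sided inverse of $\AB-\lambda_0$ on $\cH$; closedness of $\AB$, the inclusion $\lambda_0\in\rho(\AB)$, and the Krein-type formula~\eqref{resformula} will then follow as consequences. Each ingredient on the right-hand side of~\eqref{resformula} first has to be made into a bounded operator. By~\eqref{gammastar}, $\gamma(\ov{\lambda_0})^*=\Gamma_1(A_0-\lambda_0)^{-1}\in\cB(\cH,\cG)$ with range contained in $\ran\Gamma_1\subset\dom B$ by~(iii). Since $B$ is closable, $B\gamma(\ov{\lambda_0})^*$ agrees with $\ov{B}\gamma(\ov{\lambda_0})^*$, which is closed and everywhere defined on $\cH$, hence bounded by the closed graph theorem. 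Combining with $(I-B\ov{M(\lambda_0)})^{-1}\in\cB(\cG)$ from~(i) and $\ov{\gamma(\lambda_0)}\in\cB(\cG,\cH)$ yields a bounded candidate $R\in\cB(\cH)$ for the resolvent.

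For surjectivity of $\AB-\lambda_0$ I distinguish the two sub-cases of~(iv). If $B(\ran\Gamma_1)\subset\ran\Gamma_0$, then for $f\in\cH$ I set $y:=B\gamma(\ov{\lambda_0})^*f\in\ran\Gamma_0$ and $x:=(I-B\ov{M(\lambda_0)})^{-1}y$. The equation $x=y+B\ov{M(\lambda_0)}x$, combined with $\ov{M(\lambda_0)}x\in\dom B$ (this holds because $x$ lies in $\dom(B\ov{M(\lambda_0)})$ by construction) and hypothesis~(ii), shows that $B\ov{M(\lambda_0)}x\in\ran\Gamma_0$, whence $x\in\ran\Gamma_0=\dom\gamma(\lambda_0)$. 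Then $u:=(A_0-\lambda_0)^{-1}f+\gamma(\lambda_0)x$ equals $Rf$ and lies in $\dom T$ with $(T-\lambda_0)u=f$; a direct computation of $\Gamma_0 u$ and $\Gamma_1 u$ together with the defining equation of $x$ verifies $\Gamma_0 u=B\Gamma_1 u$, so $u\in\dom\AB$. If instead $\lambda_0\in\rho(A_1)$, I use the decomposition $\dom T=\dom A_1\dotplus\ker(T-\lambda_0)$: setting $u_1:=(A_1-\lambda_0)^{-1}f$ and seeking $u=u_1+\gamma(\lambda_0)\xi$ with $\xi\in\ran\Gamma_0$, the boundary condition reduces, via $\Gamma_1 u_1=0$, to $(I-BM(\lambda_0))\xi=-\Gamma_0 u_1$, and the choice $\xi:=-(I-B\ov{M(\lambda_0)})^{-1}\Gamma_0 u_1$ again lies in $\ran\Gamma_0$ by the same argument ($\Gamma_0 u_1\in\ran\Gamma_0$ trivially, and $B\ov{M(\lambda_0)}\xi\in\ran\Gamma_0$ by~(ii)).

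Injectivity of $\AB-\lambda_0$ then follows from~(i): any $u\in\ker(\AB-\lambda_0)\subset\ker(T-\lambda_0)$ has the form $\gamma(\lambda_0)z$ with $z=\Gamma_0 u\in\ran\Gamma_0$, and the boundary condition combined with $\Gamma_1 u=M(\lambda_0)z$ forces $(I-B\ov{M(\lambda_0)})z=0$, hence $z=0$. Consequently $\AB-\lambda_0$ is a bijection from $\dom\AB$ onto $\cH$, and its inverse coincides with $R$---immediately in sub-case~(a), and in sub-case~(b) after identifying the two representations of the resolvent using the standard identity $(A_0-\lambda_0)^{-1}-(A_1-\lambda_0)^{-1}=\ov{\gamma(\lambda_0)}\ov{M(\lambda_0)}^{-1}\gamma(\ov{\lambda_0})^*$, which holds because $\ov{M(\lambda_0)}^{-1}\in\cB(\cG)$ under $\lambda_0\in\rho(A_1)$. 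This yields closedness of $\AB$, the relation $\lambda_0\in\rho(\AB)$, and the formula~\eqref{resformula} at $\lambda=\lambda_0$; the formula extends to all $\lambda\in\rho(\AB)\cap\rho(A_0)$ by the standard resolvent-identity argument.

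For the adjoint statement, the Green identity~\eqref{green} together with \eqref{BBprime} gives, for $f\in\dom\AB$ and $g\in\dom A_{[B']}$, $(\AB f,g)-(f,A_{[B']}g)=(\Gamma_1 f,B'\Gamma_1 g)-(B\Gamma_1 f,\Gamma_1 g)=0$, so $A_{[B']}\subset\AB^*$. Applying the first part of the theorem to $B'$ and $\ov{\lambda_0}$ yields $\ov{\lambda_0}\in\rho(A_{[B']})$; combined with $\ov{\lambda_0}\in\rho(\AB^*)$, the inclusion must be equality. I expect the principal technical obstacle to be sub-case~(b) of~(iv): without $B(\ran\Gamma_1)\subset\ran\Gamma_0$ the direct Krein ansatz does not keep the correction term inside $\ran\Gamma_0=\dom\gamma(\lambda_0)$, which is why the indirect construction through $A_1$ is needed, and the subsequent identification with $R$ relies on the invertibility of $\ov{M(\lambda_0)}$ that has to be extracted from $\lambda_0\in\rho(A_1)$.
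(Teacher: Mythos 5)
Your overall structure mirrors the paper's: build a bounded Krein-type candidate $R$ for the resolvent, prove injectivity via (i), prove surjectivity by a boundary-value computation, and then handle the adjoint via the abstract Green identity. The first and third paragraphs are essentially the paper's Steps~1--4 and~6, and are fine.

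The genuine difference (and the gap) is in how you treat the second alternative in~(iv), $\lambda_0\in\rho(A_1)$. The paper's approach is a cheap reduction: it shows (Step~5) that $\lambda_0\in\rho(A_1)$ together with (ii) and (iii) already \emph{implies} $B(\ran\Gamma_1)\subset\ran\Gamma_0$, because $\ran\Gamma_1=\ran M(\lambda_0)\subset\ran\overline{M(\lambda_0)}$ (by \cite[Proposition~2.6\,(iii)]{BL07}), so the two sub-cases never need separate arguments. You instead run an independent surjectivity argument through $\dom T=\dom A_1\dotplus\ker(T-\lambda_0)$, which is a legitimate alternative route, \emph{but} you then want to identify the resulting solution operator with $R$ via the identity
\[
  (A_0-\lambda_0)^{-1}-(A_1-\lambda_0)^{-1}=\overline{\gamma(\lambda_0)}\,\overline{M(\lambda_0)}^{-1}\gamma(\overline{\lambda_0})^*,
\]
and you justify this by asserting $\overline{M(\lambda_0)}^{-1}\in\cB(\cG)$ whenever $\lambda_0\in\rho(A_1)$. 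That assertion is false in the quasi boundary triple setting, which is exactly the setting the theorem must cover: for instance, in the elliptic model of Proposition~\ref{prop:QBTelliptic}, $\overline{M(\lambda)}$ is the Neumann-to-Dirichlet map, whose inverse (the Dirichlet-to-Neumann map) is unbounded in $L^2(\partial\Omega)$ even though $\lambda\in\rho(\AN)\cap\rho(\AD)$. The correct version of the resolvent identity (see \cite[Theorem~3.8]{BLL13IEOT}, used in the proof of Proposition~\ref{spprop}) reads
\[
  (A_1-\lambda)^{-1}-(A_0-\lambda)^{-1}=-\overline{\gamma(\lambda)}\,M(\lambda)^{-1}\gamma(\overline\lambda)^*,
\]
where $M(\lambda)^{-1}$ is merely closable; what \emph{is} bounded is the composite $M(\lambda)^{-1}\gamma(\overline\lambda)^*$, since it is everywhere defined and closable. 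With that replacement your sub-case~(b) argument can be salvaged: $\Gamma_0 u_1=-M(\lambda_0)^{-1}\gamma(\overline{\lambda_0})^*f$ depends boundedly on $f$, hence so does $\xi$ and so does $u$, giving a bounded solution operator. But as written the step rests on an incorrect operator-theoretic fact, and the paper's Step~5 reduction is the cleaner and safer route.
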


\begin{remark}
In the special case when the operator $B$ in Theorem~\ref{mainthm} is symmetric
and the assumptions (i) and (ii) hold for some $\lambda_0\in\rho(A_0)
\cap\dR$ the result reduces to \cite[Theorem~2.6]{BLLR17},
where self-adjointness of $\AB$ was shown;
cf.\ also \cite[Theorem~2.4]{BLLR17}.
In this sense Theorem~\ref{mainthm} can be seen as a generalization
of the considerations in \cite[Section~2]{BLLR17}
to non-self-adjoint extensions.
\end{remark}

Before we prove Theorem~\ref{mainthm}, we formulate some corollaries.
If $\{\cG,\Gamma_0,\Gamma_1\}$ is a generalized boundary triple,
then $\ran\Gamma_0=\cG$ and $M(\lambda_0)\in\cB(\cG)$.
Hence in this case the above theorem reads as follows.

\begin{corollary}\label{gbtcor}
Let $\{\cG,\Gamma_0,\Gamma_1\}$ be a generalized boundary triple for $T\subset S^*$
with corresponding $\gamma$-field $\gamma$ and Weyl function $M$.
Let $B$ be a closable operator in $\cG$ and assume that there
exists $\lambda_0\in\rho(A_0)$ such that the following conditions are satisfied:
\begin{myenum}
 \item $1\in\rho(BM(\lambda_0))$;
 \item $\ran\Gamma_1\subset\dom B$.
\end{myenum}
Then the operator $\AB$ in \eqref{ab} is a closed extension of $S$
such that $\lambda_0\in\rho(\AB)$,
and the resolvent formula \eqref{resformula} holds for
all $\lambda\in\rho(\AB)\cap\rho(A_0)$.

Further, let $B'$ be a linear operator in $\cG$ that satisfies {\rm(i)} and {\rm(ii)}
with $B$ replaced by $B'$ and $\lambda_0$ replaced by $\ov{\lambda_0}$,
and assume that \eqref{BBprime} holds.
Then $A_{[B']}$ is closed and $A_{[B']} = \AB^*$.
In particular, $\ov{\lambda_0} \in \rho(A_{[B']})$.
\end{corollary}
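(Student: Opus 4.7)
The plan is to deduce this corollary directly from Theorem~\ref{mainthm} by showing that, in the generalized-boundary-triple setting, hypotheses (iii) and (iv) of that theorem become either automatic or are already encoded in the two hypotheses of the corollary. No additional analytical work is needed beyond bookkeeping.

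First I would exploit Definition~\ref{qbtdef}\,(ii)', which asserts that $\Gamma_0\colon\dom T\to\cG$ is onto, i.e.\ $\ran\Gamma_0=\cG$. As was remarked after Definition~\ref{gmdef}, this forces $\gamma(\lambda)\in\cB(\cG,\cH)$ and $M(\lambda)\in\cB(\cG)$ for every $\lambda\in\rho(A_0)$; in particular $\ov{M(\lambda_0)}=M(\lambda_0)$. Consequently hypothesis (i) of the corollary is literally hypothesis (i) of Theorem~\ref{mainthm}, and the range condition (ii) of Theorem~\ref{mainthm}, namely $B\bigl(\ran\ov{M(\lambda_0)}\cap\dom B\bigr)\subset\ran\Gamma_0$, is trivially satisfied because the right-hand side equals $\cG$.

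Next, hypothesis (ii) of the corollary, $\ran\Gamma_1\subset\dom B$, is exactly condition (iii) of Theorem~\ref{mainthm}. Condition (iv) of Theorem~\ref{mainthm} is likewise automatic: the first alternative $B(\ran\Gamma_1)\subset\ran\Gamma_0$ holds trivially, as $B(\ran\Gamma_1)\subset\cG=\ran\Gamma_0$. Thus all four hypotheses of Theorem~\ref{mainthm} are in force, so we obtain that $\AB$ is a closed extension of $S$ with $\lambda_0\in\rho(\AB)$, and the Krein-type resolvent identity \eqref{resformula} is valid for every $\lambda\in\rho(\AB)\cap\rho(A_0)$.

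For the adjoint statement, I would simply repeat the same verification with $B$ and $\lambda_0$ replaced by $B'$ and $\ov{\lambda_0}$: the generalized-boundary-triple property again makes the missing hypotheses of Theorem~\ref{mainthm} automatic, so that theorem applies to $A_{[B']}$ as well. The duality relation \eqref{BBprime} is the same in both formulations, and therefore the corresponding conclusion of Theorem~\ref{mainthm} yields that $A_{[B']}$ is closed, $A_{[B']}=\AB^*$, and in particular $\ov{\lambda_0}\in\rho(A_{[B']})$. The only point requiring any care—rather than a genuine obstacle—is to notice that boundedness of $M(\lambda_0)$ on all of $\cG$ is precisely what allows conditions (ii) and (iv) of Theorem~\ref{mainthm} to disappear; once that observation is made, the proof is a one-line invocation of the parent theorem.
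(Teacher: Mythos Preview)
Your proof is correct and follows exactly the approach the paper takes: the paper simply notes before stating the corollary that for a generalized boundary triple one has $\ran\Gamma_0=\cG$ and $M(\lambda_0)\in\cB(\cG)$, whence Theorem~\ref{mainthm} applies directly. One small quibble: in your closing sentence it is really the surjectivity $\ran\Gamma_0=\cG$ (not the boundedness of $M(\lambda_0)$) that makes conditions (ii) and (iv) of Theorem~\ref{mainthm} vacuous, while boundedness of $M(\lambda_0)$ is what identifies $\ov{M(\lambda_0)}$ with $M(\lambda_0)$ in condition (i).
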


In the special case when $\{\cG,\Gamma_0,\Gamma_1\}$ in Theorem~\ref{mainthm}
or Corollary~\ref{gbtcor} is an ordinary boundary triple
the condition  $\ran\Gamma_1\subset\dom B$ implies $\dom B=\cG$.
Since $B$ is assumed to be closable, it follows that $B$ is closed and
hence $B\in\cB(\cG)$.  In this case the statements in Theorem~\ref{mainthm}
and Corollary~\ref{gbtcor} are well known.

In the next corollary we return to the general situation of a
quasi boundary triple, but we assume that $B$ is bounded
and everywhere defined on $\cG$.

\begin{corollary}\label{maincor}
Let $\{\cG,\Gamma_0,\Gamma_1\}$ be a quasi boundary triple for $T\subset S^*$
with corresponding Weyl function $M$.
Let $B\in\cB(\cG)$ and assume that there
exists $\lambda_0\in\rho(A_0)$ such that the following conditions are satisfied:
\begin{myenum}
\item
$1\in\rho\bigl(B\overline{M(\lambda_0)}\bigr)$;
\item
$B\bigl(\ran\ov{M(\lambda_0)}\bigr)\subset\ran\Gamma_0$;
\item
$B(\ran\Gamma_1)\subset\ran\Gamma_0$ \, or \, $\lambda_0\in\rho(A_1)$.
\end{myenum}
Then the operator $\AB$ in \eqref{ab} is a closed extension of $S$
such that $\lambda_0\in\rho(\AB)$,
and the resolvent formula \eqref{resformula} holds for
all $\lambda\in\rho(\AB)\cap\rho(A_0)$.

Further, if conditions {\rm(i)--(iii)} are satisfied also for $B^*$
instead of $B$ and $\lambda_0$ replaced by $\ov{\lambda_0}$,
then $A_{[B^*]}=\AB^*$.  In particular, $\ov{\lambda_0} \in \rho(A_{[B^*]})$.
\end{corollary}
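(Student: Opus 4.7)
The plan is to derive this corollary as a direct specialization of Theorem~\ref{mainthm}; the only substantive task is to check that, when $B \in \cB(\cG)$, the four hypotheses of that theorem reduce exactly to the three hypotheses stated here.

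First I would observe that any $B \in \cB(\cG)$ is closable (indeed closed) and satisfies $\dom B = \cG$. Consequently condition (iii) of Theorem~\ref{mainthm}, namely $\ran \Gamma_1 \subset \dom B$, is automatic, and condition (ii) of that theorem, $B(\ran \ov{M(\lambda_0)} \cap \dom B) \subset \ran \Gamma_0$, collapses to $B(\ran \ov{M(\lambda_0)}) \subset \ran \Gamma_0$, which is precisely hypothesis (ii) here. Hypotheses (i) and (iv) of Theorem~\ref{mainthm} coincide verbatim with (i) and (iii) of the corollary. Applying Theorem~\ref{mainthm} then gives at once that $\AB$ is a closed extension of $S$ with $\lambda_0 \in \rho(\AB)$, together with the Krein-type resolvent formula \eqref{resformula} for every $\lambda \in \rho(\AB) \cap \rho(A_0)$.

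For the second assertion I would set $B' \defeq B^*$ and note that $B \in \cB(\cG)$ implies $B^* \in \cB(\cG)$, so the reduction of hypotheses carried out in the previous paragraph applies verbatim to $B^*$ with $\ov{\lambda_0}$ in place of $\lambda_0$; hence $B^*$ satisfies the assumptions of Theorem~\ref{mainthm} as soon as it satisfies (i)--(iii) of the corollary. The duality relation \eqref{BBprime} then specialises to $(Bx,y) = (x,B^*y)$ for all $x,y \in \cG$, which is nothing but the defining property of the Hilbert-space adjoint and is therefore automatic. The additional conclusion of Theorem~\ref{mainthm} now yields $A_{[B^*]} = \AB^*$ and, in particular, $\ov{\lambda_0} \in \rho(A_{[B^*]})$.

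The main difficulty, such as it is, lies only in verifying that no hidden density or domain issue forces additional hypotheses. Since $B$ is bounded and everywhere defined, the intersection $\ran \ov{M(\lambda_0)} \cap \dom B = \ran \ov{M(\lambda_0)}$ and the inclusion $\ran \Gamma_1 \subset \dom B$ hold trivially, so no such difficulty arises; all the substantive work has already been done in Theorem~\ref{mainthm}.
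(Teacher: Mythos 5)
Your reduction is correct and is exactly what the paper intends: since the paper states this corollary without an explicit proof, the specialization of Theorem~\ref{mainthm} via $\dom B=\cG$ (making (iii) automatic and collapsing the intersection in (ii)), together with the observation that \eqref{BBprime} with $B'=B^*$ is the defining property of the Hilbert-space adjoint, is precisely the intended argument. No gap.
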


Note that if in Corollary~\ref{maincor} the triple $\{\cG,\Gamma_0,\Gamma_1\}$
is a generalized boundary triple, then assumptions (ii) and (iii) are
automatically satisfied.

In the next two corollaries a set of conditions is provided
which guarantee that condition~(i) in Theorem~\ref{mainthm} is satisfied;
here Corollary~\ref{corchenbbb} is a special case of Corollary~\ref{corchenaaa}
for bounded $B$.  In contrast to the previous results it is also assumed
that $M(\lambda)$ is bounded for one (and hence for all) $\lambda\in\rho(A_0)$
and that the set $\rho(A_0)\cap\dR$ is non-empty.

\begin{corollary}\label{corchenaaa}
Let $\{\cG,\Gamma_0,\Gamma_1\}$ be a quasi boundary triple for $T\subset S^*$
with corresponding Weyl function $M$, and assume that $M(\lambda)$ is
bounded for one (and hence for all) $\lambda\in\rho(A_0)$.
Let $B$ be a closable operator in $\cG$ and assume that there
exist $b\in\dR$ and $\lambda_0\in\rho(A_0)\cap\dR$
such that the following conditions are satisfied:
\begin{myenum}
  \item $\Re(Bx,x)\leq b\Vert x\Vert^2$ \, for all $x \in \dom B$;
  \item $M(\lambda_0)\geq 0$ and $b\bigl\|\ov{M(\lambda_0)}\bigr\|<1$;
  \item $\ran\ov{M(\lambda_0)}^{1/2}\subset\dom B$;
  \item $B\bigl(\ran\ov{M(\lambda_0)}\bigr)\subset\ran\Gamma_0$;
  \item $\ran\Gamma_1\subset\dom B$;
  \item $B(\ran\Gamma_1)\subset\ran\Gamma_0$ \, or \, $\lambda_0\in\rho(A_1)$.
\end{myenum}
Then the operator $\AB$ in \eqref{ab} is a closed extension of $S$
such that $\lambda_0\in\rho(\AB)$,
and the resolvent formula \eqref{resformula} holds for
all $\lambda\in\rho(\AB)\cap\rho(A_0)$.

Further,  let $B'$ be a linear operator in $\cG$ that satisfies {\rm(i)--(vi)}
with $B$ replaced by $B'$
and assume that \eqref{BBprime} holds.
Then $A_{[B']}$ is closed and $A_{[B']} = \AB^*$.
In particular, $\lambda_0 \in \rho(A_{[B']})$.
\end{corollary}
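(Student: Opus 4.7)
The plan is to derive Corollary~\ref{corchenaaa} directly from Theorem~\ref{mainthm}. The hypotheses (iv), (v), (vi) of the corollary are essentially conditions (ii), (iii), (iv) of Theorem~\ref{mainthm} (noting that (iii) of the corollary gives $\ran\ov{M(\lambda_0)}\subset\ran\ov{M(\lambda_0)}^{1/2}\subset\dom B$, so $\ran\ov{M(\lambda_0)}\cap\dom B=\ran\ov{M(\lambda_0)}$). Hence the only nontrivial task is to prove condition (i) of the theorem, namely $1\in\rho\bigl(B\ov{M(\lambda_0)}\bigr)$, from the sectoriality-type assumptions (i)--(iii).

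First, since $\lambda_0\in\rho(A_0)\cap\RR$, formula~\eqref{gammmm} gives $\Im M(\lambda_0)=0$, so $M(\lambda_0)$ is symmetric. Being bounded and densely defined on $\ran\Gamma_0$, its closure $\ov{M(\lambda_0)}$ is bounded self-adjoint on $\cG$, and non-negative by (ii). Thus $\ov{M(\lambda_0)}^{1/2}\in\cB(\cG)$ is well defined via the functional calculus. By (iii), $B\ov{M(\lambda_0)}^{1/2}$ is defined on all of $\cG$; since $B$ is closable, this operator is closable, and an everywhere-defined closable operator on a Hilbert space is bounded by the closed graph theorem applied to its closure. Consequently
\[
  B\ov{M(\lambda_0)}^{1/2},\quad \ov{M(\lambda_0)}^{1/2}B\ov{M(\lambda_0)}^{1/2},\quad B\ov{M(\lambda_0)}=\bigl(B\ov{M(\lambda_0)}^{1/2}\bigr)\ov{M(\lambda_0)}^{1/2}
\]
all belong to $\cB(\cG)$.

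Next, for $x\in\cG$ with $\|x\|=1$, the element $\ov{M(\lambda_0)}^{1/2}x$ lies in $\dom B$ by (iii), and hypothesis (i) yields
\[
  \Re\bigl(\ov{M(\lambda_0)}^{1/2}B\ov{M(\lambda_0)}^{1/2}x,x\bigr)
  =\Re\bigl(B\ov{M(\lambda_0)}^{1/2}x,\ov{M(\lambda_0)}^{1/2}x\bigr)
  \le b\bigl\|\ov{M(\lambda_0)}^{1/2}x\bigr\|^2
  \le b\bigl\|\ov{M(\lambda_0)}\bigr\|.
\]
By (ii) the right-hand side is strictly less than $1$, so $1$ lies outside the closure of the numerical range of the bounded operator $\ov{M(\lambda_0)}^{1/2}B\ov{M(\lambda_0)}^{1/2}$, and in particular $1\in\rho\bigl(\ov{M(\lambda_0)}^{1/2}B\ov{M(\lambda_0)}^{1/2}\bigr)$. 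The standard identity $\sigma(ST)\setminus\{0\}=\sigma(TS)\setminus\{0\}$ for bounded operators, applied with $S=B\ov{M(\lambda_0)}^{1/2}$ and $T=\ov{M(\lambda_0)}^{1/2}$, then transfers this to $1\in\rho\bigl(B\ov{M(\lambda_0)}\bigr)$, which is condition (i) of Theorem~\ref{mainthm}. Invoking Theorem~\ref{mainthm} yields the closedness of $\AB$, the inclusion $\lambda_0\in\rho(\AB)$, and the resolvent formula \eqref{resformula}.

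For the duality statement, $B'$ satisfies the same set of hypotheses (i)--(vi), and since $\ov{\lambda_0}=\lambda_0$, the argument above verifies the hypotheses of Theorem~\ref{mainthm} at $\ov{\lambda_0}$ for $B'$; the pairing \eqref{BBprime} is assumed. Theorem~\ref{mainthm} therefore identifies $A_{[B']}$ with $\AB^*$ and places $\lambda_0=\ov{\lambda_0}$ in $\rho(A_{[B']})$. The step I expect to be most delicate is the passage from the numerical-range bound on the symmetric-looking sandwich $\ov{M(\lambda_0)}^{1/2}B\ov{M(\lambda_0)}^{1/2}$ to the non-symmetric conclusion $1\in\rho(B\ov{M(\lambda_0)})$, as this requires first promoting the closable products to bounded operators and then a careful application of the spectral identity for operator products.
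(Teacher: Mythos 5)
Your proof is correct and follows essentially the same route as the paper: reduce conditions (iv)--(vi) to Theorem~\ref{mainthm} via (iii), use the numerical-range estimate on the sandwiched operator $\ov{M(\lambda_0)}^{1/2}B\ov{M(\lambda_0)}^{1/2}$ to get $1\in\rho$ of that operator, and transfer to $1\in\rho\bigl(B\ov{M(\lambda_0)}\bigr)$. The only superficial difference is that you explicitly invoke the identity $\sigma(ST)\setminus\{0\}=\sigma(TS)\setminus\{0\}$, whereas the paper leaves that final step implicit.
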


For $B\in\cB(\cG)$, Corollary~\ref{corchenaaa} reads as follows.

\begin{corollary}\label{corchenbbb}
Let $\{\cG,\Gamma_0,\Gamma_1\}$ be a quasi boundary triple for $T\subset S^*$
with corresponding Weyl function $M$, and assume that $M(\lambda)$ is
bounded for one (and hence for all) $\lambda\in\rho(A_0)$.
Let $B\in\cB(\cG)$ and $b\in\dR$ such that
\[
  \Re(Bx,x)\leq b\|x\|^2 \qquad\text{for all}\;\; x \in \cG
\]
and assume that for some $\lambda_0\in\rho(A_0)\cap\dR$ the
following conditions are satisfied:
\begin{myenum}
  \item $M(\lambda_0)\geq 0$ and $b\bigl\|\ov{M(\lambda_0)}\bigr\|<1$;
  \item $B\bigl(\ran\ov{M(\lambda_0)}\bigr)\subset\ran\Gamma_0$;
  \item $B(\ran\Gamma_1)\subset\ran\Gamma_0$ \, or \, $\lambda_0\in\rho(A_1)$.
\end{myenum}
Then the operator $\AB$ in \eqref{ab} is a closed extension of $S$
such that $\lambda_0\in\rho(\AB)$,
and the resolvent formula \eqref{resformula} holds for
all $\lambda\in\rho(\AB)\cap\rho(A_0)$.

Further, if conditions {\rm(i)--(iii)} are satisfied also for $B^*$
instead of $B$,
then $A_{[B^*]}=\AB^*$.  In particular, $\lambda_0 \in \rho(A_{[B^*]})$.
\end{corollary}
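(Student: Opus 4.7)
The plan is to derive Corollary~\ref{corchenbbb} as a direct specialization of Corollary~\ref{corchenaaa} to the bounded case $B\in\cB(\cG)$. Once $\dom B=\cG$ is invoked, several of the domain-type hypotheses in Corollary~\ref{corchenaaa} become automatic, and the remaining ones match the present hypotheses one-to-one. The same reduction is then applied to $B^*$ to handle the adjoint assertion.

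First I would verify the six hypotheses of Corollary~\ref{corchenaaa} for $B$. Conditions~(iii) and~(v) there, namely $\ran\ov{M(\lambda_0)}^{1/2}\subset\dom B$ and $\ran\Gamma_1\subset\dom B$, hold trivially because $\dom B=\cG$. The numerical bound in condition~(i) of Corollary~\ref{corchenaaa} is exactly the standing assumption $\Re(Bx,x)\le b\|x\|^2$ of the present statement. Conditions~(ii), (iv) and~(vi) of Corollary~\ref{corchenaaa} coincide, respectively, with hypotheses~(i), (ii) and~(iii) of Corollary~\ref{corchenbbb}. Applying Corollary~\ref{corchenaaa} then gives that $\AB$ is closed, $\lambda_0\in\rho(\AB)$, and that the resolvent formula~\eqref{resformula} holds for all $\lambda\in\rho(\AB)\cap\rho(A_0)$.

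For the adjoint statement I take $B'\defeq B^*\in\cB(\cG)$. The compatibility identity~\eqref{BBprime}, i.e.\ $(Bx,y)=(x,B^*y)$ for $x,y\in\cG$, is the defining property of the Hilbert space adjoint on $\cB(\cG)$ and therefore holds automatically. To apply Corollary~\ref{corchenaaa} with $B'=B^*$ it remains to check its hypotheses~(i)--(vi) for $B^*$. Again (iii) and (v) are trivial since $\dom B^*=\cG$. For the numerical bound, the elementary identity $(B^*x,x)=\ov{(x,B^*x)}=\ov{(Bx,x)}$ gives $\Re(B^*x,x)=\Re(Bx,x)\le b\|x\|^2$ with the same constant $b$, so (i) of Corollary~\ref{corchenaaa} transfers verbatim. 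Hypotheses~(ii), (iv) and~(vi) of Corollary~\ref{corchenaaa} for $B^*$ are precisely the assumption that (i)--(iii) of the present corollary hold with $B$ replaced by $B^*$. Corollary~\ref{corchenaaa} then yields that $A_{[B^*]}$ is closed, $A_{[B^*]}=\AB^*$, and $\lambda_0\in\rho(A_{[B^*]})$; here the real-valuedness of $\lambda_0$ ensures $\ov{\lambda_0}=\lambda_0$, so the same base point works in both steps.

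The proof is essentially a translation of hypotheses, so there is no genuine obstacle. The only points worth keeping in mind are that the quadratic form bound transfers automatically to $B^*$ because it depends only on $\Re(Bx,x)$, and that $\lambda_0\in\dR$ is what permits reusing the single base point $\lambda_0$ for both $B$ and $B^*$ when invoking Corollary~\ref{corchenaaa}.
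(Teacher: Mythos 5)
Your proof is correct and takes exactly the route the paper intends: Corollary~\ref{corchenbbb} is stated as the specialization of Corollary~\ref{corchenaaa} to $B\in\cB(\cG)$ (the paper signals this with the remark ``For $B\in\cB(\cG)$, Corollary~\ref{corchenaaa} reads as follows'' and gives no separate proof), and your verification that conditions (iii), (v) of Corollary~\ref{corchenaaa} become vacuous, that (i) transfers to $B^*$ because $\Re(B^*x,x)=\Re(Bx,x)$, and that $\lambda_0\in\dR$ lets the same base point serve for both $B$ and $B^*$, fills in precisely the implicit steps.
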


\begin{proof}[Proof of Corollary~\ref{corchenaaa}]
It suffices to show that assumptions (i)--(iii) in Corollary~\ref{corchenaaa}
imply assumption (i) in Theorem~\ref{mainthm}.
The assumption (ii) in Theorem~\ref{mainthm} is satisfied since the inclusion
\begin{equation*}
 \ran\ov{M(\lambda_0)}\subset\ran\ov{M(\lambda_0)}^{1/2}\subset\dom B
\end{equation*}
holds by (iii) in Corollary~\ref{corchenaaa}, and hence
(iv) in Corollary~\ref{corchenaaa} coincides with (ii) in Theorem~\ref{mainthm};
the assumptions (iii) and (iv) in Theorem~\ref{mainthm} coincide
with (v) and (vi) in Corollary~\ref{corchenaaa}.

In order to show (i) in Theorem~\ref{mainthm} we use a similar idea as in the
proof of Proposition~\ref{corsec}, but we have to be more careful with operator domains.
Note first that a negative $b$
in (i) and (ii) in Corollary~\ref{corchenaaa} can always be replaced by $0$;
hence without loss of generality we can assume that $b\ge0$.
For $\lambda_0 \in\rho(A_0)\cap\dR$ such that $M(\lambda_0)\ge 0$
we have $\ov{M(\lambda_0)}\geq 0$.
As in \eqref{bbmclos} in the proof of Theorem~\ref{thmsec} the operator
\begin{equation}\label{bmhalf}
  B \overline{M(\lambda_0)}^{1/2}
\end{equation}
is defined on all of $\cG$ by (iii) and is closable since $B$ is closable.  Hence
\begin{equation}\label{halfbmhalf}
  B \overline{M(\lambda_0)}^{1/2}\in\cB(\cG) \qquad\text{and}\qquad
  \overline{M(\lambda_0)}^{1/2}B\overline{M(\lambda_0)}^{1/2}\in\cB(\cG).
\end{equation}
Then for $x \in \cG$ with $\|x\| = 1$ we conclude from assumption (i) that
\begin{align*}
  \Re \bigl(\ov{M(\lambda_0)}^{1/2}B\ov{M(\lambda_0)}^{1/2} x, x\bigr)
  &= \Re \bigl(B\ov{M(\lambda_0)}^{1/2}x,\ov{M(\lambda_0)}^{1/2}x\bigr)
  \\[0.5ex]
  & \le b \bigl\|\ov{M(\lambda_0)}^{1/2} x\bigr\|^2
  = b \bigl(\ov{M(\lambda_0)}x,x\bigr)
  \le b\bigl\|\ov{M(\lambda_0)}\bigr\|.
\end{align*}
Thus
\begin{align*}
  \sigma\bigl(\ov{M(\lambda_0)}^{1/2}B\ov{M(\lambda_0)}^{1/2}\bigr)
  &\subset \ov{W\bigl(\ov{M(\lambda_0)}^{1/2}B\ov{M(\lambda_0)}^{1/2}\bigr)} \\[0.5ex]
  &\subset \bigl\{z\in\CC:\Re z\le b\bigl\|\ov{M(\lambda_0)}\bigr\|\bigr\},
\end{align*}
and hence assumption (ii) implies that
\[
  1\in\rho\bigl(\overline{M(\lambda_0)}^{1/2}B\overline{M(\lambda_0)}^{1/2}\bigr).
\]
This shows that also $1\in\rho(B\overline{M(\lambda_0)})$
and therefore (i) in Theorem~\ref{mainthm} holds.
\end{proof}

Now we finally turn to the proof of Theorem~\ref{mainthm}. We note that the arguments in
Steps~2, 4 and 5 are similar to those in the proof of~\cite[Theorem~2.4]{BLLR17},
where the case when $B$ is symmetric was treated.
For the convenience of the reader we provide a self-contained and complete proof.

\begin{proof}[Proof of Theorem~\ref{mainthm}]
The proof of Theorem~\ref{mainthm} consists of six separate steps.
During the first four steps of the proof we assume that the first condition
in (iv) is satisfied.  In Step~5 of the proof we show that the second condition
in (iv) and assumptions (ii) and (iii) imply the first condition in (iv).
Finally, in Step~6 we prove the statements about $A_{[B']}$.

\medskip

\textit{Step 1.}
We claim that $\ker(\AB-\lambda_0)=\{0\}$.
To this end, let $f\in\ker(\AB-\lambda_0)$.  Then $f$ satisfies the
equation $Tf = \lambda_0 f$ and the abstract boundary condition $\Gamma_0 f=B\Gamma_1 f$.
It follows that
\begin{equation*}
  \Gamma_0f = B\Gamma_1f = BM(\lambda_0)\Gamma_0f = B\ov{M(\lambda_0)}\Gamma_0f,
\end{equation*}
that is, $\Gamma_0f \in \ker(I-B\ov{M(\lambda_0)})$.  From this and assumption (i)
of the theorem it follows that $\Gamma_0f = 0$ and, thus, $f\in\ker(A_0-\lambda_0)$.
Since $\lambda_0\in\rho(A_0)$, we obtain that $f = 0$.
Therefore we have $\ker(\AB-\lambda_0)=\{0\}$.

\textit{Step 2.}
Next we show that
\begin{equation}\label{surj}
  \ran(\AB-\lambda_0)=\cH
\end{equation}
holds. In order to do so, we first verify the inclusion
\begin{equation}\label{woracek}
  \ran\bigl(B\gamma(\overline{\lambda_0})^*\bigr)\subset\ran\bigl(I-BM(\lambda_0)\bigr).
\end{equation}
Note that the product $B\gamma(\overline{\lambda_0})^*$ on
the left-hand side of \eqref{woracek}
is defined on all of $\cH$ since $\gamma(\overline{\lambda_0})^*=\Gamma_1(A_0-\lambda_0)^{-1}$
by \eqref{gammastar} and $\ran\Gamma_1\subset\dom B$ by condition (iii).
For the inclusion in \eqref{woracek} consider
$\psi=B\gamma(\overline{\lambda_0})^* f$ for some $f\in\cH$.
From \eqref{gammastar} and the first condition in (iv) we obtain
that $\psi\in\ran\Gamma_0$.  Making use of assumption (i) we see that
\begin{equation}\label{varphi}
  \varphi\defeq\bigl(I-B\overline{M(\lambda_0)}\bigr)^{-1}\psi\in\dom
  \bigl(B\overline{M(\lambda_0)}\bigr)
\end{equation}
is well defined.  Hence
\begin{equation*}
 \varphi=B\overline{M(\lambda_0)}\varphi+\psi,
\end{equation*}
and since $\overline{M(\lambda_0)}\varphi\in\ran\overline{M(\lambda_0)}\cap\dom B$,
it follows from (ii) and $\psi\in\ran\Gamma_0$ that
$\varphi\in\ran\Gamma_0=\dom M(\lambda_0)$.
Thus we conclude from \eqref{varphi} that
\begin{equation*}
 \bigl(I-BM(\lambda_0)\bigr)\varphi=\psi,
\end{equation*}
which shows the inclusion \eqref{woracek}.

To verify~\eqref{surj}, let $f\in\cH$ and consider
\begin{equation}\label{hhh}
  h \defeq (A_0-\lambda_0)^{-1}f+\gamma(\lambda_0)
  \bigl(I-BM(\lambda_0)\bigr)^{-1}B\gamma(\overline{\lambda_0})^*f.
\end{equation}
Observe that $h$ is well defined since
$\dom\gamma(\lambda_0)=\dom M(\lambda_0)\supset\ran(I-BM(\lambda_0))^{-1}$
and the product of $(I-BM(\lambda_0))^{-1}$
and $B\gamma(\overline{\lambda_0})^*$ makes sense by \eqref{woracek}.
It is clear that $h\in\dom T$.
Moreover, from $\dom A_0=\ker\Gamma_0$, the definitions of
the $\gamma$-field and Weyl function, and \eqref{gammastar} we conclude that
\begin{equation*}
  \Gamma_0h =\bigl(I-BM(\lambda_0)\bigr)^{-1}B\gamma(\overline{\lambda_0})^*f
\end{equation*}
and
\begin{equation*}
  \Gamma_1 h=\gamma(\overline{\lambda_0})^*f+M(\lambda_0)
  \bigl(I-BM(\lambda_0)\bigr)^{-1}B\gamma(\overline{\lambda_0})^*f.
\end{equation*}
Now it follows that
\begin{equation*}
  B\Gamma_1 h=\bigl(I-BM(\lambda_0)\bigr)^{-1}B\gamma(\overline{\lambda_0})^*f=\Gamma_0h,
\end{equation*}
and therefore $h\in\dom\AB$. From the definition of $h$ in \eqref{hhh}
and $\ran\gamma(\lambda_0)=\ker(T-\lambda_0)$ we obtain that
\begin{equation*}
  (\AB-\lambda_0)h=(T-\lambda_0)h=f.
\end{equation*}
Hence we have proved \eqref{surj}.
Moreover, since $h=(\AB-\lambda_0)^{-1}f$, we also conclude from \eqref{hhh} that
\begin{equation}\label{resformula0}
  (\AB-\lambda_0)^{-1}f=(A_0-\lambda_0)^{-1}f
  + \gamma(\lambda_0)\bigl(I-BM(\lambda_0)\bigr)^{-1}B\gamma(\overline{\lambda_0})^*f.
\end{equation}

\textit{Step 3.}
We verify that $\AB$ is closed and that $\lambda_0\in\rho(\AB)$.
Since $B$ is closable by assumption and $\gamma(\ov{\lambda_0})^* \in \cB(\cH,\cG)$,
it follows that $B\gamma(\ov{\lambda_0})^*$ is closable and hence closed, so that
\begin{equation}\label{bg}
  B\gamma(\overline{\lambda_0})^* \in \cB(\cH,\cG).
\end{equation}
The operators $\gamma(\lambda_0)$ and $(I-BM(\lambda_0))^{-1}$ in \eqref{resformula0} are
bounded by \eqref{gammastar} and assumption~(i), respectively.
Therefore \eqref{resformula0} shows that the operator $(\AB-\lambda_0)^{-1}$ is bounded.
Since $(\AB-\lambda_0)^{-1}$ is defined on $\cH$ by \eqref{surj}, it follows
that $\AB$ is closed and $\lambda_0\in\rho(\AB)$.

\textit{Step 4.}
Now we prove the resolvent formula \eqref{resformula} for
all $\lambda\in\rho(\AB)\cap\rho(A_0)$.
We first observe that $I-BM(\lambda)$ is injective for $\lambda\in\rho(\AB)\cap\rho(A_0)$.
In fact, let $\varphi\in\ker(I-BM(\lambda))$. Then $\varphi\in\dom M(\lambda)=\ran\Gamma_0$
and $f \defeq \gamma(\lambda)\varphi$ belongs to $\ker(T-\lambda)$.
Furthermore, $\Gamma_0 f=\varphi$, and from
\begin{equation*}
  B\Gamma_1 f = BM(\lambda)\Gamma_0 f
  = B M(\lambda)\varphi = \varphi = \Gamma_0 f
\end{equation*}
we conclude that $f\in\dom \AB$.
Since $f\in\ker(T-\lambda)$, this implies that $f\in\ker(\AB-\lambda)$, and hence $f=0$
as $\lambda\in\rho(\AB)$ by assumption.  It follows that $\varphi=\Gamma_0 f=0$,
and therefore $I-BM(\lambda)$ is injective.

Now let $f\in\cH$, $\lambda\in\rho(\AB)\cap\rho(A_0)$, and set
\begin{equation}\label{kkk}
  k \defeq (\AB-\lambda)^{-1}f-(A_0-\lambda)^{-1}f.
\end{equation}
With $g \defeq (\AB-\lambda)^{-1}f\in\dom\AB$ we have
$B\Gamma_1 g=\Gamma_0g=\Gamma_0k$.  Since $k\in\ker(T-\lambda)$, it is
also clear that $M(\lambda)\Gamma_0 k=\Gamma_1 k$.
Moreover, $\Gamma_1(g - k) = \gamma(\overline\lambda)^*f$ by \eqref{gammastar},
and therefore
\begin{equation*}
  \bigl(I-BM(\lambda)\bigr)\Gamma_0 k
  = \Gamma_0 g- BM(\lambda)\Gamma_0 k
  = B \Gamma_1 g - B \Gamma_1 k
  = B\gamma(\overline\lambda)^*f
\end{equation*}
yields $\Gamma_0 k=(I-BM(\lambda))^{-1}B\gamma(\overline\lambda)^*f$.
Since $k\in\ker(T-\lambda)$, we have
\[
  k = \gamma(\lambda)\Gamma_0k
  = \gamma(\lambda)\bigl(I-BM(\lambda)\bigr)^{-1}B\gamma(\overline\lambda)^*f,
\]
which, together with \eqref{kkk}, yields \eqref{resformula} for $\lambda\in\rho(\AB)\cap\rho(A_0)$.

\textit{Step 5.}
Now assume that $\lambda_0\in\rho(A_1)$, i.e.\ the second condition in (iv) holds.
We claim that in this situation $B(\ran\Gamma_1)\subset\ran\Gamma_0$ follows.
In fact, suppose that $g\in\ran\Gamma_1$.  Then $g\in\dom B$ by condition (iii).
Since $\ran\Gamma_1=\ran M(\lambda_0)\subset\ran(\ov{M(\lambda_0)})$
in the present situation by \cite[Proposition~2.6\,(iii)]{BL07},
we conclude from (ii) that $Bg\in\ran\Gamma_0$.

\textit{Step 6.}
Now let $B'$ be as in the last part of the statement of the theorem.
By assumption (iii) for $B$ and $B'$, both operators are densely defined.
Hence relation~\eqref{BBprime} implies that $B'$ is also closable.
It follows from Steps~1--5 that $A_{[B']}$ is closed and
that $\ov\lambda_0\in\rho(A_{[B']})$.
Let $f\in\dom \AB$ and $g\in\dom A_{[B']}$.
Then $\Gamma_1f\in\dom B$, $\Gamma_1g\in\dom B'$ and
\[
  \Gamma_0f = B\Gamma_1f \qquad\text{and}\qquad
  \Gamma_0g = B'\Gamma_1g.
\]
Hence Green's identity \eqref{green} and the relation \eqref{BBprime} yield
\begin{align*}
  (\AB f,g) - (f,A_{[B']}g)
  &= (Tf,g) - (f,Tg)
  = (\Gamma_1f,\Gamma_0g) - (\Gamma_0f,\Gamma_1g)
  \\[0.5ex]
  &= (\Gamma_1f,B'\Gamma_1g) - (B\Gamma_1f,\Gamma_1g) = 0,
\end{align*}
which implies that
\begin{equation}\label{ABprimesubABstar}
  A_{[B']} \subset \AB^*.
\end{equation}
Since $\lambda_0 \in \rho(A_{[B]})$, we have $\ov\lambda_0\in\rho(\AB^*)$.
This, together with $\ov\lambda_0\in\rho(A_{[B']})$ and \eqref{ABprimesubABstar},
proves the relation in \eqref{ABprimeABstar}.
\end{proof}

In the next proposition we consider Schatten--von Neumann properties of
certain resolvent differences (see the end of the introduction for the definition
of the classes $\sS_p$).
For the self-adjoint case parts of the results of the following proposition
can be found in~\cite[Theorem~3.17]{BLL13IEOT}.

\begin{proposition}\label{spprop}
Let $\{\cG,\Gamma_0,\Gamma_1\}$ be a quasi boundary triple for $T\subset S^*$
with corresponding $\gamma$-field $\gamma$ and Weyl function $M$.
Let $B$ be a closable operator in $\cG$ and assume that there
exists $\lambda_0\in\rho(A_0)$ such that conditions {\rm(i)--(iv)}
in Theorem~\ref{mainthm} are satisfied.
Moreover, assume that
\begin{equation}\label{sp1}
  \gamma(\lambda_1)^* \in \sS_p(\cH,\cG)
\end{equation}
for some $\lambda_1\in\rho(A_0)$ and some $p > 0$.  Then
\begin{equation}\label{sp2}
  (\AB-\lambda)^{-1}-(A_0-\lambda)^{-1} \in \sS_p(\cH)
\end{equation}
for all $\lambda\in\rho(\AB)\cap\rho(A_0)$.
If, in addition, $A_1$ is self-adjoint, then
\begin{equation}\label{sp2mitA1}
  (\AB - \lambda)^{-1}-(A_1 - \lambda)^{-1} \in \sS_p(\cH)
\end{equation}
for all $\lambda\in\rho(\AB)\cap\rho(A_1)$.
\end{proposition}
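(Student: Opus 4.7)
The plan is to derive both Schatten--von Neumann statements from the Krein-type resolvent formula \eqref{resformula} in Theorem~\ref{mainthm} by combining it with the ideal property of $\sS_p$. The first ingredient is the propagation of the $\sS_p$-property of $\gamma(\cdot)^*$ to every spectral parameter in $\rho(A_0)$: using $\gamma(\lambda)^* = \Gamma_1(A_0-\overline\lambda)^{-1}$ from \eqref{gammastar} together with the first resolvent identity for $A_0$, one obtains
\[
  \gamma(\lambda)^* = \gamma(\lambda_1)^*\bigl[I+(\overline\lambda-\overline{\lambda_1})(A_0-\overline\lambda)^{-1}\bigr], \qquad \lambda\in\rho(A_0),
\]
so the two-sided ideal property yields $\gamma(\lambda)^*\in\sS_p(\cH,\cG)$ and, by adjointness, $\overline{\gamma(\lambda)}\in\sS_p(\cG,\cH)$ for every such $\lambda$.

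The core step is to establish \eqref{sp2} at the distinguished value $\lambda=\lambda_0$ by rewriting the right-hand side of \eqref{resformula} so that exactly one $\sS_p$-factor is sandwiched between two bounded operators. Step~2 in the proof of Theorem~\ref{mainthm} shows that $B\gamma(\overline{\lambda_0})^*\in\cB(\cH,\cG)$ maps $\cH$ into $\ran\Gamma_0$, on which $M(\lambda_0)$ coincides with the bounded operator $\overline{M(\lambda_0)}$; condition~(i) of Theorem~\ref{mainthm} makes $(I-B\overline{M(\lambda_0)})^{-1}$ a bounded operator on $\cG$ that agrees on $\ran\Gamma_0$ with $(I-BM(\lambda_0))^{-1}$ and again produces elements of $\ran\Gamma_0=\dom\gamma(\lambda_0)$, where $\gamma(\lambda_0)$ coincides with $\overline{\gamma(\lambda_0)}$. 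Consequently
\[
  (\AB-\lambda_0)^{-1}-(A_0-\lambda_0)^{-1} = \overline{\gamma(\lambda_0)}\bigl(I-B\overline{M(\lambda_0)}\bigr)^{-1}B\gamma(\overline{\lambda_0})^*,
\]
with $\overline{\gamma(\lambda_0)}\in\sS_p$ and the two remaining factors bounded, so the ideal property gives the desired membership. The extension to arbitrary $\lambda\in\rho(\AB)\cap\rho(A_0)$ is then purely algebraic: the second resolvent identity, applied to both $\AB$ and $A_0$, yields
\[
  (\AB-\lambda)^{-1}-(A_0-\lambda)^{-1} = \bigl[I+(\lambda-\lambda_0)(\AB-\lambda)^{-1}\bigr]\bigl[(\AB-\lambda_0)^{-1}-(A_0-\lambda_0)^{-1}\bigr]\bigl[I+(\lambda-\lambda_0)(A_0-\lambda)^{-1}\bigr],
\]
from which \eqref{sp2} follows because the outer factors are bounded while the middle one lies in $\sS_p$.

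For \eqref{sp2mitA1} I would split $(\AB-\lambda)^{-1}-(A_1-\lambda)^{-1}$ into $[(\AB-\lambda)^{-1}-(A_0-\lambda)^{-1}]+[(A_0-\lambda)^{-1}-(A_1-\lambda)^{-1}]$. The first summand belongs to $\sS_p$ by \eqref{sp2}, and the self-adjointness of $A_1$ makes the Krein-type formula $(A_0-\lambda)^{-1}-(A_1-\lambda)^{-1} = \overline{\gamma(\lambda)M(\lambda)^{-1}\gamma(\overline\lambda)^*}$ (cf.\ \cite{BL07,BLL13IEOT}) available, to which the same factoring strategy with $\overline{\gamma(\lambda)}\in\sS_p$ on the left applies. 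The principal technical difficulty throughout is the domain bookkeeping in the central factorization: since $\gamma(\lambda_0)$, $M(\lambda_0)$ and $B$ are only densely defined in general, one must check at each composition that the range of the preceding operator lies in the domain of the next, so that the passage from unbounded to bounded closures is legitimate. Conditions (i)--(iv) of Theorem~\ref{mainthm} are tailored precisely to this need, and once this bookkeeping is carried out the entire proof reduces to routine applications of the $\sS_p$-ideal property.
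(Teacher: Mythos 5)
Your proposal follows essentially the same route as the paper's proof: the resolvent formula from Theorem~\ref{mainthm} is rewritten so that $\overline{\gamma(\lambda)}$ is the unique $\sS_p$-factor flanked by bounded operators, the $\sS_p$-property is propagated across the spectral parameter via first-resolvent-identity conjugation (the paper cites~\cite[Lemma~2.2]{BLL13IEOT} where you derive the identity explicitly), and the $A_1$-statement is reduced to the Krein formula for $(A_1-\lambda)^{-1}-(A_0-\lambda)^{-1}$. The only places where you gesture at bookkeeping that the paper carries out explicitly are the boundedness of $M(\lambda)^{-1}\gamma(\overline\lambda)^*$ (which follows from closability of $M(\lambda)^{-1}$ and the range inclusion $\ran\gamma(\overline\lambda)^*\subset\ran\Gamma_1=\ran M(\lambda)$) and the final propagation of \eqref{sp2mitA1} from $\rho(\AB)\cap\rho(A_0)\cap\rho(A_1)$ to all of $\rho(\AB)\cap\rho(A_1)$, both routine given the tools you already invoke.
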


\begin{proof}
By Theorem~\ref{mainthm}, the resolvent formula \eqref{resformula}
holds for all $\lambda\in\rho(\AB)\cap\rho(A_0)$,
and it can also be written in the form
\begin{equation}\label{resformula2}
  (A_{[B]}-\lambda)^{-1}-(A_0-\lambda)^{-1}
  = \overline{\gamma(\lambda)}\bigl(I-B\overline{M(\lambda)}\bigr)^{-1}B		
  \gamma(\overline\lambda)^*.
\end{equation}
Moreover, it follows from~\eqref{sp1} and~\cite[Proposition~3.5\,(ii)]{BLL13IEOT}
that $\gamma(\lambda)^*\in\sS_p(\cH,\cG)$ for all $\lambda\in\rho(A_0)$ and, hence,
also $\ov{\gamma(\lambda)}=\gamma(\lambda)^{**}\in\sS_p(\cG,\cH)$
for all $\lambda \in \rho(A_0)$.

To prove \eqref{sp2}, let first $\lambda = \lambda_0$ be given as in the assumptions
of the proposition. Since $B\gamma(\overline\lambda)^*\in\cB(\cH,\cG)$
can be shown as in \eqref{bg} and $(I-B\ov{M(\lambda)})^{-1}\in\cB(\cG)$
holds by assumption (i) of Theorem~\ref{mainthm}, it is clear that the
right-hand side of \eqref{resformula2} belongs to the Schatten--von Neumann
ideal $\sS_p(\cH)$, which proves~\eqref{sp2} for $\lambda = \lambda_0$.
With the help of~\cite[Lemma 2.2]{BLL13IEOT} this property extends to
all~$\lambda\in\rho(\AB)\cap\rho(A_0)$.

Assume now, in addition, that $A_1$ is self-adjoint and fix some
$\lambda \in \rho(\AB)\cap\rho(A_0)\cap\rho(A_1)$.
Note that by~\cite[Theorem~3.8]{BLL13IEOT} the identity
\begin{equation}\label{resformulaA1}
  (A_1 - \lambda)^{-1} - (A_0 - \lambda)^{-1}
  = - \ov{\gamma(\lambda)}M(\lambda)^{-1}\gamma(\overline{\lambda})^*
\end{equation}
is true.
It follows from \cite[Proposition~6.14\,(iii)]{BL12} that the
operator $M(\lambda)^{-1}$ is closable,
and~\cite[Proposition~2.6\,(iii)]{BL07} implies that
\[
  \ran\bigl(\gamma(\overline{\lambda})^*\bigr)
  \subset \ran\Gamma_1 = \ran M(\lambda).
\]
Thus, the operator $M(\lambda)^{-1}\gamma(\overline{\lambda})^*$
is everywhere defined and closable and hence closed, so that
$M(\lambda)^{-1}\gamma(\overline{\lambda})^*\in \cB(\cH,\cG)$.
Since $\overline{\gamma(\lambda)} \in \sS_p(\cG,\cH)$ by the first part of the proof,
the identity~\eqref{resformulaA1} implies that
\begin{equation}\label{wichtig}
  (A_1 - \lambda)^{-1} - (A_0 - \lambda)^{-1} \in \sS_p(\cH).
\end{equation}
From~\eqref{sp2} and~\eqref{wichtig} we conclude that~\eqref{sp2mitA1} holds 	
for all $\lambda\in\rho(\AB)\cap\rho(A_0)\cap\rho(A_1)$,
and again with the help of \cite[Lemma 2.2]{BLL13IEOT}
this property extends to all $\lambda\in\rho(\AB)\cap\rho(A_1)$.
\end{proof}

In the case when $B$ is bounded and everywhere defined the assertion of the
previous proposition improves as follows.

\begin{proposition}\label{spprop2}
Let $\{\cG,\Gamma_0,\Gamma_1\}$ be a quasi boundary triple for $T\subset S^*$
with corresponding $\gamma$-field $\gamma$ and Weyl function $M$.
Let $B\in\cB(\cG)$ and assume that there exists $\lambda_0\in\rho(A_0)$
such that conditions {\rm(i)--(iii)} in Corollary~\ref{maincor} are satisfied.
Further, assume that
\begin{equation}\label{sp3}
  \gamma(\lambda_1)^* \in \sS_p(\cH,\cG)
\end{equation}
for some $\lambda_1\in\rho(A_0)$ and some $p > 0$. Then
\begin{equation}\label{sp4}
  (A_{[B]}-\lambda)^{-1}-(A_0-\lambda)^{-1} \in \sS_\frac{p}{2}(\cH)
\end{equation}
for all $\lambda\in\rho(A_{[B]})\cap\rho(A_0)$.
If, in addition, $A_1$ is self-adjoint and
\[
  M(\lambda_2)^{-1} \gamma(\overline{\lambda_2})^* \in \sS_q(\cH,\cG)
\]
for some $\lambda_2 \in \rho(A_0) \cap \rho(A_1)$
and some $q > 0$, then
\begin{equation}\label{sp2mitA1nochmal}
  (A_{[B]} - \lambda)^{-1}-(A_1 - \lambda)^{-1} \in \sS_r(\cH)
  \qquad \text{with} \quad
  r = \max\biggl\{\frac{p}{2}, \Bigl(\frac{1}{p} + \frac{1}{q}\Bigr)^{-1}\biggr\}
\end{equation}
for all $\lambda\in\rho(A_{[B]})\cap\rho(A_1)$.
\end{proposition}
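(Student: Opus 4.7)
The plan is to exploit the fact that $B\in\cB(\cG)$, which allows us to split the Schatten-class burden between \emph{both} $\gamma$-factors in the resolvent formula~\eqref{resformula2}, rather than only one of them as in Proposition~\ref{spprop}. First, I would invoke \cite[Proposition~3.5\,(ii)]{BLL13IEOT}, which together with the hypothesis $\gamma(\lambda_1)^*\in\sS_p(\cH,\cG)$ gives $\gamma(\lambda)^*\in\sS_p(\cH,\cG)$ for every $\lambda\in\rho(A_0)$; taking adjoints yields $\ov{\gamma(\lambda)}=\gamma(\lambda)^{**}\in\sS_p(\cG,\cH)$ as well.

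For \eqref{sp4}, I would plug $\lambda=\lambda_0$ into the Krein-type formula
\[
  (A_{[B]}-\lambda_0)^{-1}-(A_0-\lambda_0)^{-1}
  = \ov{\gamma(\lambda_0)}\bigl(I-B\ov{M(\lambda_0)}\bigr)^{-1}B\gamma(\ov{\lambda_0})^*
\]
supplied by Theorem~\ref{mainthm}. Since $B\in\cB(\cG)$ and $(I-B\ov{M(\lambda_0)})^{-1}\in\cB(\cG)$ by condition~(i) of Corollary~\ref{maincor}, the middle factor belongs to $\cB(\cG)$. The two outer factors lie in $\sS_p$, so the H\"{o}lder-type inequality for Schatten classes ($\sS_p\cdot\cB(\cG)\cdot\sS_p\subset\sS_{p/2}$, with $\tfrac{1}{p/2}=\tfrac{1}{p}+\tfrac{1}{p}$) gives the product in $\sS_{p/2}(\cH)$. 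This establishes \eqref{sp4} at $\lambda_0$, and an application of \cite[Lemma~2.2]{BLL13IEOT} extends it to all $\lambda\in\rho(A_{[B]})\cap\rho(A_0)$.

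For \eqref{sp2mitA1nochmal}, I would treat the auxiliary difference $(A_1-\lambda)^{-1}-(A_0-\lambda)^{-1}$ via the Krein formula
\[
  (A_1-\lambda_2)^{-1}-(A_0-\lambda_2)^{-1} = -\ov{\gamma(\lambda_2)}\,M(\lambda_2)^{-1}\gamma(\ov{\lambda_2})^*
\]
from \cite[Theorem~3.8]{BLL13IEOT}. Combining $\ov{\gamma(\lambda_2)}\in\sS_p$ with the hypothesis $M(\lambda_2)^{-1}\gamma(\ov{\lambda_2})^*\in\sS_q(\cH,\cG)$ and again invoking H\"{o}lder yields $(A_1-\lambda_2)^{-1}-(A_0-\lambda_2)^{-1}\in\sS_{r'}(\cH)$ with $r'=(1/p+1/q)^{-1}$; one more appeal to \cite[Lemma~2.2]{BLL13IEOT} propagates this to every $\lambda\in\rho(A_0)\cap\rho(A_1)$. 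Writing
\[
  (A_{[B]}-\lambda)^{-1}-(A_1-\lambda)^{-1}
  = \bigl[(A_{[B]}-\lambda)^{-1}-(A_0-\lambda)^{-1}\bigr]
    - \bigl[(A_1-\lambda)^{-1}-(A_0-\lambda)^{-1}\bigr]
\]
and using the monotonicity $\sS_s\subset\sS_t$ for $s\le t$, the right-hand side sits in $\sS_r(\cH)$ with $r=\max\{p/2,(1/p+1/q)^{-1}\}$, giving \eqref{sp2mitA1nochmal}.

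The argument is essentially careful bookkeeping of Schatten indices, and I do not expect serious technical difficulties. The only subtle point is to ensure that the product $M(\lambda)^{-1}\gamma(\ov{\lambda})^*$ is a well-defined everywhere-defined closed (hence bounded) operator before Schatten information can be read off; this is exactly the closability and range argument already carried out in the proof of Proposition~\ref{spprop} and can be reused verbatim.
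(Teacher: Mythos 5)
Your proposal is correct and follows essentially the same route as the paper's proof: propagate the $\sS_p$-membership of $\gamma(\lambda)^*$ to all $\lambda$, feed the Krein formula \eqref{resformula2} with both outer factors in $\sS_p$ and a bounded middle factor to get $\sS_{p/2}$, then for the second part combine the Krein formula \eqref{resformulaA1} with H\"older and \cite[Lemma~2.2]{BLL13IEOT}. The only cosmetic difference is that you first verify \eqref{sp4} at $\lambda_0$ and then extend via \cite[Lemma~2.2]{BLL13IEOT}, whereas the paper observes that $\gamma(\lambda)^*\in\sS_p$ for every $\lambda\in\rho(A_0)$ and so reads off \eqref{sp4} at once for all $\lambda\in\rho(A_{[B]})\cap\rho(A_0)$ from \eqref{resformula2}; both are valid.
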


\begin{proof}
By Corollary~\ref{maincor} the resolvent formula \eqref{resformula2}
holds for all $\lambda$ in the non-empty set $\rho(A_{[B]})\cap\rho(A_0)$.
As in the proof of Proposition~\ref{spprop}
we conclude that $\gamma(\lambda)^*\in\sS_p(\cH,\cG)$
and $\overline{\gamma(\lambda)}\in\sS_p(\cG,\cH)$ for all
$\lambda\in\rho(A_0)$.  Since $B\in\cB(\cG)$,
the operator $(I-B\overline{M(\lambda)})^{-1}B$ is also in $\cB(\cG)$,
and hence standard properties of Schatten--von Neumann ideals imply that
the right-hand side of \eqref{resformula2} belongs
to the Schatten--von Neumann ideal $\sS_\frac{p}{2}(\cH)$.

Assume now that $A_1$ is self-adjoint and that
$M(\lambda_2)^{-1} \gamma(\overline{\lambda_2})^* \in \sS_q(\cH,\cG)$
for some $\lambda_2\in\rho(A_0) \cap \rho(A_1)$.
From the first part of the proof we have that $\ov{\gamma(\lambda_2)}\in\sS_p(\cG,\cH)$.
Using the identity \eqref{resformulaA1}, standard properties
of Schatten--von Neumann classes and \cite[Lemma 2.2]{BLL13IEOT} we obtain that
\begin{equation}\label{wichtig2}
  (A_1 - \lambda)^{-1} - (A_0 - \lambda)^{-1} \in
  \sS_{(1/p + 1/q)^{-1}}(\cH)
\end{equation}
for all $\lambda\in\rho(A_0)\cap\rho(A_1)$.
From \eqref{sp4} and \eqref{wichtig2}
we conclude that \eqref{sp2mitA1nochmal} holds 	
for $\lambda \in \rho(A_{[B]})\cap\rho(A_0)\cap \rho(A_1)$,
and again \cite[Lemma~2.2]{BLL13IEOT}
shows that this property extends to all
$\lambda\in\rho(A_{[B]})\cap\rho(A_1)$.
\end{proof}

\begin{remark}\label{re:weakSvN}
Propositions~\ref{spprop} and \ref{spprop2} can also be formulated
for abstract operator ideals (see \cite{BLL13IEOT} and \cite{P87} for more details).
In particular, they remain true for the so-called weak Schatten--von Neumann
ideals $\sS_{p,\infty}$ and $\sS^{(0)}_{p,\infty}$ instead of $\sS_p$,
where the ideals $\sS_{p,\infty}$ and $\sS^{(0)}_{p,\infty}$
consist of those compact operators whose singular values $s_k$
satisfy $s_k=\rmO(k^{-1/p})$ and $s_k=\rmo(k^{-1/p})$, respectively, as $k\to\infty$;
cf.\ \cite{GK69}.
\end{remark}

\section{Consequences of the decay of the Weyl function}
\label{sec:consequences}

\noindent
In this section we continue the theme from Section~\ref{sec:closedoperators}.
In addition to the assumptions of the previous section
we now assume that the Weyl function $M$ decays
as $\dist(\lambda,\sigma(A_0)) \to \infty$.
In the first theorem we deal with a situation where $A_0$ is bounded from below.
Recall from~\eqref{mmm} that in this case a decay assumption of the
form $\|\overline{M(\lambda)}\| \to 0$ as $\lambda \to - \infty$
implies that $\overline{M(\lambda)}$ is a non-negative operator in $\cG$
for all $\lambda < \min \sigma(A_0)$.
The following theorem is now a consequence of Corollary~\ref{corchenaaa};
cf.\ \cite[Theorem~2.8]{BLLR17} for the special case when $B$ is symmetric.
Recall that a linear operator $A$ in a Hilbert space is
called \emph{dissipative} (resp., \emph{accumulative})
if $W(A) \subset\ov{\dC^+}$ (resp., $W(A)\subset\ov{\dC^-}$),
and \emph{maximal dissipative} (resp., \emph{maximal accumulative})
if $W(A) \subset\ov{\dC^+}$ and $\rho(A)\cap\dC^- \ne \emptyset$ (resp., $W(A) \subset\ov{\dC^-}$
and $\rho(A)\cap\dC^+ \ne \emptyset$).

\begin{theorem}\label{thm:operators1}
Let $\{\cG,\Gamma_0,\Gamma_1\}$ be a quasi boundary triple for $T\subset S^*$
with corresponding Weyl function $M$.
Assume that $A_0$ is bounded from below,
that $M(\lambda)$ is bounded for one (and hence for all) $\lambda\in\rho(A_0)$
and that
\begin{equation}\label{eq:Mconvergence}
  \bigl\|\overline{M(\lambda)}\bigr\| \to 0 \quad \text{as} \quad \lambda \to - \infty.
\end{equation}
Let $B$ be a closable operator in $\cG$ and assume that there
exists $b\in\dR$ such that
\begin{myenum}
\item 
$\Re(Bx,x) \leq b\|x\|^2 $ \, for all $x \in \dom B$;
\item 
$\ran\ov{M(\lambda)}^{1/2}\subset\dom B$ \, for all $\lambda < \min\sigma(A_0)$;
\item 
$B\bigl(\ran\ov{M(\lambda)}\bigr)\subset\ran\Gamma_0$ \,
for all $\lambda < \min\sigma(A_0)$;
\item 
$\ran\Gamma_1\subset\dom B$;
\item 
$B(\ran\Gamma_1)\subset\ran\Gamma_0$ \, or \, $\rho(A_1)\cap(-\infty,\min\sigma(A_0))\ne\emptyset$.
\end{myenum}
Then the operator
\begin{equation}\label{ab2}
  A_{[B]}f=Tf, \qquad
  \dom A_{[B]}=\bigl\{f\in\dom T:\Gamma_0 f= B\Gamma_1 f\bigr\},
\end{equation}
is a closed extension of $S$ in $\cH$ and
\begin{equation}\label{eq:resSet}
  \big\{\lambda<\min\sigma(A_0): b\|\overline{M(\lambda)}\| < 1 \big\}
  \subset \rho(A_{[B]}).
\end{equation}
In particular, there exists $\mu \leq \min \sigma(A_0)$ such
that $(- \infty, \mu) \subset \rho(A_{[B]})$.  Moreover, the resolvent formula
\begin{equation}\label{resformula4}
  (A_{[B]}-\lambda)^{-1}=(A_0-\lambda)^{-1}
  +\gamma(\lambda)\bigl(I-BM(\lambda)\bigr)^{-1}B\gamma(\overline\lambda)^*
\end{equation}
holds for all $\lambda \in \rho(A_{[B]}) \cap \rho(A_0)$.
If, in addition, $B$ is symmetric (dissipative, accumulative, respectively),
then $A_{[B]}$ is self-adjoint and bounded from below
(maximal accumulative, maximal dissipative, respectively).
	
Further, let $B'$ be a linear operator in $\cG$ that satisfies {\rm(i)--(v)}
with $B$ replaced by $B'$ and assume that
\begin{equation}\label{BBprime_sec5}
  (Bx,y) = (x,B'y) \qquad\text{for all}\;\;x\in\dom B,\;y\in\dom B'.
\end{equation}
Then $A_{[B']}=A_{[B]}^*$ and the left-hand side of \eqref{eq:resSet}
is contained in $\rho(A_{[B']})$.
\end{theorem}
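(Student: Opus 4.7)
My approach is to apply Corollary~\ref{corchenaaa} individually at each $\lambda_0\in\{\lambda<\min\sigma(A_0):b\|\overline{M(\lambda)}\|<1\}$. The decay \eqref{eq:Mconvergence}, together with the monotonicity argument that produces \eqref{mmm}, ensures $\overline{M(\lambda)}\ge 0$ for all $\lambda<\min\sigma(A_0)$, so the positivity part of assumption (ii) of the corollary is met at every such $\lambda_0$, while the norm bound $b\|\overline{M(\lambda_0)}\|<1$ is built into the definition of the set. Assumptions (i), (iii), (iv), (v) of Corollary~\ref{corchenaaa} translate verbatim from the theorem's hypotheses (i)--(iv). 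The one nontrivial item is assumption (vi): $B(\ran\Gamma_1)\subset\ran\Gamma_0$ or $\lambda_0\in\rho(A_1)$, which I want to hold uniformly in $\lambda_0$.

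To secure this, I plan to derive $B(\ran\Gamma_1)\subset\ran\Gamma_0$ from either alternative in hypothesis (v). The first is immediate. In the second, pick any $\lambda_1\in\rho(A_1)\cap(-\infty,\min\sigma(A_0))$, which lies automatically in $\rho(A_0)$ since $A_0$ is bounded from below. As in Step~5 of the proof of Theorem~\ref{mainthm}, the identity $\ran\Gamma_1=\ran M(\lambda_1)$ holds because $\lambda_1\in\rho(A_1)$; combined with $\ran M(\lambda_1)\subset\ran\overline{M(\lambda_1)}$ and the range condition (iii) of the theorem applied at $\lambda_1$, this yields $B(\ran\Gamma_1)\subset\ran\Gamma_0$. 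With this in hand, Corollary~\ref{corchenaaa} applies at every $\lambda_0$ in the set, delivering closedness of $A_{[B]}$, the inclusion \eqref{eq:resSet}, and the resolvent formula \eqref{resformula4}; the existence of $\mu\le\min\sigma(A_0)$ with $(-\infty,\mu)\subset\rho(A_{[B]})$ is then immediate from \eqref{eq:Mconvergence}.

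For the special cases of symmetric, dissipative or accumulative $B$, the plan is to exploit the duality $A_{[B']}=A_{[B]}^*$ from Corollary~\ref{corchenaaa}. If $B$ is symmetric, take $B'=B$; relation \eqref{BBprime_sec5} is the symmetry, and the corollary gives $A_{[B]}=A_{[B]}^*$, i.e.\ self-adjointness; the inclusion $(-\infty,\mu)\subset\rho(A_{[B]})$ then yields semiboundedness. If $B$ is dissipative, Green's identity \eqref{green} applied to $f\in\dom A_{[B]}$, together with the boundary condition, gives $\Im(A_{[B]}f,f)=-\Im(B\Gamma_1 f,\Gamma_1 f)\le 0$, so $A_{[B]}$ is accumulative; maximality follows because the openness of $\rho(A_{[B]})$ around the real interval $(-\infty,\mu)$ produces points in $\rho(A_{[B]})\cap\dC^+$. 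The accumulative case is parallel. Finally, the statement for $B'$ is obtained by applying the first part of the theorem with the roles of $B$ and $B'$ interchanged and invoking the duality of Corollary~\ref{corchenaaa}. The principal obstacle throughout is the uniform verification of assumption (vi) of Corollary~\ref{corchenaaa}; once this is done, the remaining assertions are essentially bookkeeping combined with the duality.
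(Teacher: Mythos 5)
Your proposal matches the paper's proof closely: you reduce to Corollary~\ref{corchenaaa} at each $\lambda_0$ in the set, supply the positivity of $\overline{M(\lambda_0)}$ via~\eqref{mmm}, and dispose of the second alternative in hypothesis~(v) by reducing it to the first exactly as in Step~5 of the proof of Theorem~\ref{mainthm} (using $\ran\Gamma_1=\ran M(\lambda_1)\subset\ran\overline{M(\lambda_1)}$ at some $\lambda_1\in\rho(A_1)\cap\rho(A_0)$). The only genuine deviation is cosmetic: for the self-adjoint case you choose $B'=B$ and invoke the corollary's duality $A_{[B]}^*=A_{[B']}=A_{[B]}$, whereas the paper treats all three sign cases uniformly by first using the abstract Green identity to show $\Im(A_{[B]}f,f)=-\Im(B\Gamma_1 f,\Gamma_1 f)$, hence that $A_{[B]}$ is symmetric/accumulative/dissipative, and then concludes from the non-empty real resolvent set. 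Both are valid; the duality route is slightly slicker for the symmetric case, while the Green-identity route is what the paper needs anyway for the dissipative and accumulative cases. One small imprecision at the end: for the claim that the left-hand side of~\eqref{eq:resSet} lies in $\rho(A_{[B']})$, ``rerunning the first part with $B'$ in place of $B$'' gives the set built from the constant $b'$ for $B'$, not from $b$; the clean route is simply that $A_{[B']}=A_{[B]}^*$ forces $\rho(A_{[B']})=\overline{\rho(A_{[B]})}$, and the set in question is real.
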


\begin{proof}
First note that it can be shown in the same way as in Step~5 in the
proof of Theorem~\ref{mainthm} that the second condition in (v)
and (ii)--(iv) imply the first condition in (v).
Further, the assumption~\eqref{eq:Mconvergence} implies $M(\lambda) \ge 0$
for every $\lambda<\min\sigma(A_0)$; see~\eqref{mmm}.
It follows from Corollary~\ref{corchenaaa} that $\AB$ is a
closed extension of $S$ in $\cH$ and that every point $\lambda<\min\sigma(A_0)$
with the property $b\|\overline{M(\lambda)}\| < 1$ belongs to $\rho(\AB)$.
Note that such $\lambda$ exist due to the decay condition \eqref{eq:Mconvergence}.
Condition~\eqref{eq:Mconvergence} and relation \eqref{eq:resSet} also imply
that there exists $\mu \leq \min \sigma(A_0)$ with
\begin{equation}\label{eq:semibounded}
  (-\infty, \mu) \subset \rho(A_{[B]}).
\end{equation}
The resolvent formula \eqref{resformula4} and the assertions on  $A_{[B']}$
are immediate from Corollary~\ref{corchenaaa}.
	
It remains to show that $A_{[B]}$ is self-adjoint (maximal accumulative,
maximal dissipative, respectively) if $B$ is symmetric
(dissipative, accumulative, respectively).
For this let $f \in \dom A_{[B]}$ and observe that the abstract
Green identity \eqref{green} yields
\begin{equation}\label{eq:Im}
\begin{split}
  \Im(A_{[B]}f,f) &= \frac{1}{2i}\bigl((Tf,f)-(f,Tf)\bigr)
  = \frac{1}{2i}\bigl((\Gamma_1 f,\Gamma_0 f)-(\Gamma_0f,\Gamma_1 f)\bigr) \\
  &= \frac{1}{2i}\Bigl((\Gamma_1 f,B\Gamma_1 f)-(B\Gamma_1 f,\Gamma_1 f)\Bigr)
  = - \Im(B\Gamma_1 f,\Gamma_1 f).
\end{split}
\end{equation}
If $B$ is symmetric (dissipative, accumulative), then $\Im(Bx,x)$ is zero
(non-negative, non-positive, respectively) for all $x \in \dom B$,
and it follows from~\eqref{eq:Im} that $A_{[B]}$ is symmetric
(accumulative, dissipative, respectively).
Now~\eqref{eq:semibounded} implies that $A_{[B]}$ is self-adjoint and
bounded from below (maximal accumulative, maximal dissipative, respectively).
\end{proof}

In the case when $\{\cG, \Gamma_0, \Gamma_1\}$ is a generalized boundary triple,
Theorem~\ref{thm:operators1} simplifies in the following way.

\begin{corollary}\label{cor:operators1GenBT}
Let $\{\cG, \Gamma_0, \Gamma_1\}$ be a generalized boundary triple
for $T \subset S^*$ with corresponding Weyl function $M$.
Assume that $A_0$ is bounded from below
and that
\begin{align*}
  \|M(\lambda)\| \to 0 \quad \text{as} \quad \lambda \to - \infty.
\end{align*}
Let $B$ be a closable operator in $\cG$ and assume that there exists $b\in\dR$ such that
\begin{myenum}
\item 
$\Re(Bx,x) \leq b\|x\|^2 $ for all $x \in \dom B$;
\item 
$\ran M(\lambda)^{1/2}\subset\dom B$ for all $\lambda < \min\sigma(A_0)$;
\item 
$\ran\Gamma_1\subset\dom B$.
\end{myenum}
Then the operator $A_{[B]}$ in~\eqref{ab2} is a closed extension of $S$ in $\cH$ and
\begin{equation}\label{eq:resSet3}
  \big\{\lambda < \min\sigma(A_0): b \| M(\lambda) \| < 1\big\} \subset \rho(A_{[B]}).
\end{equation}
In particular, there exists $\mu \leq \min \sigma(A_0)$
such that $(-\infty,\mu) \subset \rho(A_{[B]})$.
Moreover, the resolvent formula~\eqref{resformula4} holds for all
$\lambda \in \rho(A_{[B]}) \cap \rho(A_0)$.
If, in addition, $B$ is symmetric (dissipative, accumulative, respectively),
then $A_{[B]}$ is self-adjoint and bounded from below
(maximal accumulative, maximal dissipative, respectively).
	
Further, let $B'$ be a linear operator in $\cG$ that satisfies {\rm(i)--(iii)}
with $B$ replaced by $B'$ and assume that \eqref{BBprime_sec5} holds.
Then $A_{[B']}=A_{[B]}^*$ and the left-hand side of \eqref{eq:resSet3}
is contained in $\rho(A_{[B']})$.
\end{corollary}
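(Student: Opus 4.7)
The plan is to deduce Corollary~\ref{cor:operators1GenBT} directly from Theorem~\ref{thm:operators1} by showing that for generalized boundary triples the auxiliary hypotheses of that theorem become either automatic or coincide (after identifying $\overline{M(\lambda)}$ with $M(\lambda)$) with the hypotheses stated here. No genuinely new argument is required; the work is purely a bookkeeping verification.

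First I would invoke condition (ii)$'$ in Definition~\ref{qbtdef}, which gives $\ran\Gamma_0 = \cG$ for a generalized boundary triple. As recalled after Definition~\ref{gmdef}, this implies $M(\lambda)\in\cB(\cG)$ for every $\lambda\in\rho(A_0)$, so $\overline{M(\lambda)} = M(\lambda)$; in particular the boundedness hypothesis on the Weyl function appearing in Theorem~\ref{thm:operators1} is satisfied automatically, and the decay condition $\|M(\lambda)\|\to0$ as $\lambda\to-\infty$ matches $\|\overline{M(\lambda)}\|\to0$ in \eqref{eq:Mconvergence}.

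Next I would match the remaining hypotheses one by one. Condition~(i) of the corollary is identical to condition~(i) of Theorem~\ref{thm:operators1}. Condition~(ii) here coincides with condition~(ii) of Theorem~\ref{thm:operators1}, since $\overline{M(\lambda)}^{1/2}=M(\lambda)^{1/2}$. Condition~(iii) of the corollary matches condition~(iv) of Theorem~\ref{thm:operators1}. The two remaining range conditions in Theorem~\ref{thm:operators1}, namely $B(\ran\overline{M(\lambda)})\subset\ran\Gamma_0$ in~(iii) and the first alternative in~(v), are trivially satisfied because $\ran\Gamma_0=\cG$. Thus all hypotheses of Theorem~\ref{thm:operators1} are in force.

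Applying Theorem~\ref{thm:operators1} then yields at once that $A_{[B]}$ is a closed extension of $S$, that the set in \eqref{eq:resSet3} is contained in $\rho(A_{[B]})$ (since $\overline{M(\lambda)}=M(\lambda)$), that the resolvent formula~\eqref{resformula4} holds on $\rho(A_{[B]})\cap\rho(A_0)$, and that the symmetric, dissipative, or accumulative nature of $B$ is transferred to the corresponding property of $A_{[B]}$. Applying the same collapse of hypotheses to $B'$ and invoking the final statement of Theorem~\ref{thm:operators1} gives $A_{[B']}=A_{[B]}^{*}$ together with the claimed enclosure in $\rho(A_{[B']})$. The only point that warrants a brief sentence of justification, rather than no comment at all, is the automatic identification $\overline{M(\lambda)}=M(\lambda)$; beyond that, the corollary is immediate, and no obstacle of substance arises.
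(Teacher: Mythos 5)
Your proposal is correct and coincides with the paper's intent: the paper offers no explicit proof but prefaces the corollary by stating that Theorem~\ref{thm:operators1} ``simplifies in the following way'' for generalized boundary triples, and your bookkeeping verification—$\ran\Gamma_0=\cG$ forces $M(\lambda)=\overline{M(\lambda)}\in\cB(\cG)$, making the boundedness hypothesis and the two range conditions into $\ran\Gamma_0$ automatic—is exactly the reduction the authors have in mind.
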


\begin{remark}\label{obtremaa}
Note that for an ordinary boundary triple $\{\cG, \Gamma_0, \Gamma_1\}$
condition (iv) in Theorem~\ref{thm:operators1}
(condition (iii) in Corollary~\ref{cor:operators1GenBT}) implies that $B\in\cB(\cG)$.
In this situation the conditions (ii), (iii), and the first condition in (v)
in Theorem~\ref{thm:operators1} (condition (ii) in Corollary~\ref{cor:operators1GenBT})
are automatically satisfied. We shall formulate a corollary on spectral enclosures
in the case of an ordinary boundary triple in Corollary~\ref{obtcoraa} below.
\end{remark}

Let us formulate another corollary of Theorem~\ref{thm:operators1}
(in particular, of the inclusion in \eqref{eq:resSet}).

\begin{corollary}\label{cor:bnegative}
Let all assumptions of Theorem~\ref{thm:operators1} be satisfied and
assume that $b \leq 0$ in {\rm(i)} of Theorem~\ref{thm:operators1}.
Then the closed operator $A_{[B]}$ in~\eqref{ab2} satisfies
\[
  \bigl(-\infty, \min\sigma(A_0)\bigr) \subset \rho(A_{[B]}).
\]
\end{corollary}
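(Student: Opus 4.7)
The plan is to apply Theorem~\ref{thm:operators1} directly and show that when $b\le 0$ the defining condition of the set on the left-hand side of~\eqref{eq:resSet} becomes vacuous on the entire half-line $(-\infty,\min\sigma(A_0))$. All hypotheses of Theorem~\ref{thm:operators1} are assumed to hold, so its conclusions apply: in particular, $A_{[B]}$ is closed, and
\[
  \bigl\{\lambda<\min\sigma(A_0):b\bigl\|\overline{M(\lambda)}\bigr\|<1\bigr\}\subset\rho(A_{[B]}).
\]
Thus the only thing left to check is that this set actually equals the full interval $(-\infty,\min\sigma(A_0))$ under the extra hypothesis $b\le 0$.

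For this, I would first observe that for every $\lambda<\min\sigma(A_0)$ the operator $\overline{M(\lambda)}$ is bounded: by assumption $M(\lambda)$ is bounded for one (and hence all) $\lambda\in\rho(A_0)$, and since $A_0$ is bounded from below, the interval $(-\infty,\min\sigma(A_0))$ is contained in $\rho(A_0)$. Consequently $\|\overline{M(\lambda)}\|$ is a finite non-negative real number for every such $\lambda$. If $b\le 0$, then
\[
  b\bigl\|\overline{M(\lambda)}\bigr\|\le 0<1
\]
for all $\lambda<\min\sigma(A_0)$, so the condition defining the set in~\eqref{eq:resSet} is automatically satisfied on the whole half-line.

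Combining these two observations immediately yields $(-\infty,\min\sigma(A_0))\subset\rho(A_{[B]})$, which is the claim. There is no real obstacle in this argument: it is a direct specialization of Theorem~\ref{thm:operators1} to the sign condition on~$b$, using only that $\|\overline{M(\lambda)}\|$ is finite and non-negative on the relevant interval.
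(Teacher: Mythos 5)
Your proof is correct and matches the paper's approach exactly: the corollary is stated as an immediate consequence of the inclusion~\eqref{eq:resSet} in Theorem~\ref{thm:operators1}, and the observation that $b\le 0$ forces $b\|\overline{M(\lambda)}\|\le 0<1$ for every $\lambda<\min\sigma(A_0)$ is precisely the intended argument.
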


\medskip

We now turn to situations where the rate of decay of the Weyl function
for $\lambda \to -\infty$ is known in more detail.
In such cases we derive spectral estimates for the operator $A_{[B]}$,
which refine the inclusion~\eqref{eq:resSet} in Theorem~\ref{thm:operators1}.
The following proposition provides a first, easy step towards this.
Here we assume that $b$ in Theorem~\ref{thm:operators1}\,(i) is positive;
the case $b\le0$ is treated in Corollary~\ref{cor:bnegative} above.
The proposition is a generalization of \cite[Theorem~2.8\,(b)]{BLLR17}
to the non-self-adjoint setting.

\begin{proposition}\label{prop:specEstReal}
Let $\{\cG,\Gamma_0,\Gamma_1\}$ be a quasi boundary triple for $T\subset S^*$
with corresponding Weyl function $M$.
Assume that $A_0$ is bounded from below, that $M(\lambda)$ is bounded for one
(and hence for all) $\lambda\in\rho(A_0)$ and that there exist
$\beta\in(0,1]$, $C > 0$ and $\mu \leq \min \sigma(A_0)$ such that
\begin{equation}\label{powerdecay}
  \bigl\|\overline{M(\lambda)}\bigr\| \le \frac{C}{(\mu-\lambda)^\beta}
  \qquad \text{for all }\;\lambda < \mu.
\end{equation}
Moreover, let $B$ be a closable operator in $\cG$, let $b>0$, and assume that
conditions {\rm(i)--(v)} in Theorem~\ref{thm:operators1} are satisfied.
Then the operator $A_{[B]}$ in~\eqref{ab2} is closed and satisfies
\begin{equation}\label{eq:specInclReal}
  \bigl(-\infty,\mu-(Cb)^{1/\beta}\bigr) \subset \rho(A_{[B]}).
\end{equation}
\end{proposition}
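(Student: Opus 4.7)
The plan is to derive the result as a direct specialization of Theorem~\ref{thm:operators1}: we will combine the general resolvent inclusion \eqref{eq:resSet} with the quantitative decay bound \eqref{powerdecay} to convert the abstract smallness condition $b\|\overline{M(\lambda)}\|<1$ into the explicit half-line condition $\lambda<\mu-(Cb)^{1/\beta}$.

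First, I would verify that Theorem~\ref{thm:operators1} applies. Conditions (i)--(v) there are assumed verbatim, and the decay hypothesis \eqref{powerdecay} implies in particular $\|\overline{M(\lambda)}\|\to 0$ as $\lambda\to-\infty$, which is the hypothesis \eqref{eq:Mconvergence} of Theorem~\ref{thm:operators1}. Hence $A_{[B]}$ is closed and
\[
  \bigl\{\lambda<\min\sigma(A_0):b\bigl\|\overline{M(\lambda)}\bigr\|<1\bigr\}\subset\rho(A_{[B]}).
\]

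Next, I would translate the estimate. Fix any $\lambda<\mu-(Cb)^{1/\beta}$. Since $\mu\le\min\sigma(A_0)$ by assumption, we have $\lambda<\mu\le\min\sigma(A_0)$, so in particular $\lambda<\mu$ and the bound \eqref{powerdecay} is applicable. The inequality $\lambda<\mu-(Cb)^{1/\beta}$ is equivalent to $\mu-\lambda>(Cb)^{1/\beta}$, and raising both (positive) sides to the power $\beta>0$ yields $(\mu-\lambda)^\beta>Cb$. Dividing, this gives
\[
  b\bigl\|\overline{M(\lambda)}\bigr\|\le\frac{Cb}{(\mu-\lambda)^\beta}<1.
\]

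Combining the last two displays shows $\lambda\in\rho(A_{[B]})$ for every $\lambda<\mu-(Cb)^{1/\beta}$, which is exactly \eqref{eq:specInclReal}. There is no real obstacle here; the argument is just the arithmetic combination of the abstract inclusion with the power-type decay, and the only point that deserves a line of care is ensuring $\lambda<\min\sigma(A_0)$ so that the inclusion \eqref{eq:resSet} of Theorem~\ref{thm:operators1} is legitimately invoked, which is automatic from $\mu\le\min\sigma(A_0)$.
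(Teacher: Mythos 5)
Your proposal is correct and follows essentially the same route as the paper's own proof: invoke Theorem~\ref{thm:operators1} to get closedness and the abstract resolvent inclusion \eqref{eq:resSet}, then use the power-type decay \eqref{powerdecay} to show $b\|\overline{M(\lambda)}\|<1$ whenever $\lambda<\mu-(Cb)^{1/\beta}$. The extra verification you add (that \eqref{powerdecay} implies \eqref{eq:Mconvergence} and that $\lambda<\min\sigma(A_0)$) is implicit in the paper but makes the argument cleaner.
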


\begin{proof}
That $\AB$ is closed follows from Theorem~\ref{thm:operators1}.
Consider $\lambda < \mu-(Cb)^{1/\beta}$.  Then $(\mu-\lambda)^\beta > C b$ and hence
\[
  b\bigl\|\overline{M(\lambda)}\bigr\| \leq b\frac{C}{(\mu-\lambda)^\beta} < 1.
\]
Now Theorem~\ref{thm:operators1} yields that $\lambda \in \rho(A_{[B]})$.
\end{proof}

In the next theorem we study the m-sectorial case discussed in Theorem~\ref{thmsec}
in more detail and obtain refined estimates for the numerical range of $\AB$.
Roughly speaking, if the Weyl function decays for $\lambda\to -\infty$,
then there exists an $\eta_*\in\RR$ such that the assumptions in
Theorem~\ref{thmsec} are satisfied for every $\eta<\eta_*$ and hence
\[
  \sigma(\AB) \subset \ov{W(\AB)} \subset
  \bigcap_{\eta \in (-\infty,\eta_*)} \cS_{\eta}(B).
\]
In the particular case when $\Im B$ is bounded and the Weyl function satisfies
a decay condition as in Proposition~\ref{prop:specEstReal}, we use this fact
to obtain an extension of Proposition~\ref{prop:specEstReal} including estimates
for the non-real spectrum.

\begin{theorem}\label{thm:parabola}
Let $\{\cG,\Gamma_0,\Gamma_1\}$ be a quasi boundary triple for $T\subset S^*$
with corresponding Weyl function $M$ and suppose that $A_1$ is self-adjoint
and that $A_0$ and $A_1$ are bounded from below.
Further, assume that $M(\lambda)$ is bounded for one
(and hence for all) $\lambda\in\rho(A_0)$
and that there exist $\beta\in(0,1]$, $C>0$ and $\mu\le\min\sigma(A_0)$ such that
\begin{equation}\label{eq:estMpower}
  \bigl\|\ov{M(\lambda)}\bigr\| \le \frac{C}{(\mu-\lambda)^\beta}
  \qquad \text{for every}\;\; \lambda<\mu.
\end{equation}
Moreover, let $B$ be a closable linear operator in $\cG$ and
let $b\in\RR$ such that conditions {\rm(i)--(iv)} in
Theorem~\ref{thm:operators1} are satisfied.
Then the operator $\AB$ in \eqref{ab2} is m-sectorial and,
in particular, the inclusion $\sigma(\AB) \subset \ov{W(\AB)}$ holds.
	
Assume, in addition, that $\dom B^*\supset\dom B$ and that $\Im B$ is bounded.
Then the following assertions are true.
\begin{myenuma}
\item 
If\, $b>0$, then for every $\xi < \mu-(Cb)^{1/\beta}$,
\begin{equation}\label{inclWABa}
  W(\AB) \subset \Bigl\{z \in \C : \Re z \ge \mu-(Cb)^{1/\beta},\;
  |\Im z| \le K_\xi(\Re z-\xi)^{1-\beta}\Bigr\}, \hspace*{-3ex}
\end{equation}
where
\[
  K_\xi = \frac{2C\bigl\|\ov{\Im B}\bigr\|}{1-\frac{Cb}{(\mu-\xi)^\beta}}\,.
\]
\item 
If\, $b=0$, then
\begin{equation}\label{inclWABb}
  W(\AB)
  \subset \Bigl\{z \in \C : \Re z \ge \mu,\;
  |\Im z| \le K'_\beta(\Re z-\mu)^{1-\beta}\Bigr\},
\end{equation}
where
\begin{equation}\label{defKprbeta}
  K'_\beta = \begin{cases}
    \dfrac{C\bigl\|\ov{\Im B}\bigr\|}{\beta^\beta(1-\beta)^{1-\beta}} & \text{if}\;\; 0<\beta<1,
    \\[3ex]
    C\bigl\|\ov{\Im B}\bigr\| & \text{if}\;\; \beta=1,
  \end{cases}
\end{equation}
and the convention $0^0=1$ is used in \eqref{inclWABb} when $\beta=1$ and $\Re z=\mu$.
Moreover, $K'_\beta$ satisfies
$C\bigl\|\ov{\Im B}\bigr\| \le K'_\beta \le 2C\bigl\|\ov{\Im B}\bigr\|$.
\item 
If\, $b<0$, then
\begin{equation}\label{inclWABc}
  W(\AB)
  \subset \biggl\{z \in \C : \Re z \ge \mu,\;
  |\Im z| \le \frac{2C\bigl\|\ov{\Im B}\bigr\|(\Re z-\mu)}{(\Re z-\mu)^\beta-Cb}\biggr\}.
\end{equation}
\end{myenuma}
\end{theorem}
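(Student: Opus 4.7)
The plan is to apply Theorem~\ref{thmsec} for every sufficiently negative $\eta$, obtaining $W(\AB)\subset\cS_\eta(B)$, and then to intersect/optimise these sectors using the decay~\eqref{eq:estMpower}. First I would verify that the hypotheses of Theorem~\ref{thmsec} are satisfied for every $\eta<\mu$ with $b\|\ov{M(\eta)}\|<1$: by~\eqref{mmm} and~\eqref{eq:estMpower} one has $\ov{M(\eta)}\ge 0$ for every $\eta<\mu$; Proposition~\ref{pr:A1bddbelow}, which applies thanks to the decay, yields $\min\sigma(A_0)\le\min\sigma(A_1)$, so $\eta<\min\sigma(A_1)$ and $\eta\in\rho(A_0)$; conditions~(i) and~(iii) of Theorem~\ref{thmsec} coincide with Theorem~\ref{thm:operators1}(i),(ii). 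Since $A_1$ is self-adjoint and bounded from below, condition~(v) of Theorem~\ref{thm:operators1} holds automatically, so Proposition~\ref{prop:specEstReal} (if $b>0$) and Corollary~\ref{cor:bnegative} (if $b\le 0$) show that the half-line $(-\infty,\mu-(Cb)^{1/\beta})$, respectively $(-\infty,\mu)$, is contained in $\rho(\AB)$. This produces points of $\rho(\AB)$ outside each sector $\cS_\eta(B)$ and upgrades sectoriality to m-sectoriality; the inclusion $\sigma(\AB)\subset\ov{W(\AB)}$ is standard.

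The key quantitative ingredient under the additional hypotheses on $B$ is the operator identity
\[
  \Im\bigl(\ov{M(\eta)}^{1/2}B\,\ov{M(\eta)}^{1/2}\bigr)
  = \ov{M(\eta)}^{1/2}\,\ov{\Im B}\,\ov{M(\eta)}^{1/2} \in \cB(\cG),
\]
obtained from $\dom B^*\supset\dom B$, $\ov{\Im B}\in\cB(\cG)$ and $\ran\ov{M(\eta)}^{1/2}\subset\dom B$ (Theorem~\ref{thm:operators1}(ii)) by a direct computation of adjoints: for $x,y\in\cG$ one has $\ov{M(\eta)}^{1/2}y\in\dom B\subset\dom B^*$, which identifies $(\ov{M(\eta)}^{1/2}B\ov{M(\eta)}^{1/2})^*$ with $\ov{M(\eta)}^{1/2}B^*\ov{M(\eta)}^{1/2}$. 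Combining with~\eqref{eq:estMpower} then gives the uniform bound
\[
  \kappa_B(\eta) \le \frac{\|\ov{M(\eta)}\|\cdot\|\ov{\Im B}\|}{1-b\|\ov{M(\eta)}\|}
  \le \frac{C\|\ov{\Im B}\|}{(\mu-\eta)^\beta-Cb}
\]
for every admissible $\eta$, and the real-part conditions $\Re z\ge\mu-(Cb)^{1/\beta}$ in (a), respectively $\Re z\ge\mu$ in (b), (c), follow from the sector inequality $\Re z\ge\eta$ by taking the supremum of admissible $\eta$.

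It remains to optimise the bound $|\Im z|\le\kappa_B(\eta)(\Re z-\eta)$ over $\eta$ in each of the three cases. For (b), $b=0$, substituting $t=\mu-\eta>0$ and minimising $(\Re z-\mu+t)/t^\beta$ gives $t_*=\beta(\Re z-\mu)/(1-\beta)$ for $\beta\in(0,1)$, producing the constant $K'_\beta$ in~\eqref{defKprbeta}; for $\beta=1$ one lets $t\to\infty$. The two-sided estimate $C\|\ov{\Im B}\|\le K'_\beta\le 2C\|\ov{\Im B}\|$ reflects the fact that $\beta^\beta(1-\beta)^{1-\beta}$ takes values in $[\tfrac12,1]$ on $[0,1]$, with extremum $\tfrac12$ at $\beta=\tfrac12$. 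For (c), $b<0$, the choice $\eta=2\mu-\Re z$ is admissible whenever $\Re z>\mu$ and gives $\Re z-\eta=2(\Re z-\mu)$, $\mu-\eta=\Re z-\mu$, yielding~\eqref{inclWABc} directly; the boundary case $\Re z=\mu$ is handled by continuity. For (a), $b>0$ with $\xi<\mu-(Cb)^{1/\beta}$, a case distinction on $\Re z$ relative to $\mu$ works: when $\mu-(Cb)^{1/\beta}\le\Re z\le\mu$, the choice $\eta=\xi$ produces the linear bound $\kappa_B(\xi)(\Re z-\xi)$, which is dominated by $K_\xi(\Re z-\xi)^{1-\beta}$ since $(\Re z-\xi)^\beta\le(\mu-\xi)^\beta$; when $\Re z>\mu$, the admissible choice $\eta=\mu-(\Re z-\xi)$ gives $\mu-\eta=\Re z-\xi$ and $\Re z-\eta\le 2(\Re z-\xi)$, whence $|\Im z|\le 2C\|\ov{\Im B}\|(\Re z-\xi)^{1-\beta}/\bigl(1-Cb/(\Re z-\xi)^\beta\bigr)\le K_\xi(\Re z-\xi)^{1-\beta}$ by monotonicity of $\lambda\mapsto 1/(1-Cb/\lambda^\beta)$ for $\lambda>(Cb)^{1/\beta}$.

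The main obstacle is the operator-theoretic identity in the second paragraph: since $B$ is only closable and typically unbounded, rigorously identifying $(\ov{M(\eta)}^{1/2}B\,\ov{M(\eta)}^{1/2})^*$ with $\ov{M(\eta)}^{1/2}B^*\,\ov{M(\eta)}^{1/2}$ requires careful domain bookkeeping, and this is precisely the role of the new hypothesis $\dom B^*\supset\dom B$. Once this identity and the attendant bound on $\kappa_B(\eta)$ are secured, the remaining work is a collection of elementary scalar minimisations together with the invocation of Theorems~\ref{thmsec} and~\ref{thm:operators1}.
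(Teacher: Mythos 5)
Your proposal is correct and essentially reproduces the paper's argument: m-sectoriality via Theorems~\ref{thmsec} and~\ref{thm:operators1}, the adjoint identity $\bigl(\ov{M(\eta)}^{1/2}B\ov{M(\eta)}^{1/2}\bigr)^*=\ov{M(\eta)}^{1/2}B^*\ov{M(\eta)}^{1/2}$ (using $\dom B^*\supset\dom B$ and $\ran\ov{M(\eta)}^{1/2}\subset\dom B$) to get $\kappa_B(\eta)\le C\|\ov{\Im B}\|/\bigl((\mu-\eta)^\beta-Cb\bigr)$, and then scalar optimisation over $\eta$ in each case, with the optimal choices $\eta=\frac{1}{1-\beta}(\mu-\beta\Re z)$ for (b) and $\eta=2\mu-\Re z$ for (c) matching the paper exactly. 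The only small deviation is in part~(a): where you split into the two ranges $\Re z\le\mu$ and $\Re z>\mu$, the paper avoids the case distinction by the single admissible choice $\eta=2\xi-\Re z$ for every $\Re z>\xi$, bounding $(\mu-\eta)^\beta$ from below by $(\xi-\eta)^\beta=(\Re z-\xi)^\beta$ in the numerator and by $(\mu-\xi)^\beta$ in the denominator, which yields \eqref{inclWABa} in one stroke.
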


\medskip

\noindent
See Figure~\ref{fig:parabolic_regions} for plots of the regions given
by the right-hand sides of \eqref{inclWABa}, \eqref{inclWABb}, \eqref{inclWABc}.
Notice that in Theorem~\ref{thm:parabola}\,(a)
we get, in fact, a family of enclosures in parabola-type regions that depend
on the choice of the parameter $\xi$. By intersecting all these regions
with respect to $\xi \in (-\infty,\mu - (Cb)^{1/\beta})$ one gets
a finer enclosure for the numerical range of $\AB$.

\begin{figure}[ht]
\begin{center}
\hspace*{-8ex}
\begin{tabular}{cc}
\begin{minipage}{7cm}
\begin{center}
\includegraphics[width=3cm]{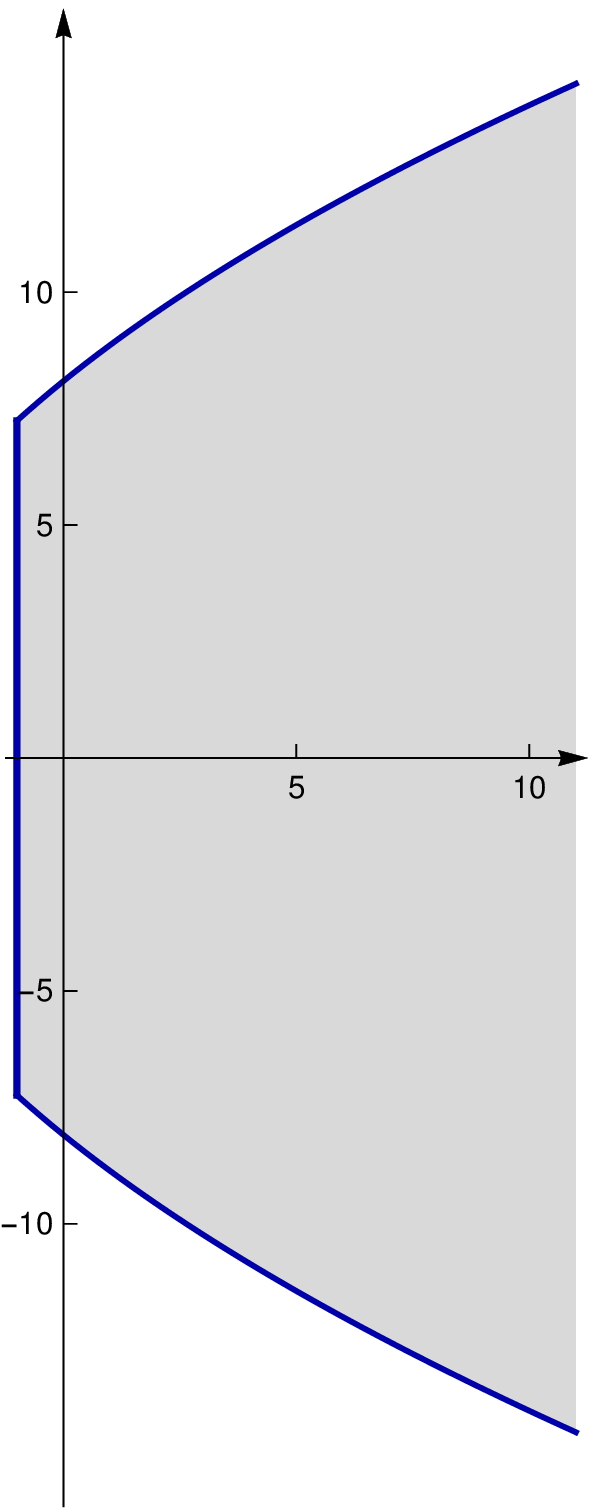} \\[1ex]
(a) $b=1$, $\beta=\frac{1}{2}$, $\xi=-5$
\\[2ex]
\includegraphics[width=4cm]{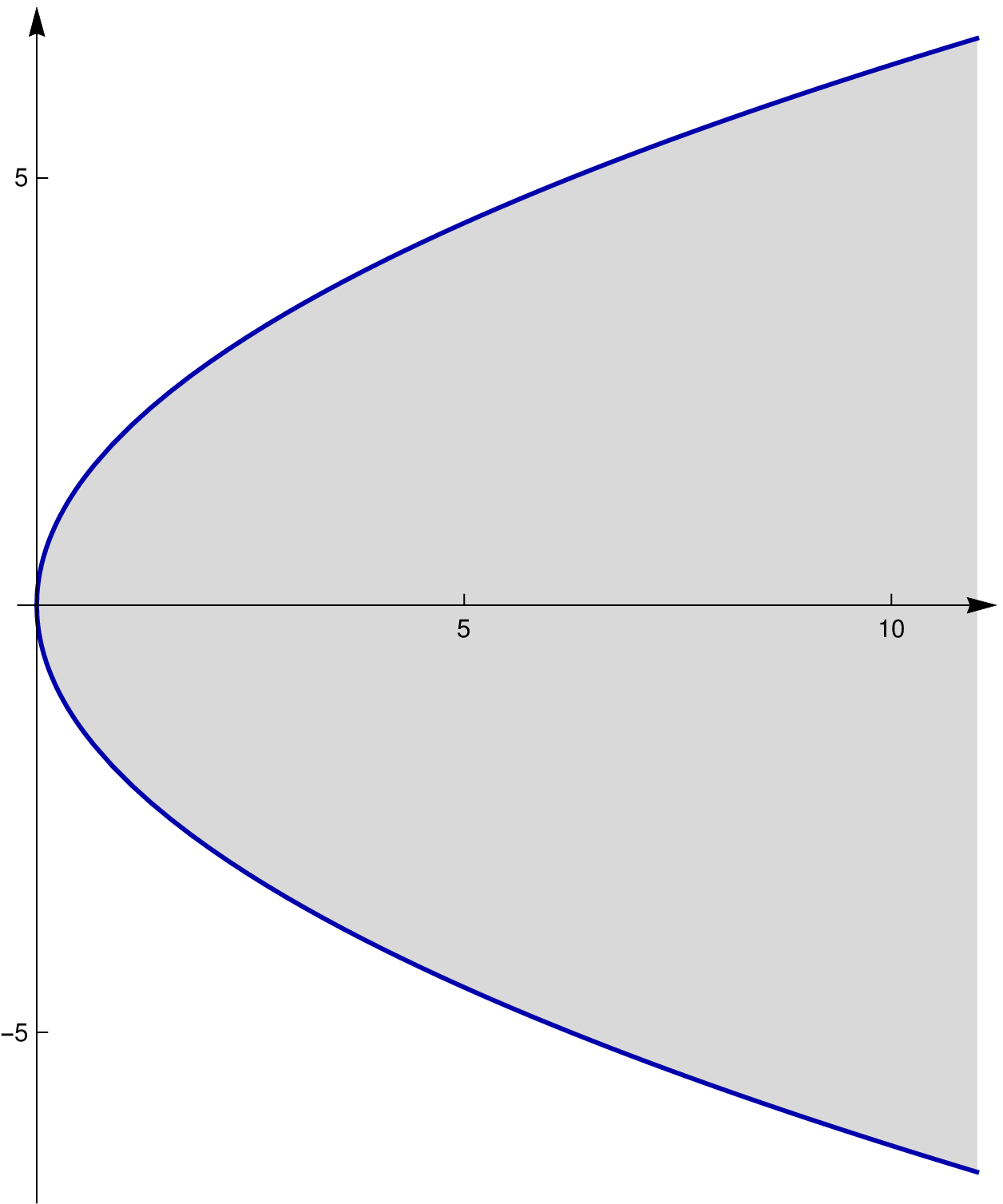} \\[1ex]
(b) $b=0$, $\beta=\frac{1}{2}$
\end{center}
\end{minipage}
\begin{minipage}{7cm}
\begin{center}
\includegraphics[width=5cm]{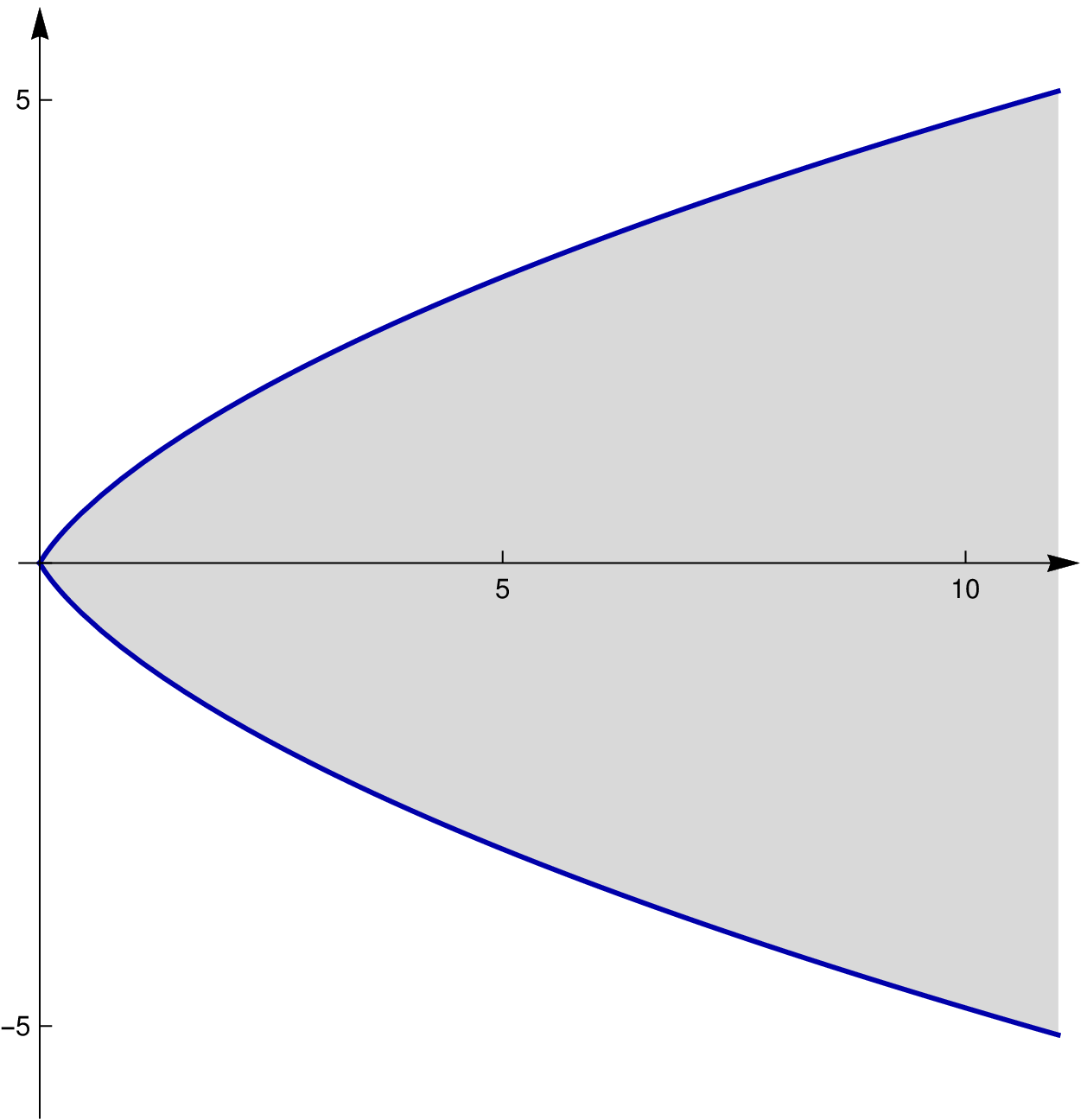} \\[1ex]
(c) $b=-1$, $\beta=\frac{1}{2}$
\\[4ex]
\includegraphics[width=5.5cm]{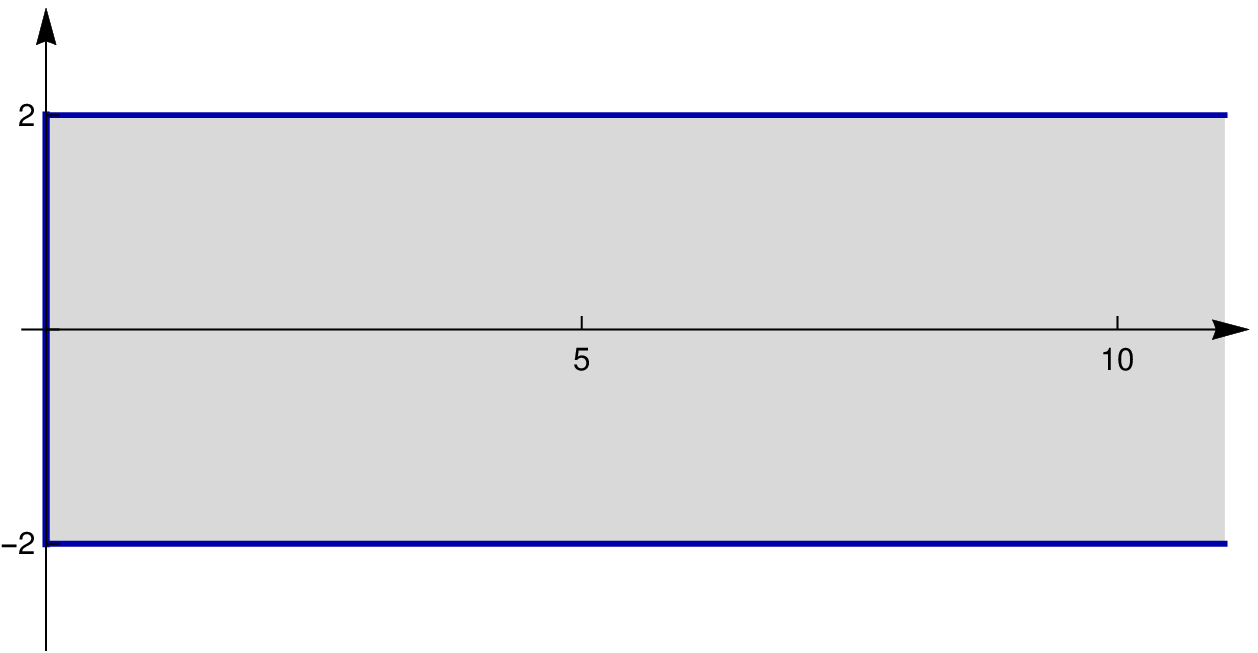} \\[1ex]
(d) $b=0$, $\beta=1$
\\[4ex]
\includegraphics[width=5.5cm]{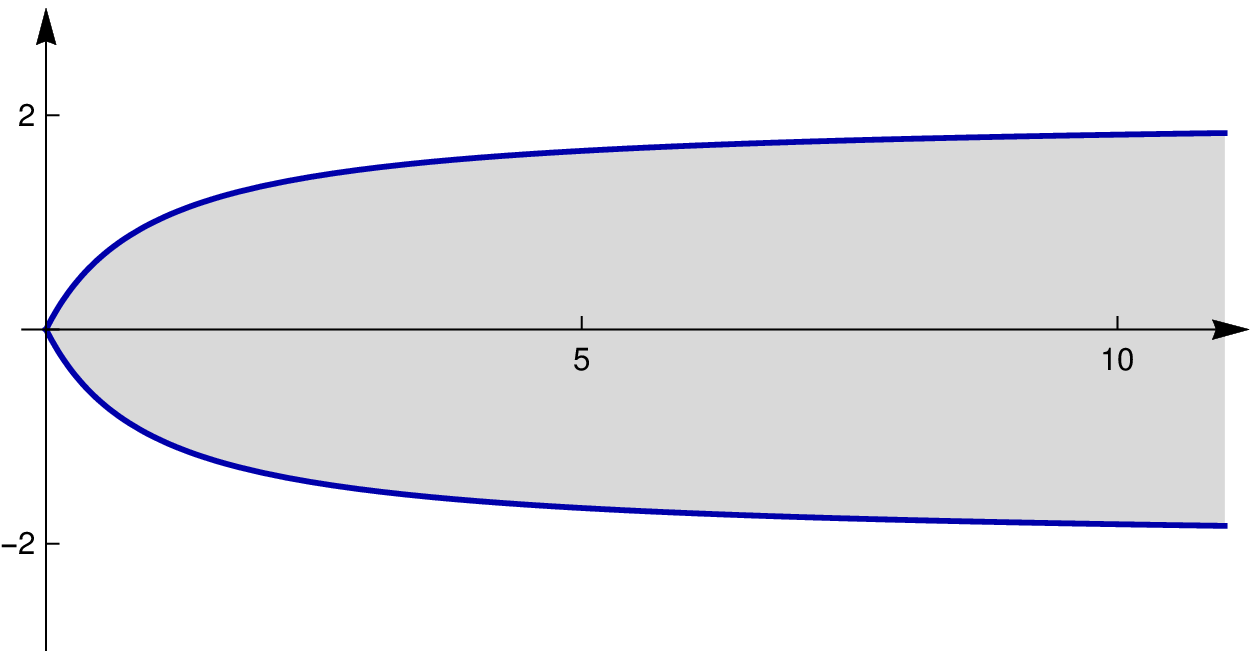} \\[1ex]
(e) $b=-1$, $\beta=1$
\end{center}
\end{minipage}
\end{tabular}
\caption{The plots show the regions given by the right-hand sides
of \eqref{inclWABa}, \eqref{inclWABb}, \eqref{inclWABc}
with $\mu=0$, $C=1$ and $\|\ov{\Im B}\|=1$ for the
following cases: $\beta=\frac{1}{2}$ in (a)--(c) ($b>0$, $b=0$, $b<0$, respectively)
and $\beta=1$ in (d), (e) ($b=0$, $b<0$, respectively).}
\label{fig:parabolic_regions}
\end{center}
\end{figure}

\begin{proof}
Note first, that the conditions of Theorem~\ref{thmsec} are satisfied;
we point out, particularly, that by~\eqref{eq:estMpower} and~\eqref{mmm}
we have $M(\lambda) \ge 0$ for each $\lambda < \min\sigma(A_0) \le \min\sigma(A_1)$
(see Proposition~\ref{pr:A1bddbelow}), and there exists $\eta < \mu$
such that $b\|\ov{M(\eta)}\| < 1$.  Hence, $\AB$ is sectorial.
Since $A_1$ is self-adjoint and bounded from below and the assumptions (i)--(iv)
in Theorem~\ref{thm:operators1} hold, the latter yields $\eta\in\rho(\AB)$.
Thus $\AB$ is m-sectorial and hence $\sigma(\AB)\subset\ov{W(\AB)}$.
	
For the rest of the proof assume that $\dom B^*\supset\dom B$ and
that $\Im B$ is bounded.  For every $\lambda<\min\sigma(A_0)$ we
have $\ran\ov{M(\lambda)}^{1/2}\subset\dom B\subset\dom B^*$ by condition~(ii)
of Theorem~\ref{thm:operators1}; in particular, $B\ov{M(\lambda)}^{1/2} \in \cB(\cG)$
and $B^* \ov{M(\lambda)}^{1/2} \in \cB(\cG)$.
Hence
\begin{align*}
  \bigl(\ov{M(\lambda)}^{1/2}B\ov{M(\lambda)}^{1/2}\bigr)^*
  &= \bigl(B\ov{M(\lambda)}^{1/2}\bigr)^*\ov{M(\lambda)}^{1/2}
  = \Big( \big(\ov{M(\lambda)}^{1/2}B^* \big)^* \Big)^* \ov{M(\lambda)}^{1/2}
  \\[0.5ex]
  &= \ov{\ov{M(\lambda)}^{1/2}B^*}\ov{M(\lambda)}^{1/2} = \ov{M(\lambda)}^{1/2}B^*\ov{M(\lambda)}^{1/2}.
\end{align*}
This implies that
\begin{equation}\label{estImMBM}
\begin{split}
  \bigl\|\Im\bigl(\ov{M(\lambda)}^{1/2}B\ov{M(\lambda)}^{1/2}\bigr)\bigr\|
  &= \frac{1}{2}\Bigl\|\ov{M(\lambda)}^{1/2}B\ov{M(\lambda)}^{1/2}
  -\bigl(\ov{M(\lambda)}^{1/2}B\ov{M(\lambda)}^{1/2}\bigr)^*\Bigr\|
  \\[0.5ex]
  &= \frac{1}{2}\Bigl\|\ov{M(\lambda)}^{1/2}(B-B^*)\ov{M(\lambda)}^{1/2}\Bigr\|
  \\[0.5ex]
  &\le \bigl\|\ov{\Im B}\bigr\|\,\bigl\|\ov{M(\lambda)}\bigr\|,
\end{split}
\end{equation}
where we have used that $\Im B$ is a bounded operator defined on the dense
subspace $\dom B$ of $\cG$.  Let $z\in W(\AB)$.
It follows from Theorem~\ref{thmsec} and \eqref{estImMBM} that, for every
$\eta<\min\sigma(A_0)$ for which $b\|\ov{M(\eta)}\|<1$, the inequalities
\begin{equation}\label{estRezImz}
  \Re z \ge \eta, \qquad
  |\Im z| \le \frac{\bigl\|\ov{\Im B}\bigr\|\,\bigl\|\ov{M(\eta)}\bigr\|}{1-b\bigl\|\ov{M(\eta)}\bigr\|}
  (\Re z-\eta)
\end{equation}
hold.
	
(a) Assume that $b > 0$.
For every $\eta < \mu - (Cb)^{1/\beta}$ we have $\eta<\min\sigma(A_0)$
and, by \eqref{eq:estMpower},
\begin{equation}\label{eq:estbM}
  b\bigl\|\ov{M(\eta)}\bigr\| \le \frac{Cb}{(\mu-\eta)^\beta} < 1.
\end{equation}
Hence \eqref{estRezImz} is true for each such $\eta$.
For the real part of $z$ this yields
\begin{equation}\label{eq:estx15}
  \Re z \ge \mu-(Cb)^{1/\beta}.
\end{equation}
To estimate $|\Im z|$ further, note that the function
\[
  \Bigl(-\infty, \frac{1}{b}\Bigr) \ni t
  \mapsto \frac{\bigl\|\ov{\Im B}\bigr\|t}{1 - bt}
\]
is strictly increasing and that
$\|\ov{M(\eta)}\| \le \frac{C}{(\mu-\eta)^\beta} < \frac{1}{b}$
for all $\eta < \mu - (C b)^{1/\beta}$ by~\eqref{eq:estbM}.
Hence \eqref{estRezImz} yields
\begin{equation}\label{eq:estIm}
  |\Im z| \le \frac{\bigl\|\ov{\Im B}\bigr\|
  \frac{C}{(\mu-\eta)^\beta}}{1-\frac{Cb}{(\mu-\eta)^\beta}}(\Re z-\eta)
  \qquad\text{for all}\;\; \eta < \mu - (C b)^{1/\beta}.
\end{equation}
Now let $\xi < \mu - (C b)^{1/\beta}$ be arbitrary.
Then \eqref{eq:estx15} implies that $\Re z > \xi$.
Choose $\eta \defeq 2\xi - \Re z$, which satisfies $\eta < 2\xi-\xi = \xi$.
From \eqref{eq:estIm} and $\xi < \mu$ we obtain the inequality
\[
  |\Im z| \le \frac{C\bigl\|\ov{\Im B}\bigr\|}{1-\frac{Cb}{(\mu-\xi)^\beta}}
  \cdot\frac{\Re z-\eta}{(\xi-\eta)^\beta}
  = \frac{C\bigl\|\ov{\Im B}\bigr\|}{1-\frac{Cb}{(\mu-\xi)^\beta}}
  \cdot\frac{2(\Re z-\xi)}{(\Re z -\xi)^\beta}\,,
\]
which, together with \eqref{eq:estx15}, shows \eqref{inclWABa}.
	
(b), (c) Assume now that $b \le 0$.
For every $\eta < \mu$ we have $\eta < \min \sigma(A_0)$ and
$b\|\ov{M(\eta)}\|\le0$.  Hence \eqref{estRezImz} is true for $\eta < \mu$,
which, in particular, shows that
\begin{equation}\label{eq:ersteHaelfte}
  \Re z \geq \mu.
\end{equation}
Note that $t \mapsto \frac{\|\ov{\Im B}\|t}{1 - b t}$ is strictly increasing
on $(0,\infty)$. Hence \eqref{estRezImz} and \eqref{eq:estMpower} imply that
\begin{equation}\label{eq:estkappaNochmal}
  |\Im z| \le \frac{\bigl\|\ov{\Im B}\bigr\|\,\bigl\|\ov{M(\eta)}\bigr\|}{1-b\bigl\|\ov{M(\eta)}\bigr\|}
  (\Re z-\eta)
  \le \frac{\bigl\|\ov{\Im B}\bigr\|\,\frac{C}{(\mu-\eta)^\beta}}{1-\frac{Cb}{(\mu-\eta)^\beta}}
  (\Re z-\eta).
\end{equation}
Assume first that $\Re z>\mu$.
Now we distinguish the two cases $b=0$ and $b<0$.
First let $b=0$ and $\beta\in(0,1)$.  We choose
\[
  \eta \defeq \frac{1}{1-\beta}(\mu-\beta\Re z),
\]
which yields
\[
  \Re z-\eta = \frac{1}{1-\beta}(\Re z-\mu) \qquad\text{and}\qquad
  \mu-\eta = \frac{\beta}{1-\beta}(\Re z-\mu);
\]
in particular, we have $\eta<\mu$.
Hence \eqref{eq:estkappaNochmal} implies that
\begin{align*}
  |\Im z| &\le C\bigl\|\ov{\Im B}\bigr\|\frac{\Re z-\eta}{(\mu-\eta)^\beta}
  = C\bigl\|\ov{\Im B}\bigr\|
  \frac{\frac{1}{1-\beta}(\Re z-\mu)}{\bigl[\frac{\beta}{1-\beta}(\Re z-\mu)\bigr]^\beta}
  \\[1ex]
  &= \frac{C\bigl\|\ov{\Im B}\bigr\|}{\beta^\beta(1-\beta)^{1-\beta}}(\Re z-\mu)^{1-\beta},
\end{align*}
which shows that $z$ is contained in the right-hand side of \eqref{inclWABb}.
Taking the limit $\beta\nearrow1$ we obtain this inclusion also for the case when $\beta=1$.
The estimates for $K'_\beta$ follow from the fact that the
function $f(\beta)=\beta^\beta(1-\beta)^{1-\beta}$, $\beta\in(0,1)$ has a
unique minimum at $\beta=\frac{1}{2}$ and that $f(\beta)\to1$
as $\beta\searrow0$ or $\beta\nearrow1$.

Now let $b<0$ (and still $\Re z>\mu$).
We choose $\eta \defeq 2\mu - \Re z$, which yields
\[
  \Re z - \eta = 2(\Re z - \mu) \qquad\text{and}\qquad \mu-\eta = \Re z - \mu.
\]
Therefore \eqref{eq:estkappaNochmal} implies that
\[
  |\Im z| \le \frac{2C\bigl\|\ov{\Im B}\bigr\|}{(\Re z -\mu)^\beta-Cb}(\Re z - \mu),
\]
and hence $z$ is contained in the right-hand side of \eqref{inclWABc}.
Since the numerical range $W(\AB)$ is a convex set, the inclusions
\eqref{inclWABb} and \eqref{inclWABc} hold also for $z$ with $\Re z=\mu$.
\end{proof}

\medskip

Next we formulate a variant of Theorem~\ref{thm:parabola} for the special case
when $\{\cG,\Gamma_0,\Gamma_1\}$ is an ordinary boundary triple.
In this case the assumptions in Theorem~\ref{thm:operators1} imply that $B$
is a bounded operator in $\cG$; cf.\ Remark~\ref{obtremaa}.

\begin{corollary}\label{obtcoraa}
Let $\{\cG,\Gamma_0,\Gamma_1\}$ be an ordinary boundary triple for $S^*$
with corresponding Weyl function $M$ and suppose that the self-adjoint
operators $A_0$ and $A_1$ are bounded from below.
Further, assume that there exist $\beta\in(0,1]$, $C>0$ and
$\mu\le\min\sigma(A_0)$ such that
\[
  \bigl\|M(\lambda)\bigr\| \le \frac{C}{(\mu-\lambda)^\beta}
  \qquad \text{for every}\;\; \lambda<\mu.
\]
Let $B\in\cB(\cG)$ be a bounded, everywhere defined operator in $\cG$ and
let $b\in\RR$ be such that $\Re(Bx,x) \le b\|x\|^2$  for all $x \in \cG$.
Then the operator $\AB$ in \eqref{ab2} is m-sectorial and,
in particular, the inclusion $\sigma(\AB) \subset \ov{W(\AB)}$ holds.
Moreover, the assertions in Theorem~\ref{thm:parabola}\,(a), (b) and (c) are true.
\end{corollary}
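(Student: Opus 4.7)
The plan is to deduce Corollary~\ref{obtcoraa} as a direct specialization of Theorem~\ref{thm:parabola} by verifying that every hypothesis of that theorem is automatic in the ordinary boundary triple setting.

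First I would set up the standard framework. For an ordinary boundary triple one has $T=S^*$, and both $A_0=S^*\upharpoonright\ker\Gamma_0$ and $A_1=S^*\upharpoonright\ker\Gamma_1$ are self-adjoint; by hypothesis they are bounded from below. Moreover, the Weyl function $M$ of an ordinary boundary triple satisfies $M(\lambda)\in\cB(\cG)$ for every $\lambda\in\rho(A_0)$, so $M(\lambda)=\ov{M(\lambda)}$ and the decay estimate assumed in the corollary coincides with \eqref{eq:estMpower} with the same constants $\beta$, $C$, $\mu$. In particular the boundedness of $M(\lambda)$ for one $\lambda\in\rho(A_0)$ holds automatically.

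Next I would verify conditions (i)--(iv) of Theorem~\ref{thm:operators1} for the everywhere defined, bounded operator $B$. Since $\dom B=\cG$, condition (i) is exactly the assumption $\Re(Bx,x)\le b\|x\|^2$ for all $x\in\cG$. The inclusions in (ii), (iii), (iv), and the first alternative in (v) are all trivial once one notes that $\dom B=\cG$ and that for an ordinary boundary triple $\ran\Gamma_0=\cG$ (so $B(\ran M(\lambda))\subset\cG=\ran\Gamma_0$ and $B(\ran\Gamma_1)\subset\cG=\ran\Gamma_0$). Hence Theorem~\ref{thm:operators1} applies and, combined with the already observed $\eta\in\rho(\AB)$ for $\eta$ sufficiently negative, the m-sectoriality portion of Theorem~\ref{thm:parabola} (the conclusion preceding the three enclosures) transfers to the present setting; in particular $\sigma(\AB)\subset\ov{W(\AB)}$.

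Finally I would check the additional hypotheses needed for parts~(a)--(c) of Theorem~\ref{thm:parabola}. Since $B\in\cB(\cG)$, the Hilbert space adjoint $B^*\in\cB(\cG)$ is everywhere defined, so $\dom B^*=\cG\supset\cG=\dom B$; likewise $\Im B=\tfrac{1}{2i}(B-B^*)\in\cB(\cG)$ is bounded and coincides with its closure, so $\|\ov{\Im B}\|=\|\Im B\|$. With all hypotheses verified, parts~(a), (b) and (c) of Theorem~\ref{thm:parabola} apply verbatim and yield the three enclosures claimed. There is no real obstacle here; the work is purely the bookkeeping of checking that the closure decorations in Theorem~\ref{thm:parabola} reduce to the underlying operators in the ordinary boundary triple case.
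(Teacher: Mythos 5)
Your proposal is correct and follows exactly what the paper leaves implicit: Corollary~\ref{obtcoraa} has no separate proof in the paper because it is intended as a direct specialization of Theorem~\ref{thm:parabola} (cf.\ Remark~\ref{obtremaa}), and you correctly check that for an ordinary boundary triple with $B\in\cB(\cG)$ every hypothesis of Theorem~\ref{thm:parabola} (and of conditions (i)--(iv) in Theorem~\ref{thm:operators1}) holds trivially, with $\ov{M(\lambda)}=M(\lambda)$, $\dom B=\dom B^*=\cG$, and $\ov{\Im B}=\Im B$.
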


In the following theorem we drop the assumption that $A_0$ is bounded from below,
but we assume that $B\in\cB(\cG)$.
We remark that the condition~\eqref{eq:Mconvergence} does no longer make sense
if $A_0$ is not bounded from below.  Therefore we replace it by the
more appropriate condition~\eqref{eq:Mconvergence'} below.

\begin{theorem}\label{thm:operators2}
Let $\{\cG,\Gamma_0,\Gamma_1\}$ be a quasi boundary triple for $T\subset S^*$
with corresponding Weyl function $M$.  Assume that $M(\lambda)$ is bounded
for one (and hence for all) $\lambda\in\rho(A_0)$ and that
\begin{equation}\label{eq:Mconvergence'}
  \bigl\|\overline{M(re^{i\phi})}\bigr\| \to 0 \qquad \text{as} \;\; r \to  \infty
\end{equation}
for some fixed $\phi \in (-\pi,0)\cup(0,\pi)$. Let $B \in \cB(\cG)$ be such that
\begin{myenum}
\item
$B\bigl(\ran\ov{M(\lambda)}\bigr)\subset\ran\Gamma_0$ \, for all $\lambda\in\rho(A_0)$;
\item
$B(\ran\Gamma_1)\subset\ran\Gamma_0$ \, or \, $A_1$ is self-adjoint.
\end{myenum}
Then the operator $\AB$ in \eqref{ab2} is closed,
the resolvent formula \eqref{resformula4} holds for
all $\lambda\in \rho(\AB) \cap \rho(A_0)$, and
\begin{equation}\label{eq:resSet2}
  \big\{\lambda\in\rho(A_0): \bigl\|B\ov{M(\lambda)}\bigr\|<1\big\} \subset \rho(A_{[B]}).
\end{equation}
In particular, for every interval $[\psi_1,\psi_2]\subset(-\pi,0)$
or $[\psi_1,\psi_2]\subset(0,\pi)$ there exists $R_{[\psi_1,\psi_2]}>0$ such that
\begin{equation}\label{partsectrho}
  \bigl\{re^{i\psi}: r\ge R_{[\psi_1,\psi_2]},\,\psi\in[\psi_1,\psi_2]\bigr\}
  \subset \rho(\AB).
\end{equation}
Moreover, if $B$ is self-adjoint (accumulative, dissipative, respectively),
then $\AB$ is self-adjoint (maximal dissipative, maximal accumulative, respectively).

Further, if conditions {\rm(i)} and {\rm(ii)} are satisfied also for the adjoint
operator $B^*$ instead of $B$, then $A_{[B^*]}=A_{[B]}^*$.
\end{theorem}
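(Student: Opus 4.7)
The plan is to reduce the theorem to two successive applications of Corollary~\ref{maincor}, then to invoke Lemma~\ref{le:raysector} to spread the decay of $M$ from the ray $\{re^{i\varphi}:r>0\}$ into open sectors in both half-planes. For the first application I would fix $\lambda_0=re^{i\varphi}$ with $r$ so large that $\|B\overline{M(\lambda_0)}\|\le\|B\|\,\|\overline{M(\lambda_0)}\|<1$; a Neumann-series argument then gives $1\in\rho(B\overline{M(\lambda_0)})$, so hypothesis~(i) of Corollary~\ref{maincor} holds. Hypothesis~(ii) of the corollary is precisely assumption~(i) of the theorem at $\lambda_0$, and hypothesis~(iii) is assumption~(ii) of the theorem, since the second alternative ``$\lambda_0\in\rho(A_1)$'' holds automatically when $A_1$ is self-adjoint and $\lambda_0\notin\RR$. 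Corollary~\ref{maincor} then yields that $A_{[B]}$ is closed, $\lambda_0\in\rho(A_{[B]})$, and that the resolvent formula~\eqref{resformula4} holds on $\rho(A_{[B]})\cap\rho(A_0)$.

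To establish the inclusion~\eqref{eq:resSet2}, I would reapply Corollary~\ref{maincor} at each $\lambda\in\rho(A_0)$ with $\|B\overline{M(\lambda)}\|<1$. Hypotheses~(i) and~(ii) are verified just as before. The delicate point is hypothesis~(iii) for real $\lambda$ under the ``$A_1$ self-adjoint'' alternative of assumption~(ii). For this I would mimic Step~5 of the proof of Theorem~\ref{mainthm}: pick any non-real $\lambda'\in\rho(A_0)\cap\rho(A_1)$, use \cite[Proposition~2.6\,(iii)]{BL07} to obtain $\ran\Gamma_1=\ran M(\lambda')\subset\ran\overline{M(\lambda')}$, and combine this with assumption~(i) of the theorem applied at $\lambda'$ to conclude $B(\ran\Gamma_1)\subset\ran\Gamma_0$. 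Thus the first alternative of~(iii) in Corollary~\ref{maincor} holds globally, and every such $\lambda$ lies in $\rho(A_{[B]})$.

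The partial sector inclusion~\eqref{partsectrho} now follows from~\eqref{eq:resSet2} and Lemma~\ref{le:raysector}, since that lemma propagates $\|\overline{M(re^{i\varphi})}\|\to 0$ to uniform decay of $\|\overline{M(re^{i\psi})}\|$ on any compact sub-arc of $(-\pi,0)$ or $(0,\pi)$, regardless of which half-plane contains $\varphi$. The symmetry assertions follow from Green's identity as in~\eqref{eq:Im}, giving $\Im(A_{[B]}f,f)=-\Im(B\Gamma_1 f,\Gamma_1 f)$, so that symmetry, dissipativity, or accumulativity of $B$ transfers with a sign flip to $A_{[B]}$; maximality then comes from the sector inclusion, which produces resolvent points in both open half-planes. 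The adjoint identity $A_{[B^*]}=A_{[B]}^*$ is a direct application of the last part of Corollary~\ref{maincor} with $\lambda_0=re^{i\varphi}$ and $\overline{\lambda_0}=re^{-i\varphi}$, using $\|\overline{M(\overline{\lambda_0})}\|=\|\overline{M(\lambda_0)}\|\to 0$ as $r\to\infty$. The main obstacle I anticipate is exactly the verification of condition~(iii) of Corollary~\ref{maincor} at real $\lambda$ in the $A_1$-self-adjoint case, which forces the detour through a non-real auxiliary parameter $\lambda'$.
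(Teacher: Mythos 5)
Your proposal is correct and follows essentially the same route as the paper's proof: the argument reduces to Corollary~\ref{maincor} at every $\lambda\in\rho(A_0)$ with $\|B\overline{M(\lambda)}\|<1$, then uses Lemma~\ref{le:raysector} for~\eqref{partsectrho}, the Green-identity computation~\eqref{eq:Im} for the symmetry/dissipativity transfer, and the $A_{[B^*]}$ clause of Corollary~\ref{maincor} for the adjoint. The extra care you take for real $\lambda$ in the $A_1$-self-adjoint alternative (detouring through a non-real $\lambda'\in\rho(A_0)\cap\rho(A_1)$ and Proposition~2.6(iii) of \cite{BL07} to conclude $B(\ran\Gamma_1)\subset\ran\Gamma_0$) is a valid elaboration of a point the paper's terse proof leaves implicit, and matches the technique already displayed in Step~5 of the proof of Theorem~\ref{mainthm} and in the proof of Proposition~\ref{prop:specEst}.
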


\begin{proof}
Let $\lambda \in \rho(A_0)$ with $\|B \overline{M(\lambda)}\| < 1$;
such $\lambda$ exist by \eqref{eq:Mconvergence'}.  Then
\[
  1\in\rho\bigl(B\overline{M(\lambda)}\bigr).
\]
It follows from this and the assumptions of the current theorem that
Corollary~\ref{maincor} can be applied. Thus $A_{[B]}$ is closed,
the resolvent formula \eqref{resformula4} holds for
all $\lambda \in \rho(A_{[B]}) \cap \rho(A_0)$, \eqref{eq:resSet2} is valid,
and the statement on $A_{[B^*]}$ follows.  The relation~\eqref{partsectrho}
follows from \eqref{eq:Mconvergence'}, Lemma~\ref{le:raysector} and \eqref{eq:resSet2}.
If $B$ is symmetric (accumulative, dissipative, respectively), then it follows
as in the proof of Theorem~\ref{thm:operators1} that $A_{[B]}$ is symmetric
(dissipative, accumulative, respectively).
This, together with \eqref{partsectrho}, implies the remaining assertions.
\end{proof}

The next proposition complements Proposition~\ref{prop:specEstReal}.
Here we require a decay condition on the Weyl function on a set $G \subset \rho(A_0)$
that is sufficiently large.  In later sections this is applied to,
e.g.\ all of $\rho(A_0)$ or to certain sectors in the complex plane.

\begin{proposition}\label{prop:specEst}
Let $\{\cG,\Gamma_0,\Gamma_1\}$ be a quasi boundary triple for $T\subset S^*$
with corresponding Weyl function $M$ and assume that $M(\lambda)$ is bounded
for one (and hence for all) $\lambda\in\rho(A_0)$.
Further, let $B\in\cB(\cG)$ such that
\begin{myenum}
\item
$B\bigl(\ran\ov{M(\lambda)}\bigr)\subset\ran\Gamma_0$ for all $\lambda\in\rho(A_0)$;
\item
$B(\ran\Gamma_1)\subset\ran\Gamma_0$ \, or \, $A_1$ is self-adjoint.
\end{myenum}
Let $G \subset \rho(A_0)$ be a set such that there exist $\lambda_n \in G$, $n\in\NN$,
with
\[
  \dist(\lambda_n, \sigma(A_0)) \to \infty \quad \text{as}\;\; n \to \infty.
\]
Then the following assertions hold.
\begin{myenuma}
\item 
If there exist $\beta\in(0,1]$ and $C>0$ such that
\[
  \bigl\|\overline{M(\lambda)}\bigr\|
  \le \frac{C}{\bigl(\dist(\lambda,\sigma(A_0))\bigr)^\beta}
  \qquad \text{for all }\;\lambda\in G,
\]
then $\AB$ is a closed extension of $S$ and
\begin{equation}\label{spec_incl}
  \sigma(A_{[B]})\cap G \subset
  \Bigl\{z \in G: \dist(z,\sigma(A_0))
  \le \bigl(C\|B\|\bigr)^{1/\beta}\Bigr\}.
\end{equation}
\item 
If there exist $\beta\in(0,1]$, $C>0$ and $\mu \leq \min \sigma(A_0)$ such that
\begin{equation}\label{nuistgut}
  \bigl\|\overline{M(\lambda)}\bigr\| \le \frac{C}{|\lambda - \mu|^\beta}
  \qquad \text{for all }\;\lambda\in G,
\end{equation}
then $\AB$ is a closed extension of $S$ and
\begin{equation}\label{spec_inclnochmal}
  \sigma(\AB)\cap G \subset
  \Bigl\{z \in G: |z-\mu| \le \bigl(C\|B\|\bigr)^{1/\beta}\Bigr\}.
\end{equation}
\end{myenuma}
\end{proposition}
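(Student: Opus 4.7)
The plan is to apply Corollary~\ref{maincor} at every $\lambda \in G$ lying strictly outside the right-hand side of \eqref{spec_incl} (respectively \eqref{spec_inclnochmal}). Assumptions (i) and (ii) of the proposition supply precisely the hypotheses (ii) and (iii) of Corollary~\ref{maincor}: when only the second alternative of (ii) is in force (so $A_1$ is self-adjoint but $B(\ran\Gamma_1)\not\subset\ran\Gamma_0$), one uses that $\sigma(A_1)\subset\RR$, so $\lambda\in\rho(A_1)$ is automatic at any non-real base point. Therefore only hypothesis~(i) of Corollary~\ref{maincor}, namely $1\in\rho(B\ov{M(\lambda)})$, remains to be verified at each $\lambda$ considered, and this will follow from the sufficient condition $\|B\ov{M(\lambda)}\|<1$.

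For part (a), let $\lambda\in G$ with $\dist(\lambda,\sigma(A_0))^{\beta}>C\|B\|$. The decay hypothesis together with $B\in\cB(\cG)$ gives
\[
  \bigl\|B\ov{M(\lambda)}\bigr\|
  \le \|B\|\cdot\frac{C}{\bigl(\dist(\lambda,\sigma(A_0))\bigr)^{\beta}} < 1,
\]
so $1\in\rho(B\ov{M(\lambda)})$; Corollary~\ref{maincor} applied at $\lambda_0=\lambda$ then yields $\lambda\in\rho(\AB)$, which by contraposition is exactly~\eqref{spec_incl}. The assumed existence of a sequence $\lambda_n\in G$ with $\dist(\lambda_n,\sigma(A_0))\to\infty$ ensures that the computation succeeds at $\lambda_n$ for $n$ large, which simultaneously secures the closedness of $\AB$. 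Part (b) follows from the identical computation with $|\lambda-\mu|$ in place of $\dist(\lambda,\sigma(A_0))$.

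I expect no major obstacle. The only delicate step is the transfer of hypothesis~(iii) of Corollary~\ref{maincor} from non-real $\lambda$ (where it is automatic when $A_1$ is self-adjoint) to real points of $G$; if needed, this can be circumvented by combining the decay bound with Lemma~\ref{le:mbdd}, which propagates the smallness of $\|\ov{M(\cdot)}\|$ to nearby non-real perturbations. This permits invoking Corollary~\ref{maincor} at a non-real auxiliary base point, after which the full enclosure is read off from the resolvent formula~\eqref{resformula}.
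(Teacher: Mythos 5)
Your overall strategy -- apply Corollary~\ref{maincor} at each $\lambda\in G$ with $\dist(\lambda,\sigma(A_0))>(C\|B\|)^{1/\beta}$, using $\|B\overline{M(\lambda)}\|\le\|B\|\,\|\overline{M(\lambda)}\|<1$ to obtain $1\in\rho(B\overline{M(\lambda)})$ -- is the same as the paper's and is correct. The gap is in the treatment of hypothesis~(iii) of Corollary~\ref{maincor} when only the second alternative of assumption~(ii) (i.e.\ $A_1$ self-adjoint) is available and the base point $\lambda\in G$ is real.

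You correctly observe that $\lambda\in\rho(A_1)$ is automatic for non-real $\lambda$, but your proposed workaround for real $\lambda$ -- pass to a nearby non-real point via Lemma~\ref{le:mbdd} and then ``read off the full enclosure from the resolvent formula~\eqref{resformula}'' -- does not close the gap. Two things go wrong. First, the decay bound $\|\overline{M(\cdot)}\|\le C\cdot\dist(\cdot,\sigma(A_0))^{-\beta}$ is assumed only on $G$, so a nearby non-real point outside $G$ does not come with the needed smallness, and Lemma~\ref{le:mbdd} requires both comparison points to be non-real, so it cannot be used with the real $\lambda$ as reference. Second, and more importantly, the resolvent formula~\eqref{resformula} merely describes $(A_{[B]}-\lambda)^{-1}$ for $\lambda$ already known to lie in $\rho(A_{[B]})\cap\rho(A_0)$; it does not by itself add new points to $\rho(A_{[B]})$. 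To conclude $\lambda\in\rho(A_{[B]})$ at a real point you must actually verify conditions (i)--(iii) of Corollary~\ref{maincor} there, and that is exactly what has not yet been done.

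The correct (and simpler) repair is the one the paper uses: if $A_1$ is self-adjoint, then $\rho(A_0)\cap\rho(A_1)\ne\emptyset$, and for any $\lambda\in\rho(A_0)\cap\rho(A_1)$ one has $\ran\Gamma_1=\ran M(\lambda)\subset\ran\overline{M(\lambda)}$, so assumption~(i) of the proposition yields $B(\ran\Gamma_1)\subset\ran\Gamma_0$. This is a $\lambda$-independent property of $B$, $\Gamma_0$, $\Gamma_1$; once established it supplies the first alternative in condition~(iii) of Corollary~\ref{maincor} at every base point, real or non-real, and the argument then goes through uniformly over $G$. Replacing your Lemma~\ref{le:mbdd} workaround with this observation makes the proof complete.
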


\begin{proof}
We prove only assertion (a); the proof of the second assertion is analogous.
Assume first that condition~(i) and the first condition in~(ii) are satisfied.
By the assumption on $G$, there exists $\lambda\in G$ such that
$\dist(\lambda,\sigma(A_0)) > (C\|B\|)^{1/\beta}$.  Then
\[
  \bigl\|B\overline{M(\lambda)}\bigr\| \leq \|B\| \bigl\|\overline{M(\lambda)}\bigr\|
  < \frac{\bigl(\dist(\lambda,\sigma(A_0))\bigr)^\beta}{C}
  \cdot\frac{C}{\bigl(\dist(\lambda,\sigma(A_0))\bigr)^\beta} = 1
\]
implies that $1\in\rho( B \overline{M(\lambda)})$.  It follows from Theorem~\ref{mainthm}
that $A_{[B]}$ is closed with $\lambda\in\rho(A_{[B]})$.
If the condition~(i) together with the second condition in~(ii) is
satisfied then $\rho(A_0) \cap \rho(A_1) \neq \emptyset$ and for each
$\lambda \in \rho(A_0) \cap \rho(A_1)$ we have
$\ran \Gamma_1 = \ran M(\lambda) \subset \ran \overline{M(\lambda)}$;
see~\cite[Proposition~2.6~(iii)]{BL07}.
Hence, for each such $\lambda$ we have $B(\ran \Gamma_1) \subset \ran \Gamma_0$ by~(i),
that is, the first condition of~(ii) is satisfied as well.
\end{proof}

In the special case $G=\rho(A_0)$ and $A_0\geq 0$ with $\mu=0$ in \eqref{nuistgut},
Proposition~\ref{prop:specEst}\,(b) reads as follows.

\begin{corollary}\label{cor:ganzTolleWeylfunktion}
Let the assumptions be as in Proposition~\ref{prop:specEst} and assume,
in addition, that $A_0$ is non-negative and that there exist $\beta\in(0,1]$
and $C>0$ such that
\[
  \bigl\|\overline{M(\lambda)}\bigr\| \le \frac{C}{|\lambda|^\beta}
  \qquad \text{for all}\;\;\lambda\in \rho(A_0).
\]
Then
\[
  \sigma(A_{[B]})\cap \rho(A_0)
  \subset \Bigl\{z \in \rho(A_0): |z| \le \bigl(C\|B\|\bigr)^{1/\beta}\Bigr\}.
\]
\end{corollary}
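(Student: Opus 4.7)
The plan is to deduce this corollary directly from Proposition~\ref{prop:specEst}\,(b) by specializing to $G = \rho(A_0)$ and $\mu = 0$. All the hypotheses are tailored to match, so the work consists of verifying them rather than introducing new arguments.

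First I would observe that the assumption $A_0 \ge 0$ gives $\min \sigma(A_0) \ge 0$, so the choice $\mu \defeq 0$ satisfies $\mu \le \min \sigma(A_0)$, as required in Proposition~\ref{prop:specEst}\,(b). With this $\mu$ the bound $\|\overline{M(\lambda)}\| \le C/|\lambda|^\beta$ becomes exactly $\|\overline{M(\lambda)}\| \le C/|\lambda - \mu|^\beta$ on $G = \rho(A_0)$.

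Next I would verify the hypothesis on $G$, namely the existence of a sequence $\lambda_n \in G$ with $\dist(\lambda_n,\sigma(A_0)) \to \infty$. Since $A_0$ is self-adjoint, $\sigma(A_0) \subset \RR$, and therefore the sequence $\lambda_n \defeq in$ lies in $\rho(A_0) = G$ and satisfies $\dist(\lambda_n,\sigma(A_0)) \ge n \to \infty$. Assumptions (i) and (ii) on $B$ are inherited from the statement of the corollary. Hence Proposition~\ref{prop:specEst}\,(b) applies and yields
\[
  \sigma(\AB) \cap \rho(A_0)
  \subset \Bigl\{z \in \rho(A_0): |z| \le \bigl(C\|B\|\bigr)^{1/\beta}\Bigr\},
\]
which is precisely the claim. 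There is no real obstacle in the argument; the only point requiring a moment's care is to note that the self-adjointness of $A_0$ guarantees enough points of $\rho(A_0)$ lie far from $\sigma(A_0)$ so that $G = \rho(A_0)$ is an admissible choice.
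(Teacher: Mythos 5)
Your proposal is correct and takes essentially the same approach as the paper: the paper states the corollary without a separate proof, introducing it with the remark that it is the special case $G=\rho(A_0)$, $A_0\ge 0$, $\mu=0$ of Proposition~\ref{prop:specEst}\,(b), and your verification (including the choice $\lambda_n=in$ to satisfy the hypothesis on $G$) just spells out that specialization.
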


\section{Sufficient conditions for decay of the Weyl function}
\label{sec:suffconddecay}

\noindent
In this section we consider conditions on the quasi boundary triple
that ensure an asymptotic behaviour of the Weyl function $M$
as required in the results of the previous section. We emphasize
that these results are also new in the settings of ordinary and generalized
boundary triples.
For the next theorem some notation for sectors in the complex plane is needed.
For $z_0 \in \overline{\C^+}$
and $\theta\in\bigl(0,\frac{\pi}{2}\bigr)$ we define the
closed sector $\bbS_{z_0,\theta}$ in $\dC^+$ by
\begin{equation}\label{defSz0th}
  \bbS_{z_0,\theta} \defeq \Bigl\{z\in\CC: z\ne z_0,\,
  \arg(z-z_0)\in\Bigl[\frac{\pi}{2}-\theta,\frac{\pi}{2}+\theta\Bigr]\Bigr\}
  \cup \{z_0\}
\end{equation}
and we denote the corresponding complex conjugate sector in $\dC^-$
by $\bbS_{z_0,\theta}^*$, that is,
\begin{equation*}
  \bbS_{z_0,\theta}^* \defeq \bigl\{z\in\dC:\ov{z}\in\bbS_{z_0,\theta}\bigr\}.
\end{equation*}
Furthermore, for $w_0\in\dR$ and $\nu\in (0,\pi)$ we set
\begin{equation}\label{defUw0nu}
  \mathbb U_{w_0,\nu} \defeq \bigl\{z\in\CC: z\ne w_0,\,
  \arg(z-w_0)\in [\nu,2\pi-\nu]\bigr\}\cup\{w_0\};
\end{equation}
see Figure~\ref{fig:sectors}.

\begin{figure}[ht]
\begin{center}
\begin{tikzpicture}[scale=1.0]
  \draw[fill=lightgray] (-3.5,3.5) -- (-1,1) node[below left] {$z_0$}
  -- (1.5,3.5);
  \draw (-1,1) -- (-1,3.8);
  \draw (0,2) arc (45:135:1.41);
  \draw [->] (-4,0) -- (2.5,0);
  \draw [->] (0,-1.5) -- (0,4);
  \node at (-0.7,1.8) {$\theta$};
  \node at (-1.3,1.8) {$\theta$};
  \node at (-1.7,3) {$\mathbb{S}_{z_0,\theta}$};
\end{tikzpicture}
\hspace*{2ex}
\begin{tikzpicture}[scale=0.8]
  \fill[fill=lightgray] (-4,0) -- (-2,0) -- (1,3) -- (-4,3);
  \fill[fill=lightgray] (-4,0) -- (-2,0) -- (1,-3) -- (-4,-3);
  \draw (1,-3) -- (-2,0)  node[above left] {$w_0$}
  -- (1,3);
  \draw (-0.5,0) arc (0:45:1.5);
  \draw (-0.5,0) arc (360:315:1.5);
  \draw [->] (-4,0) -- (3,0);
  \draw [->] (0,-3.5) -- (0,3.5);
  \node at (-1.0,0.5) {$\nu$};
  \node at (-1.0,-0.5) {$\nu$};
  \node at (-2.5,2) {$\mathbb{U}_{w_0,\nu}$};
\end{tikzpicture}
\end{center}
\caption{The sectors $\mathbb{S}_{z_0,\theta}$ and $\mathbb{U}_{w_0,\nu}$, defined
in \eqref{defSz0th} and \eqref{defUw0nu}, respectively.}
\label{fig:sectors}
\end{figure}
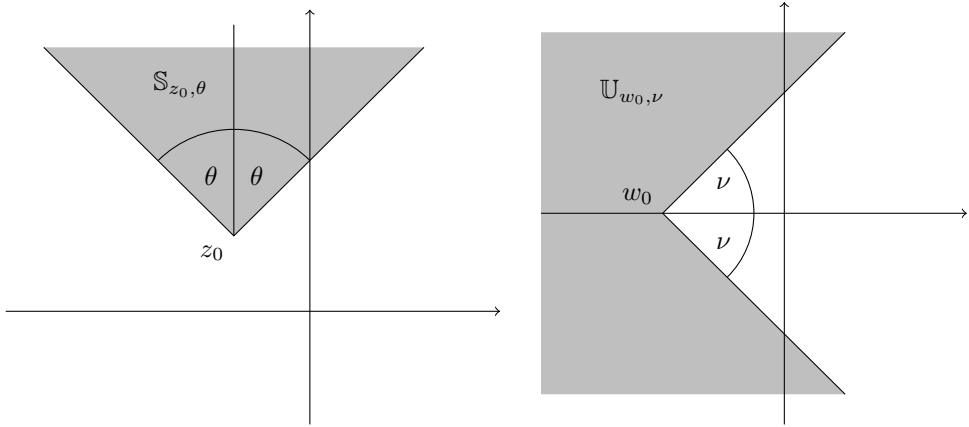

In the proof of the next theorem we need the following fact
from the functional calculus for self-adjoint operators, which
is found, e.g.\ in \cite[Theorem~5.9]{S12}:
for a self-adjoint operator $A$ and measurable functions $\Phi,\Psi:\sigma(A)\to\C$
one has
\begin{equation}\label{funct_calc}
  \ov{\Phi(A)\Psi(A)} = (\Phi\Psi)(A).
\end{equation}
If $\Psi$ is bounded on $\sigma(A)$, then the closure on the left-hand side
is not needed.

\begin{theorem}\label{th:weyl_decay}
Let $S$ be a densely defined, closed, symmetric operator in a Hilbert space $\cH$
and let $\Pi=\{\cG,\Gamma_0,\Gamma_1\}$ be a quasi boundary triple for $T\subset S^*$
with corresponding Weyl function $M$.  Moreover, assume that
\begin{equation}
\label{bnd}
  \Gamma_1|A_0 -\mu|^{-\alpha} : \cH \supset \dom(\Gamma_1|A_0
  -\mu|^{-\alpha}) \to \cG
\end{equation}
is bounded for some $\mu\in\rho(A_0)$ and some $\alpha\in\bigl(0,\frac12\bigr]$.
Then the following assertions hold.
\begin{myenuma}
\item 
$M(\lambda)$ is bounded for all $\lambda\in\rho(A_0)$.
\item 
For all $z_0 \in \overline{\CC^+} \cap \rho(A_0)$ and
all $\theta\in (0,\frac{\pi}{2})$
there exists $C = C(\Pi, \alpha, \mu, z_0, \theta) > 0$ such that
\begin{equation}\label{weyl_decay}
   \big\|\ov{M(\lambda)}\big\|
   \le \frac{C}{\bigl(\dist(\lambda,\sigma(A_0))\bigr)^{1-2\alpha}}
   \end{equation}
   for all $\lambda\in\bbS_{z_0,\theta}\cup\bbS_{z_0,\theta}^*$.
\item 
If $A_0$ is bounded from below, then for all $w_0<\min\sigma(A_0)$
and all $\nu\in (0,\pi)$
there exists $D = D(\Pi, \alpha, \mu, w_0, \nu) > 0$ such that
\begin{equation}\label{weyl_decay2}
   \big\|\ov{M(\lambda)}\big\|
   \le \frac{D}{\bigl(\dist(\lambda,\sigma(A_0))\bigr)^{1-2\alpha}}
\end{equation}
for all $\lambda\in\mathbb U_{w_0,\nu}$.
\end{myenuma}
\end{theorem}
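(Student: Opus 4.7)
The plan is to exploit the factorization
\[
\gamma(\overline\lambda)^* = \Gamma_1(A_0-\lambda)^{-1} = \bigl[\Gamma_1|A_0-\mu|^{-\alpha}\bigr]\cdot\bigl[|A_0-\mu|^\alpha(A_0-\lambda)^{-1}\bigr],
\]
in which the first factor extends to a bounded operator of norm at most some $K$ by the assumption, and the second is a bounded Borel function of the self-adjoint operator $A_0$ whose operator norm equals $\sup_{t\in\sigma(A_0)}|t-\mu|^\alpha/|t-\lambda|$. Part (a) follows at once from this: the factorization gives $\gamma(\overline\lambda)^*\in\cB(\cH,\cG)$ and, by duality together with \eqref{eq:gammaWunder}, also $\overline{\gamma(\lambda)}\in\cB(\cG,\cH)$ for every $\lambda\in\rho(A_0)$; then \eqref{der_M} with $n=1$ shows that $\overline{M'(\lambda)} = \gamma(\overline\lambda)^*\overline{\gamma(\lambda)}$ is bounded throughout $\rho(A_0)$, and combining this with the boundedness of $\overline{\Im M(\mu_0)}$ via \eqref{gammmm} at a non-real reference point $\mu_0$ and Lemma~\ref{le:mbdd} yields boundedness of $\overline{M(\lambda)}$ for every $\lambda\in\rho(A_0)$.

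For parts (b) and (c) I would first control the supremum above by a uniform geometric estimate. Using the subadditivity $|t-\mu|^\alpha\le|t-\lambda|^\alpha+|\lambda-\mu|^\alpha$ (valid since $\alpha\in(0,1]$),
\[
\sup_{t\in\sigma(A_0)}\frac{|t-\mu|^\alpha}{|t-\lambda|} \le \frac{1}{(\dist(\lambda,\sigma(A_0)))^{1-\alpha}} + \frac{|\lambda-\mu|^\alpha}{\dist(\lambda,\sigma(A_0))}.
\]
On $\bbS_{z_0,\theta}\cup\bbS_{z_0,\theta}^*$ the geometry yields $\dist(\lambda,\sigma(A_0))\ge|\Im\lambda|\ge\cos\theta\cdot|\lambda-z_0|$ and $|\lambda-\mu|\le|\lambda-z_0|+|z_0-\mu|$, and an analogous estimate is available throughout $\mathbb U_{w_0,\nu}$ in the semibounded setting. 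Inserting these into the display gives $\sup\le C/(\dist(\lambda,\sigma(A_0)))^{1-\alpha}$ uniformly in the region, whence, via the factorization and \eqref{eq:gammaWunder}, $\|\gamma(\overline\lambda)^*\|,\|\overline{\gamma(\lambda)}\|\le C_1/(\dist(\lambda,\sigma(A_0)))^{1-\alpha}$ and therefore, again by \eqref{der_M},
\[
\bigl\|\overline{M'(\lambda)}\bigr\| \le \frac{C_2}{(\dist(\lambda,\sigma(A_0)))^{2-2\alpha}}.
\]

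The final step converts this derivative estimate into the announced bound on $\|\overline{M(\lambda)}\|$. For $\alpha=1/2$ the claim reduces to boundedness of $M$ and is covered by part (a). For $\alpha\in(0,1/2)$ the exponent $2-2\alpha>1$ makes $\overline{M'}$ integrable along any ray going to infinity inside the region, so operator-norm integration of the derivative formula produces a limit $\overline{M(\infty)}\in\cB(\cG)$ along such a ray, together with
\[
\bigl\|\overline{M(\lambda)}-\overline{M(\infty)}\bigr\| \le \frac{C_3}{(\dist(\lambda,\sigma(A_0)))^{1-2\alpha}}.
\]
The delicate point, and the one I expect to be the main obstacle, is to identify $\overline{M(\infty)}=0$. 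I would do this on the subspace $\cG_0 := \{x\in\ran\Gamma_0:\gamma(\mu)x\in\dom|A_0-\mu|^\alpha\}$, on which the identity
\[
|A_0-\mu|^\alpha\gamma(\lambda)x = (A_0-\mu)(A_0-\lambda)^{-1}\,|A_0-\mu|^\alpha\gamma(\mu)x
\]
is available; since $(A_0-\mu)(A_0-\lambda)^{-1}=I+(\lambda-\mu)(A_0-\lambda)^{-1}\to 0$ strongly as $\lambda\to\infty$ along a ray (by the spectral theorem and dominated convergence), applying the bounded operator $\Gamma_1|A_0-\mu|^{-\alpha}$ yields $M(\lambda)x\to 0$ for each $x\in\cG_0$. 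Density of $\cG_0$ in $\cG$ follows from the boundedness (valid because $\alpha\le 1/2$) of the adjoint
\[
(|A_0-\mu|^\alpha\gamma(\mu))^* = \bigl[\Gamma_1|A_0-\mu|^{-\alpha}\bigr]\,|A_0-\mu|^{2\alpha}(A_0-\overline\mu)^{-1},
\]
and combined with the operator-norm convergence this forces $\overline{M(\infty)}=0$, giving the decay bound. Uniformity of the bound over the full region $\bbS_{z_0,\theta}\cup\bbS_{z_0,\theta}^*$, respectively $\mathbb U_{w_0,\nu}$, then follows from the uniformity of the geometric constants above (and could alternatively be obtained from Lemma~\ref{le:raysector}).
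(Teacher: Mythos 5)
Your route differs genuinely from the paper's. The paper factorizes $M(\lambda)$ itself as $\ov{\Gamma_1|A_0-\mu|^{-\alpha}}\,\Psi_4(A_0)\,|A_0-\mu|^{1-\alpha}\ov{\gamma(\mu)}$ with $\Psi_4(t)=(t-\mu)|t-\mu|^{2\alpha-1}(t-\lambda)^{-1}$; since the two outer factors are fixed bounded operators and $\|\Psi_4(A_0)\|\lesssim \bigl(\dist(\lambda,\sigma(A_0))\bigr)^{2\alpha-1}$ uniformly over the sectors, the decay in \eqref{weyl_decay} and \eqref{weyl_decay2} follows at once for every $\alpha\in(0,\tfrac12]$. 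You instead factorize $\gamma(\ov\lambda)^*$, deduce $\|\ov{M'(\lambda)}\|\lesssim\bigl(\dist(\lambda,\sigma(A_0))\bigr)^{2\alpha-2}$ via \eqref{der_M}, and integrate along a ray, identifying $\ov{M(\infty)}=0$. For $\alpha<\tfrac12$ the exponent $2-2\alpha>1$ makes the derivative integrable and this indirect argument works, at the cost of extra machinery.

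There is, however, a genuine gap at the endpoint $\alpha=\tfrac12$. You claim this case ``reduces to boundedness of $M$ and is covered by part (a)'', but (a) only gives boundedness of each individual $M(\lambda)$, while \eqref{weyl_decay} with $\alpha=\tfrac12$ asserts a \emph{uniform} bound $\sup_{\lambda\in\bbS_{z_0,\theta}\cup\bbS_{z_0,\theta}^*}\|\ov{M(\lambda)}\|<\infty$. Your integration strategy cannot supply this: the estimate $\|\ov{M'(\lambda)}\|\lesssim\dist(\lambda,\sigma(A_0))^{-1}$ is no longer integrable along a ray to infinity, and integrating from $z_0$ to $\lambda$ gives at best logarithmic growth in $|\lambda|$, while Lemma~\ref{le:mbdd} gives only a bound growing linearly along rays. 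You would need a separate argument; in fact the factorization $M(\lambda)x=\ov{\Gamma_1|A_0-\mu|^{-\alpha}}\,(A_0-\mu)(A_0-\lambda)^{-1}\,|A_0-\mu|^{\alpha}\gamma(\mu)x$ that you write down in your density discussion supplies it immediately, since $\sup_{t\in\sigma(A_0)}|t-\mu|/|t-\lambda|$ is uniformly bounded on the sectors, but you did not exploit it. Moreover, the density claim for $\cG_0$ is circular as stated: the adjoint $(|A_0-\mu|^\alpha\gamma(\mu))^*$ is only defined once $\cG_0=\dom\bigl(|A_0-\mu|^\alpha\gamma(\mu)\bigr)$ is already known to be dense. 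The clean way out is the paper's: show that $\gamma(\mu)^*|A_0-\mu|^{1-\alpha}$ (exponent $1-\alpha$, not $\alpha$) is densely defined and bounded using \eqref{gammastar} and \eqref{funct_calc}, take adjoints to obtain $|A_0-\mu|^{1-\alpha}\ov{\gamma(\mu)}\in\cB(\cG,\cH)$, and conclude $\ran\ov{\gamma(\mu)}\subset\dom|A_0-\mu|^{1-\alpha}\subset\dom|A_0-\mu|^\alpha$, so that $\cG_0=\ran\Gamma_0$; this same bounded operator is then the right factor in the paper's direct factorization of $M(\lambda)$, rendering the derivative integration unnecessary.
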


\begin{proof}
Let us first observe that $\Gamma_1|A_0-\mu|^{-\alpha}$ is densely
defined.  Indeed, with the functions $\Phi(t) \defeq (t-\mu)^{-1}$ and
$\Psi(t) \defeq (t-\mu)|t-\mu|^{-\alpha}$ we can use \eqref{funct_calc}
and \eqref{gammastar} to write
\[
  \Gamma_1|A_0-\mu|^{-\alpha}
  = \Gamma_1(\Phi\Psi)(A_0) \supset \Gamma_1\Phi(A_0)\Psi(A_0)
  = \gamma(\ov{\mu})^*\Psi(A_0).
\]
Since $\gamma(\ov{\mu})^* \in \cB(\cH,\cG)$ and $\dom\Psi(A_0)
= \dom|A_0-\mu|^{1-\alpha}$ is dense in $\cH$, it follows
that $\Gamma_1|A_0-\mu|^{-\alpha}$ is densely defined.
By assumption~\eqref{bnd} we therefore have
\begin{equation}\label{eq:bounded}
  \overline{\Gamma_1|A_0-\mu|^{-\alpha}} \in \cB(\cH,\cG).
\end{equation}
Note that also $\gamma(\mu)^* |A_0 - \mu|^{1 - \alpha}$ is
densely defined since $\gamma(\mu)^* \in \cB(\cH, \cG)$
and $|A_0 - \mu|^{1 - \alpha}$ is self-adjoint.
Moreover, set
\[
  \begin{alignedat}{2}
    \Phi_1(t) &\defeq (t-\ov\mu)^{-1}, \qquad &
    \Psi_1(t) &\defeq |t-\mu|^{1-\alpha}, \\
    \Phi_2(t) &\defeq |t-\mu|^{-\alpha}, \qquad &
    \Psi_2(t) &\defeq |t-\mu|(t-\ov\mu)^{-1},
  \end{alignedat}
  \qquad t\in\sigma(A_0),
\]
and note that $\Phi_1\Psi_1=\Phi_2\Psi_2$ and that $\Psi_2$ is bounded.
We obtain from \eqref{gammastar},
\eqref{funct_calc} and \eqref{eq:bounded} that
\begin{align*}
  \gamma(\mu)^*|A_0-\mu|^{1-\alpha}
  &= \Gamma_1(A_0-\ov\mu)^{-1}|A_0-\mu|^{1-\alpha}
  \\[0.5ex]
  &= \Gamma_1\Phi_1(A_0)\Psi_1(A_0)
  \subset \Gamma_1(\Phi_1\Psi_1)(A_0)=\Gamma_1(\Phi_2\Psi_2)(A_0)
  \\[0.5ex]
  &= \Gamma_1\Phi_2(A_0)\Psi_2(A_0)
  \subset \ov{\Gamma_1|A_0-\mu|^{-\alpha}}\Psi_2(A_0)
  \in \cB(\cH, \cG).
\end{align*}
Thus $\gamma(\mu)^*|A_0-\mu|^{1-\alpha}$ is
bounded and densely defined. In particular,
\begin{equation}\label{bounded2}
  |A_0-\mu|^{1-\alpha} \ov{\gamma(\mu)}
  = \big[\gamma(\mu)^*|A_0-\mu|^{1-\alpha}\big]^* \in \cB(\cG,\cH),
\end{equation}
where we have used again that $\gamma(\mu)^* \in \cB(\cH,\cG)$.
Let $\lambda\in\rho(A_0)$ and define the functions
\[
  \begin{alignedat}{2}
    \Phi_3(t) &\defeq \frac{t-\mu}{t-\lambda}|t-\mu|^{\alpha-1}, \qquad &
    \Psi_3(t) &\defeq |t-\mu|^{1-\alpha}, \\
    \Phi_4(t) &\defeq |t-\mu|^{-\alpha}, \qquad &
    \Psi_4(t) &\defeq \frac{t-\mu}{t-\lambda}|t-\mu|^{2\alpha-1},
  \end{alignedat}
  \qquad t\in\sigma(A_0),
\]
which satisfy $\Phi_3=\Phi_4\Psi_4$. The functions $\Phi_3$, $\Phi_4$ and $\Psi_4$
are bounded on $\sigma(A_0)$ and
$\ran\ov{\gamma(\mu)}\subset\dom\Psi_3(A_0)$ by \eqref{bounded2}.
Hence for each $g \in \dom M(\lambda) = \ran\Gamma_0$ we have
(where we use \eqref{gammalm} in the second equality)
\begin{equation}\label{factorM}
\begin{split}
  M(\lambda)g &= \Gamma_1\ov{\gamma(\lambda)}g
  = \Gamma_1(A_0-\mu)(A_0-\lambda)^{-1}\ov{\gamma(\mu)}g
    \\[0.5ex]
  &= \Gamma_1(\Phi_3\Psi_3)(A_0)\ov{\gamma(\mu)}g
  = \Gamma_1\ov{\Phi_3(A_0)\Psi_3(A_0)}\;\ov{\gamma(\mu)}g
    \\[0.5ex]
  &= \Gamma_1\Phi_3(A_0)\Psi_3(A_0)\ov{\gamma(\mu)}g
    \\[0.5ex]
  &= \Gamma_1\Phi_4(A_0)\Psi_4(A_0)\Psi_3(A_0)\ov{\gamma(\mu)}g
    \\[0.5ex]
  &= \bigl[\,\ov{\Gamma_1|A_0-\mu|^{-\alpha}}\,\bigr]\Psi_4(A_0)
  \bigl[|A_0-\mu|^{1-\alpha}\ov{\gamma(\mu)}\,\bigr]g.
\end{split}
\end{equation}
According to \eqref{eq:bounded} and \eqref{bounded2} the terms in the square brackets
are bounded and everywhere defined operators, which are independent
of $\lambda$.  Since $\Psi_4$ is bounded on $\sigma(A_0)$, it follows
that $M(\lambda)$ is a bounded, densely defined operator,
and assertion (a) is proved.

Relations \eqref{factorM} and \eqref{bounded2} imply that
\[
  \big\|\ov{M(\lambda)}\big\|
  \le \big\|\ov{\Gamma_1|A_0-\mu|^{-\alpha}}\big\|^2\|\Psi_4(A_0)\|.
\]
Assertions (b) and (c) follow from suitable estimates of $\|\Psi_4(A_0)\|$.
Let $E$ be the spectral measure for the operator $A_0$.
For all $\lambda\in\rho(A_0)$ and all $f\in\cH$ we have
\begin{equation}\label{integral101}
\begin{split}
  \|\Psi_4(A_0)f\|^2
  &= \int_{\sigma(A_0)} \frac{|t-\mu|^{4\alpha}}{|t-\lambda|^2}\,\drm\bigl(E(t)f,f\bigr)
    \\[0.5ex]
  &= \int_{\sigma(A_0)} \frac{|t-\mu|^{4\alpha}}{|t-\lambda|^{4\alpha}}
  \cdot\frac{1}{|t-\lambda|^{2-4\alpha}}\,\drm\bigl(E(t)f,f\bigr)
    \\[0.5ex]
  &\le \frac{1}{\bigl(\dist(\lambda,\sigma(A_0))\bigr)^{2-4\alpha}}
  \int_{\sigma(A_0)} \frac{|t-\mu|^{4\alpha}}{|t-\lambda|^{4\alpha}}
  \,\drm\bigl(E(t)f,f\bigr).
\end{split}
\end{equation}
In order to prove (b), fix $z_0\in\ov{\C^+}\cap\rho(A_0)$ and $\theta\in(0,\pi/2)$.
It remains to estimate the integrand of the last integral in \eqref{integral101}
uniformly in $\lambda\in\bbS_{z_0,\theta}$ and $t\in\sigma(A_0)$.
To this end set $d_{z_0,\theta} \defeq \dist(\bbS_{z_0,\theta},\sigma(A_0))>0$.
Let $\lambda\in\bbS_{z_0,\theta}$, i.e.\
\[
  \Im\lambda \ge \Im z_0 \qquad\text{and}\qquad
  |\Re(\lambda-z_0)| \le \tan\theta \cdot \Im(\lambda-z_0).
\]
If $\lambda\ne z_0$, then
\begin{align*}
  \frac{|t-\mu|^2}{|t-\lambda|^2}
  &= \frac{(t-\Re\mu)^2+(\Im\mu)^2}{|t-\lambda|^2}
    \\
  &\le \frac{3\bigl[(t-\Re\lambda)^2+(\Re\lambda-\Re z_0)^2+(\Re z_0-\Re\mu)^2\bigr]
  +(\Im\mu)^2}{|t-\lambda|^2}
    \\
  &\le 3 + 3\frac{\bigl(\Re(\lambda-z_0)\bigr)^2}{\bigl(\Im(\lambda-z_0)\bigr)^2}
  + \frac{3\bigl(\Re(z_0-\mu)\bigr)^2+(\Im\mu)^2}{d_{z_0,\theta}^2}
    \\
  &\le 3 + 3\tan^2\theta
  + \frac{3\bigl(\Re(z_0-\mu)\bigr)^2+(\Im\mu)^2}{d_{z_0,\theta}^2}\,,
\end{align*}
where the right-hand side is independent of $\lambda$ and $t$; by continuity this
estimate extends to $\lambda=z_0$.
The case $\lambda\in\bbS_{z_0,\theta}^*$ can be treated analogously.
From this, together with~\eqref{factorM} and~\eqref{integral101}, the claim of (b) follows.

To prove (c), let $w_0 < \min\sigma(A_0)$ and $\nu\in(0,\pi)$;
note that $\dist(\bbU_{w_0,\nu},\sigma(A_0)) > 0$.
Let first $\lambda \in \C$ with $\real \lambda < w_0$.
Then with $m \defeq \min\sigma(A_0)$ the integrand of the last integral
in \eqref{integral101} can be estimated using
\begin{align*}
  \frac{|t-\mu|^2}{|t-\lambda|^2}
  &\le \frac{3\big[(t-\Re\lambda)^2+(\Re\lambda-m)^2+(m-\Re\mu)^2\big]
  +(\Im\mu)^2}{(t-\Re\lambda)^2+(\Im\lambda)^2}
    \\
  &\le 3 + 3 + \frac{3(m-\Re\mu)^2+(\Im\mu)^2}{(m-w_0)^2}\,,
\end{align*}
where we have used $t-\Re\lambda \ge m-\Re\lambda \ge m-w_0 > 0$.
If $\nu \ge \pi/2$, this and \eqref{integral101} lead to a uniform estimate
of $\Psi_4(A_0)$ in $\dU_{w_0, \nu}$.  If $\nu \in (0, \pi/2)$, then
\begin{align*}
  \bbU_{w_0,\nu} = \{z\in\C : \Re z < w_0\} \cup \bbS_{w_0,\theta,}
  \cup \bbS_{w_0,\theta,}^*
\end{align*}
with $\theta = \pi/2 - \nu$, and a uniform estimate of the last integral
in \eqref{integral101} for $\lambda \in \bbU_{w_0,\nu}$ follows from the
previous consideration and item (b).  The proof is complete.
\end{proof}

\begin{remark}
Suppose that the assumptions of Theorem~\ref{th:weyl_decay} are satisfied
for $\alpha=\frac12$.  It follows from Theorem~\ref{th:weyl_decay} that $M(\lambda)$
is bounded for every $\lambda\in\rho(A_0)$ and that $\|\ov{M(\lambda)}\|$ is uniformly
bounded on each sector $\bbS_{z_0,\theta}$ as in the theorem.
In addition, we can show (see below) that for each $\bbS_{z_0,\theta}$ as in the theorem,
\begin{equation}\label{Mto0strongly}
  \ov{M(\lambda)}g \to 0 \qquad
  \text{as} \;\; \lambda\to\infty \;\; \text{in} \;\; \bbS_{z_0,\theta}, \;\;
  g\in\cG.
\end{equation}
Similarly, if $A_0$ is bounded from below, then $M(\lambda)$
is bounded for every $\lambda\in\rho(A_0)$ and $\|\ov{M(\lambda)}\|$ is uniformly
bounded on each sector $\bbU_{w_0,\nu}$ as in the theorem, and for
each such $\bbU_{w_0,\nu}$,
\begin{equation}\label{kempinski}
  \ov{M(\lambda)}g \to 0 \qquad
  \text{as} \;\; \lambda\to\infty \;\; \text{in} \;\; \bbU_{w_0,\nu}, \;\;
  g\in\cG.
\end{equation}
To prove \eqref{Mto0strongly} set
\[
  f \defeq |A_0-\mu|^{1/2}\ov{\gamma(\mu)}g
\]
and observe that by \eqref{factorM} it is sufficient to show that
\[
  \|\Psi_4(A_0)f\|^2 = \int_{\sigma(A_0)} \frac{|t-\mu|^2}{|t-\lambda|^2}\,
  \drm\bigl(E(t)f,f\bigr)\to 0 \qquad
  \text{as} \;\; \lambda\to\infty \;\; \text{in} \;\; \bbS_{z_0,\theta}.
\]
It was shown in the proof of Theorem~\ref{th:weyl_decay} that the integrand
is uniformly bounded for $\lambda\in\bbS_{z_0,\theta}$ and $t\in\sigma(A_0)$.
Moreover, the measure $(E(\cdot)f,f)$ is finite and the integrand converges
to $0$ as $\lambda\to\infty$ for each fixed $t\in\sigma(A_0)$.
Hence the dominated convergence theorem implies that $\|\Psi_4(A_0)f\|\to0$
as $\lambda\to\infty$ in $\bbS_{z_0,\theta}$, which proves \eqref{Mto0strongly}.
The same argument also shows \eqref{kempinski}.
\end{remark}

\begin{corollary}
Let $\Pi=\{\cG,\Gamma_0,\Gamma_1\}$ be a quasi boundary triple for $T\subset S^*$
with corresponding $\gamma$-field $\gamma$ and Weyl function $M$ and
assume that the operator in \eqref{bnd}
is bounded for some $\mu\in\rho(A_0)$ and some $\alpha\in\bigl(0,\frac12\bigr]$.
Then the following assertions hold.
\begin{myenuma}
\item 
For all $z_0 \in \overline{\CC^+} \cap \rho(A_0)$ and
all $\theta\in (0,\frac{\pi}{2})$
there exist $C_1=C_1(\Pi,\alpha,\mu,z_0,\theta)$ and $C_2=C_2(\Pi,\alpha,\mu,z_0,\theta)$
such that
\begin{align}
   \big\|\ov{\gamma(\lambda)}\big\|
   &\le \frac{C_1}{\bigl(\dist(\lambda,\sigma(A_0))\bigr)^{1-\alpha}}\,,
     \label{gamma_decay}\\[1ex]
   \bigl\|\ov{M^{(n)}}(\lambda)\bigr\|
   &\le \frac{C_2\,n!}{\bigl(\dist(\lambda,\sigma(A_0))\bigr)^{n+1-2\alpha}}
     \label{weyl_der_decay}
\end{align}
for all $\lambda\in\bbS_{z_0,\theta}\cup\bbS_{z_0,\theta}^*$.
\item 
If $A_0$ is bounded from below, then for all $w_0<\min\sigma(A_0)$
and all $\nu\in (0,\pi)$
there exist $D_1=D_1(\Pi,\alpha,\mu,w_0,\nu)$ and $D_2=D_2(\Pi,\alpha,\mu,w_0,\nu)$ such that
\begin{align}
   \big\|\ov{\gamma(\lambda)}\big\|
   &\le \frac{D_1}{\bigl(\dist(\lambda,\sigma(A_0))\bigr)^{1-\alpha}}\,,
     \label{gamma_decay2}\\[1ex]
   \bigl\|\ov{M^{(n)}}(\lambda)\bigr\|
   &\le \frac{D_2\,n!}{\bigl(\dist(\lambda,\sigma(A_0))\bigr)^{n+1-2\alpha}}
     \label{weyl_der_decay2}
\end{align}
for all $\lambda\in\mathbb U_{w_0,\nu}$.
\end{myenuma}
\end{corollary}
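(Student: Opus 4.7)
The plan is to deduce both the $\gamma$-field and the derivative estimates by reusing the factorization and the uniform bound on $|t-\mu|/|t-\lambda|$ that were already obtained in the proof of Theorem~\ref{th:weyl_decay}, combined with identity~\eqref{der_M} for the derivatives of $M$ and the symmetry relation~\eqref{eq:gammaWunder}.

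For~\eqref{gamma_decay} and~\eqref{gamma_decay2}, I would rewrite the $\gamma$-field using~\eqref{gammalm} as
\[
  \ov{\gamma(\lambda)}
  =(A_0-\mu)(A_0-\lambda)^{-1}\ov{\gamma(\mu)}
  =\Phi_\lambda(A_0)\bigl[|A_0-\mu|^{1-\alpha}\ov{\gamma(\mu)}\bigr],
\]
where $\Phi_\lambda(t)\defeq(t-\mu)|t-\mu|^{\alpha-1}(t-\lambda)^{-1}$. The factor in square brackets belongs to $\cB(\cG,\cH)$ by~\eqref{bounded2} in the proof of Theorem~\ref{th:weyl_decay}, and its norm is independent of~$\lambda$. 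Since
\[
  |\Phi_\lambda(t)|=\biggl(\frac{|t-\mu|}{|t-\lambda|}\biggr)^{\!\alpha}\cdot\frac{1}{|t-\lambda|^{1-\alpha}}\,,
\]
the uniform bound of $|t-\mu|/|t-\lambda|$ for $t\in\sigma(A_0)$ and $\lambda\in\bbS_{z_0,\theta}\cup\bbS_{z_0,\theta}^*$ (respectively $\lambda\in\bbU_{w_0,\nu}$) established in the proof of Theorem~\ref{th:weyl_decay}(b) and~(c), together with $|t-\lambda|\ge\dist(\lambda,\sigma(A_0))$ and the spectral theorem, gives $\|\Phi_\lambda(A_0)\|\le K^\alpha/\dist(\lambda,\sigma(A_0))^{1-\alpha}$ with an appropriate sector-dependent constant~$K$. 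This yields~\eqref{gamma_decay} and~\eqref{gamma_decay2}.

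For~\eqref{weyl_der_decay} and~\eqref{weyl_der_decay2} I would apply identity~\eqref{der_M}, which gives, for every $n\in\NN$,
\[
  \bigl\|\ov{M^{(n)}(\lambda)}\bigr\|
  \le n!\,\bigl\|\gamma(\ov\lambda)^*\bigr\|\cdot\bigl\|(A_0-\lambda)^{-(n-1)}\bigr\|\cdot\bigl\|\ov{\gamma(\lambda)}\bigr\|.
\]
By~\eqref{eq:gammaWunder}, $\|\gamma(\ov\lambda)^*\|=\|\ov{\gamma(\ov\lambda)}\|=\|\ov{\gamma(\lambda)}\|$; since the sets $\bbS_{z_0,\theta}\cup\bbS_{z_0,\theta}^*$ and $\bbU_{w_0,\nu}$ are invariant under complex conjugation, the $\gamma$-field bound from the first step applies to both $\lambda$ and $\ov\lambda$. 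The middle factor is estimated by the spectral theorem via $\|(A_0-\lambda)^{-(n-1)}\|\le\dist(\lambda,\sigma(A_0))^{-(n-1)}$, valid also for $n=1$. Multiplying the three bounds produces~\eqref{weyl_der_decay} and~\eqref{weyl_der_decay2} with constants $C_2=C_1^2$ and $D_2=D_1^2$.

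I do not foresee any serious obstacle: all the technical ingredients, in particular the uniform bound on $|t-\mu|/|t-\lambda|$ over the relevant sectors and the boundedness of $|A_0-\mu|^{1-\alpha}\ov{\gamma(\mu)}$, are already present in the proof of Theorem~\ref{th:weyl_decay}. The only point that requires a small amount of care is the invariance of the two types of sectors under complex conjugation, which is what allows the $\gamma$-field estimate to be applied to the adjoint factor $\gamma(\ov\lambda)^*$ as well as to $\ov{\gamma(\lambda)}$.
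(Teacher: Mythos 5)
Your argument is correct, and for the $\gamma$-field bound it takes a genuinely different route from the paper. The paper derives~\eqref{gamma_decay} from $\|\ov{\gamma(\lambda)}\|=\|\ov{\Im M(\lambda)}\|^{1/2}/|\Im\lambda|^{1/2}$ (combining~\eqref{eq:gammaWunder}, \eqref{normgamma} and the Weyl-function bound~\eqref{weyl_decay}), which requires $\Im\lambda\ne0$; consequently, to obtain~\eqref{gamma_decay2} on the full exterior sector $\bbU_{w_0,\nu}$, including the ray $(-\infty,w_0)$, the paper must invoke a Phragm\'en--Lindel\"of argument to transfer the bound from $\{\Re z=w_0\}$ to the half-plane $\{\Re z\le w_0\}$. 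Your factorization $\ov{\gamma(\lambda)}=\Phi_\lambda(A_0)\,[|A_0-\mu|^{1-\alpha}\ov{\gamma(\mu)}]$ with $\Phi_\lambda(t)=(t-\mu)|t-\mu|^{\alpha-1}(t-\lambda)^{-1}$, together with the uniform bound on $|t-\mu|/|t-\lambda|$ already established in the proof of Theorem~\ref{th:weyl_decay}, works uniformly for every $\lambda\in\rho(A_0)$ in the relevant region, so (a) and (b) are treated at once and the Phragm\'en--Lindel\"of step becomes unnecessary. This is cleaner and shorter; the only thing you lose is the tidy passage through the identity~\eqref{normgamma}, which the paper presumably reuses for expository coherence with Lemma~\ref{le:mbdd}. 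The derivative estimates~\eqref{weyl_der_decay}, \eqref{weyl_der_decay2} via~\eqref{der_M} and $\|\gamma(\ov\lambda)^*\|=\|\ov{\gamma(\ov\lambda)}\|=\|\ov{\gamma(\lambda)}\|$ follow the paper's argument verbatim, and the resulting constant $C_2=C_1^2$ agrees with the one obtained there.
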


\begin{proof}
(a)
First we prove \eqref{gamma_decay}.
Let $z_0\in\ov{\CC^+}\cap\rho(A_0)$ and $\theta\in\bigl(0,\frac{\pi}{2}\bigr)$.
For $\lambda\in\bbS_{z_0,\theta}$ with $\Im\lambda\ge1$ we have
\begin{align*}
  \dist(\lambda,\sigma(A_0))
  &\le |\lambda-z_0|+\dist\bigl(z_0,\sigma(A_0)\bigr) \\[0.5ex]
  &\le \frac{\Im\lambda}{\cos\theta}+\dist(z_0,\sigma(A_0)) \\[0.5ex]
  &\le \biggl(\frac{1}{\cos\theta}+\dist(z_0,\sigma(A_0))\biggr)\Im\lambda.
\end{align*}
This, \eqref{eq:gammaWunder}, \eqref{normgamma} and \eqref{weyl_decay} imply that
\begin{align*}
  \bigl\|\ov{\gamma(\lambda)}\bigr\|
  &= \frac{\bigl\|\ov{\Im M(\lambda)}\bigr\|^{1/2}}{(\Im\lambda)^{1/2}}
  \le \frac{\bigl\|\ov{M(\lambda)}\bigr\|^{1/2}}{(\Im\lambda)^{1/2}}
    \\[0.5ex]
  &\le \frac{C^{1/2}\bigl(\dist(\lambda,\sigma(A_0))\bigr)^{1/2}}{(\Im\lambda)^{1/2}\bigl(\dist(\lambda,\sigma(A_0))\bigr)^{1-\alpha}}
  \le \frac{C^{1/2}\bigl[\frac{1}{\cos\theta}+\dist(z_0,\sigma(A_0))\bigr]^{1/2}}{\bigl(\dist(\lambda,\sigma(A_0))\bigr)^{1-\alpha}}
\end{align*}
for $\lambda\in\bbS_{z_0,\theta}$ with $\Im\lambda\ge1$.
Since $\|\ov{\gamma(\ov\lambda)}\|=\|\ov{\gamma(\lambda)}\|$, see~\eqref{eq:gammaWunder},
and $\gamma$ is bounded on the
set $\{z\in\bbS_{z_0,\theta}\cup\bbS_{z_0,\theta}^*:|\Im z|\le1\}$,
the inequality~\eqref{gamma_decay} is proved.

The inequality in \eqref{weyl_der_decay} is obtained from \eqref{gamma_decay}
and \eqref{der_M} as follows:
\begin{align*}
  \bigl\|\ov{M^{(n)}}(\lambda)\bigr\|
  &\le n!\bigl\|\gamma(\ov\lambda)^*\bigr\|\,\bigl\|(A_0-\lambda)^{-(n-1)}\bigr\|\,
  \bigl\|\ov{\gamma(\lambda)}\bigr\| \\[1ex]
  &\le \frac{n!\,C_1^2}{\bigl(\dist(\lambda,\sigma(A_0))\bigr)^{1-\alpha+n-1+1-\alpha}}\,.
\end{align*}

(b)
Now assume that $A_0$ is bounded from below and set $m\defeq\min\sigma(A_0)$.
Let $w_0<m$ and, without loss of generality, $\nu\in\bigl(0,\frac{\pi}{2}\bigr)$.
Let $x\in\cG$ and $u\in\cH$ and define the function
\[
  f(z) \defeq (m-z)^{1-\alpha}\bigl(\ov{\gamma(z)}x,u\bigr),
  \qquad z\in\CC \;\; \text{with} \;\; \Re z\le w_0,
\]
where the function $\zeta\mapsto\zeta^{1-\alpha}$ is defined with a cut on
the negative half-line.
The already proved item (a) implies that \eqref{gamma_decay} is valid
for $z\in\bbS_{w_0,\theta}$ with $\theta\defeq \frac{\pi}{2}-\nu$ and
some $D_1>0$.  In particular, it is true for $z\in\CC$ with $\Re z=w_0$,
which yields that
\[
  |f(z)| \le |m-z|^{1-\alpha}\bigl\|\ov{\gamma(z)}\bigr\|\,\|x\|\,\|u\|
  \le |m-z|^{1-\alpha}\frac{D_1\|x\|\,\|u\|}{\bigl(\dist(z,\sigma(A_0))\bigr)^{1-\alpha}}
  = D_1\|x\|\,\|u\|
\]
for all $z\in\CC$ with $\Re z=w_0$.
Since by \eqref{gammalamu} the function $f$ grows at most like a power of $z$ on the
half-plane $\{z\in\CC:\Re z\le w_0\}$, the Phragm\'en--Lindel\"of principle
(see, e.g.\ \cite[Corollary VI.4.2]{ConwayBuch})
implies that
\[
  |f(z)| \le D_1\|x\|\,\|u\| \qquad
  \text{for all} \;\; z\in\CC \;\; \text{with} \;\; \Re z\le w_0.
\]
It follows from this that
\[
  \bigl\|\ov{\gamma(z)}\bigr\| \le \frac{D_1}{|m-z|^{1-\alpha}}
  \qquad \text{for all} \;\; z\in\CC \;\; \text{with} \;\; \Re z\le w_0.
\]
If we combine this with \eqref{gamma_decay} with $z_0=w_0$ and $\theta=\frac{\pi}{2}-\nu$,
we obtain \eqref{gamma_decay2}.
The estimate \eqref{weyl_der_decay2} follows from \eqref{gamma_decay2} in the
same way as in (a).
\end{proof}

The following example shows that Theorem~\ref{th:weyl_decay} is sharp in a certain sense.

\begin{example}\label{exampleoptimal}
Let $\alpha\in\bigl(0,\frac12\bigr]$ and
let $\mu$ be the Borel measure on $\RR$ that has support $[e,\infty)$,
is absolutely continuous and has density
\[
  \frac{\drm\mu(t)}{\drm t} = \frac{1}{t^{1-2\alpha}(\ln t)^2}\,, \qquad t\in[e,\infty).
\]
Moreover, define
\[
  M(\lambda) \defeq \int_e^\infty \frac{1}{t-\lambda}\,\drm\mu(t),
  \qquad \lambda\in\CC\setminus[e,\infty).
\]
This function is the Weyl function of the following ordinary boundary triple
\begin{align*}
  & \cH = L^2(\mu), \quad \cG = \CC, \\[1ex]
  & \dom T = \bigl\{f\in\cH: \exists\,c_f\in\CC
  \;\;\text{such that}\;\; tf(t)-c_f\in\cH\bigr\}, \\[1ex]
  & (Tf)(t) = tf(t)-c_f, \\[1ex]
  & \Gamma_0f = c_f, \qquad \Gamma_1f = \int_e^\infty f(t)\,\drm\mu(t);
\end{align*}
note that $c_f$ is uniquely determined by $f$ since the measure $\mu$ is infinite.
The operator $A_0$ is the multiplication operator by the independent variable.
The mapping in \eqref{bnd} with $\mu=0$ is bounded since for $f\in\cH$ with
compact support we have
\[
  \Gamma_1 A_0^{-\alpha}f = \int_e^\infty f(t)\,t^{-\alpha}\drm\mu(t)
  \le \|f\|_{\cH}\biggl[\int_e^\infty \frac{1}{t^{2\alpha}t^{1-2\alpha}(\ln t)^2}\,\drm t\biggr]^{1/2}
\]
and the last integral converges.
Hence Theorem~\ref{th:weyl_decay} yields that
\[
  M(\lambda) = \rmO\biggl(\frac{1}{|\lambda|^{1-2\alpha}}\biggr), \qquad \lambda\to-\infty.
\]
One can show that the actual asymptotic behaviour of $M$ is
\[
  M(\lambda) \sim \frac{C}{|\lambda|^{1-2\alpha}(\ln|\lambda|)^2}, \qquad \lambda\to-\infty,
\]
with a positive constant $C$.

Hence, apart from the logarithmic factor, Theorem~\ref{th:weyl_decay}
yields the correct asymptotic behaviour.
Using Krein's inverse spectral theorem (see, e.g.\ \cite{KK68})
one can rewrite this example as a Krein--Feller operator: $-D_m D_x$
with some mass distribution $m$ so that the measure $\mu$ becomes the
principal spectral measure of the string.
\end{example}

The next corollary is an immediate consequence of Theorem~\ref{th:weyl_decay}.

\begin{corollary}
Let $\{\cG,\Gamma_0,\Gamma_1\}$ be a quasi boundary triple for $T\subset S^*$
with corresponding Weyl function $M$ and assume that the operator in \eqref{bnd}
is bounded for some $\alpha\in\bigl(0,\frac12\bigr]$ and some $\mu\in\rho(A_0)$.
Then $M$ satisfies
\begin{equation}\label{kac}
  \int_1^\infty \frac{\bigl\|\Im\ov{M(iy)}\bigr\|}{y^\gamma} \drm y < \infty
\end{equation}
for every $\gamma>2\alpha$.
\end{corollary}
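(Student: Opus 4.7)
The plan is to derive the integrability of $y\mapsto\|\Im\overline{M(iy)}\|/y^{\gamma}$ on $[1,\infty)$ directly from the sectorial decay estimate in Theorem~\ref{th:weyl_decay}\,(b), which is immediately available since the hypothesis of the corollary is precisely the hypothesis of that theorem.

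First I would fix the sector. Take $z_0\defeq i$ (which automatically lies in $\ov{\CC^+}\cap\rho(A_0)$ since $A_0$ is self-adjoint and hence $\sigma(A_0)\subset\RR$) and any $\theta\in\bigl(0,\frac{\pi}{2}\bigr)$. For every $y\ge 1$ one has $iy-z_0=i(y-1)$, so $\arg(iy-z_0)=\frac{\pi}{2}\in\bigl[\frac{\pi}{2}-\theta,\frac{\pi}{2}+\theta\bigr]$, meaning $iy\in\bbS_{z_0,\theta}$. Theorem~\ref{th:weyl_decay}\,(b) therefore yields a constant $C>0$ with
\[
  \bigl\|\overline{M(iy)}\bigr\|
  \le \frac{C}{\bigl(\dist(iy,\sigma(A_0))\bigr)^{1-2\alpha}}
  \qquad\text{for all } y\ge 1.
\]
Since $\sigma(A_0)\subset\RR$, one has $\dist(iy,\sigma(A_0))\ge y$, and thus $\bigl\|\overline{M(iy)}\bigr\|\le C y^{-(1-2\alpha)}$ for all $y\ge 1$.

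Second I would pass from $\overline{M(iy)}$ to its imaginary part. The bounded closed operator $\overline{M(iy)}$ has bounded adjoint of the same norm, so the trivial estimate
\[
  \bigl\|\Im\overline{M(iy)}\bigr\|
  = \tfrac{1}{2}\bigl\|\overline{M(iy)}-\overline{M(iy)}^*\bigr\|
  \le \bigl\|\overline{M(iy)}\bigr\|
\]
gives $\bigl\|\Im\overline{M(iy)}\bigr\|\le C y^{-(1-2\alpha)}$ for $y\ge 1$. Dividing by $y^{\gamma}$ and integrating,
\[
  \int_1^\infty \frac{\bigl\|\Im\overline{M(iy)}\bigr\|}{y^{\gamma}}\,\drm y
  \le C\int_1^\infty \frac{\drm y}{y^{\gamma+1-2\alpha}},
\]
and the right-hand side is finite precisely when $\gamma+1-2\alpha>1$, i.e., when $\gamma>2\alpha$, which is the hypothesis of the corollary. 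There is no serious obstacle here: the entire proof is a routine combination of the sectorial estimate of Theorem~\ref{th:weyl_decay}\,(b) with the bound $\dist(iy,\sigma(A_0))\ge y$ afforded by self-adjointness of $A_0$.
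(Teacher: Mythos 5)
Your argument is correct and is exactly the route the paper intends — the paper simply states that the corollary is an immediate consequence of Theorem~\ref{th:weyl_decay}, and your proof supplies the details: take $z_0=i$ so that the positive imaginary axis lies in the sector $\bbS_{z_0,\theta}$, use $\sigma(A_0)\subset\RR$ to get $\dist(iy,\sigma(A_0))\ge y$, apply the decay estimate $\|\ov{M(iy)}\|\le C\,y^{-(1-2\alpha)}$, bound $\|\Im\ov{M(iy)}\|\le\|\ov{M(iy)}\|$, and integrate. The case $\alpha=\tfrac12$ (uniform boundedness of $\ov M$) is also handled correctly by your power count. No gaps.
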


Condition \eqref{kac} says that the function $M$ belongs to
the \emph{Kac class} $\mathbf{N}_\gamma$ (see, e.g.\ \cite{KK74} for the scalar case).
Assume that $M$ satisfies \eqref{kac} for some $\gamma\in(0,2)$
and consider the integral representation
\[
  M(\lambda) = A+\lambda B+\int_{\RR}\biggl(\frac{1}{t-\lambda}-\frac{t}{1+t^2}\biggr)
  \drm\Sigma(t),
\]
where $A$ and $B\ge0$ are bounded symmetric operators and $\Sigma$ is
an operator-valued measure (see, e.g.\ \cite{MM03} or \cite[\S 3.4]{BL10}).
Often the measure $\Sigma$ plays the role of a spectral measure.
For each $\varphi\in\ran\Gamma_0$ we have
\[
  (M(\lambda)\varphi,\varphi) = (A\varphi,\varphi) + \lambda(B\varphi,\varphi)
  + \int_{\RR} \biggl(\frac{1}{t-\lambda}-\frac{t}{1+t^2}\biggr)
  \drm\bigl(\Sigma(t)\varphi,\varphi\bigr).
\]
It follows from \cite[Lemma~3.1]{W00} and its proof that $(B\varphi,\varphi)=0$
and that
\[
  \int_{\RR} \frac{1}{1+|t|^\gamma}\,\drm\bigl(\Sigma(t)\varphi,\varphi\bigr)
  \le C\|\varphi\|^2,
\]
with some $C>0$, which does not depend on $\varphi$.
Hence $B=0$ and
\[
  \int_{\RR} \frac{1}{1+|t|^\gamma}\,\drm\Sigma(t)
\]
is a bounded operator.

\section{Elliptic operators with non-local Robin boundary conditions}
\label{sec:Robin}

\noindent
In this section we apply the results of the previous sections to
elliptic differential operators on domains whose boundaries
are not necessarily compact.
Our main focus is on operators subject to non-self-adjoint boundary conditions.
For some recent investigations of non-self-adjoint elliptic operators
we refer the reader to \cite{BGW09,BMNW08,GLMZ05,G08,M10}.

Let $\Omega\subset\RR^n$ be a domain that is \emph{uniformly regular}\footnote{This means
that $\partial\Omega$ is $C^\infty$-smooth and that
there exists a covering of $\partial\Omega$ by open sets $\Omega_j$, $j\in\dN$, and $n_0\in\dN$
such that at most $n_0$ of the $\Omega_j$ have a non-empty intersection,
and a family of $C^\infty$-homeomorphisms
\[
  \varphi_j:\Omega_j\cap\Omega\rightarrow \cB_1\cap\{x_n>0\}, \qquad
  \text{where}\quad \cB_r=\{x\in\RR^n:\Vert x\Vert < r\},
\]
such that $\varphi_j:\Omega_j\cap\partial\Omega\rightarrow \cB_1\cap\{x_n=0\}$,
the derivatives of $\varphi_j$, $j\in\dN$, and their inverses are uniformly bounded, and
$\bigcup_j\varphi^{-1}_j(\cB_{1/2})$ covers a uniform neighbourhood of $\partial\Omega$.}
in the sense of \cite[p.~366]{B59} and \cite[page~72]{F67}; see also~\cite{B65,B61}.
This includes, e.g.\ domains with compact $C^\infty$-smooth boundaries or compact,
smooth perturbations of half-spaces. Moreover, the class of uniformly regular
unbounded domains includes certain quasi-conical
and quasi-cylindrical domains in the sense
of \cite[Definition~X.6.1]{EE87}.
Non-self-adjoint elliptic operators with Robin boundary conditions
on such domains have been investigated recently in connection with
non-Hermitian quantum waveguides and layers;
see, e.g.\ \cite{BK08, BK12, BZ17, LS17}.
Further, let
\begin{equation}\label{eq:diffexpr}
  \cL = - \sum_{j,k=1}^n \frac{\partial}{\partial x_j}a_{jk}\frac{\partial}{\partial x_k} + a
\end{equation}
be a differential expression on $\Omega$, where
we assume that $a_{j k} \in C^\infty (\overline \Omega)$ are bounded,
have bounded, uniformly continuous derivatives on $\overline{\Omega}$
and satisfy $a_{jk}(x)=\ov{a_{kj}(x)}$ for all $x\in\ov{\Omega}$, $1\le j,k\le n$,
and that $a \in L^\infty (\Omega)$ is real-valued; cf.\ \cite[(S1)--(S5) in Chapter~4]{B65}.
Moreover, we assume that $\cL$ is uniformly elliptic, i.e.\ there exists $E > 0$ such that
\[
  \sum_{j,k=1}^n a_{jk}(x) \xi_j\xi_k\geq E \sum_{k=1}^n\xi_k^2,
  \qquad \xi=(\xi_1,\dots\xi_n)^\top \in \dR^n, x \in \overline \Omega.
\]

In the following we denote by $H^s(\Omega)$ and $H^s(\partial\Omega)$
the Sobolev spaces of order $s \geq 0$ on $\Omega$ and $\partial\Omega$, respectively.
For $f \in C_0^\infty(\overline\Omega)$, where $C_0^\infty(\ov\Omega)$ denotes
the set of $C^\infty(\ov\Omega)$-functions with compact support, let
\[
  \frac{\partial f}{\partial \nu_\cL} \Big|_{\partial\Omega}
  \defeq \sum_{j, k = 1}^n a_{jk} \nu_j \frac{\partial f}{\partial x_k}\Big|_{\partial\Omega}
\]
denote the conormal derivative of $f$ at $\partial\Omega$ with respect to~$\cL$,
where $\nu = (\nu_1, \dots, \nu_n)^\top$ is the unit normal vector field
at $\partial\Omega$ pointing outwards.  Then Green's identity
\begin{equation}\label{eq:Green}
  (\cL f, g) - (f, \cL g)
  = \biggl(f|_{\partial\Omega}, \frac{\partial g}{\partial \nu_\cL}\Big|_{\partial\Omega}\biggr)
  - \biggl(\frac{\partial f}{\partial \nu_\cL}\Big|_{\partial\Omega}, g|_{\partial\Omega}\biggr)
\end{equation}
holds for all $f,g \in C_0^\infty(\ov\Omega)$, where the inner products are
in $L^2(\Omega)$ and $L^2(\partial\Omega)$, respectively.
Recall that the pair of mappings
\[
  C_0^\infty(\overline{\Omega}) \ni f \mapsto
  \biggl\{f|_{\partial\Omega};\; \frac{\partial f}{\partial \nu_\cL}\Big|_{\partial\Omega}\biggr\}
  \in H^{3/2}(\partial\Omega) \times H^{1/2}(\partial\Omega)
\]
extends by continuity to a bounded map from $H^2(\Omega)$
onto $H^{3/2}(\partial\Omega) \times H^{1/2}(\partial\Omega)$;
see, e.g.\ \cite[Theorem~3.9]{F67}.  The extended trace and conormal derivative
are again denoted by $f|_{\partial\Omega}$ and
$\frac{\partial f}{\partial \nu_\cL}\big|_{\partial\Omega}$, respectively.
Moreover, Green's identity~\eqref{eq:Green} extends to all $f,g \in H^2(\Omega)$;
see \cite[Theorem~4.4]{F67}.

In order to construct a quasi boundary triple, let us define the
operators $S$ and $T$ in $L^2(\Omega)$ via
\begin{equation}\label{eq:Selliptic}
  S f = \cL f, \qquad \dom S = \biggl\{ f \in H^2(\Omega) :
  f |_{\partial\Omega} = \frac{\partial f}{\partial \nu_\cL}\Big|_{\partial\Omega} = 0 \biggr\},
\end{equation}
and
\begin{equation}\label{eq:Telliptic}
  T f = \cL f, \qquad \dom T = H^2(\Omega).
\end{equation}
Moreover, we define boundary mappings $\Gamma_0,\Gamma_1: \dom T \to L^2(\partial\Omega)$ by
\[
  \Gamma_0 f = \frac{\partial f}{\partial\nu_\cL}\Big|_{\partial\Omega}, \qquad
  \Gamma_1 f = f|_{\partial\Omega} \qquad \text{for}\;f \in \dom T.
\]

The assertions of the following proposition can be found in
\cite[Propositions~3.1 and~3.2]{BLLR17}.

\begin{proposition}\label{prop:QBTelliptic}
The operator $S$ in~\eqref{eq:Selliptic} is closed, symmetric and densely defined
with $\overline{T} = S^*$ for $T$ in \eqref{eq:Telliptic},
and the triple $\{L^2(\partial\Omega), \Gamma_0, \Gamma_1\}$
is a quasi boundary triple for $T\subset S^*$ with the following properties.
\begin{myenum}
\item 
$\ran(\Gamma_0, \Gamma_1)^{\top} = H^{1/2}(\partial\Omega) \times H^{3/2}(\partial\Omega)$.
\item 
$A_0$ is the \emph{Neumann operator}
\begin{align*}
  A_{\rm N} f = \cL f, \qquad \dom A_{\rm N} = \left\{ f \in H^2(\Omega) :
  \frac{\partial f}{\partial \nu_\cL}\Big|_{\partial\Omega} = 0 \right\},
\end{align*}
and $A_1$ is the \emph{Dirichlet operator}
\begin{align*}
  A_{\rm D} f = \cL f, \qquad \dom A_{\rm D} = \left\{ f \in H^2(\Omega) :
  f |_{\partial\Omega} = 0 \right\}.
\end{align*}
Both operators, $\AN$ and $\AD$, are self-adjoint and bounded from below.
\item 
For $\lambda \in \rho(A_{\rm N})$, the associated $\gamma$-field satisfies
\begin{equation}\label{eq:NDmap}
  \gamma(\lambda) \frac{\partial f}{\partial \nu_\cL} \Big|_{\partial\Omega}
  = f \qquad \text{for all}\;\; f \in \ker(T - \lambda),
\end{equation}
and the associated Weyl function is given by the \emph{Neumann-to-Dirichlet map},
\begin{equation}\label{eq:NDmap2}
  M(\lambda)\frac{\partial f}{\partial \nu_\cL} \Big|_{\partial\Omega}
  = f|_{\partial\Omega} \qquad \text{for all}\;\; f \in \ker(T - \lambda).
\end{equation}
Moreover, $M(\lambda)$ is a bounded, non-closed operator in $L^2(\partial\Omega)$
with domain $H^{1/2}(\partial\Omega)$ such that
$\ran\ov{M(\lambda)} \subset H^1(\partial\Omega)$.
\end{myenum}
\end{proposition}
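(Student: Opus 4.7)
The plan is to verify the three hypotheses of Theorem~\ref{thm:ratetheorem} for the operator $T$ in~\eqref{eq:Telliptic} and the boundary maps $\Gamma_0, \Gamma_1$, and then read off the $\gamma$-field and the Weyl function from Definition~\ref{gmdef}. The abstract Green identity~\eqref{green} is exactly the classical identity~\eqref{eq:Green}, under the identifications $\Gamma_0 f=\partial_{\nu_\cL}f|_{\partial\Omega}$ and $\Gamma_1 f=f|_{\partial\Omega}$, and it holds on all of $H^2(\Omega)$ by the extension statement recalled in the text. For the range condition I would invoke the classical surjective trace theorem: under the uniform regularity assumption on $\partial\Omega$, the map $f \mapsto (f|_{\partial\Omega}, \partial_{\nu_\cL}f|_{\partial\Omega})$ sends $H^2(\Omega)$ onto $H^{3/2}(\partial\Omega) \times H^{1/2}(\partial\Omega)$. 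This gives not only density of $\ran(\Gamma_0,\Gamma_1)^\top$ but in fact the precise range formula in~(i). Density of $\ker\Gamma_0 \cap \ker\Gamma_1$ in $L^2(\Omega)$ is immediate since this kernel contains $C_0^\infty(\Omega)$. The third hypothesis reduces to self-adjointness of the Neumann realization, which I would obtain from the closed, densely defined, semibounded sesquilinear form $\mathfrak{a}[f,g] = \sum_{j,k}(a_{jk}\partial_k f,\partial_j g) + (af,g)$ on $H^1(\Omega)$; uniform elliptic regularity up to the boundary then identifies the domain of the associated self-adjoint operator with $\dom A_{\rm N}$.

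With these three inputs at hand, Theorem~\ref{thm:ratetheorem} immediately yields that $S$ in~\eqref{eq:Selliptic} is closed, densely defined and symmetric with $\overline T=S^*$, and that $\{L^2(\partial\Omega), \Gamma_0, \Gamma_1\}$ is a quasi boundary triple with $A_0 = A_{\rm N}$. The analogous form-theoretic argument applied to the closed form $\mathfrak{a}$ restricted to $H^1_0(\Omega)$ gives $A_1 = T\upharpoonright\ker\Gamma_1 = A_{\rm D}$, self-adjoint and bounded from below.

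For (iii), the formulas~\eqref{eq:NDmap}--\eqref{eq:NDmap2} are direct unpackings of Definition~\ref{gmdef}: for $\lambda\in\rho(A_{\rm N})$ and $f\in\ker(T-\lambda)\subset H^2(\Omega)$, one has $\gamma(\lambda)\Gamma_0 f=f$ and $M(\lambda)\Gamma_0 f=f|_{\partial\Omega}$. To show that $M(\lambda)$ is bounded on $L^2(\partial\Omega)$ with domain $H^{1/2}(\partial\Omega)$ and range contained in $H^{3/2}(\partial\Omega)$, I would, for each $\varphi\in H^{1/2}(\partial\Omega)$, solve the elliptic Neumann problem $(\cL-\lambda)f_\lambda=0$ with $\partial_{\nu_\cL}f_\lambda|_{\partial\Omega}=\varphi$; the unique solution $f_\lambda\in H^2(\Omega)$ depends boundedly on $\varphi$ by elliptic regularity, so that $M(\lambda)\varphi = f_\lambda|_{\partial\Omega} \in H^{3/2}(\partial\Omega)$ with a bound $\|M(\lambda)\varphi\|_{H^{3/2}(\partial\Omega)} \le c_\lambda \|\varphi\|_{H^{1/2}(\partial\Omega)}$. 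The inclusion $\ran\overline{M(\lambda)} \subset H^1(\partial\Omega)$ would then follow by interpolation between the just-established bound $H^{1/2}\to H^{3/2}$ and the dual bound $H^{-1/2}\to H^{1/2}$ corresponding to Neumann data of negative order, or alternatively from the resolvent representation $\overline{M(\lambda)}=\overline{M(\mu)}+\ldots$ combined with~\eqref{gammastar}.

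The main obstacle I anticipate is that every one of the elliptic regularity and trace estimates used above must hold with constants uniform over the possibly unbounded $\Omega$. This is exactly what the uniform regularity of $\partial\Omega$ is designed to supply: it provides an atlas of charts with uniformly bounded derivatives together with a locally finite partition of unity, which permits the standard half-space theory to be transferred to $\overline\Omega$ with constants independent of the chosen chart, and then glued into global statements.
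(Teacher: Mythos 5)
Your proposal is correct and follows exactly the route that the paper delegates to \cite[Propositions~3.1 and~3.2]{BLLR17}: verify the three hypotheses of Theorem~\ref{thm:ratetheorem} via the extended Green identity on $H^2(\Omega)$, the surjective trace theorem, and the form-theoretic construction of $A_{\rm N}$ and $A_{\rm D}$ together with uniform elliptic regularity up to $\partial\Omega$, then read off $\gamma$ and $M$ and establish the mapping properties of $M(\lambda)$ by a direct elliptic estimate and interpolation. Two tiny loose ends, both easily patched: the claimed boundedness of $M(\lambda)$ in the $L^2(\partial\Omega)$-operator norm (as opposed to the $H^{1/2}(\partial\Omega)\to H^{3/2}(\partial\Omega)$ bound you state first) itself only comes out of the interpolation step, and the non-closedness of $M(\lambda)$ --- which you do not address, but which the proposition asserts --- is immediate, since a bounded operator defined on the dense, proper subspace $H^{1/2}(\partial\Omega)$ of $L^2(\partial\Omega)$ cannot be closed.
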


\medskip

\noindent
In order to apply the results of Section~\ref{sec:consequences} to the quasi
boundary triple in Proposition~\ref{prop:QBTelliptic} we prove estimates
for the Weyl function in certain sectors using Theorem~\ref{th:weyl_decay}.

\begin{lemma}\label{lem:NDdecay}
Let $\bbU_{w_0,\nu}$ be defined as in \eqref{defUw0nu}. Then for
each $w_0 < \min\sigma(A_{\rm N})$, $\nu \in (0,\pi)$ and $\beta \in (0,\frac{1}{2})$
there exists $C = C(\cL, \Omega, w_0, \nu, \beta) > 0$ such that
\begin{equation}\label{powerdecayell}
  \big\|\ov{M(\lambda)}\big\| \le \frac{C}{\bigl(\dist(\lambda,\sigma(A_{\rm N}))\bigr)^{\beta}}
  \qquad\text{for all}\;\;\lambda \in \bbU_{w_0,\nu}.
\end{equation}
\end{lemma}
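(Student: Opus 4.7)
The plan is to deduce the estimate from Theorem~\ref{th:weyl_decay}(c) applied with $A_0=A_{\rm N}$. Given $\beta\in\bigl(0,\tfrac{1}{2}\bigr)$, set $\alpha\defeq\tfrac{1-\beta}{2}$, so that $\alpha\in\bigl(\tfrac{1}{4},\tfrac{1}{2}\bigr)$ and $1-2\alpha=\beta$. Fix some real $\mu<\min\sigma(A_{\rm N})$; then $\mu\in\rho(A_{\rm N})$ and $A_{\rm N}-\mu$ is a positive self-adjoint operator, so that $|A_{\rm N}-\mu|^{-\alpha}=(A_{\rm N}-\mu)^{-\alpha}$ is defined via the functional calculus. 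It therefore suffices to verify that
\[
  \Gamma_1(A_{\rm N}-\mu)^{-\alpha}\in\cB\bigl(L^2(\Omega),L^2(\partial\Omega)\bigr),
\]
after which Theorem~\ref{th:weyl_decay}(c) immediately yields the decay~\eqref{powerdecayell} on every sector $\bbU_{w_0,\nu}$ with the required exponent $1-2\alpha=\beta$.

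To prove the boundedness of $\Gamma_1(A_{\rm N}-\mu)^{-\alpha}$ I factor it through the fractional Sobolev space $H^{2\alpha}(\Omega)$ by establishing separately that
\[
  (A_{\rm N}-\mu)^{-\alpha}\colon L^2(\Omega)\to H^{2\alpha}(\Omega)
  \quad\text{and}\quad
  \Gamma_1\colon H^{2\alpha}(\Omega)\to L^2(\partial\Omega)
\]
are bounded. The second map is the Dirichlet trace, which is continuous out of $H^s(\Omega)$ for every $s>\tfrac{1}{2}$; here $2\alpha>\tfrac{1}{2}$, and uniform regularity of $\Omega$ ensures that the trace theorem holds with a constant independent of the particular chart in the atlas. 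The first map rests on the fractional-power domain identification
\[
  \dom\bigl((A_{\rm N}-\mu)^{\theta}\bigr)=H^{2\theta}(\Omega),
  \qquad 0\le\theta<\tfrac{3}{4},
\]
with equivalent norms. In this range no boundary condition appears in the description of the domain; the identification follows by complex interpolation between $L^2(\Omega)=\dom((A_{\rm N}-\mu)^{0})$ and $\dom(A_{\rm N}-\mu)\subset H^2(\Omega)$, combined with Seeley-type regularity estimates for the uniformly elliptic operator $\cL$ on the uniformly regular domain $\Omega$. Applied with $\theta=\alpha\in\bigl(\tfrac14,\tfrac12\bigr)$, this yields the desired boundedness.

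Combining the two continuities finishes the verification, and Theorem~\ref{th:weyl_decay}(c) then produces~\eqref{powerdecayell} with a constant depending only on the quasi boundary triple (hence on $\cL$ and $\Omega$), on $\alpha$ (hence on $\beta$), and on $w_0$ and $\nu$. The principal technical obstacle is the fractional-power domain identification on a possibly unbounded, merely uniformly regular domain: for bounded smooth domains this is classical, but on unbounded $\Omega$ one has to check that the complex-interpolation and Seeley arguments, together with the trace estimate on $H^{2\alpha}(\Omega)$, yield constants controlled solely by the uniform-regularity data of $\Omega$ and the uniform smoothness of the coefficients of $\cL$, so that the bound holds globally rather than only locally. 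Apart from this bookkeeping, the proof is a direct application of the abstract decay machinery from Section~\ref{sec:suffconddecay}.
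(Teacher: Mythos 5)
Your proposal is correct and follows the same strategy as the paper: establish boundedness of $\Gamma_1(A_{\rm N}-\mu)^{-\alpha}$ for suitable $\alpha\in\bigl(\tfrac14,\tfrac12\bigr)$ by factoring through a fractional Sobolev space, then invoke Theorem~\ref{th:weyl_decay}\,(c). The one place where you take a genuinely different route is the fractional-power domain identification. You propose to interpolate between $L^2(\Omega)$ and $\dom(A_{\rm N}-\mu)\subset H^2(\Omega)$ and to appeal to Seeley-type regularity to identify $\dom\bigl((A_{\rm N}-\mu)^\theta\bigr)=H^{2\theta}(\Omega)$ up to $\theta<\tfrac34$. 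That is correct, but it is heavier than necessary and, as you yourself flag, brings in nontrivial bookkeeping on unbounded, merely uniformly regular domains (Seeley's theorem is classically stated for compact manifolds with boundary, and the Neumann boundary condition must be shown not to interfere below $\theta=\tfrac34$). The paper instead only needs $\theta\in\bigl(\tfrac14,\tfrac12\bigr)$, and reaches this range by setting $\Lambda\defeq(A_{\rm N}-\mu)^{1/2}$, observing directly via the sesquilinear form that $\dom\Lambda=H^1(\Omega)$ with equivalent norms, and then interpolating only between $L^2(\Omega)$ and $H^1(\Omega)$ to get $\dom\Lambda^s=H^s(\Omega)$ for $s\in(0,1)$. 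This avoids elliptic $H^2$-regularity and Seeley's theorem entirely, and handles the unbounded-domain bookkeeping automatically since the form-domain identification is elementary. In short: your argument is sound and lands in the same place, but the paper's route through the form domain is both shorter and more robust for the class of domains considered.
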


\begin{proof}
Let $\mu= \min \sigma(A_{\rm N})-1$.  Then $A_{\rm N}-\mu$ is a positive,
self-adjoint operator in $L^2(\Omega)$ and $\Lambda \defeq (A_{\rm N}-\mu)^{1/2}$
in $L^2(\Omega)$ is well defined, self-adjoint and positive.
It can be seen with the help of the quadratic form associated
with $A_{\rm N}$ that $\dom \Lambda = H^1(\Omega)$ and that
the $H^1(\Omega)$-norm is equivalent to the graph norm $\|\Lambda \cdot\|_{L^2(\Omega)}$.
Thus the identity operator provides an isomorphism between $H^1(\Omega)$
and $(\dom \Lambda, \|\Lambda\cdot\|_{L^2(\Omega)})$ as well as, trivially,
between $L^2(\Omega)$ and $(\dom \Lambda^0, \| \Lambda^0 \cdot \|_{L^2(\Omega)})$.
By interpolation (see, e.g.\ \cite[Theorems~5.1 and 7.7]{LM72}),
the identity operator is also an isomorphism
between $H^s(\Omega)$ and $(\dom \Lambda^s, \|\Lambda^s\cdot\|_{L^2(\Omega)})$
for each $s \in (0, 1)$.
In particular, $\dom(A_{\rm N}-\mu)^{s/2} = \dom \Lambda^s = H^s(\Omega)$
for each $s \in (0, 1)$. It follows from the closed graph theorem that
$(A_{\rm N} - \mu)^{-s/2}$ is bounded as an operator from $L^2(\Omega)$ to $H^s(\Omega)$
for each such $s$.  Since the trace map is bounded from $H^s(\Omega)$
to $L^2(\partial\Omega)$ for each $s \in (\frac{1}{2}, 1)$
by \cite[Theorem~3.7]{F67},
it follows that $f \mapsto ((A_{\rm N} - \mu)^{-s/2} f) |_{\partial\Omega}$
is bounded from $L^2(\Omega)$ to $L^2(\partial\Omega)$ for each $s \in (\frac{1}{2}, 1)$.
In particular, the operator
\begin{equation}\label{eq:prodOp}
  \Gamma_1(A_{\rm N}-\mu)^{-\alpha}: L^2(\Omega)
  \supset \dom\bigl(\Gamma_1(A_{\rm N}-\mu)^{-\alpha}\bigr) \to L^2(\partial\Omega)
\end{equation}
is bounded for each $\alpha \in (\frac{1}{4}, \frac{1}{2})$. By Theorem~\ref{th:weyl_decay}
for each $w_0 < \min \sigma(A_{\rm N})$, each $\nu \in (0, \pi)$
and each $\alpha \in (\frac{1}{4}, \frac{1}{2})$ there
exists $C = C(\cL, \Omega,w_0,\nu,\alpha) > 0$ such that
\[
  \big\|\overline{M(\lambda)}\big\|
  \leq \frac{C}{\bigl(\dist(\lambda,\sigma(A_{\rm N}))\bigr)^{1-2\alpha}}
\]
holds for all $\lambda \in \bbU_{w_0, \nu}$.
From this the claim of the lemma follows.
\end{proof}

\begin{remark}\label{re:NDdecay}
Along the negative real axis the result of Lemma~\ref{lem:NDdecay}
can be slightly improved.  It was proved in \cite[Proposition~3.2\,(iv)]{BLLR17}
(using techniques from \cite{A62}) that for each $\mu < \min\sigma(\AN)$
there exists $C = C(\cL, \Omega, \mu)$ such that
\begin{equation}\label{eq:smallAgmon}
  \big\|\overline{M(\lambda)}\big\| \le \frac{C}{(\mu-\lambda)^{1/2}} \qquad
  \text{for all}\;\; \lambda < \mu.
\end{equation}
\end{remark}

In the next theorem we apply Lemma~\ref{lem:NDdecay}, Remark~\ref{re:NDdecay}
and the results from Section~\ref{sec:consequences} to obtain m-sectorial
(self-adjoint, maximal dissipative, maximal accumulative) realizations of $\cL$
subject to generalized Robin boundary conditions and also spectral enclosures
for these realizations.

\begin{theorem}\label{thm:ABelliptic}
Let $B$ be a closable operator in $L^2(\partial\Omega)$ such that
\begin{equation}\label{assumpBelliptic}
  H^{1/2}(\partial\Omega) \subset \dom B \qquad\text{and}\qquad
  B\bigl(H^1(\partial\Omega)\bigr) \subset H^{1/2}(\partial\Omega).
\end{equation}
Assume further that there exists $b \in \R$ such that
\begin{equation}\label{eq:semibddElliptic}
  \Re(B\varphi,\varphi)_{L^2(\partial\Omega)} \le b\|\varphi\|_{L^2(\partial\Omega)}^2
  \qquad \text{for all}\;\; \varphi \in \dom B.
\end{equation}
Then the operator
\begin{equation}\label{eq:ABelliptic}
  \AB f = \cL f, \qquad
  \dom \AB = \biggl\{f\in H^2(\Omega):
  \frac{\partial f}{\partial \nu_\cL}\Big|_{\partial\Omega} = Bf|_{\partial\Omega}\biggr\},
\end{equation}
in $L^2(\Omega)$ is m-sectorial, one has $\sigma(\AB)\subset\ov{W(\AB)}$,
the resolvent formula
\begin{equation}\label{eq:KreinElliptic}
  (\AB-\lambda)^{-1} = (\AN-\lambda)^{-1}
  + \gamma(\lambda)\big(I-BM(\lambda)\big)^{-1}B\gamma(\overline{\lambda})^*
\end{equation}
holds for all $\lambda \in \rho(\AB) \cap \rho(\AN)$, and the following assertions are true.
\begin{myenum}
\item 
If $B$ is symmetric, then $\AB$ is self-adjoint and bounded from below.
If $B$ is dissipative (accumulative, respectively),
then $\AB$ is maximal accumulative (maximal dissipative, respectively).
\item 
If\, $B'$ is a closable operator in $L^2(\partial\Omega)$ that
satisfies \eqref{assumpBelliptic} and \eqref{eq:semibddElliptic} with $B$
replaced by $B'$ and
\begin{equation}\label{BBprell}
  (B\varphi,\psi) = (\varphi,B'\psi)
  \qquad\text{for all}\;\; \varphi\in\dom B,\; \psi\in\dom B'
\end{equation}
holds, then $A_{[B']}=A_{[B]}^*$.
\end{myenum}
Moreover, the following spectral enclosures hold.
\begin{myenum}
\setcounter{counter_a}{2}
\item 
If\, $b \le 0$, then $(-\infty,\min\sigma(\AN)) \subset \rho(\AB)$.
\item 
If\, $\dom B^*\supset\dom B$, $\Im B$ is bounded and $b > 0$,
then for each $\mu < \min \sigma(\AN)$
there exists $C > 0$ such that for each $\xi < \mu-(Cb)^2$ one has
(see Fig.~\ref{fig:elliptic}\,(a))
\begin{equation*}
  W(\AB)
  \subset \Biggl\{z\in\CC: \Re z \ge \mu-(Cb)^2,\;
  |\Im z| \le \frac{2C\bigl\|\ov{\Im B}\bigr\|}{1-\frac{Cb}{(\mu-\xi)^{1/2}}}(\Re z-\xi)^{1/2}\Biggr\}.
\end{equation*}
\item 
If\, $\dom B^*\supset\dom B$, $\Im B$ is bounded and $b \le 0$,
then for each $\mu < \min \sigma(A_{\rm N})$ there exists $C > 0$ such that
(see Fig.~\ref{fig:elliptic}\,(b))
\begin{equation*}
  W(\AB) \subset \biggl\{z\in\CC: \Re z \ge \min\sigma(\AN),\;
  |\Im z| \le \frac{2C\bigl\|\ov{\Im B}\bigr\|(\Re z-\mu)}{(\Re z-\mu)^{1/2}-Cb}\biggr\}.
 \end{equation*}
\item 
If $B$ is bounded, then for each $w_0 < \min \sigma(A_{\rm N})$, $\nu \in (0, \pi)$
and $\beta \in \bigl(0, \frac{1}{2}\bigr)$ there exists $C > 0$ such that
\begin{align*}
  \sigma(\AB) \cap \bbU_{w_0, \nu} \subset
  \Bigl\{z\in\bbU_{w_0,\nu}: \dist(z,\sigma(\AN)) \le (C\|B\|)^{1/\beta}\Bigr\},
\end{align*}
where $\bbU_{w_0,\nu}$ is defined in \eqref{defUw0nu}.
\end{myenum}
\end{theorem}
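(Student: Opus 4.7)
The plan is to apply the abstract results of Sections~\ref{sec:closedoperators}--\ref{sec:consequences} to the quasi boundary triple $\{L^2(\partial\Omega),\Gamma_0,\Gamma_1\}$ of Proposition~\ref{prop:QBTelliptic}, in which $A_0=\AN$ and $A_1=\AD$ are both self-adjoint and bounded from below. The main statements (m-sectoriality, $\sigma(\AB)\subset\ov{W(\AB)}$, the Krein formula \eqref{eq:KreinElliptic}, and items (i), (ii)) will follow from Theorem~\ref{thm:operators1} combined with Theorem~\ref{thm:parabola}, while the spectral enclosures (iii)--(vi) will be obtained by feeding the Neumann-to-Dirichlet decay estimates of Remark~\ref{re:NDdecay} and Lemma~\ref{lem:NDdecay} into the corresponding abstract enclosure results.

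First I would verify the hypotheses (i)--(v) of Theorem~\ref{thm:operators1}. Condition~(i), $\Re(Bx,x)\le b\|x\|^2$, is precisely \eqref{eq:semibddElliptic}. Condition~(iv), $\ran\Gamma_1\subset\dom B$, holds since $\ran\Gamma_1=H^{3/2}(\partial\Omega)\subset H^{1/2}(\partial\Omega)\subset\dom B$ by \eqref{assumpBelliptic}. Condition~(v) is satisfied because $\min\sigma(\AD)\ge\min\sigma(\AN)$ implies $(-\infty,\min\sigma(\AN))\subset\rho(\AD)$. Condition~(iii), $B(\ran\ov{M(\lambda)})\subset\ran\Gamma_0=H^{1/2}(\partial\Omega)$ for $\lambda<\min\sigma(\AN)$, follows from $\ran\ov{M(\lambda)}\subset H^1(\partial\Omega)$ together with the second inclusion in \eqref{assumpBelliptic}. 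The non-negativity of $\ov{M(\lambda)}$ on $(-\infty,\min\sigma(\AN))$ is a consequence of \eqref{mmm} and the decay \eqref{eq:smallAgmon}.

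The technically most delicate step is condition~(ii): $\ran\ov{M(\lambda)}^{1/2}\subset\dom B$ for $\lambda<\min\sigma(\AN)$. Since $\dom B\supset H^{1/2}(\partial\Omega)$, it suffices to show that $\ov{M(\lambda)}^{1/2}$ maps $L^2(\partial\Omega)$ into $H^{1/2}(\partial\Omega)$. I would argue as follows: self-adjointness of $\ov{M(\lambda)}$ together with its boundedness from $L^2(\partial\Omega)$ into $H^1(\partial\Omega)$ implies, by duality, boundedness from $H^{-1}(\partial\Omega)$ into $L^2(\partial\Omega)$; complex interpolation then yields $\ov{M(\lambda)}\in\cB(H^{-1/2}(\partial\Omega),H^{1/2}(\partial\Omega))$, and applying the L\"owner--Heinz inequality to the non-negative self-adjoint operator $\ov{M(\lambda)}$ gives the desired $\ov{M(\lambda)}^{1/2}\in\cB(L^2(\partial\Omega),H^{1/2}(\partial\Omega))$. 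This is the main obstacle in the proof.

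With (i)--(v) in place, Theorem~\ref{thm:operators1} delivers closedness of $\AB$, the Krein formula \eqref{eq:KreinElliptic}, and items (i) and (ii); Theorem~\ref{thm:parabola} applied with the power decay \eqref{eq:smallAgmon} (i.e.\ $\beta=\tfrac12$) yields m-sectoriality and the inclusion $\sigma(\AB)\subset\ov{W(\AB)}$. The remaining enclosures are obtained by specialization: item~(iii) is Corollary~\ref{cor:bnegative} combined with \eqref{eq:smallAgmon}; items~(iv) and (v) are the $\beta=\tfrac12$ cases of Theorem~\ref{thm:parabola}\,(a) and (b)/(c), respectively, where for (v) one takes the intersection of the parabolic regions obtained for varying $\mu<\min\sigma(\AN)$ to extract the sharper real-part bound $\Re z\ge\min\sigma(\AN)$; item~(vi) is Proposition~\ref{prop:specEst}\,(a) applied with $G=\bbU_{w_0,\nu}$ using the decay estimate of Lemma~\ref{lem:NDdecay}.
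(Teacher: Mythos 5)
Your plan mirrors the paper's proof: verify the hypotheses (i)--(v) of Theorem~\ref{thm:operators1} for the quasi boundary triple of Proposition~\ref{prop:QBTelliptic} (with $A_0=\AN$, $A_1=\AD$), then feed the decay estimates of Remark~\ref{re:NDdecay} and Lemma~\ref{lem:NDdecay} into Corollary~\ref{cor:bnegative}, Theorem~\ref{thm:parabola} and Proposition~\ref{prop:specEst}. The checks you give for conditions (i), (iii), (iv), (v) and the handling of items (iii)--(vi) all coincide with the paper's argument.

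There is, however, a gap in the argument you sketch for condition~(ii) of Theorem~\ref{thm:operators1}, i.e.\ the inclusion $\ran\ov{M(\lambda)}^{1/2}\subset H^{1/2}(\partial\Omega)$. Letting $\Lambda$ be a non-negative self-adjoint operator in $L^2(\partial\Omega)$ with $\dom\Lambda^s=H^s(\partial\Omega)$, the duality-and-interpolation step you describe encodes the operator inequality $\ov{M(\lambda)}\le c\,\Lambda^{-1}$ in the form sense. Applying L\"owner--Heinz to this yields $\ov{M(\lambda)}^{1/2}\le c^{1/2}\Lambda^{-1/2}$, which is the statement that $\Lambda^{1/4}\ov{M(\lambda)}^{1/4}$ is bounded, i.e.\ $\ov{M(\lambda)}^{1/4}\in\cB(L^2(\partial\Omega),H^{1/4}(\partial\Omega))$; it does \emph{not} give $\ov{M(\lambda)}^{1/2}\in\cB(L^2(\partial\Omega),H^{1/2}(\partial\Omega))$. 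One sees the loss already in the toy example $\ov{M(\lambda)}=\Lambda^{-2}$: your chain of implications ends with boundedness of $\Lambda^{1/4}\Lambda^{-1/2}=\Lambda^{-1/4}$, whereas the actual statement is that $\Lambda^{1/2}\Lambda^{-1}=\Lambda^{-1/2}$ is bounded. The correct tool is the Heinz--Kato inequality applied directly to the $L^2\to H^1$ boundedness, without the intermediate interpolation: with $T=I$, $B=\Lambda$ and $A=c\,\ov{M(\lambda)}^{-1}$ (noting that $\ov{M(\lambda)}$ is non-negative, injective and has dense range because $H^{3/2}(\partial\Omega)\subset\ran\ov{M(\lambda)}$), the Heinz--Kato hypothesis reads $\ran\ov{M(\lambda)}\subset\dom\Lambda$ together with $\|\Lambda\ov{M(\lambda)}v\|\le c\|v\|$, and the conclusion with $\alpha=\tfrac12$ gives $\ran\ov{M(\lambda)}^{1/2}=\dom\ov{M(\lambda)}^{-1/2}\subset\dom\Lambda^{1/2}=H^{1/2}(\partial\Omega)$. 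The paper sidesteps this by citing the stronger fact $\ran\ov{M(\lambda)}^{1/2}=H^{1/2}(\partial\Omega)$ from \cite[Proposition~3.2\,(iii)]{BLLR17}. One further small point: the inequality $\min\sigma(\AD)\ge\min\sigma(\AN)$ that you use for condition~(v) is true, but in the present abstract framework it is supplied by Proposition~\ref{pr:A1bddbelow} (using the decay of $M$); it is worth flagging that this is not entirely free.
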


\begin{figure}[ht]
\begin{center}
\begin{tabular}{cc}
\includegraphics[width=3cm]{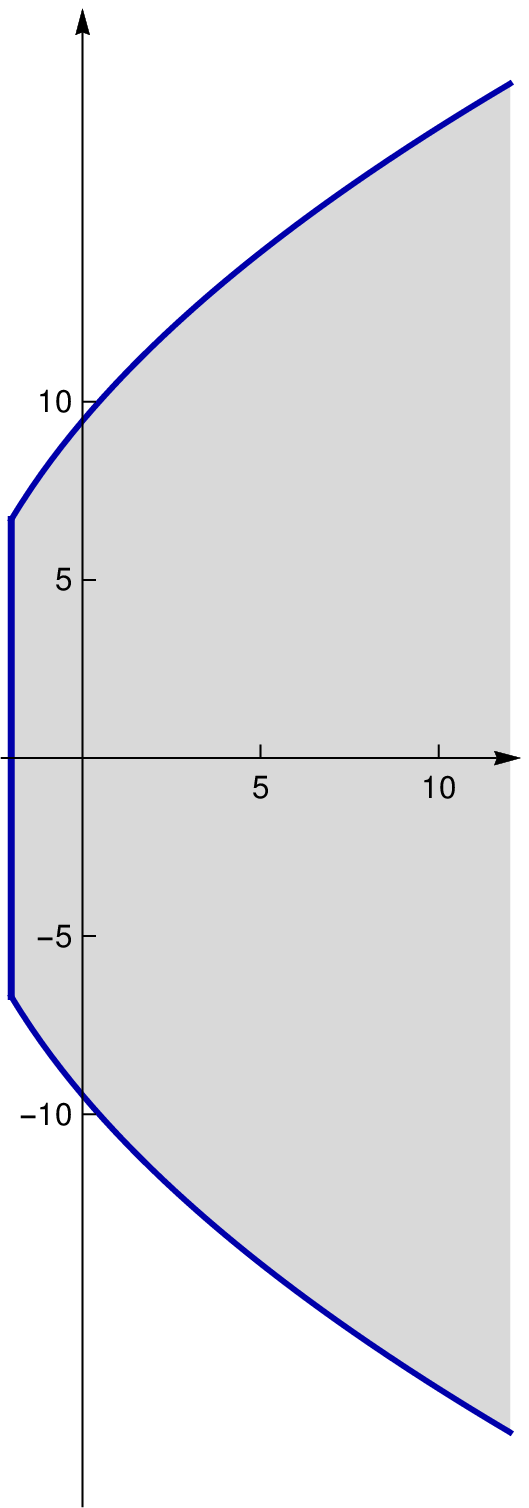} \hspace*{2ex}
&
\includegraphics[width=6cm]{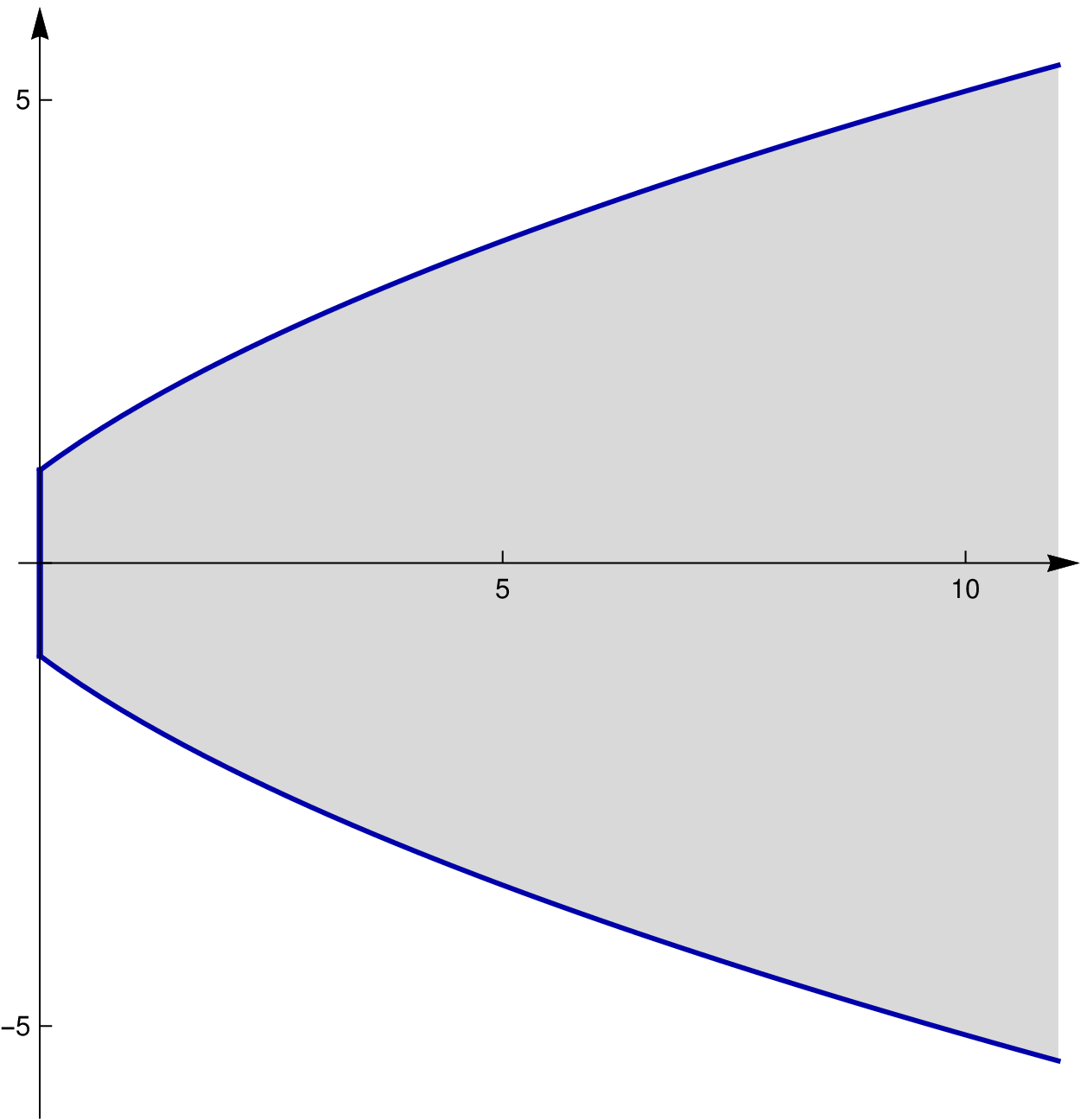}
\\[1ex]
(a) $b=1$ & (b) $b=-1$
\end{tabular}
\end{center}
\caption{The plots show the regions given in Theorem~\ref{thm:ABelliptic}\,(iv), (v),
respectively, that contain $W(\AB)$ for (a) $b>0$ and (b) $b<0$;
it is assumed that $\min\sigma(\AN)=0$, $C\|\ov{\Im B}\|=1$, $\mu=-1$
for both} cases and $\xi=-4$ in (a).
\label{fig:elliptic}
\end{figure}

\begin{proof}
Let $B$ be a closable operator in $L^2(\partial\Omega)$ that
satisfies \eqref{assumpBelliptic} and \eqref{eq:semibddElliptic} for some $b \in \R$.
Let $\{L^2(\partial\Omega), \Gamma_0, \Gamma_1\}$ be the quasi boundary triple
in Proposition~\ref{prop:QBTelliptic}.  It follows from
Lemma~\ref{lem:NDdecay} that~\eqref{eq:Mconvergence} is valid
for the corresponding Weyl function.  The assumptions (i) and (iv) and
the second assumption in (v) of Theorem~\ref{thm:operators1} are satisfied
due to the assumptions of the present theorem and the fact that $\AD=A_1$
is self-adjoint and bounded from below by Proposition~\ref{prop:QBTelliptic}.
Assumption (iii) of Theorem~\ref{thm:operators1} follows from the
last assertion of Proposition~\ref{prop:QBTelliptic}\,(iii) and \eqref{assumpBelliptic}.
For assumption (ii) of Theorem~\ref{thm:operators1} note that
\begin{align*}
  \ran \overline{M(\lambda)}^{1/2} = H^{1/2}(\partial\Omega),
  \qquad \lambda < \min \sigma(\AN),
\end{align*}
which can be verified as in the proof of \cite[Proposition~3.2\,(iii)]{BLLR17},
and use~\eqref{assumpBelliptic}.
It follows from Proposition~\ref{prop:QBTelliptic} that $A_0$ and $A_1$ are
bounded from below.
Thus Theorem~\ref{thm:operators1} and Corollary~\ref{cor:bnegative}
imply assertions (i)--(iii).
Moreover, Theorem~\ref{thm:parabola} and \eqref{eq:smallAgmon}
yield that $\AB$ is m-sectorial and the assertions in items (iv) and (v);
note that the estimate for $\Re z$ in (v) follows from taking the
estimates $\Re z>\mu$ in Theorem~\ref{thm:parabola}\,(b), (c) for all $\mu<\min\sigma(\AN)$.
Finally, to prove item (vi) one combines Lemma~\ref{lem:NDdecay} and
Proposition~\ref{prop:specEst}\,(a) with $G=\bbU_{w_0,\nu}$.
\end{proof}

\begin{remark}\rule{0ex}{1ex}
\begin{myenum}
\item
The constants $C$ in items (iv)--(vi) of the above theorem depend only
on the differential expression $\cL$ and the domain $\Omega$
and on $\mu$ in (iv), (v) and on $w_0, \nu, \beta$ in (vi);
the constants are independent of the operator $B$.
\item
In many cases (e.g.\ when $\Omega$ is bounded), one can define $T$
in \eqref{eq:Telliptic} on the larger domain
\[
  H^{3/2}_{\cL}(\Omega) \defeq \bigl\{f\in H^{3/2}(\Omega):
  \cL f\in L^2(\Omega)\bigr\};
\]
see \cite[\S4.2]{BL07}.  In this case the extensions of the
boundary mappings $\Gamma_0$ and $\Gamma_1$ to $H^{3/2}_{\cL}(\Omega)$ give rise
to a generalized boundary triple, and the second condition in \eqref{assumpBelliptic}
on $B$ is not needed to guarantee that the assertions of Theorem~\ref{thm:ABelliptic}
are true for the operator
\[
  \AB f = \cL f, \qquad
  \dom \AB = \biggl\{f\in H^{3/2}_{\cL}(\Omega):
  \frac{\partial f}{\partial \nu_\cL}\Big|_{\partial\Omega} = Bf|_{\partial\Omega}\biggr\},
\]
instead of~\eqref{eq:ABelliptic}.  In particular, for every bounded operator $B$
the statements (i)--(vi) in Theorem~\ref{thm:ABelliptic} are true.
The second condition in~\eqref{assumpBelliptic} is needed to obtain
the extra regularity $\dom A_{[B]} \subset H^2(\Omega)$;
see also \cite[Theorem~7.2]{AGW14} for a related result.

\item
The assertions in (iv) and (v) of Theorem~\ref{thm:ABelliptic} imply that the spectrum
of $\AB$ is contained in a parabola if $\dom B^*\supset\dom B$ and $\Im B$ is bounded.
This is in accordance with \cite[Theorem~5.14]{BF62},
where the Laplacian on a bounded domain with bounded $B$ was studied.
In that paper a setting with $H^{3/2}_{\cL}(\Omega)$ as mentioned in the previous item
of this remark was used.

\item
Under the basic assumptions of Theorem~\ref{thm:ABelliptic} the operator $A_{[B]}$
is m-sectorial and hence $-A_{[B]}$ generates an analytic semigroup.
For the Laplacian on a bounded domain $\Omega$ this was proved in \cite{ABN17}
in the $H^{3/2}_{\cL}(\Omega)$ setting as in (ii).
\end{myenum}
\end{remark}

\noindent
The next remark shows that the condition~\eqref{assumpBelliptic}
can be relaxed when an adjoint pair of boundary operators that
map $H^1(\partial\Omega)$ into $H^{1/2}(\partial\Omega)$ is given.
In this case the assumption $H^{1/2}(\partial \Omega) \subset \dom B$
is not needed.

\begin{remark}\label{re:dualpair}
Assume that $B_0$ and $B_0'$ are linear operators in $L^2(\partial\Omega)$ which satisfy
\begin{equation}\label{B0Bpr0ell}
  (B_0\varphi,\psi) = (\varphi,B_0'\psi)
  \qquad\text{for all}\;\; \varphi\in\dom B_0,\; \psi\in\dom B_0',
\end{equation}
and
\begin{alignat}{2}
  H^1(\partial\Omega) &\subset \dom B_0, \qquad &
  B_0\bigl(H^1(\partial\Omega)\bigr) &\subset H^{1/2}(\partial\Omega),
  \label{BBprell1}\\[0.5ex]
  H^1(\partial\Omega) &\subset \dom B_0', \qquad &
  B_0'\bigl(H^1(\partial\Omega)\bigr) &\subset H^{1/2}(\partial\Omega).
  \label{BBprell2}
\end{alignat}
Then $B_0$ and $B_0'$ have closable extensions $B$ and $B'$, respectively,
that satisfy \eqref{assumpBelliptic} and \eqref{BBprell}.
Indeed, it follows from \eqref{BBprell1} and \eqref{BBprell2} that $B_0$ and $B_0'$
are densely defined.  Hence \eqref{B0Bpr0ell} shows that $B_0$ and $B_0'$ are closable.
This and the second condition in \eqref{BBprell2}
imply that $B_0'\upharpoonright H^1(\partial\Omega)$ is bounded
from $H^1(\partial\Omega)$ to $H^{1/2}(\partial\Omega)$.
A duality argument as, e.g.\ in \cite[Lemma~4.4]{BLL13IEOT}
shows that the Banach space adjoint of $B_0'\upharpoonright H^1(\partial\Omega)$,
which we denote by $\wt B$, is an extension of $B_0$ and a bounded mapping
from $H^{-1/2}(\partial\Omega)$ to $H^{-1}(\partial\Omega)$.
Interpolation (see, e.g.\ \cite[Theorems~5.1 and 7.7]{LM72}) implies
that $B\defeq\wt B\upharpoonright H^{1/2}(\partial\Omega)$ is bounded from
$H^{1/2}(\partial\Omega)$ to $L^2(\partial\Omega)$.
Hence $H^{1/2}(\partial\Omega)\subset\dom B$
and \eqref{assumpBelliptic} is satisfied.
In a similar way one constructs an extension $B'$ of $B_0'$ that
satisfies $H^{1/2}(\partial\Omega)\subset\dom B'$.
The relation \eqref{BBprell} is obtained by continuity.
We emphasize that in this situation replacing $B$ by $B_0$ in the
definition of $A_{[B]}$ does not change the domain of the operator.
\end{remark}

If, for $B$, we choose a multiplication operator by some function $\alpha$,
we obtain classical Robin boundary conditions.  We formulate this situation
in the following corollary, which follows from Theorem~\ref{thm:ABelliptic}
and Remark~\ref{re:dualpair} with $B_0'$ being the multiplication operator
by $\ov{\alpha}$.

\begin{corollary}\label{cor:robin}
Let $\alpha$ be a measurable complex-valued function on $\partial\Omega$ such that
\begin{equation}\label{alphamultipl}
  \alpha\varphi \in H^{1/2}(\partial\Omega) \qquad\text{for all}\;\;
  \varphi \in H^1(\partial\Omega)
\end{equation}
and that
\begin{equation}\label{bsupReal}
  b \defeq \sup(\Re\alpha) < \infty.
\end{equation}
Then the operator
\[
  A_{[\alpha]} f = \cL f, \qquad
  \dom A_{[\alpha]} = \biggl\{f\in H^2(\Omega):
  \frac{\partial f}{\partial \nu_\cL}\Big|_{\partial\Omega}
  = \alpha f|_{\partial\Omega}\biggr\},
\]
in $L^2(\Omega)$ is m-sectorial, one has $\sigma(A_{[\alpha]})\subset\ov{W(A_{[\alpha]})}$,
and the resolvent formula
\[
  (A_{[\alpha]}-\lambda)^{-1} = (\AN-\lambda)^{-1}
  + \gamma(\lambda)\big(I-\alpha M(\lambda)\big)^{-1}\alpha\gamma(\overline{\lambda})^*
\]
holds for all $\lambda\in\rho(A_{[\alpha]})\cap\rho(\AN)$.
Moreover, the following assertions are true.
\begin{myenum}
\item
$A_{[\ov\alpha]}=A_{[\alpha]}^*$.
\item
If\, $\alpha$ is real-valued, then $A_{[\alpha]}$ is self-adjoint and bounded from below.
If\, $\Im(\alpha(x))\ge0$ ($\le 0$, respectively) for almost all $x\in\partial\Omega$,
then $A_{[\alpha]}$ is maximal accumulative (maximal dissipative, respectively).
\item
If\, $b\le0$ in \eqref{bsupReal}, then $(-\infty,\min\sigma(\AN))\subset\rho(A_{[\alpha]})$.
\end{myenum}
Further, if\, $\Im\alpha$ is bounded, then the enclosures for $W(A_{[\alpha]})$
in Theorem~\ref{thm:ABelliptic}\,{\rm(iv)} and {\rm(v)} hold with $\|\ov{\Im B}\|$
replaced by $\sup|\Im\alpha|$.
If $\alpha$ is bounded, then also the enclosure in Theorem~\ref{thm:ABelliptic}\,{\rm(vi)}
holds with $\|B\|$ replaced by $\sup|\alpha|$.
\end{corollary}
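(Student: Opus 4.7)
The plan is to apply Theorem~\ref{thm:ABelliptic} via Remark~\ref{re:dualpair} with $\dom B_0 = \dom B_0' := H^1(\partial\Omega)$ and $B_0\varphi := \alpha\varphi$, $B_0'\psi := \bar\alpha\psi$. Hypothesis~\eqref{alphamultipl} is precisely the inclusion $B_0(H^1(\partial\Omega)) \subset H^{1/2}(\partial\Omega)$, and since the Sobolev spaces $H^s(\partial\Omega)$ are invariant under complex conjugation the same inclusion holds for $B_0'$; the duality $(B_0\varphi,\psi) = (\varphi,B_0'\psi)$ is immediate from the pointwise identity $\alpha\varphi\,\overline{\psi} = \varphi\,\overline{\bar\alpha\psi}$. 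Thus Remark~\ref{re:dualpair} produces closable extensions $B \supset B_0$ and $B' \supset B_0'$ satisfying~\eqref{assumpBelliptic} and~\eqref{BBprell}, and condition~\eqref{bsupReal} yields $\Re(B_0\varphi,\varphi) \le b\|\varphi\|^2$ (and analogously for $B_0'$), which extends to~\eqref{eq:semibddElliptic} by continuity. Since $\Gamma_1 f = f|_{\partial\Omega} \in H^{3/2}(\partial\Omega) \subset H^1(\partial\Omega) = \dom B_0$ for every $f \in H^2(\Omega)$, the boundary condition $\Gamma_0 f = B\Gamma_1 f$ reduces to $\partial_{\nu_\cL} f|_{\partial\Omega} = \alpha f|_{\partial\Omega}$, so $A_{[B]} = A_{[\alpha]}$ by the final sentence of Remark~\ref{re:dualpair}.

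With this identification, m-sectoriality of $A_{[\alpha]}$, the inclusion $\sigma(A_{[\alpha]}) \subset \overline{W(A_{[\alpha]})}$, the resolvent formula, and assertion~(iii) are direct consequences of Theorem~\ref{thm:ABelliptic}. For assertion~(i), Theorem~\ref{thm:ABelliptic}(ii) gives $A_{[B']} = A_{[B]}^*$, and the same identification yields $A_{[B']} = A_{[\bar\alpha]}$, hence $A_{[\bar\alpha]} = A_{[\alpha]}^*$. Assertion~(ii) then follows easily: if $\alpha$ is real, (i) gives $A_{[\alpha]} = A_{[\alpha]}^*$, and m-sectoriality forces boundedness from below; for the dissipativity cases, the computation
\[
  \Im(A_{[\alpha]}f,f) = -\Im(\alpha f|_{\partial\Omega},f|_{\partial\Omega})_{L^2(\partial\Omega)} = -\int_{\partial\Omega}(\Im\alpha)\bigl|f|_{\partial\Omega}\bigr|^2,
\]
which mirrors~\eqref{eq:Im}, produces accumulativity or dissipativity according to the sign of $\Im\alpha$, and m-sectoriality then supplies the maximality.

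For the enclosure refinements I would observe that on $H^1(\partial\Omega) \subset \dom B \cap \dom B^*$ one has $(B - B^*)\varphi = 2i(\Im\alpha)\varphi$; hence if $\Im\alpha$ is bounded, then $\overline{\Im B}$ is the bounded multiplication operator by $\Im\alpha$ on $L^2(\partial\Omega)$ and $\|\overline{\Im B}\| = \sup|\Im\alpha|$, which plugs directly into Theorem~\ref{thm:ABelliptic}(iv)--(v). If $\alpha$ itself is bounded, then $B_0$ extends to a bounded, everywhere defined multiplication operator with $\|B_0\| = \sup|\alpha|$, so $B = B_0 \in \cB(L^2(\partial\Omega))$ and Theorem~\ref{thm:ABelliptic}(vi) yields the corresponding enclosure.

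The only conceptually delicate point is ensuring that the abstractly constructed extension $B$ from Remark~\ref{re:dualpair} acts as pointwise multiplication by $\alpha$ on the traces $\Gamma_1 f \in H^{3/2}(\partial\Omega)$ appearing in the definition of $A_{[\alpha]}$; this is secured by the chain of inclusions $H^{3/2}(\partial\Omega) \subset H^1(\partial\Omega) = \dom B_0$ together with $B \supset B_0$, so the two ways of reading the boundary condition give the same operator domain.
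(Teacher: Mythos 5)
Your proposal is correct and follows precisely the route the paper takes: apply Remark~\ref{re:dualpair} with $B_0=\alpha\cdot$, $B_0'=\ov\alpha\cdot$ on $H^1(\partial\Omega)$ and feed the resulting closable extensions into Theorem~\ref{thm:ABelliptic}. The paper only gives this as a one-line pointer; your write-up supplies the verifications (duality, stability of $H^{1/2}$ under conjugation, density argument for the semiboundedness of the extension $B$, identification $A_{[B]}=A_{[\alpha]}$ via $H^{3/2}\subset H^1=\dom B_0$, and identification of $\ov{\Im B}$ as multiplication by $\Im\alpha$), and all of these steps are sound.
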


\begin{remark}\label{re:multiplier}
Condition \eqref{alphamultipl} says that $\alpha$ is a multiplier from
$H^1(\partial\Omega)$ to $H^{1/2}(\partial\Omega)$, in the notation of
\cite{MS09} written as
\[
  \alpha \in M\bigl(H^1(\partial\Omega)\to H^{1/2}(\partial\Omega)\bigr).
\]
In certain situations there exist characterizations or sufficient conditions
for this property.
For example let
\[
  \Omega = \RR_+^n = \bigl\{x=(x',x_n)^\top: x'\in\RR^{n-1},\, x_n>0\bigr\}.
\]
Then $\partial\Omega=\RR^{n-1}$.
The set of multipliers can be characterized using capacities;
see \cite[Theorem~3.2.2]{MS09}.
For the case $n=2$ there is a simpler characterization and for $n>2$ there
are simpler sufficient conditions.  To this end, let us recall some notation.
Let $H^{s,p}(\RR^{n-1})$ denote the (fractional) Sobolev space (or Bessel potential space)
defined as
\[
  H^{s,p}(\RR^{n-1}) = \bigl\{u\in\cS'(\RR^{n-1}): \cF M^s\cF^{-1}u \in L^p(\RR^{n-1})\bigr\}
\]
where $\cS'(\RR^{n-1})$ is the space of tempered distributions,
$\cF$ is the $(n-1)$-dimensional Fourier transform,
and $M$ is the operator of multiplication by $\sqrt{1+|\xi|^2}$;
see, e.g.\ \cite[\S 2.2.2\,(iii)]{ET96} or \cite[\S 3.1.1]{MS09}.
Further, let $\eta\in C_0^\infty(\RR^{n-1})$ be such that $\eta(x)=1$ on the
unit ball, and set $\eta_z(x)\defeq\eta(x-z)$ for $z\in\RR^{n-1}$.
Let
\[
  \Hlocunif^{s,p}(\RR^{n-1}) = \Bigl\{u\in\cS'(\RR^{n-1}):
  \sup_{z\in\RR^{n-1}}\|\eta_z u\|_{H^{s,p}(\RR^{n-1})}<\infty\Bigr\},
\]
a space of functions being in $H^{s,p}$ only locally but in a uniform way;
see \cite[p.~34]{MS09}.
We also set $\Hlocunif^s(\RR^{n-1}) \defeq \Hlocunif^{s,2}(\RR^{n-1})$.
When $n=2$, one obtains from \cite[Theorem~3.2.5]{MS09} that $\alpha$
satisfies \eqref{alphamultipl} if and only if
\begin{equation}\label{alpha_mult_n2}
  \alpha \in \Hlocunif^{\frac12}(\RR).
\end{equation}
In the case $n>2$ we can use \cite[Theorem~3.3.1\,(ii)]{MS09} to provide sufficient conditions:
$\alpha$ satisfies \eqref{alphamultipl} if
\begin{equation}\label{suffcondmult}
\begin{alignedat}{2}
  & \alpha \in \Hlocunif^{\frac12,p}(\RR^{n-1}) \;\;\text{for some $p\in(2,4)$} \qquad
  && \text{when}\;\; n=3, \\[1ex]
  & \alpha \in \Hlocunif^{\frac12,n-1}(\RR^{n-1}) \qquad
  && \text{when}\;\; n>3.
\end{alignedat}
\end{equation}
The implication in the case $n=3$ can be shown as follows:
if $\alpha\in \Hlocunif^{\frac12,p}(\RR^{n-1})$ and $p\in(2,4)$, then
$\alpha\in M\bigl(H^{\frac{2}{p}}(\RR^2) \to H^{\frac12}(\RR^2)\bigr)$
by \cite[Theorem~3.3.1\,(ii)]{MS09},
and since $H^1(\RR^2)$ is continuously embedded in $H^{\frac{2}{p}}(\RR^2)$,
we therefore have $\alpha\in M\bigl(H^1(\RR^2) \to H^{\frac12}(\RR^2)\bigr)$.

If $\Omega$ is a domain with smooth compact boundary, then one can
characterize multipliers using charts to reduce the situation to the half-space case,
i.e.\ $\alpha$ satisfies \eqref{alphamultipl} if and only
if $\alpha\in H^{\frac12}(\partial\Omega)$ when $n=2$;
when $n>2$, $\alpha$ satisfies \eqref{alphamultipl} if \eqref{suffcondmult} holds
with $\Hlocunif^{\frac12,p}(\RR^{n-1})$ replaced by $H^{\frac12,p}(\partial\Omega)$.
\end{remark}

\begin{example}\label{ex:unboundedalpha}
An example of an unbounded function $\alpha$ that satisfies \eqref{alpha_mult_n2} is
\[
  \alpha(x_1) = -\log\Bigl(\log\Bigl(1+\frac{1}{|x_1|}\Bigr)\Bigr),
  \qquad x_1\in(-1,1),
\]
smoothly connected, e.g.\ to the zero function outside $\RR\setminus(-2,2)$ or
to periodically shifted copies of this function.
That $\alpha$ belongs to $\Hlocunif^{\frac12}(\RR)$ can be seen from the fact
that it is the trace of a function $f\in H^1(\RR\times(0,\infty))$ that satisfies
\[
  f(x_1,x_2) = -\log\biggl(\log\biggl(1+\frac{1}{\sqrt{x_1^2+x_2^2}\,}\biggr)\biggr),
  \qquad x_1\in(-1,1),\,x_2\in(0,1).
\]
Note that such a function $\alpha$ also satisfies \eqref{bsupReal}
and hence Corollary~\ref{cor:robin} can be applied.
\end{example}

Let us consider an example in which the spectral estimates of the
previous theorem can be made more explicit.

\begin{example}\label{ex:halfspace}
Let $\Omega = \R^n_+ = \{ (x', x_n)^\top: x' \in \R^{n - 1}, x_n > 0 \}$,
so that $\partial\Omega = \R^{n - 1}$,
and consider the negative Laplacian $\cL = - \Delta$.
Then $\sigma(A_{\rm N}) = [0, \infty)$ and
the Weyl function of the quasi boundary triple in Proposition~\ref{prop:QBTelliptic}
can be calculated explicitly,
\begin{equation}\label{MLaplacian}
  \overline{M(\lambda)}
  = (-\Delta_{\RR^{n-1}}-\lambda)^{-1/2}, \qquad
  \lambda \in \CC \setminus [0, \infty);
\end{equation}
see, e.g.\ \cite[(9.65)]{G09}. Here $- \Delta_{\R^{n - 1}}$
denotes the self-adjoint Laplacian in $L^2(\R^{n - 1})$.
From \eqref{MLaplacian} we obtain
\begin{equation}\label{eq:WeylDecayExpPDE}
  \big\| \overline{M(\lambda)} \big\|
  = \frac{1}{\sqrt{\dist(\lambda,\RR_+)}\,}\,,
  \qquad \lambda \in \C \setminus [0, \infty).
\end{equation}
In particular, the estimate \eqref{eq:smallAgmon} is satisfied with $\mu=0$
and $C=1$.  Hence we can use Theorem~\ref{thm:parabola} to obtain
a better inclusion for the numerical range.
Let $B$ be a closable operator that satisfies \eqref{assumpBelliptic}
and \eqref{eq:semibddElliptic} such that $\dom B^*\supset\dom B$
and $\Im B$ is bounded.
If $b>0$, then for every $\xi<-b^2$ one has
\[
  W(\AB) \subset \Biggl\{z\in\CC: \Re z\ge -b^2,\;
  |\Im z| \le \frac{2\bigl\|\ov{\Im B}\bigr\|}{1-\frac{b}{\sqrt{|\xi|}}}
  (\Re z-\xi)^{1/2}\Biggr\}.
\]
If $b\le0$, then
\begin{equation}\label{Wenclhalfspace}
  W(\AB) \subset \biggl\{z\in\CC: \Re z>0,\;
  |\Im z| \le \frac{2\bigl\|\ov{\Im B}\bigr\|\Re z}{(\Re z)^{1/2}-b}\biggr\}\cup\{0\}.
\end{equation}
Note that $\sigma(\AB)\subset\ov{W(\AB)}$.
If $B$ is bounded, then we can use Proposition~\ref{prop:specEst}\,(a)
with $G=\CC\setminus[0,\infty)$ to obtain the spectral enclosure
\begin{equation}\label{diskecnlosure}
  \sigma(\AB)
  \subset \bigl\{z \in \CC: \dist(z,\RR_+) \le \|B\|^2 \bigr\}.
\end{equation}
In the case of the Robin boundary condition,
i.e.\ when $B$ is a multiplication operator with a complex-valued
function $\alpha$, an enclosure alternative to~\eqref{diskecnlosure}
can be found in~\cite[Theorem~2]{Fr15},
where the operator norm is replaced by an $L^p$-norm of $\alpha$
with a suitably chosen $p > 0$.
Finally, we remark that for $b \le 0$ and $z$ close to the origin,
the enclosure~\eqref{Wenclhalfspace} is sharper than~\eqref{diskecnlosure}.
\end{example}

If the boundary $\partial\Omega$ of $\Omega$ is compact, then the differences
of the resolvents of $\AB$ and $\AN$ or $\AD$, respectively,
belong to certain Schatten--von Neumann ideals as the following theorem shows.
For the case of a bounded self-adjoint operator $B$ in $L^2(\partial\Omega)$
the inclusions in \eqref{eq:three} and \eqref{eq:four} were proved
in \cite[Theorem~4.10 and Corollary~4.14]{BLL13IEOT}; cf.\ also~\cite{BLLLP10,G11}.

\begin{theorem}\label{thm:spelliptic}
Let $\partial\Omega$ be compact and let all assumptions of
Theorem~\ref{thm:ABelliptic} be satisfied.  Then
\begin{equation}\label{eq:one}
  (A_{[B]}-\lambda)^{-1} - (A_{\rm N}-\lambda)^{-1} \in \sS_p\bigl(L^2(\Omega)\bigr)
  \qquad \text{for all} \;\; p > \frac{2(n-1)}{3}
\end{equation}
and $\lambda \in \rho(A_{[B]}) \cap \rho(A_{\rm N})$, and
\begin{equation}\label{eq:two}
  (A_{[B]}-\lambda)^{-1} - (A_{\rm D}-\lambda)^{-1} \in \sS_p\bigl(L^2(\Omega)\bigr)
  \qquad \text{for all} \;\; p > \frac{2(n-1)}{3}
\end{equation}
and $\lambda \in \rho(A_{[B]}) \cap \rho(A_{\rm D})$.
If, in addition, $B \in \cB(L^2(\partial\Omega))$ then
\begin{equation}\label{eq:three}
  (A_{[B]}-\lambda)^{-1} - (A_{\rm N}-\lambda)^{-1} \in \sS_p\bigl(L^2(\Omega)\bigr)
  \qquad \text{for all} \;\; p > \frac{n-1}{3}
\end{equation}
and $\lambda \in \rho(A_{[B]}) \cap \rho(A_{\rm N})$, and
\begin{equation}\label{eq:four}
  (A_{[B]}-\lambda)^{-1} - (A_{\rm D}-\lambda)^{-1} \in \sS_p\bigl(L^2(\Omega)\bigr)
  \qquad \text{for all} \;\; p > \frac{n-1}{2}
\end{equation}
and $\lambda \in \rho(A_{[B]}) \cap \rho(A_{\rm D})$.
\end{theorem}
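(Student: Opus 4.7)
The plan is to apply Propositions~\ref{spprop} and~\ref{spprop2} to the quasi boundary triple $\{L^2(\partial\Omega),\Gamma_0,\Gamma_1\}$ of Proposition~\ref{prop:QBTelliptic}. All hypotheses other than the Schatten--von Neumann properties of $\gamma(\lambda)^*$ (and, for~\eqref{eq:four}, of $M(\lambda)^{-1}\gamma(\overline{\lambda})^*$) are already verified in the proof of Theorem~\ref{thm:ABelliptic} via Corollary~\ref{corchenaaa}, together with the self-adjointness of $\AN$ and $\AD$. Hence the task reduces to quantifying the singular values of these two operators on the compact $(n-1)$-dimensional manifold $\partial\Omega$.

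For the first step I would use the factorization
\[
  \gamma(\lambda)^* \;=\; \Gamma_1(\AN-\overline{\lambda})^{-1} : L^2(\Omega) \longrightarrow H^2(\Omega) \longrightarrow H^{3/2}(\partial\Omega) \hookrightarrow L^2(\partial\Omega)
\]
valid for $\lambda\in\rho(\AN)$ by~\eqref{gammastar}, where the first two arrows are bounded by elliptic regularity and the trace theorem, respectively. Since $\partial\Omega$ is compact and smooth, Weyl's law for the Laplace--Beltrami operator shows that the embedding $H^{3/2}(\partial\Omega)\hookrightarrow L^2(\partial\Omega)$ belongs to $\sS_p$ for every $p>\tfrac{2(n-1)}{3}$, and the ideal property of $\sS_p$ yields $\gamma(\lambda)^*\in\sS_p(L^2(\Omega),L^2(\partial\Omega))$ for every such $p$. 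Plugging this into Proposition~\ref{spprop} immediately gives~\eqref{eq:one} and~\eqref{eq:two}, and plugging it into Proposition~\ref{spprop2} halves the exponent and gives~\eqref{eq:three}, since $p/2>\tfrac{n-1}{3}$ whenever $p>\tfrac{2(n-1)}{3}$.

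For~\eqref{eq:four} the additional ingredient is the identity
\[
  M(\lambda)^{-1}\gamma(\overline{\lambda})^* f \;=\; -\frac{\partial}{\partial\nu_\cL}(\AD-\lambda)^{-1}f\,\Big|_{\partial\Omega},\qquad \lambda\in\rho(\AN)\cap\rho(\AD),\ f\in L^2(\Omega),
\]
which I would derive from the decomposition $(\AN-\lambda)^{-1}f=(\AD-\lambda)^{-1}f+v_\lambda$ with $v_\lambda\in\ker(T-\lambda)$: then $\Gamma_1 v_\lambda=\gamma(\overline{\lambda})^*f$, while $\Gamma_0 v_\lambda=M(\lambda)^{-1}\Gamma_1 v_\lambda$ coincides with the negative of the conormal derivative of $(\AD-\lambda)^{-1}f$ at $\partial\Omega$. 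The right-hand side factors as $L^2(\Omega)\to H^2(\Omega)\to H^{1/2}(\partial\Omega)\hookrightarrow L^2(\partial\Omega)$, and Weyl's law on $\partial\Omega$ places the last embedding in $\sS_q$ for every $q>2(n-1)$. Applying the second part of Proposition~\ref{spprop2} with such $p$ and $q$ yields Schatten--von Neumann membership of the resolvent difference with $r=\max\{p/2,(1/p+1/q)^{-1}\}$. Letting $p\searrow\tfrac{2(n-1)}{3}$ and $q\searrow 2(n-1)$ gives $p/2\searrow\tfrac{n-1}{3}$ and $(1/p+1/q)^{-1}\searrow\tfrac{n-1}{2}$, so $r$ can be made arbitrarily close to $\tfrac{n-1}{2}$, which is exactly~\eqref{eq:four}.

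The main obstacle I expect is the identification in the last step of $M(\lambda)^{-1}\gamma(\overline{\lambda})^*$ with a boundary value of the Dirichlet resolvent: it is this representation that reveals the sharp $H^{1/2}(\partial\Omega)$ range and hence the correct exponent $\tfrac{n-1}{2}$ in~\eqref{eq:four}. A direct bound treating $M(\lambda)^{-1}$ as a first-order pseudodifferential operator and composing with the already known $H^{3/2}$-range of $\gamma(\overline{\lambda})^*$ would lead to the same exponent, but requires independently establishing the $H^{3/2}\!\to\!H^{1/2}$ boundedness of $M(\lambda)^{-1}$, whereas the identity above bypasses this by routing through elliptic regularity for~$\AD$.
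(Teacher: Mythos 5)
Your proof is correct, and structurally it mirrors the paper's proof: both establish $\gamma(\lambda)^*\in\sS_p$ for $p>\tfrac{2(n-1)}{3}$ and $M(\lambda)^{-1}\gamma(\ov\lambda)^*\in\sS_q$ for $q>2(n-1)$, and then feed these into Propositions~\ref{spprop} and~\ref{spprop2}. The one genuine deviation is in how the second Schatten estimate is obtained. The paper simply cites that $M(\lambda)^{-1}$ maps $H^{3/2}(\partial\Omega)$ onto $H^{1/2}(\partial\Omega)$ (the Dirichlet-to-Neumann map gains exactly one derivative), concludes that $\ran\bigl(M(\lambda)^{-1}\gamma(\ov\lambda)^*\bigr)=H^{1/2}(\partial\Omega)$, and then applies the range-to-Schatten-class lemma from~\cite{BLLLP10}. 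You instead derive the exact representation $M(\lambda)^{-1}\gamma(\ov\lambda)^*f=-\Gamma_0(\AD-\lambda)^{-1}f$, whose verification (via the decomposition $(\AN-\lambda)^{-1}f=(\AD-\lambda)^{-1}f+v_\lambda$ with $v_\lambda\in\ker(T-\lambda)$, using $\Gamma_0(\AN-\lambda)^{-1}f=0$, $\Gamma_1(\AD-\lambda)^{-1}f=0$ and $\Gamma_0 v_\lambda=M(\lambda)^{-1}\Gamma_1 v_\lambda$) is correct, and then factor through elliptic regularity for $\AD$ and the conormal-derivative trace $H^2(\Omega)\to H^{1/2}(\partial\Omega)$. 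Your route has the merit of not requiring the isomorphism property of the Dirichlet-to-Neumann map as an independent input: it extracts the $H^{1/2}$ range directly from $H^2$-regularity of the Dirichlet resolvent and the trace theorem. The paper's route is slightly shorter but invokes a mapping property of $M(\lambda)^{-1}$ that your formula makes unnecessary. Your explicit use of Weyl's law on the compact $(n-1)$-dimensional boundary to place $H^s(\partial\Omega)\hookrightarrow L^2(\partial\Omega)$ in $\sS_p$ for $p>(n-1)/s$ is precisely the content of the lemma the paper cites, so that part is the same argument in different clothing. The exponent bookkeeping in your final step is also right: with $p\searrow\tfrac{2(n-1)}{3}$ and $q\searrow 2(n-1)$ one has $(1/p+1/q)^{-1}\searrow\tfrac{n-1}{2}\ge p/2$, so $r$ can be taken arbitrarily close to $\tfrac{n-1}{2}$, giving \eqref{eq:four}.
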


\begin{proof}
Let $\{L^2(\partial\Omega), \Gamma_0, \Gamma_1\}$ be the quasi boundary triple
in Proposition~\ref{prop:QBTelliptic} and let $\gamma$ be the corresponding $\gamma$-field.
Clearly, $\gamma(\lambda)^* \in \cB(L^2(\Omega), L^2(\partial\Omega))$,
and it follows from~\eqref{gammastar} that
$\ran\gamma(\lambda)^* = \ran(\Gamma_1\upharpoonright\dom A_{\rm N})
= H^{3/2}(\partial\Omega)$ for all $\lambda \in \rho(A_{\rm N})$.
Therefore we can conclude as in \cite[Lemma~3.4]{BLLLP10} that
\begin{equation}\label{eq:spdieerste}
  \gamma(\lambda)^* \in \sS_p\bigl(L^2(\Omega),L^2(\partial\Omega)\bigr)
  \quad \text{for~all} \quad p > \frac{2(n - 1)}{3}
\end{equation}
and for each $\lambda \in \rho(A_{\rm N})$.
Moreover, for $\lambda \in \rho(A_{\rm N}) \cap \rho(A_{\rm D})$
we have the relations $M(\lambda)^{-1} \gamma(\ov{\lambda})^* \in \cB(L^2(\Omega),L^2(\partial\Omega))$
and $\ran M(\lambda)^{-1} \gamma(\overline{\lambda})^* = H^{1/2}(\partial\Omega)$
since $M(\lambda)^{-1}$ maps $H^{3/2}(\partial\Omega)$ onto $H^{1/2}(\partial\Omega)$.
It follows again as in \cite[Lemma~3.4]{BLLLP10} that
\begin{equation}\label{eq:spdiezwote}
  M(\lambda)^{-1} \gamma(\overline{\lambda})^* \in \sS_q\bigl(L^2(\Omega),L^2(\partial\Omega)\bigr)
  \qquad \text{for all} \quad q > 2(n-1)
\end{equation}
and for each $\lambda \in \rho(A_{\rm N}) \cap \rho(A_{\rm D})$.
From~\eqref{eq:spdieerste} we obtain with the help of Proposition~\ref{spprop}
the assertions~\eqref{eq:one} and~\eqref{eq:two}.
For $B \in \cB(L^2(\partial\Omega))$, Proposition~\ref{spprop2},
\eqref{eq:spdieerste} and \eqref{eq:spdiezwote} yield \eqref{eq:three}
and~\eqref{eq:four}.
\end{proof}

\begin{remark}\label{rem:Weak1}
Note that the statement of Theorem~\ref{thm:spelliptic}
can be refined if we replace the usual Schatten--von Neumann classes
$\sS_p$ by the weak Schatten--von Neumann classes $\sS_{p,\infty}$,
which are discussed in Remark~\ref{re:weakSvN}.
In this case one can allow $p$ to be equal to $2(n-1)/3$, $(n-1)/3$ or $(n-1)/2$,
respectively; cf.\ \cite[Section~4.2]{BLL13IEOT}	
and \cite[Section~3]{BLL13trace}.
\end{remark}

\section{Schr\"odinger operators with $\delta$-interaction on hypersurfaces}
\label{sec:delta}

\noindent
In this section we provide some applications of the results in
Sections~\ref{sec:closedoperators}, \ref{sec:consequences} and \ref{sec:suffconddecay}
to Schr\"odinger operators with $\delta$-interaction supported on a
smooth, not necessarily bounded hypersurface $\Sigma$ in $\R^n$.
To be more specific, we consider operators associated with the
formal differential expression
\[
  -\Delta-\alpha\langle\,\cdot,\delta_\Sigma\rangle\delta_\Sigma,
\]
where $\alpha$ is a complex constant or a complex-valued function on $\Sigma$,
the strength of the $\delta$-interaction.
The spectral theory of such operators is a prominent subject in
mathematical physics; see the review paper~\cite{E08}, the monograph \cite{EK15},
and the references therein.  The largest part of the existing literature
(see, e.g.\ \cite{BEKS94, EI01, EK03, ER16, EY02, LO16, MPS16})
is devoted to the case of a real interaction strength $\alpha$.
However, there has been recent interest in non-real $\alpha$;
see, e.g.\ \cite{Fr15, KK16}.

In what follows, let $\Omega_+$ be a uniformly regular, bounded or
unbounded domain in $\R^n$ (see Section~\ref{sec:Robin})
with boundary $\Sigma \defeq \partial\Omega_+$.
Furthermore, let $\Omega_- = \R^n\setminus(\Omega_+\cup\Sigma)$
be its complement in $\R^n$.  We write
$f = f_+ \oplus f_-$ for $f \in L^2(\R^n)$, where $f_\pm = f |_{\Omega_\pm}$.
By the same reason as in Section~\ref{sec:Robin}, the trace and the
normal derivative extend to continuous linear mappings
\[
  H^2(\Omega_\pm) \ni f_\pm \mapsto
  \biggl\{f_\pm|_\Sigma;\; \frac{\partial f_\pm}{\partial \nu_\pm}\Big|_\Sigma\biggr\}
  \in H^{3/2}(\Sigma) \times H^{1/2}(\Sigma).
\]
Both the above mappings are surjective onto $H^{3/2}(\Sigma) \times H^{1/2}(\Sigma)$.
Furthermore, we introduce an operator $T$ in $L^2(\R^n)$ by
\begin{equation}\label{eq:Tdelta}
  Tf = (-\Delta f_{\rm +}) \oplus (-\Delta f_{\rm -}),\qquad
  \dom T = H^2(\R^n\setminus\Sigma)\cap H^1(\R^n).
\end{equation}
On $\dom T$ we define boundary mappings $\Gamma_0$ and $\Gamma_1$ by
\begin{equation}\label{eq:Gammasdelta}
  \Gamma_0 f = \frac{\partial f_+}{\partial\nu_+}\Big|_\Sigma
  + \frac{\partial f_-}{\partial\nu_-}\Big|_\Sigma,
  \qquad
  \Gamma_1 f = f|_\Sigma \qquad \text{for}\;f \in \dom T;
\end{equation}
here $\frac{\partial f_\pm}{\partial\nu_\pm}\big|_\Sigma$ stand
for the normal derivatives of $f = f_+ \oplus f_-\in\dom T$ on two
opposite faces of $\Sigma$ with the normals pointing outwards $\Omega_\pm$;
note that the outer unit normal vector fields $\nu_-$ and $\nu_+$
of $\Omega_-$ and $\Omega_+$, respectively, satisfy $\nu_-(x) = - \nu_+(x)$
for all $x \in \Sigma$.  Moreover, consider the symmetric operator $S$
in $L^2(\R^n)$ defined as
\begin{equation}\label{eq:Sdelta}
  Sf = - \Delta f, \qquad
  \dom S = H^2(\R^n)\cap H^1_0(\R^n\setminus\Sigma).
\end{equation}

In the following proposition we state that $\{L^2(\Sigma), \Gamma_0,\Gamma_1\}$
is a quasi boundary triple for $T\subset S^*$ and we formulate
properties of this triple and of the associated $\gamma$-field
and Weyl function.  This proposition is analogous to
Proposition~\ref{prop:QBTelliptic} and can be proved in a similar way;
see the proofs of \cite[Propositions~3.1 and~3.2]{BLLR17}.
Note that in the case of a compact $\Sigma$, the statements and proofs
of the next proposition and further details can be found in \cite[\S3]{BLL13delta}
and \cite[\S3.1]{BLL13IEOT}.

\begin{proposition}\label{prop:QBTdelta}
The operator $S$ in~\eqref{eq:Sdelta}
is closed, symmetric and densely defined
with $S^* = \overline{T}$ for $T$ in~\eqref{eq:Tdelta}, and the
triple $\{L^2(\Sigma), \Gamma_0, \Gamma_1\}$
is a quasi boundary triple for $T\subset S^*$ with the following properties.
\begin{myenum}
\item 
$\ran(\Gamma_0, \Gamma_1)^{\top} = H^{1/2}(\Sigma)\times H^{3/2}(\Sigma)$.
\item 
$A_0$ is the \emph{free Laplace operator}
\[
  -\Delta_{\R^n}f = -\Delta f, \qquad \dom(-\Delta_{\R^n}) = H^2(\R^n),
\]
and $A_1$ is the orthogonal sum of the \emph{Dirichlet Laplacians} on $\Omega_+$
and $\Omega_-$, respectively,
\[
  -\Delta_{\rm D} f  = -\Delta f, 	
  \qquad
  \dom(-\Delta_{\rm D}) =  H^2(\R^n\setminus\Sigma)\cap H^1_0(\R^n\setminus\Sigma).
\]
Both operators, $-\Delta_{\dR^n}$ and $-\Delta_{\rm D}$,
are self-adjoint and non-negative in $L^2(\R^n)$.
\item 
For all $\lambda \in \C\setminus\R_+$ the associated $\gamma$-field satisfies
\begin{equation}\label{eq:gammadelta}
  \gamma(\lambda)\left(\frac{\partial f_+}{\partial \nu_+} \Big|_\Sigma
  + \frac{\partial f_-}{\partial \nu_-}\Big|_\Sigma\right)
  = f \qquad \text{for all}\;\; f \in \ker(T-\lambda),
\end{equation}
and the associated Weyl function is given by:
\begin{equation}\label{eq:Weyldelta}
  M(\lambda)\left(\frac{\partial f_+}{\partial \nu_+} \Big|_\Sigma
  +	\frac{\partial f_-}{\partial \nu_-}\Big|_\Sigma\right)
  = f|_\Sigma \qquad \text{for all}\;\; f \in \ker(T-\lambda).
\end{equation}
Moreover, $M(\lambda)$ is a bounded, non-closed operator in $L^2(\Sigma)$
with domain $H^{1/2}(\Sigma)$ such that $\ran\ov{M(\lambda)} \subset H^1(\Sigma)$.
\end{myenum}
\end{proposition}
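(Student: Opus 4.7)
The plan is to apply Theorem~\ref{thm:ratetheorem} to the triple $\{L^2(\Sigma),\Gamma_0,\Gamma_1\}$ with $T$ as in \eqref{eq:Tdelta}, and then identify $A_0$ and $A_1$ independently with the free and Dirichlet Laplacians, after which the formulas for $\gamma(\lambda)$ and $M(\lambda)$ follow from Definition~\ref{gmdef}. The structural observation underlying everything is that the $H^1(\R^n)$-condition in $\dom T$ forces $f_+|_\Sigma = f_-|_\Sigma$ as elements of $H^{3/2}(\Sigma)$, so that $\Gamma_1 f = f|_\Sigma$ is unambiguous and the two one-sided boundary integrals can be combined correctly.

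First I would verify the abstract Green identity \eqref{green} by applying the classical Green identity separately on $\Omega_+$ and $\Omega_-$ for $f,g\in\dom T$ and summing; the trace matching across $\Sigma$ collapses the four one-sided boundary contributions into $(\Gamma_1 f,\Gamma_0 g)-(\Gamma_0 f,\Gamma_1 g)$. Next, to establish (i) (which also gives condition~(ii) of Theorem~\ref{thm:ratetheorem} in strengthened form), namely $\ran(\Gamma_0,\Gamma_1)^\top = H^{1/2}(\Sigma)\times H^{3/2}(\Sigma)$, given $(\phi,\psi)$ in this product I would use surjectivity of the one-sided trace and normal derivative maps on $H^2(\Omega_\pm)$ to pick $f_+\in H^2(\Omega_+)$ with $f_+|_\Sigma=\psi$ and $\partial_{\nu_+}f_+|_\Sigma=\phi$, and $f_-\in H^2(\Omega_-)$ with $f_-|_\Sigma=\psi$ and $\partial_{\nu_-}f_-|_\Sigma=0$. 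Then $f_+\oplus f_-$ lies in $\dom T$ and realizes $(\phi,\psi)$ under $(\Gamma_0,\Gamma_1)^\top$. Density of $\ker\Gamma_0\cap\ker\Gamma_1$ in $L^2(\R^n)$ is immediate from the inclusion $C_0^\infty(\R^n\setminus\Sigma)\subset\ker\Gamma_0\cap\ker\Gamma_1$.

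For the identification $A_0 = -\Delta_{\R^n}$, I would argue that any $f\in\dom T$ with $\Gamma_0 f=0$ satisfies $f_\pm\in H^2(\Omega_\pm)$, $f\in H^1(\R^n)$, and has vanishing conormal jump across $\Sigma$; a standard transmission regularity result for smooth interfaces then promotes this to $f\in H^2(\R^n)$, so $A_0\subset -\Delta_{\R^n}$, with equality by self-adjointness of $-\Delta_{\R^n}$. Theorem~\ref{thm:ratetheorem} then yields the quasi boundary triple structure, and a direct inspection of $T\upharpoonright(\ker\Gamma_0\cap\ker\Gamma_1)$ confirms $S$ as in \eqref{eq:Sdelta}. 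For $A_1$, the condition $\Gamma_1 f=0$ reduces to $f_\pm\in H^1_0(\Omega_\pm)\cap H^2(\Omega_\pm)$, which is exactly $\dom(-\Delta_{\rm D})$; non-negativity of both operators is standard.

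Finally, the formulas \eqref{eq:gammadelta} and \eqref{eq:Weyldelta} follow directly from Definition~\ref{gmdef} together with \eqref{eq:Gammasdelta}. The asserted mapping properties of $M(\lambda)$ — boundedness, $\dom M(\lambda) = H^{1/2}(\Sigma) = \ran\Gamma_0$, and $\ran\overline{M(\lambda)}\subset H^1(\Sigma)$ — are inherited from one-sided regularity of weak solutions to $(-\Delta-\lambda)f_\pm=0$ with prescribed conormal data in $H^{1/2}(\Sigma)$ and the corresponding trace estimates on $\Sigma$. The main obstacle is the transmission regularity step in the identification of $A_0$: one must promote $H^2(\R^n\setminus\Sigma)\cap H^1(\R^n)$ with vanishing conormal jump to $H^2(\R^n)$. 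For compact $\Sigma$ this is covered in \cite{BLL13delta,BLL13IEOT}; for unbounded, uniformly regular $\Sigma$ the same argument carries over via local charts, and it is precisely uniform regularity that supplies the uniform $H^2$-estimates needed in this step.
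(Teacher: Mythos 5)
Your proposal is correct and follows the same route as the paper, which for this proposition simply defers to the analogous arguments in Proposition~\ref{prop:QBTelliptic} and in the references \cite{BLLR17,BLL13delta,BLL13IEOT}: verify the hypotheses of Theorem~\ref{thm:ratetheorem} (Green's identity by summing the two one-sided identities on $\Omega_\pm$, surjectivity of $(\Gamma_0,\Gamma_1)^\top$ via your explicit choice of $f_\pm$, density of the kernel intersection) and then identify $A_0$, $A_1$, $\gamma$, and $M$ directly from the definitions. One small imprecision worth correcting: the step promoting $f\in H^2(\R^n\setminus\Sigma)\cap H^1(\R^n)$ with vanishing conormal jump to $f\in H^2(\R^n)$ is not a transmission-regularity theorem requiring uniform estimates; it is an elementary distributional computation --- integrating by parts against a test function, the surface terms on $\Sigma$ cancel because both the trace jump (from $f\in H^1(\R^n)$) and the conormal jump (from $\Gamma_0 f=0$) vanish, so $\Delta f = \Delta f_+\oplus\Delta f_-\in L^2(\R^n)$, and then the Fourier characterization of $H^2(\R^n)$ finishes. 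The uniform regularity of $\Sigma$ is instead what supplies the uniform trace and inverse-trace estimates you need for surjectivity of $(\Gamma_0,\Gamma_1)^\top$ onto $H^{1/2}(\Sigma)\times H^{3/2}(\Sigma)$ and for the $L^2(\Sigma)$-boundedness and $H^1(\Sigma)$-range assertions about $\overline{M(\lambda)}$.
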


\medskip

\noindent
The following lemma ensures the decay of the Weyl function $M$ in~\eqref{eq:Weyldelta}.
For the definition of the exterior sector $\bbU_{w_0,\nu}$ we refer to~\eqref{defUw0nu}.

\begin{lemma}\label{lem:Weyldeltadecay}
Let $M$ denote the Weyl function in~\eqref{eq:Weyldelta}.
Then for all $w_0 < 0$, $\nu \in (0,\pi)$, and $\beta \in (0,\frac12)$
there exists a constant $C = C(\Sigma, \beta, w_0, \nu) > 0$ such that
\begin{equation}\label{powerdecaydelta}
  \big\|\ov{M(\lambda)}\big\| \le \frac{C}{\bigl(\dist(\lambda,\dR_+)\bigr)^{\beta}}
  \qquad \text{for all}\;\;\lambda \in \bbU_{w_0,\nu}.
\end{equation}
\end{lemma}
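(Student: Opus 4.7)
The plan is to verify the abstract hypothesis of Theorem~\ref{th:weyl_decay} for the quasi boundary triple $\{L^2(\Sigma),\Gamma_0,\Gamma_1\}$ from Proposition~\ref{prop:QBTdelta} and then simply read off the estimate~\eqref{powerdecaydelta} from part~(c) of that theorem. Concretely, I would fix $\beta\in(0,\tfrac12)$, set $\alpha \defeq (1-\beta)/2\in(\tfrac14,\tfrac12)$, choose any $\mu<0$, and aim to show that
\[
  \Gamma_1(A_0-\mu)^{-\alpha}:L^2(\R^n)\supset \dom\bigl(\Gamma_1(A_0-\mu)^{-\alpha}\bigr)\to L^2(\Sigma)
\]
is bounded, where $A_0=-\Delta_{\R^n}$ and $\Gamma_1 f=f|_{\Sigma}$.

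First I would use that $A_0-\mu$ is a positive self-adjoint operator with $\dom(A_0-\mu)=H^2(\R^n)$ and $\dom(A_0-\mu)^{1/2}=H^1(\R^n)$, with graph norms equivalent to the natural Sobolev norms. Interpolation between the scales $H^2(\R^n)$ and $L^2(\R^n)$ (cf.\ the analogous argument in the proof of Lemma~\ref{lem:NDdecay}) then gives $\dom(A_0-\mu)^{s/2}=H^s(\R^n)$ for every $s\in[0,1]$ with equivalent norms, so that by the closed graph theorem
\[
  (A_0-\mu)^{-\alpha}\in\cB\bigl(L^2(\R^n),H^{2\alpha}(\R^n)\bigr)
  \qquad\text{for every}\;\alpha\in\bigl(0,\tfrac{1}{2}\bigr].
\]
Next I would invoke the uniform regularity of $\Omega_+$, which yields a bounded restriction map $H^{2\alpha}(\R^n)\to H^{2\alpha}(\Omega_+)$ and a bounded trace map $H^{2\alpha}(\Omega_+)\to L^2(\Sigma)$ (see \cite[Theorem~3.7]{F67}) whenever $2\alpha>\tfrac12$, i.e.\ whenever $\alpha\in(\tfrac14,\tfrac12]$. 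Composing these, the operator $\Gamma_1(A_0-\mu)^{-\alpha}$ is bounded from $L^2(\R^n)$ to $L^2(\Sigma)$ for each such $\alpha$.

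Since $A_0=-\Delta_{\R^n}$ is non-negative and thus bounded from below, Theorem~\ref{th:weyl_decay}\,(c) now applies: for every $w_0<0=\min\sigma(A_0)$, every $\nu\in(0,\pi)$, and every $\alpha\in(\tfrac14,\tfrac12)$ there exists $C=C(\Sigma,\alpha,\mu,w_0,\nu)>0$ with
\[
  \bigl\|\ov{M(\lambda)}\bigr\|\le \frac{C}{\bigl(\dist(\lambda,\sigma(A_0))\bigr)^{1-2\alpha}}
  \qquad\text{for all}\;\lambda\in\bbU_{w_0,\nu}.
\]
Recalling $\sigma(A_0)=\R_+$ and setting $\beta=1-2\alpha$ yields \eqref{powerdecaydelta} for the prescribed $\beta\in(0,\tfrac12)$.

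The only step requiring real care is the boundedness of the trace map into $L^2(\Sigma)$ when $\Sigma$ is non-compact; this is where the uniform regularity of $\Omega_+$ enters and replaces the classical trace theorem on bounded smooth domains. Everything else is a direct transcription of the argument used in Lemma~\ref{lem:NDdecay}, with the free Laplacian on $\R^n$ and the (two-sided) trace $f\mapsto f|_{\Sigma}$ playing the roles that the Neumann Laplacian and the Dirichlet trace played there.
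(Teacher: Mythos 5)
Your proposal is correct and follows essentially the same route as the paper: verify the boundedness of $\Gamma_1(A_0-\mu)^{-\alpha}$ for $\alpha\in(\tfrac14,\tfrac12)$ and read off \eqref{powerdecaydelta} from Theorem~\ref{th:weyl_decay}\,(c). The only cosmetic difference is that you invoke interpolation to identify $\dom(A_0-\mu)^{s/2}$ with $H^s(\R^n)$, whereas for the free Laplacian on $\R^n$ this is immediate from the Fourier-transform definition of the Sobolev spaces (which is what the paper uses), so the interpolation step can be dispensed with.
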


\begin{proof}
Let $\{L^2(\Sigma), \Gamma_0, \Gamma_1\}$ be the quasi boundary triple in
Proposition~\ref{prop:QBTdelta}. Recall that $A_0 = - \Delta_{\R^n}$;
in particular, $\sigma(A_0) = [0, \infty)$
and $\dom(A_0 + 1)^\frac{s}{2} = H^s(\R^n)$ for all $s > 0$ by the
definition of the Sobolev spaces.
Hence by the closed graph theorem, $(A_0 + 1)^{-s/2}$ is bounded as
an operator from $L^2(\R^n)$ to $H^s(\R^n)$ for each $s \geq 0$.
Since the trace map is bounded from $H^s(\R^n)$ to $L^2(\Sigma)$
for each $s \in (\frac{1}{2}, 1)$, it follows that
$f \mapsto ((A_0 + 1)^{-s/2} f) |_{\Sigma}$ is bounded
from $L^2(\R^n)$ to $L^2(\Sigma)$ for each $s \in (\frac{1}{2}, 1)$.
Therefore the operator
\[
  \Gamma_1(A_0 + 1)^{-\alpha}: L^2(\R^n)
  \supset \dom \bigl(\Gamma_1(A_0 + 1)^{-\alpha}\bigr) \to L^2(\Sigma)
\]
is bounded for each $\alpha \in (\frac{1}{4}, \frac{1}{2})$.
By Theorem~\ref{th:weyl_decay} it follows that for each $w_0<0$,
each $\nu\in(0,\pi)$ and each $\alpha \in (\frac{1}{4}, \frac{1}{2})$
there exists $C = C(\Sigma, \beta, w_0, \nu) > 0$ such that
\[
  \big\|\overline{M(\lambda)}\big\|
  \le \frac{C}{\bigl(\dist(\lambda, \R_+)\bigr)^{1-2\alpha}}
\]
holds for all $\lambda \in \bbU_{w_0, \nu}$.
From this the claim of the lemma follows.
\end{proof}

\begin{remark}\label{re:deltaMDecay}
It can be shown as in \cite[Proposition~3.2\,(iii)]{BLL13delta} that
\begin{equation}\label{MMplMmi}
  M(\lambda) = \bigl(M_+(\lambda)^{-1}+M_-(\lambda)^{-1}\bigr)^{-1}, \qquad
  \lambda\in\CC\setminus[0,\infty),
\end{equation}
where $M_+$ and $M_-$ are the Weyl functions from Section~\ref{sec:Robin}
for $-\Delta$ on $\Omega_+$ and $\Omega_-$, respectively.
Remark~\ref{re:NDdecay} implies that for each $\mu<0$ there
exist $C_\pm>0$ such that
\[
  \big\|\ov{M_\pm(\lambda)}\big\| \le \frac{C_\pm}{(\mu-\lambda)^{1/2}}\,,
  \qquad \lambda<\mu.
\]
Since $M_\pm(\lambda)\ge0$ for $\lambda\in(-\infty,0)$,
it follows from~\cite[Corollaries~I.2.4 and I.3.2]{Ando78} that
\[
  \bigl\|\ov{M(\lambda)}\bigr\|
  \le \frac14\Bigl(\bigl\|\ov{M_+(\lambda)}\bigr\| + \bigl\|\ov{M_-(\lambda)}\bigr\|\Bigr).
\]
Hence for each $\mu < 0$
there exists $C = C(\Sigma, \mu) > 0$ such that
\[
  \bigl\|\ov{M(\lambda)}\bigr\| \le \frac{C}{(\mu-\lambda)^{1/2}}
  \qquad \text{for all}\;\; \lambda < \mu.
\]
\end{remark}	

\medskip

From Lemma~\ref{lem:Weyldeltadecay}, Remark~\ref{re:deltaMDecay} and the
results of Section~\ref{sec:consequences} we obtain the following consequences
for Schr\"odinger operators with $\delta$-potentials supported on $\Sigma$;
cf.\ the proof of Theorem~\ref{thm:ABelliptic} and Corollary~\ref{cor:robin}.
Note that the assumptions of the next theorem allow certain classes
of unbounded functions $\alpha$; cf.\ Remark~\ref{re:multiplier}.

\begin{theorem}\label{thm:ABdelta}
Let $\alpha$ be a measurable complex-valued function such that
\begin{equation}\label{eq:alphaCond2}
  \alpha\varphi \in H^{1/2}(\Sigma) \qquad\text{for all}\;\; \varphi \in H^1(\Sigma),
\end{equation}
and that
\[
  b \defeq \sup(\Re\alpha) < \infty.
\]
Then the Schr\"odinger operator with $\delta$-interaction of strength $\alpha$
supported on $\Sigma$,
\begin{equation}\label{eq:ABdelta}
\begin{split}
  A_{[\alpha]} f & = (-\Delta f_+) \oplus (-\Delta f_-), \\
  \dom A_{[\alpha]} & = \left\{ f \in H^2(\R^n \setminus \Sigma)\cap H^1(\R^n) :
  \frac{\partial f_+}{\partial \nu_+} \Big|_\Sigma
  + \frac{\partial f_-}{\partial\nu_-}\Big|_\Sigma = \alpha f |_\Sigma \right\},
\end{split}
\end{equation}
in $L^2(\R^n)$
is m-sectorial, one has $\sigma(A_{[\alpha]})\subset\ov{W(A_{[\alpha]})}$, the resolvent formula
\begin{equation}\label{eq:KreinDelta}
  (A_{[\alpha]}-\lambda)^{-1} = (-\Delta_{\R^n}-\lambda)^{-1}
  + \gamma(\lambda)\big(I-\alpha M(\lambda)\big)^{-1}\alpha\gamma(\overline{\lambda})^*
\end{equation}
holds for all $\lambda\in\rho(A_{[\alpha]}) \setminus \dR_+$, and the following assertions are true.
\begin{myenum}
\item 
$A_{[\ov\alpha]} = A_{[\alpha]}^*$.
\item 
If $\alpha$ is real-valued, then $A_{[\alpha]}$ is self-adjoint and bounded from below.
If\, $\Im(\alpha(s)) \ge 0$ ($\le 0$, respectively) for almost all $s\in\Sigma$,
then $A_{[\alpha]}$ is maximal accumulative (maximal dissipative, respectively).
\end{myenum}
Moreover, the following spectral enclosures hold.
\begin{myenum}
\setcounter{counter_a}{2}
\item 
If\, $b \le 0$, then $(-\infty, 0) \subset \rho(A_{[\alpha]})$.
\item 
If\, $\Im\alpha$ is bounded and $b>0$, then for each $\mu < 0$ there exists $C > 0$ such that
for each $\xi < \mu-(Cb)^2$,
\[
  W(A_{[\alpha]})
  \subset \Biggl\{z\in\CC: \Re z \ge \mu-(Cb)^2,\;
  |\Im z| \le \frac{2C\|\Im\alpha\|_\infty}{1-\frac{C b}{(\mu-\xi)^{1/2}}}(\Re z-\xi)^{1/2}\Biggr\}.
\]
\item 
If\, $\Im\alpha$ is bounded and $b\le0$, then for each $\mu < 0$ there exists $C > 0$ such that
\[
  W(A_{[\alpha]})
  \subset \biggl\{z\in\CC: \Re z \ge 0,\;
  |\Im z| \le \frac{2C\|\Im\alpha\|_\infty(\Re z-\mu)}{(\Re z-\mu)^{1/2}-Cb}\biggr\}.
\]
\item[\textup{(vi)}] 
If\, $\alpha$ is bounded, then for each $w_0 < 0$, $\nu \in (0, \pi)$
and $\beta \in \bigl(0, \frac{1}{2}\bigr)$ there exists $C > 0$ such that
\[
  \sigma(A_{[\alpha]}) \cap \bbU_{w_0, \nu} \subset
  \Bigl\{z\in\bbU_{w_0,\nu}: \dist(z, \R_+) \le (C\|\alpha\|_\infty)^{1/\beta}\Bigr\},
\]
where $\bbU_{w_0,\nu}$ is defined in \eqref{defUw0nu}.
\end{myenum}
\end{theorem}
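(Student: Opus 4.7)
The plan is to realize $A_{[\alpha]}$ as the operator $A_{[B]}$ associated (via \eqref{AB}) with the quasi boundary triple $\{L^2(\Sigma),\Gamma_0,\Gamma_1\}$ of Proposition~\ref{prop:QBTdelta} and the boundary parameter $B$ given by multiplication by $\alpha$. With this identification in hand, every assertion of the theorem is a direct application of a corresponding abstract result from Sections \ref{sec:consequences}--\ref{sec:suffconddecay}; thus the proof reduces to (a) constructing the correct operator $B$ and checking the technical domain hypotheses, and (b) assembling the spectral enclosures with $\beta=1/2$, $\mu=0$, $\min\sigma(A_0)=0$.

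First, I would define $B_0$ to be multiplication by $\alpha$ on $\dom B_0=H^1(\Sigma)$ and $B_0'$ multiplication by $\ov{\alpha}$ on $\dom B_0'=H^1(\Sigma)$. By~\eqref{eq:alphaCond2} applied to $\alpha$ and $\ov\alpha$, both operators map $H^1(\Sigma)$ into $H^{1/2}(\Sigma)$, so the hypotheses analogous to \eqref{BBprell1}--\eqref{BBprell2} of Remark~\ref{re:dualpair} are satisfied, and $B_0$ and $B_0'$ are dual with respect to the $L^2(\Sigma)$ pairing. The argument of Remark~\ref{re:dualpair} (a duality plus interpolation between the action $H^1\to H^{1/2}$ and the dual action $H^{-1/2}\to H^{-1}$) then provides closable extensions $B,B'$ with $H^{1/2}(\Sigma)\subset\dom B\cap\dom B'$ and $B(H^1(\Sigma))\cup B'(H^1(\Sigma))\subset H^{1/2}(\Sigma)$, preserving the duality $(Bx,y)=(x,B'y)$. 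Since the conditions of Theorem~\ref{thm:operators1} force $\Gamma_1f\in\dom B$ and $\Gamma_1f\in H^{3/2}(\Sigma)\subset H^1(\Sigma)$ on $\dom A_{[B]}$, we have $Bf|_\Sigma=\alpha f|_\Sigma$ pointwise, so the operator $A_{[B]}$ in~\eqref{AB} coincides with $A_{[\alpha]}$ defined by~\eqref{eq:ABdelta}. The bound $b\defeq\sup(\Re\alpha)<\infty$ yields assumption~(i) of Theorem~\ref{thm:operators1}.

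Next I would verify the remaining hypotheses of Theorem~\ref{thm:operators1} for $B$: decay of $M$ as $\lambda\to-\infty$ follows from Remark~\ref{re:deltaMDecay} (in fact with rate $(\mu-\lambda)^{-1/2}$); condition (ii) holds because $\ran\ov{M(\lambda)}^{1/2}=H^{1/2}(\Sigma)\subset\dom B$ (argued as in \cite[Proposition~3.2]{BLLR17} using~\eqref{MMplMmi}); condition (iii) follows from $\ran\ov{M(\lambda)}\subset H^1(\Sigma)$ of Proposition~\ref{prop:QBTdelta}\,(iii) combined with $B(H^1(\Sigma))\subset H^{1/2}(\Sigma)=\ran\Gamma_0$; condition (iv) follows from $\ran\Gamma_1=H^{3/2}(\Sigma)\subset H^1(\Sigma)\subset\dom B$; and condition (v) uses that $A_1=-\Delta_{\mathrm D}$ is self-adjoint with $\rho(A_1)\cap(-\infty,0)\ne\emptyset$. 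Since $A_0=-\Delta_{\R^n}$ and $A_1=-\Delta_{\mathrm D}$ are both self-adjoint and non-negative (Proposition~\ref{prop:QBTdelta}\,(ii)), Theorem~\ref{thm:parabola} applies and gives m-sectoriality, $\sigma(A_{[\alpha]})\subset\ov{W(A_{[\alpha]})}$, and the resolvent formula~\eqref{eq:KreinDelta} via~\eqref{resformula4}. Assertion (i) is then immediate from Theorem~\ref{thm:operators1} applied to the dual pair $(B,B')$; assertion (ii) follows from the same theorem since $\alpha$ real (resp., $\Im\alpha\ge0$ or $\le0$) makes $B$ symmetric (resp., dissipative or accumulative).

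Finally, I would read off the spectral enclosures: (iii) is Corollary~\ref{cor:bnegative}; (iv) and (v) follow from Theorem~\ref{thm:parabola}\,(a), (c) applied with $\beta=\tfrac12$, using $\|\ov{\Im B}\|\le\|\Im\alpha\|_\infty$ (since $\Im B$ is the multiplication operator by $\Im\alpha$, bounded by $\|\Im\alpha\|_\infty$ on its dense domain); (vi) is Proposition~\ref{prop:specEst}\,(a) with $G=\bbU_{w_0,\nu}$ and the decay estimate of Lemma~\ref{lem:Weyldeltadecay}, where $\|B\|\le\|\alpha\|_\infty$ when $\alpha\in L^\infty(\Sigma)$. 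The main technical obstacle is the dual-pair construction in the second paragraph: without it, the raw multiplication operator by $\alpha$ need not satisfy $H^{1/2}(\Sigma)\subset\dom B$ required by hypothesis (iii) of Theorem~\ref{thm:operators1}, so the interpolation-duality step recovering the extension $B$ with $H^{1/2}(\Sigma)$ in its domain is essential; once this is available, everything else is a direct transcription of the abstract theorems.
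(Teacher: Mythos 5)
Your proposal is correct and follows essentially the same route the paper indicates, namely to transcribe the proof of Theorem~\ref{thm:ABelliptic} together with the dual-pair construction of Remark~\ref{re:dualpair} (as used in Corollary~\ref{cor:robin}) to the quasi boundary triple of Proposition~\ref{prop:QBTdelta}, feeding in the decay estimates from Lemma~\ref{lem:Weyldeltadecay} and Remark~\ref{re:deltaMDecay}. Your check of the hypotheses of Theorem~\ref{thm:operators1}, the invocation of Theorem~\ref{thm:parabola} with $\beta=\tfrac12$ and $\mu<0$, Corollary~\ref{cor:bnegative}, and Proposition~\ref{prop:specEst}\,(a) with $G=\bbU_{w_0,\nu}$ all match the intended argument.
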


\medskip

\noindent
Let us illustrate the obtained spectral estimates in an example.

\begin{example}
Consider the case
\[
  \Omega_\pm = \R^n_\pm = \bigl\{x = (x', x_n)^\top \in \R^n :
  x' \in \R^{n - 1}, \pm x_n > 0 \bigr\},
\]
that is, $\Sigma = \{(x', 0)^\top : x' \in \R^{n-1}\}$, which we
identify with $\R^{n-1}$.
It follows from \eqref{MMplMmi} and \eqref{MLaplacian} that
\[
  \overline{M(\lambda)} = \frac{1}{2}\bigl(-\Delta_{\R^{n-1}}-\lambda\bigr)^{-1/2}, \qquad
  \lambda \in \C \setminus [0, \infty),
\]
and hence
\[
  \big\|\ov{M(\lambda)}\big\| = \frac{1}{2\sqrt{\dist(\lambda,\R_+)}\,}\,, \qquad
  \lambda \in \C \setminus [0, \infty).
\]
In particular, the estimate~\eqref{eq:estMpower} is satisfied with
$\mu = 0$ and $C = 1/2$.
In analogy to Example~\ref{ex:halfspace}, this observation can be used
to obtain several better enclosures for the spectrum and
numerical range of the operator $A_{[\alpha]}$.
Let $\alpha$ satisfy the conditions of Theorem~\ref{thm:ABdelta} and
let $\Im\alpha$ be bounded.
If $b > 0$, then for every $\xi < -b^2 / 4$ one has
\[
  W(A_{[\alpha]}) \subset \Biggl\{z\in\dC\colon \Re z \ge -\frac{b^2}{4}\,,\;
  |\Im z| \le \frac{\|\Im\alpha\|_\infty}{1-\frac{b}{2\sqrt{|\xi|}}}
  (\Re z - \xi)^{1/2}\Biggr\}.
\]
If $b \le 0$, then
\[
	W(A_{[\alpha]}) \subset \bigg\{z\in\dC\colon \Re z > 0,
	|\Im z| \le \frac{\|\Im\alpha\|_\infty\Re z}{(\Re z)^{1/2} - b/2}
	\bigg\}\cup\{0\}.
\]
If, in addition, $\alpha$ is  bounded, then by Proposition~\ref{prop:specEst}\,(a)
with $G = \dC\setminus\dR_+$ the spectrum of $A_{[\alpha]}$ satisfies the enclosure
\[
  \sigma(A_{[\alpha]}) \subset
  \biggl\{ z \in \C : \dist(z,\dR_+) \le \frac14\|\alpha\|_\infty^2 \biggr\}.
\]
\end{example}

We now have a closer look at the special case of a compact hypersurface $\Sigma$
and bounded $\alpha$.  For this case certain refined bounds for the
function $M$ from the recent work~\cite{GS15} are available and can be combined
with the results in the abstract part of this paper in order to obtain
the spectral bounds for $A_{[\alpha]}$ that are contained in the next theorem.
We remark that~\cite{GS15}
contains further bounds in space dimension two and in the special case
when $\Omega_+$ is a convex domain, which could be combined with our theorems;
however, we do not include this in the next theorem.

\begin{theorem}\label{thm:specEstdelta2}
Let $\Sigma$ be compact and let $\alpha\in L^\infty(\Sigma)$ be a complex-valued
function which satisfies \eqref{eq:alphaCond2}.
Then there exist constants $C_1,C_2 > 0$, which are independent of $\alpha$,
such that the spectrum of $A_{[\alpha]}$ satisfies
\begin{equation}\label{encl_galkowski}
  \sigma(A_{[\alpha]})
  \setminus\dR_+ \subset \dV_{\alpha,C_1} \cap \dW_{\alpha,C_2},
\end{equation}
where
\begin{align*}
  \dV_{\alpha,C_1} &\defeq
  \begin{cases}
    \Bigl\{z\in\C\setminus\{0\}:
    C_1\|\alpha\|_\infty\bigl(2+|z|\bigr)^{-\frac{1}{4}}\ln\bigl(2+|z|^{-1}\bigr)\ge 1\Bigr\},
    & n = 2,
    \\[2ex]
    \Bigl\{z\in\C:
    C_1\|\alpha\|_\infty\bigl(2+|z|\bigr)^{-\frac{1}{4}}\ln\bigl(2+|z|\bigr)\ge 1\Bigr\},
    & n\ge 3,
  \end{cases}
  \displaybreak[0]\\[1ex]
  \dW_{\alpha,C_2} &\defeq
  \begin{cases}
    \Bigl\{z\in\C\setminus\{0\}: C_2\|\alpha\|_\infty\bigl(2+|\Im\sqrt{z}|^2\bigr)^{-\frac{1}{2}}
    \ln\bigl(2 + |z|^{-1}\bigr) \ge 1 \Bigr\},  & n = 2,
    \\[2ex]
    \Bigl\{z\in\C:
    C_2\|\alpha\|_\infty\bigl(2+|\Im\sqrt{z}|^2\bigr)^{-\frac{1}{2}} \ge 1 \Bigr\}, & n\ge 3.
  \end{cases}
\end{align*}
\end{theorem}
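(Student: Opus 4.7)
The plan is to reduce the statement to norm estimates on the Weyl function $\ov{M(\lambda)}$ via the abstract machinery of Section~\ref{sec:consequences}, and then to invoke the refined bounds on $\ov{M(\lambda)}$ proved in \cite{GS15}. Concretely, I would apply Theorem~\ref{thm:operators2} to the quasi boundary triple $\{L^2(\Sigma), \Gamma_0, \Gamma_1\}$ of Proposition~\ref{prop:QBTdelta} with the bounded operator $B$ given by multiplication by $\alpha \in L^\infty(\Sigma)$. The inclusion $\ran\ov{M(\lambda)} \subset H^1(\Sigma)$ from Proposition~\ref{prop:QBTdelta}\,(iii) together with the multiplier assumption \eqref{eq:alphaCond2} yields $\alpha(\ran\ov{M(\lambda)}) \subset H^{1/2}(\Sigma) = \ran\Gamma_0$; the remaining hypothesis in Theorem~\ref{thm:operators2}\,(ii) is fulfilled because $A_1 = -\Delta_{\rm D}$ is self-adjoint. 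Theorem~\ref{thm:operators2} then provides the implication
\[
  \|\alpha\|_\infty\,\bigl\|\ov{M(\lambda)}\bigr\| < 1 \;\;\Longrightarrow\;\; \lambda \in \rho(A_{[\alpha]}),
\]
so every $\lambda \in \sigma(A_{[\alpha]}) \setminus \R_+$ must satisfy $\|\alpha\|_\infty\|\ov{M(\lambda)}\| \ge 1$.

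The second ingredient comes from \cite{GS15}, where for a compact smooth hypersurface $\Sigma$ two types of upper bounds for $\|\ov{M(\lambda)}\|$ are established: a decay bound of order $(2+|\lambda|)^{-1/4}$ with a logarithmic correction whose form depends on whether $n=2$ or $n \ge 3$, and a bound of order $(2+|\Im\sqrt{\lambda}|^{2})^{-1/2}$ valid on $\C \setminus \R_+$, again with an additional logarithmic factor in dimension two and with a corresponding small-$|\lambda|$ correction. Substituting each of these estimates into $\|\alpha\|_\infty\|\ov{M(\lambda)}\| \ge 1$ produces precisely the defining inequalities of the sets $\dV_{\alpha,C_1}$ and $\dW_{\alpha,C_2}$, respectively, with constants $C_1,C_2$ depending only on $\Sigma$; intersecting both conclusions yields \eqref{encl_galkowski}.

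The main obstacle will be matching the norm estimates stated in \cite{GS15} --- which are typically phrased in terms of layer potentials on $\Sigma$ or of the associated Dirichlet-to-Neumann map --- with the Neumann-to-Dirichlet map $\ov{M(\lambda)}$ of our quasi boundary triple, and in particular keeping careful track of the logarithmic corrections in dimension $n = 2$ and of the behaviour of $\|\ov{M(\lambda)}\|$ as $|\lambda| \to 0$, in order to recover the precise form of $\dV_{\alpha,C_1}$ and $\dW_{\alpha,C_2}$. Once this dictionary is set up, the verification of the abstract hypotheses and the reduction itself are routine from the results already established in the first part of the paper.
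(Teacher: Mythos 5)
Your proposal is correct and takes essentially the same approach as the paper: recognize that the condition $\|\alpha\,\overline{M(\lambda)}\|<1$ forces $\lambda\in\rho(A_{[\alpha]})$ (the paper invokes Theorem~\ref{mainthm} directly with $\alpha$ viewed as an operator with domain $H^1(\Sigma)$ and range in $H^{1/2}(\Sigma)=\ran\Gamma_0$, while you route through Theorem~\ref{thm:operators2}, which is deduced from the same source and yields the same inclusion \eqref{eq:resSet2}), and then feed in the two families of norm estimates from~\cite{GS15} to read off the sets $\dV_{\alpha,C_1}$ and $\dW_{\alpha,C_2}$. The only detail you flag as uncertain but leave open — matching the \cite{GS15} conventions with $\overline{M(\lambda)}$ — is indeed present in the paper as a factor-$\tfrac12$ renormalization of the constants, and your identification of the remaining verifications as routine is accurate.
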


\begin{figure}[ht]
\begin{center}
\begin{tabular}{cc}
\includegraphics[width=6cm]{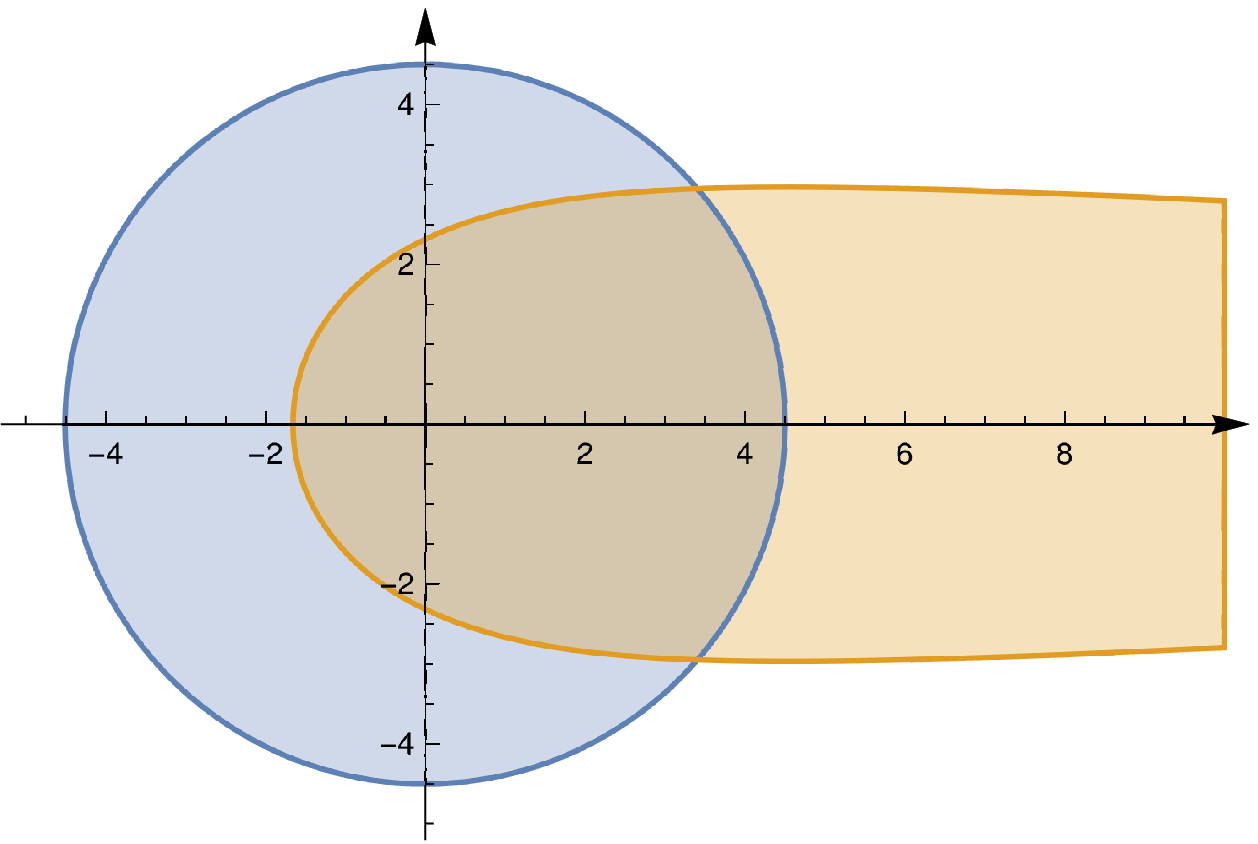}
&
\includegraphics[width=6cm]{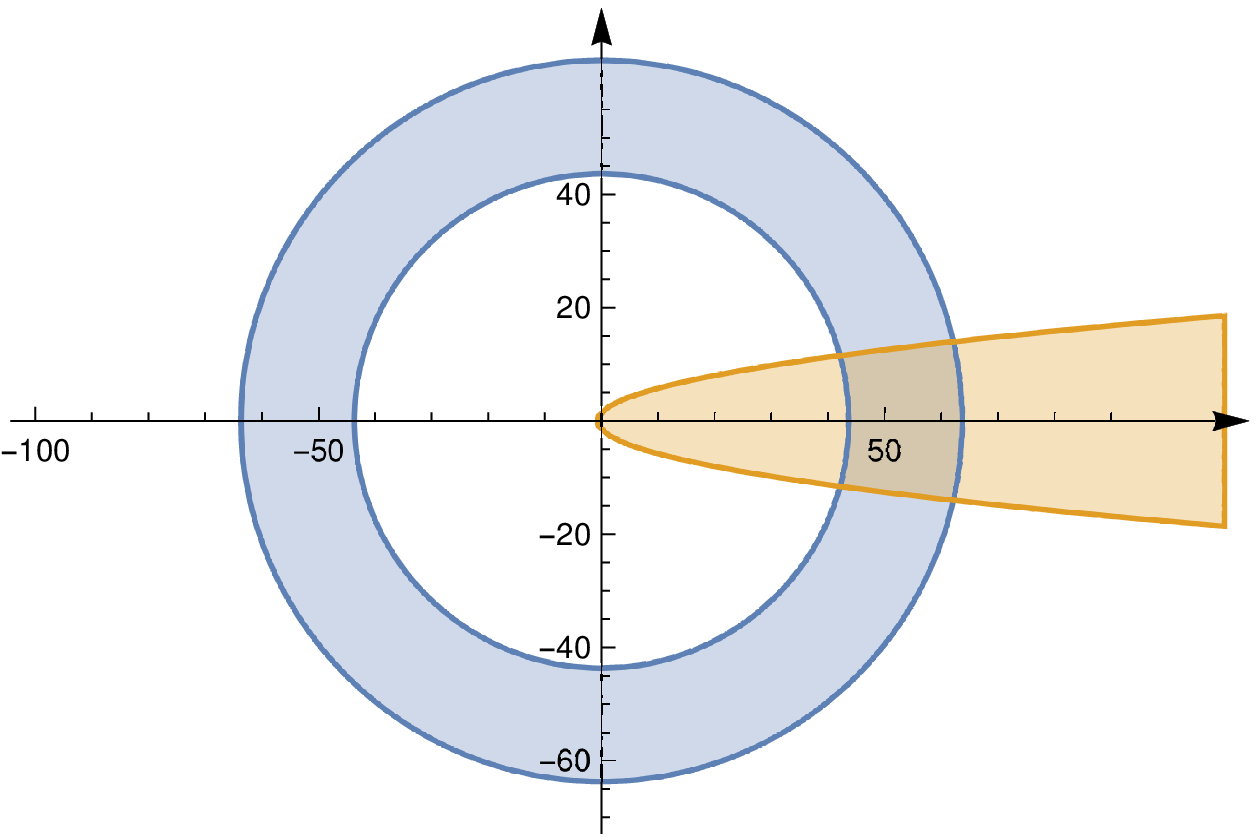}
\\[1ex]
(a) $n=2$ & (b) $n\ge3$
\end{tabular}
\end{center}
\caption{The sets $\dV_{\alpha,C_1}$ (blue) and $\dW_{\alpha,C_2}$ (yellow)
in Theorem~\ref{thm:specEstdelta2} for (a) $n = 2$ and (b) $n \geq 3$, respectively,
where $C_1\|\alpha\|_\infty=C_2\|\alpha\|_\infty=0.5$ in (a),
and $C_1\|\alpha\|_\infty=1.47$ and $C_2\|\alpha\|_\infty=0.6$ in (b).}
\label{fig:thm86}
\end{figure}

\begin{proof}
By \cite[Theorems~1.2 and 1.3]{GS15} there exist constants $C_1,C_2 >0$
(the constants here differ from the ones in \cite{GS15}
by a factor $\frac{1}{2}$) such that
\begin{align*}
  &\big\|\alpha\overline{M(\lambda)} \big\| \le
  \begin{cases}
    C_1\|\alpha\|_\infty\bigl(2+|\lambda|\bigr)^{-\frac{1}{4}}\ln\bigl(2+|\lambda|^{-1}\bigr),
    &\quad n = 2,
    \\[1ex]
    C_1\|\alpha\|_\infty\bigl(2+|\lambda|\bigr)^{-\frac{1}{4}}\ln\bigl(2+|\lambda|\bigr),
    &\quad n \ge 3,
  \end{cases}
  \displaybreak[0]\\[1ex]
  &\big\|\alpha \overline{M(\lambda)} \big\| \le
  \begin{cases}
    C_2\|\alpha\|_\infty\bigl(2+|\Im\sqrt{\lambda}|^2\bigr)^{-\frac{1}{2}}
    \ln\bigl(2+|\lambda|^{-1}\bigr),
    &\quad n = 2,
    \\[1ex]
    C_2\|\alpha\|_\infty\bigl(2+|\Im\sqrt{\lambda}|^2\bigr)^{-\frac{1}{2}},
    &\quad n \ge 3,
  \end{cases}
\end{align*}
hold for all $\lambda \in\dC\setminus \dR_+$.
Thanks to condition \eqref{eq:alphaCond2} we can view the multiplication
with $\alpha$ as an operator in $L^2(\Sigma)$ with domain $H^1(\Sigma)$
and range contained in $H^{1/2}(\Sigma)$.
Hence, by Theorem~\ref{mainthm}, any point $\lambda\in\dC\setminus\dR_+$
for which at least one of the above two upper bounds
on $\|\alpha\ov{M(\lambda)}\|$ is strictly less than one
belongs to the resolvent set of $A_{[\alpha]}$.
Thus, the enclosure in \eqref{encl_galkowski} follows.
\end{proof}

Furthermore, we obtain certain Schatten--von Neumann estimates for the
difference of the resolvents of $A_{[\alpha]}$ and the free Laplacian.
They are analogues of the first and the third estimates in
Theorem~\ref{thm:spelliptic}, and the proofs are analogous, where one uses the relations
\[
  \gamma(\lambda)^* \in \sS_p\bigl(L^2(\RR^n),L^2(\Sigma)\bigr)
  \qquad \text{for all}\;\; p > \frac{2(n-1)}{3}
\]
and
\[
  M(\lambda)^{-1} \gamma(\ov{\lambda})^* \in \sS_q\bigl(L^2(\RR^n),L^2(\Sigma)\bigr)
  \qquad \text{for all}\;\; q > 2(n-1).
\]

\begin{theorem}\label{thm:Spdelta}
Let all assumptions of Theorem~\ref{thm:ABdelta} be satisfied.
Moreover, assume that $\Sigma$ is compact.  Then
\begin{equation}\label{eq:one1}
  (A_{[\alpha]}-\lambda)^{-1} - (-\Delta_{\R^n}-\lambda)^{-1} \in \sS_p\bigl(L^2(\RR^n)\bigr)
  \qquad \text{for all}\;\; p > \frac{2(n-1)}{3}
\end{equation}
and all $\lambda \in \rho(A_{[\alpha]})$.
If, in addition, $\alpha$ is bounded, then
\begin{equation}\label{eq:three1}
  (A_{[\alpha]}-\lambda)^{-1} - (-\Delta_{\R^n}-\lambda)^{-1} \in \sS_p\bigl(L^2(\RR^n)\bigr)
  \qquad \text{for all}\;\; p > \frac{n-1}{3}
\end{equation}
and all $\lambda \in \rho(A_{[\alpha]})$.
\end{theorem}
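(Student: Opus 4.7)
The plan is to mimic the proof of Theorem~\ref{thm:spelliptic} step by step, working with the quasi boundary triple $\{L^2(\Sigma),\Gamma_0,\Gamma_1\}$ of Proposition~\ref{prop:QBTdelta} in place of the one from Proposition~\ref{prop:QBTelliptic}. The basic machinery is Propositions~\ref{spprop} and~\ref{spprop2}, so all that is really needed is the right Schatten--von Neumann bound on $\gamma(\lambda)^*$ (and, for~\eqref{eq:three1}, also on $M(\lambda)^{-1}\gamma(\overline{\lambda})^*$). Note that assumptions (i)--(iv) of Theorem~\ref{mainthm} for $B$ equal to multiplication by $\alpha$ have effectively already been verified inside the proof of Theorem~\ref{thm:ABdelta}, so Propositions~\ref{spprop} and~\ref{spprop2} are applicable.

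First I would establish the $\sS_p$-property of the $\gamma$-field. By~\eqref{gammastar} and Proposition~\ref{prop:QBTdelta}, one has $\gamma(\lambda)^*\in\cB(L^2(\R^n),L^2(\Sigma))$ with
\[
  \ran\gamma(\lambda)^* \;=\; \ran\bigl(\Gamma_1\upharpoonright\dom(-\Delta_{\R^n})\bigr) \;=\; H^{3/2}(\Sigma)
\]
for $\lambda\in\rho(-\Delta_{\R^n})$. Since $\Sigma$ is compact and $(n-1)$-dimensional, Weyl's law on $\Sigma$ gives that the embedding $H^{3/2}(\Sigma)\hookrightarrow L^2(\Sigma)$ lies in $\sS_p$ for every $p>2(n-1)/3$; combined with the closed graph theorem (as in \cite[Lemma~3.4]{BLLLP10}) this yields
\[
  \gamma(\lambda)^* \in \sS_p\bigl(L^2(\R^n),L^2(\Sigma)\bigr), \qquad p>\tfrac{2(n-1)}{3},
\]
for every $\lambda\in\rho(-\Delta_{\R^n})$. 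Plugging this into Proposition~\ref{spprop} and using that $A_1=-\Delta_{\rm D}$ is self-adjoint already gives~\eqref{eq:one1} for all $\lambda\in\rho(A_{[\alpha]})\setminus\R_+$, and Lemma~2.2 of \cite{BLL13IEOT} extends the statement to all $\lambda\in\rho(A_{[\alpha]})$.

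For~\eqref{eq:three1}, multiplication by a bounded $\alpha$ is in $\cB(L^2(\Sigma))$, so Proposition~\ref{spprop2} applies; however, the conclusion $\sS_{p/2}$ only gives $p/2>(n-1)/3$, i.e.\ $p>2(n-1)/3$, which is exactly what we want, provided we take $B=\alpha\in\cB(L^2(\Sigma))$ and use the bare $\gamma(\lambda)^*$-estimate (the formula $r=\max\{p/2,(1/p+1/q)^{-1}\}$ in Proposition~\ref{spprop2} is only relevant when comparing with $A_1$, which we do not do here). Concretely, from Krein's formula~\eqref{eq:KreinDelta} written as
\[
  (A_{[\alpha]}-\lambda)^{-1}-(-\Delta_{\R^n}-\lambda)^{-1} = \overline{\gamma(\lambda)}\bigl(I-\alpha\overline{M(\lambda)}\bigr)^{-1}\alpha\,\gamma(\overline{\lambda})^*,
\]
boundedness of $(I-\alpha\overline{M(\lambda)})^{-1}\alpha$ and the Schatten property of both $\overline{\gamma(\lambda)}$ and $\gamma(\overline{\lambda})^*$ in $\sS_p$ with $p>2(n-1)/3$ give, by the H\"older inequality for Schatten classes, membership in $\sS_{p/2}$ with $p/2>(n-1)/3$. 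Relabelling yields~\eqref{eq:three1} for $\lambda\in\rho(A_{[\alpha]})\cap\rho(-\Delta_{\R^n})$, and \cite[Lemma~2.2]{BLL13IEOT} extends this to all $\lambda\in\rho(A_{[\alpha]})$.

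The only mildly delicate point is the Schatten estimate on $\gamma(\lambda)^*$, since $H^{3/2}$-range is on $\Sigma$ rather than on $\partial\Omega$ as in Section~\ref{sec:Robin}, but compactness of $\Sigma$ makes the argument identical to the bounded-boundary case of \cite[Lemma~3.4]{BLLLP10}. Everything else is a direct quotation of Section~\ref{sec:closedoperators}. As in Remark~\ref{rem:Weak1}, sharper statements in terms of weak Schatten--von Neumann classes $\sS_{p,\infty}$ (allowing the endpoint values $p=2(n-1)/3$ and $p=(n-1)/3$) can be obtained along the same lines.
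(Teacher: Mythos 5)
Your proof is correct and follows the same route the paper sketches: establish the Schatten bound $\gamma(\lambda)^*\in\sS_p(L^2(\RR^n),L^2(\Sigma))$ for $p>2(n-1)/3$ from $\ran\gamma(\lambda)^*=H^{3/2}(\Sigma)$ and compactness of $\Sigma$ (via the argument of \cite[Lemma~3.4]{BLLLP10}), then feed this into Proposition~\ref{spprop} to get \eqref{eq:one1} and into Proposition~\ref{spprop2} to get \eqref{eq:three1}. Your observation that the bound on $M(\lambda)^{-1}\gamma(\overline\lambda)^*$ is superfluous here is accurate: the paper mentions it because the analogue of estimate~\eqref{eq:four} from Theorem~\ref{thm:spelliptic} (comparison with $-\Delta_{\rm D}$) would require it, but Theorem~\ref{thm:Spdelta} only compares with $A_0=-\Delta_{\RR^n}$, for which the $\gamma^*$-estimate alone suffices in both \eqref{eq:one1} and \eqref{eq:three1}.
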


\begin{remark}
In the same way as in Remark~\ref{rem:Weak1}, we can reformulate
Theorem~\ref{thm:Spdelta} for weak Schatten--von Neumann classes.
In this setting the endpoints for the intervals
of admissible values of $p$ can be included in
both \eqref{eq:one1} and \eqref{eq:three1}; cf.\ \cite[Section~4.2]{BLL13delta}.
\end{remark}

\begin{remark}			
In the case of a real, bounded coefficient $\alpha$, in space dimensions 2 and 3 the
previous theorem can be used in order to derive existence and completeness
of wave operators for the scattering pair $\{A_{[\alpha]}, -\Delta_{\dR^n}\}$.
In space dimension 2, the same is true for certain unbounded~$\alpha$;
cf.\ Example~\ref{ex:unboundedalpha}.
Let us also mention \cite{MPS16} where Schatten--von Neumann properties
were proved for certain $\delta$-interactions with unbounded real-valued coefficients.
\end{remark}

Finally, in the last theorem of this section we show that in two space dimensions
for $\|\alpha\|_\infty$ small enough the spectrum of $A_{[\alpha]}$
outside $[0,\infty)$ is contained in a disc with radius that
converges to $0$ exponentially as $\|\alpha\|_\infty\to0$ and that
in higher dimensions $A_{[\alpha]}$ has no spectrum
outside $[0,\infty)$ if $\|\alpha\|_\infty$ is small enough.
The result in two dimensions agrees well with the asymptotic expansion
in \cite{KL14} in the self-adjoint setting.
Related conditions for absence of non-real eigenvalues in higher dimensions
for Schr\"odinger operators with complex-valued regular potentials can be found in \cite{FKV17, F11}.
In the self-adjoint setting absence of negative eigenvalues for  $\|\alpha\|_\infty$
small enough is also a consequence of the Birman--Schwinger bounds in~\cite{BEKS94};
see also~\cite{EF09}.

\begin{theorem}
Let $\Sigma$ be compact and let $\alpha \in L^\infty(\Sigma)$ be a
complex-valued function that satisfies~\eqref{eq:alphaCond2}.
Then $\sess(A_{[\alpha]})=[0,\infty)$, and the following statements hold.
\begin{myenum}
\item 
Let $n=2$ and let $C_1 > 0$ be as in Theorem~\ref{thm:specEstdelta2}.
If\, $0 < \|\alpha\|_\infty\le \frac{1}{2C_1\ln 2}$, then
\[
  \sigma(A_{[\alpha]})\setminus \dR_+
  \subset \biggl\{z\in\CC: |z| \le 2\exp\biggl(-\frac{1}{C_1\|\alpha\|_\infty}\biggr)\biggr\}.
\]
\item 
Let $n\ge3$.  There exists $\eps = \eps(\Sigma) > 0$ such that
$\sigma(A_{[\alpha]}) = \sigma(-\Delta_{\dR^n}) = [0,\infty)$
if\, $\|\alpha\|_\infty<\eps$.
\end{myenum}
\end{theorem}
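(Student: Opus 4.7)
The plan is to deduce both assertions from Theorem~\ref{thm:specEstdelta2} by making the set $\dV_{\alpha,C_1}$ either small or empty when $\|\alpha\|_\infty$ is small. First, I would record the essential-spectrum identity: by Theorem~\ref{thm:Spdelta} the difference $(A_{[\alpha]}-\lambda)^{-1} - (-\Delta_{\dR^n}-\lambda)^{-1}$ is compact for every $\lambda\in\rho(A_{[\alpha]})$, so by the standard stability of the Weyl essential spectrum under compact perturbations of the resolvent one has $\sess(A_{[\alpha]}) = \sess(-\Delta_{\dR^n}) = [0,\infty)$.

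For part (i), set $s \defeq \|\alpha\|_\infty$ and $r_0 \defeq 2\exp(-1/(C_1 s))$. The hypothesis $s \le 1/(2C_1\ln 2)$ is equivalent to $\exp(1/(C_1 s)) \ge 4$, and this gives
\[
  2 + r_0^{-1} \;=\; 2 + \tfrac{1}{2}\exp(1/(C_1 s)) \;\le\; \exp(1/(C_1 s)),
\]
since then $2 \le \tfrac{1}{2}\exp(1/(C_1 s))$. For any $z\in\dC$ with $|z| > r_0$ I therefore obtain the strict inequality
\[
  C_1 s\,(2+|z|)^{-1/4}\ln(2+|z|^{-1}) \;<\; 2^{-1/4}\cdot C_1 s \cdot \ln(2 + r_0^{-1}) \;\le\; 2^{-1/4} \;<\; 1,
\]
using also $(2+|z|)^{-1/4} \le 2^{-1/4}$. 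Hence $z \notin \dV_{\alpha,C_1}$, and the enclosure in Theorem~\ref{thm:specEstdelta2} forces $\sigma(A_{[\alpha]}) \setminus \dR_+ \subset \{z\in\dC : |z| \le r_0\}$, which is the claim.

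For part (ii), I would analyse the scalar function $g(r) \defeq (2+r)^{-1/4}\ln(2+r)$ on $[0,\infty)$; an elementary differentiation shows that $g$ attains its unique global maximum at $r = e^4 - 2$ with value $g(e^4-2) = 4/e$. Consequently, choosing $\eps \defeq e/(4C_1)$, the hypothesis $\|\alpha\|_\infty < \eps$ yields
\[
  C_1\|\alpha\|_\infty\,(2+|z|)^{-1/4}\ln(2+|z|) \;\le\; \tfrac{4 C_1\|\alpha\|_\infty}{e} \;<\; 1
  \qquad\text{for every }z\in\dC,
\]
so $\dV_{\alpha,C_1} = \emptyset$. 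Theorem~\ref{thm:specEstdelta2} then gives $\sigma(A_{[\alpha]}) \setminus \dR_+ = \emptyset$, which combined with the essential-spectrum identity completes the proof. The only non-computational step is the essential-spectrum identity, which is where I would be most careful: one has to invoke a definition of $\sess$ for non-self-adjoint operators that is stable under relatively compact perturbations of the resolvent (the standard convention here, and the one implicit in the statement of the theorem).
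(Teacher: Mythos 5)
Your proof is correct and follows essentially the same approach as the paper: the essential spectrum is identified via the Schatten-class resolvent difference of Theorem~\ref{thm:Spdelta}, part (i) is a quantitative analysis of the set $\dV_{\alpha,C_1}$ from Theorem~\ref{thm:specEstdelta2}, and part (ii) uses the global maximum $4/e$ of $t\mapsto t^{-1/4}\ln t$ to show $\dV_{\alpha,C_1}=\emptyset$. Your part (i) argues contrapositively ($|z|>r_0$ forces $z\notin\dV_{\alpha,C_1}$, using the extra factor $2^{-1/4}$) while the paper bounds $(2+|z|)^{-1/4}\le 1$ and solves $C_1\|\alpha\|_\infty\ln(2+|z|^{-1})\ge1$ directly for $|z|$, but this is a cosmetic difference and both yield the same radius $2\exp(-1/(C_1\|\alpha\|_\infty))$.
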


\begin{proof}
The statement about the essential spectrum follows
directly from Theorem~\ref{thm:Spdelta}.

(i)
Assume that $0 < \|\alpha\|_\infty\le \frac{1}{2C_1\ln 2}$ and
let $z\in\sigma(A_{[\alpha]})\setminus[0,\infty)$.
It follows from Theorem~\ref{thm:specEstdelta2} that $z\in\dV_{\alpha,C_1}$ and hence
\[
  C_1\|\alpha\|_\infty\ln\bigl(2+|z|^{-1}\bigr)
  \ge C_1\|\alpha\|_\infty\bigl(2+|z|\bigr)^{-1/4}\ln\bigl(2+|z|^{-1}\bigr)
  \ge 1,
\]
which implies that
\[
  |z| \le \frac{1}{\exp\bigl(\frac{1}{C_1\|\alpha\|_\infty}\bigr)-2}
  = \frac{\exp\bigl(-\frac{1}{C_1\|\alpha\|_\infty}\bigr)}{1-2\exp\bigl(-\frac{1}{C_1\|\alpha\|_\infty}\bigr)}
  \le 2\exp\biggl(-\frac{1}{C_1\|\alpha\|_\infty}\biggr).
\]

(ii)
Since the maximum of the function $g(t)=t^{-1/4}\ln t$, $t\in[2,\infty)$ is $\frac{4}{e}$
(attained at $t=e^4$), it follows that
\[
  \bigl(2+|z|\bigr)^{-1/4}\ln\bigl(2+|z|\bigr) \le \frac{4}{e}
  \qquad\text{for all}\;\; z\in\CC.
\]
If
\[
  \|\alpha\|_\infty< \eps \defeq \frac{e}{4C_1}\,,
\]
then
\[
  C_1\|\alpha\|_\infty\bigl(2+|z|\bigr)^{-1/4}\ln\bigl(2+|z|\bigr) < 1
\]
for every $z\in\CC$, and Theorem~\ref{thm:specEstdelta2}
implies that $\sigma(A_{[\alpha]})\setminus[0,\infty)=\emptyset$.
Together with the relation $\sess(A_{[\alpha]})=[0,\infty)$
this shows that $\sigma(A_{[\alpha]})=[0,\infty)$.
\end{proof}

\section{Infinitely many point interactions on the real line}
\label{sec:pointinteractions}

\noindent
In this section we provide applications of the results in
Section~\ref{sec:consequences}
to Hamiltonians with non-local, non-Hermitian
interactions supported on a discrete set of points $X=\{x_n:n\in\ZZ\}$,
where $(x_n)$ is a strictly increasing sequence of real numbers.
The investigation of such Hamiltonians has been initiated
almost a century ago in~\cite{KP31} for periodically distributed, local,
Hermitian point $\delta$-interactions.
Classical results are summarized in the monograph~\cite{AGHH};
see also the references therein and~\cite{K89, KM13}.
More recently, non-Hermitian interactions attracted attention (see~\cite{AFK02,AN07})
and also non-local interactions were studied; see~\cite{AN07, KZ16}.

Throughout this section we make the assumption
\begin{equation}\label{eq:d}
  d \defeq \inf_{n\in\dZ} (x_{n+1} - x_n) > 0;
\end{equation}
in particular, the sequence $(x_n)$ does not have a finite accumulation point.
We remark that this assumption can be avoided by using the methods of~\cite{AKM10, KM10},
but we do not focus on this here.

For each interval $I_n \defeq (x_n,x_{n+1})$ we denote by $H^2(I_n)$ the
usual Sobolev space on $I_n$ of second order.
Moreover, we set $f_n \defeq f|_{I_n}$ for $f\in L^2(\dR)$ and introduce
\[
  H^2(\R \setminus X) \defeq \bigg\{f\in L^2(\R):
  f_n\in H^2(I_n)\;\;\text{for all}\;\; n\in\Z,\;
  \sum_{n\in\Z} \|f_n\|_{H^2(I_n)}^2<\infty\bigg\},
\]
equipped with the norm
\begin{equation}\label{normformula}
  \|f\|_{H^2(\dR\setminus X)}^2
  \defeq \sum_{n\in\dZ} \|f_n\|_{H^2(I_n)}^2, \qquad f \in H^2(\R \setminus X).
\end{equation}

In order to construct a boundary triple which is suitable for the parameterization
of Hamiltonians with interactions supported on $X$,
we define operators $S$ and $T$ in $L^2(\R)$ by
\begin{equation}\label{eq:Sline}
  S f = -f'' \quad \text{on}\;\;\R \setminus X, \quad
  \dom S = \big\{f\in H^2(\dR): f(x_n) = 0 \;\;\text{for all}\;\; n\in\dZ \big\},
\end{equation}
and
\begin{equation}\label{eq:Tline}
  T f = - f'' \quad \text{on}\;\;\R \setminus X, \qquad
  \dom T = H^2(\R \setminus X) \cap H^1(\R),
\end{equation}
that is, $\dom T$ consists of all $f \in H^2(\R \setminus X)$
such that $f_{n-1}(x_n) = f_n(x_n)$ for all $n\in\dZ$.
Moreover, for $f \in \dom T$ we define
\begin{equation}\label{eq:Gamma01line}
  \Gamma_0 f = \bigl(f_n'(x_n)- f_{n - 1}'(x_n)\bigr)_{n\in\dZ} \qquad\text{and}\qquad
  \Gamma_1 f = \bigl(-f(x_n)\bigr)_{n\in\dZ}.
\end{equation}
In fact, $\Gamma_0$ and $\Gamma_1$ are boundary mappings for an
ordinary boundary triple, as the following proposition shows;
see also \cite[Proposition~7\,(i)]{K13} where a very similar boundary triple
was constructed.

\begin{proposition}\label{prop:BTline}
The operator $S$ in \eqref{eq:Sline} is closed, symmetric and densely defined
with $S^*=T$ for $T$ in \eqref{eq:Tline}, and the triple $\{\ell^2(\Z),\Gamma_0,\Gamma_1\}$
is an ordinary boundary triple for $S^*$ with the following properties.
\begin{myenum}
\item
  $A_0 = S^* \upharpoonright \ker \Gamma_0$ is given by
  \begin{equation}\label{eq:A0line}
    A_0 f = -f'', \quad \dom A_0 = H^2(\dR),
  \end{equation}
  and $A_1 = S^* \upharpoonright \ker \Gamma_1$ is given by
  \begin{equation}\label{eq:A1line}
	\begin{split}
    A_1f &= - f'' \quad \text{on}\;\;\dR \setminus X,
    \\[0.5ex]
    \dom A_1 &= \bigl\{ f \in H^2(\dR \setminus X) \cap H^1(\dR) :
    f(x_n) = 0\;\;\text{for all}\;\; n \in \Z \bigr\}.
	\end{split}
  \end{equation}
\item
  For $\lambda \in \C \setminus \R_+$ the associated $\gamma$-field
  acts as
  \begin{equation}\label{eq:gammaline}
    \bigl(\gamma(\lambda)\xi\bigr)(x) = \frac{-i}{2\sqrt{\lambda}\,}
    \sum_{n \in \Z} e^{i \sqrt{\lambda} |x_n - x|} \xi_n, \qquad
    x \in \R, \quad \xi = (\xi_n) \in \ell^2(\Z),
  \end{equation}
  and the associated Weyl function satisfies
  \begin{equation}\label{eq:Mline}
    M(\lambda)\xi = \left(\frac{i}{2\sqrt{\lambda}\,}
    \sum_{n \in \Z} e^{i\sqrt{\lambda}|x_n - x_m|} \xi_n \right)_{m \in \Z}, \qquad
    \xi = (\xi_n) \in \ell^2(\Z).
  \end{equation}
\end{myenum}
\end{proposition}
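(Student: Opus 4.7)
The plan is to apply Theorem~\ref{thm:ratetheorem} to the operator $T$ from \eqref{eq:Tline} together with the mappings $\Gamma_0,\Gamma_1$ from \eqref{eq:Gamma01line}. First I would check the abstract Green identity: for $f,g\in\dom T$, integration by parts on each subinterval $I_n$ yields
\[
  (Tf,g)_{L^2(I_n)}-(f,Tg)_{L^2(I_n)} = \bigl[-f'\ov g+f\ov g'\bigr]_{x_n^+}^{x_{n+1}^-}.
\]
Summing over $n\in\dZ$, grouping the boundary contributions at each point $x_m$, and using the continuity $f_{m-1}(x_m)=f_m(x_m)$ and $g_{m-1}(x_m)=g_m(x_m)$ arising from $f,g\in H^1(\dR)$, one arrives at $(\Gamma_1 f,\Gamma_0 g)_{\ell^2}-(\Gamma_0 f,\Gamma_1 g)_{\ell^2}$, which is the required identity.

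Next I would verify that $\Gamma\defeq(\Gamma_0,\Gamma_1)^\top$ maps $\dom T$ onto $\ell^2(\dZ)\times\ell^2(\dZ)$. Fix $\phi,\psi\in H^2(\dR\setminus\{0\})\cap H^1(\dR)$ with compact supports in $(-d/2,d/2)$ such that $\phi(0)=-1$, $\phi'(0^-)=\phi'(0^+)=0$, $\psi(0)=0$, and $\psi'(0^+)-\psi'(0^-)=1$ (one may take $\phi\in C_0^\infty$ and $\psi(x)=x_+\chi(x)$ for a suitable cut-off $\chi$). For $(\eta,\xi)\in\ell^2(\dZ)\times\ell^2(\dZ)$ set
\[
  f(x) \defeq \sum_{n\in\dZ}\bigl(\eta_n\phi(x-x_n)+\xi_n\psi(x-x_n)\bigr).
\]
Hypothesis~\eqref{eq:d} ensures pairwise disjoint supports of the summands, so the norm \eqref{normformula} is bounded by a multiple of $\|\eta\|_{\ell^2}^2+\|\xi\|_{\ell^2}^2$; hence $f\in\dom T$, with $\Gamma_1 f=\eta$ and $\Gamma_0 f=\xi$. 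The inclusion $C_0^\infty(\dR\setminus X)\subset\ker\Gamma_0\cap\ker\Gamma_1$ yields density in $L^2(\dR)$. Finally, $\ker\Gamma_0$ consists of those $f\in H^2(\dR\setminus X)\cap H^1(\dR)$ without jumps of the derivative at any $x_n$, and this together with piecewise $H^2$-regularity forces $f\in H^2(\dR)$; thus $T\upharpoonright\ker\Gamma_0$ coincides with the free Laplacian on $H^2(\dR)$, which is self-adjoint. Theorem~\ref{thm:ratetheorem} together with the surjectivity of $\Gamma$ then yields that $S$ is closed, symmetric and densely defined, $T=S^*$, and $\{\ell^2(\dZ),\Gamma_0,\Gamma_1\}$ is an ordinary boundary triple for $S^*$ with $A_0$ as in~\eqref{eq:A0line}. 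The description of $A_1$ in~\eqref{eq:A1line} follows directly from the definition of $\Gamma_1$.

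For the $\gamma$-field and Weyl function, fix $\lambda\in\dC\setminus\dR_+$, so that $\Im\sqrt\lambda>0$. The function $x\mapsto\erm^{\irm\sqrt\lambda|x-x_n|}$ lies in $L^2(\dR)\cap H^2(\dR\setminus\{x_n\})$, solves $(-\partial_x^2-\lambda)u=0$ pointwise off $x_n$, is smooth away from $x_n$, and a direct calculation shows that its derivative has jump $2\irm\sqrt\lambda$ at $x_n$. For $\xi=(\xi_n)\in\ell^2(\dZ)$ define
\[
  f(x) \defeq \frac{-\irm}{2\sqrt\lambda}\sum_{n\in\dZ}\erm^{\irm\sqrt\lambda|x-x_n|}\xi_n.
\]
The inequality $|x_m-x_n|\ge d|m-n|$ from~\eqref{eq:d} gives geometric decay of $\erm^{-\Im\sqrt\lambda|x_m-x_n|}$ in $|m-n|$, so a Schur-test argument shows that $f\in L^2(\dR)$, the series converges in $\dom T$, $f\in\ker(T-\lambda)$, and $\Gamma_0 f=\xi$; this proves \eqref{eq:gammaline}. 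Then \eqref{eq:Mline} follows from $M(\lambda)\xi=\Gamma_1\gamma(\lambda)\xi$ by evaluating $-f(x_m)$.

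The main obstacle is the surjectivity of $\Gamma$ onto $\ell^2(\dZ)\times\ell^2(\dZ)$ together with the $H^2(\dR\setminus X)$-norm control needed to conclude $f\in\dom T$; this is exactly where the uniform-separation condition~\eqref{eq:d} enters decisively, both to decouple the bump-function construction above and to ensure convergence of the Schur-type sums used to identify $\gamma(\lambda)$ and $M(\lambda)$.
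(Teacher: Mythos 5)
Your argument is substantively correct but takes a different route from the paper's at two points, and omits one routine but necessary step. You never check that $\Gamma_0$ and $\Gamma_1$ actually map $\dom T$ into $\ell^2(\Z)$---a prerequisite both for stating the Green identity in $\ell^2$-inner products and for asking whether $\Gamma$ is surjective onto $\ell^2(\Z)\times\ell^2(\Z)$; the paper does this first, using the trace estimate~\eqref{eq:LSV}, i.e.\ $|f(a)|^2 \le \tfrac{2}{l}\|f\|^2_{L^2(a,b)} + l\|f'\|^2_{L^2(a,b)}$, with $l=d$ from~\eqref{eq:d}, applied to $f$ and to $f'$. This is easy to add but should not be skipped. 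To obtain an \emph{ordinary} (rather than merely quasi) boundary triple you prove surjectivity of $\Gamma$ outright via the disjoint bump-function construction (valid thanks to the uniform gap $d>0$), whereas the paper shows only dense range of $\Gamma$ (via unit sequences) and then separately proves that $T$ is closed, by establishing boundedness of the jump map $K:H^2(\R\setminus X)\to\ell^2(\Z)$, $f\mapsto(f_n(x_n)-f_{n-1}(x_n))_n$, and invoking a Weidmann-type interpolation inequality to identify the graph norm of $T$ with $\|\cdot\|_{H^2(\R\setminus X)}$ on $\dom T$. By the equivalence built into Theorem~\ref{thm:ratetheorem} the two routes yield the same conclusion; yours is more constructive, while the paper's avoids having to estimate the $H^2(\R\setminus X)$-norm of an infinite superposition of translates. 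Finally, for item (ii) you construct the defect element as the explicit series $\frac{-i}{2\sqrt\lambda}\sum_n e^{i\sqrt\lambda|x-x_n|}\xi_n$ and read off $\Gamma_0 f=\xi$ from the derivative jump $2i\sqrt\lambda$ of each summand, while the paper instead computes $\gamma(\lambda)^*=\Gamma_1(A_0-\ov\lambda)^{-1}$ from the explicit kernel of the free resolvent and passes to the adjoint. Your route requires verifying that the series converges in the $H^2(\R\setminus X)$-topology so that $f\in\dom T$ (not merely $f\in L^2(\R)$); your ``Schur-test'' remark is the right idea---essentially the same computation reappears as Lemma~\ref{lem:WeylDecayLine}---but it should be spelled out rather than asserted.
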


\begin{proof}
Let us first check that $\Gamma_0$ and $\Gamma_1$ are well-defined mappings
from $\dom T$ to $\ell^2(\Z)$.  For this we make use of the following estimate,
which can be found in, e.g.\ \cite[Lemma~8]{K04}:
if $[a, b]$ is a compact interval then for each $l \in (0, b - a]$ one has
\begin{equation}\label{eq:LSV}
  |f(a)|^2 \leq \frac{2}{l} \|f\|_{L^2(a,b)}^2 + l \|f'\|_{L^2(a,b)}^2 \quad
  \text{for all}\;\;f \in H^1(a,b).
\end{equation}
The same estimate holds for $|f(a)|^2$ replaced by $|f(b)|^2$.
From \eqref{eq:d} we obtain that $d \in (0, x_{n + 1} - x_n]$
for each $n \in \Z$, and \eqref{eq:LSV} yields
\[
  \sum_{n \in \Z} |f(x_n)|^2
  \le \frac{2}{d} \sum_{n \in \Z} \|f_n\|_{L^2(I_n)}^2
  + d \sum_{n \in \Z} \|f_n'\|_{L^2(I_n)}^2 < \infty
\]
for all $f \in \dom T \subset H^1(\R)$.
Hence $\Gamma_1 f \in \ell^2(\Z)$ for all $f \in \dom T$.
Similarly, using~\eqref{eq:LSV} for $f$ replaced by $f'$
we get $\Gamma_0 f \in \ell^2(\Z)$ for all $f \in \dom T$.

To show that $\{\ell^2(\Z), \Gamma_0, \Gamma_1\}$ is a boundary triple for $S^*$,
let us verify the conditions of Theorem~\ref{thm:ratetheorem}.
In fact, it is clear that $T \upharpoonright \ker \Gamma_0$ is given by
the operator $A_0$ in~\eqref{eq:A0line}, which is self-adjoint.
Moreover, for all $f, g \in \dom T$ we have
\begin{align*}
  & (Tf,g)_{L^2(\dR)} - (f,Tg)_{L^2(\dR)}
  = \sum_{n\in\dZ} \Bigl((-f_n'',g_n)_{L^2(I_n)} - (f_n,- g_n'')_{L^2(I_n)}\Bigr)
  \\[0.5ex]
  &= \sum_{n\in\dZ} \Big(f_n'(x_n) \ov{g(x_n)} - f_n'(x_{n+1}) \ov{g(x_{n+1})} \Big)
  \\[0.5ex]
  &\quad - \sum_{n\in\dZ} \Bigl(f(x_n) \ov{g_n'(x_{n})} - f(x_{n + 1}) \ov{g_n'(x_{n+1})}\Bigr)
  \displaybreak[0]\\[0.5ex]
  &= \sum_{n\in\dZ} \bigl(f_n'(x_n) - f_{n - 1}'(x_n)\bigr)\ov{g(x_n)}
  - \sum_{n\in\dZ} f(x_n) \ov{\bigl(g_n'(x_n) - g_{n - 1}'(x_n)\bigr)}
  \\[0.5ex]
  &= (\Gamma_1f,\Gamma_0g)_{\ell^2(\dZ)} - (\Gamma_0f,\Gamma_1g)_{\ell^2(\dZ)}.
\end{align*}
Furthermore, the pair of mappings
$(\Gamma_0, \Gamma_1)^\top : \dom T \to \ell^2(\Z) \times \ell^2(\Z)$ has a
dense range since it can be checked easily that all pairs of
unit sequences $\{e_j, e_k\}$, $j, k \in \Z$, belong to the range.
It follows from Theorem~\ref{thm:ratetheorem} that $S$ is closed
with $S^* = \overline T$ and that $\{\ell^2(\Z), \Gamma_0, \Gamma_1\}$
is a quasi boundary triple for $S^*$.

In order to conclude that $\{\ell^2(\Z), \Gamma_0, \Gamma_1\}$ is even an
ordinary boundary triple, let us verify that the operator $T$ is closed.
To this end define a mapping
\[
  K : H^2(\R \setminus X) \to \ell^2(\Z), \qquad
  f \mapsto \big(f_n(x_n) - f_{n - 1}(x_n) \big)_{n \in \Z}.
\]
For all $f \in H^2(\R \setminus X)$ we have
\begin{align*}
  & \|K f\|_{\ell^2(\Z)}^2
  \le 2\sum_{n \in \Z} \big( |f_n(x_n)|^2
  + |f_{n - 1}(x_n)|^2 \big)
  \\[0.5ex]
  &\le 2\sum_{n \in \Z} \biggl(\frac{2}{d} \|f_n\|_{L^2(I_n)}^2
  + d\|f_n'\|_{L^2(I_n)}^2
  + \frac{2}{d} \|f_{n-1}\|_{L^2(I_{n-1})}^2
  + d \|f_{n-1}'\|_{L^2(I_{n-1})}^2\biggr)
  \\[0.5ex]
  &\le 2 \max\biggl\{\frac{4}{d}, 2 d\biggr\} \sum_{n \in \Z} \|f_n\|_{H^2(I_n)}^2,
\end{align*}
where we have used~\eqref{eq:LSV} with $l = d$.  Therefore $K$ is a bounded operator and,
hence, its kernel, which equals $\dom T$, is closed in $H^2(\R \setminus X)$.
Equivalently, $\dom T$ equipped with the norm of $H^2(\R \setminus X)$ is complete.
It follows from \cite[Satz~6.24]{Wei}, its proof and \eqref{eq:d} that
for each $\eps > 0$ there exists $C(\eps) > 0$ such that for all $n \in \Z$
one has
\[
  \|f_n'\|_{L^2(I_n)}^2 \le \eps \|f_n''\|_{L^2(I_n)}^2 + C(\eps) \|f_n\|_{L^2(I_n)}^2, \qquad
  f_n \in H^2(I_n).
\]
This implies that $\dom T$ is also complete when equipped
with the graph norm of $T$, that is, $T$ is a closed operator.
Hence $\{\ell^2(\Z), \Gamma_0, \Gamma_1\}$ is an ordinary boundary triple for $S^*$.

The remaining assertion \eqref{eq:A1line} in (i) is obvious.
For the assertions in (ii) let $\lambda \in \C \setminus [0, \infty)$.
According to \cite[Satz 11.26]{Wei} or \cite[page~190]{T} we have
\[
  \bigl((A_0 - \lambda)^{-1}f\bigr)(y) = \frac{i}{2\sqrt{\lambda}\,}
  \int_{\dR} e^{i\sqrt{\lambda} |y - x|} f(x)\,\drm x, \qquad y \in \R,\; f \in L^2(\R).
\]
Hence for each compactly supported $f\in L^2(\dR)$ and
each $\xi = \{\xi_n\}_n \in \ell^2(\dZ)$ we obtain from \eqref{gammastar}
and the definition of $\Gamma_1$ that
\begin{align*}
  \bigl(f,\gamma(\lambda)\xi\bigr)_{L^2(\dR)}
  &= \bigl(\gamma(\lambda)^*f,\xi\bigr)_{\ell^2(\dZ)}
  = \bigl(\Gamma_1(A_0-\ov\lambda)^{-1}f,\xi\bigr)_{\ell^2(\dZ)}
  \\[0.5ex]
  &= \sum_{n\in\dZ} \bigg(-\frac{i}{2 \sqrt{\ov\lambda\,}\,}
  \int_{\dR} e^{i \sqrt{\overline{\lambda}\,}|x_n - x|} f(x)\,\drm x \bigg)\ov{\xi_n}
  \\[0.5ex]
  &= \int_{\dR} f(x) \ov{\bigg(\frac{-i}{2\sqrt{\lambda}\,}
  \sum_{n\in\dZ} e^{i \sqrt{\lambda} |x_n - x|} \xi_n \bigg)}\,\drm x,
\end{align*}
where we have used that $\overline{i \sqrt{\lambda}} = i \sqrt{\overline{\lambda}\,}$.
This proves \eqref{eq:gammaline}.
With the definition of $\Gamma_1$ also relation \eqref{eq:Mline} follows.
\end{proof}

Next we use the representation of the Weyl function in~\eqref{eq:Mline}
to estimate its norm.

\begin{lemma}\label{lem:WeylDecayLine}
The Weyl function associated with the boundary triple in
Proposition~\ref{prop:BTline} satisfies
\begin{equation}\label{eq:WeylEstLine}
  \|M(\lambda)\|\le \frac{\coth\bigl(\frac{d}{2}\Im\sqrt{\lambda}\,\bigr)}{2\sqrt{|\lambda|}}
\end{equation}
for all $\lambda \in \C \setminus [0, \infty)$.
In particular, the following estimates hold.
\begin{myenum}
\item
  For each $\mu < 0$,
  \[
    \|M(\lambda)\| \le \frac{\coth \bigl(\frac{d}{2}\sqrt{-\mu}\,\bigr)}{2(\mu-\lambda)^{1/2}}
    \qquad \text{for all}\;\; \lambda < \mu.
  \]
\item
  For each $w_0 < 0$ and each $\nu \in (0, \pi)$ we have
  \begin{equation}\label{eq:WeylEstLine2}
    \|M(\lambda)\| \le \frac{\coth(J_0)}{2\sqrt{|\lambda|}}
    \qquad \text{for~all}~\lambda \in \bbU_{w_0, \nu},
  \end{equation}
  where $J_0 = J_0(w_0, \nu) \defeq \frac{d}{2}\sqrt{|w_0|\sin\nu}\,\sin\bigl(\frac{\nu}{2}\bigr) > 0$
  and $\bbU_{w_0,\nu}$ is defined in \eqref{defUw0nu}.
\end{myenum}
\end{lemma}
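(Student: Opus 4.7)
The plan is to combine the explicit formula \eqref{eq:Mline} with the Schur test and the geometric spacing condition \eqref{eq:d}. Writing $k \defeq \sqrt{\lambda}$ (with $\Im k > 0$ for $\lambda \in \CC\setminus[0,\infty)$), the operator $M(\lambda)$ is represented in the canonical basis of $\ell^2(\ZZ)$ by the symmetric matrix with entries $a_{mn}(\lambda) = \frac{i}{2k} e^{ik|x_m - x_n|}$ and modulus $|a_{mn}(\lambda)| = \frac{1}{2|k|} e^{-\Im k\,|x_m - x_n|}$. By the Schur test applied to this symmetric non-negative kernel,
\[
  \|M(\lambda)\| \;\le\; \sup_{m \in \ZZ} \sum_{n \in \ZZ} |a_{mn}(\lambda)|.
\]
Bounding $|x_m - x_n| \ge d\,|m - n|$ via \eqref{eq:d} and summing the two-sided geometric series with $q = e^{-d\,\Im k} \in (0,1)$,
\[
  \sum_{n \in \ZZ} |a_{mn}(\lambda)|
  \;\le\; \frac{1}{2|k|} \sum_{j \in \ZZ} q^{|j|}
  \;=\; \frac{1}{2|k|} \cdot \frac{1 + q}{1 - q}
  \;=\; \frac{\coth\bigl(\tfrac{d}{2}\Im k\bigr)}{2\sqrt{|\lambda|}},
\]
which yields the main estimate \eqref{eq:WeylEstLine}.

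Part (i) is then a direct consequence: for $\lambda < \mu < 0$ one has $\sqrt{\lambda} = i\sqrt{-\lambda}$, so $\Im\sqrt{\lambda} = \sqrt{-\lambda} \ge \sqrt{-\mu}$, and monotonicity of $\coth$ on $(0,\infty)$ controls the numerator. For the denominator, the trivial inequality $\mu - \lambda < -\lambda = |\lambda|$ (since $\mu < 0$) gives $\frac{1}{\sqrt{|\lambda|}} < \frac{1}{\sqrt{\mu - \lambda}}$; combining the two yields the bound in (i).

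The main obstacle is (ii), namely to find a lower bound for $\Im\sqrt{\lambda}$ over the whole exterior sector $\bbU_{w_0,\nu}$. My plan is to use the identity $(\Im\sqrt{\lambda})^2 = \tfrac{1}{2}(|\lambda| - \Re\lambda)$ and parameterize $\lambda = w_0 + r e^{i\theta}$ with $r \ge 0$ and, by reflection symmetry $\lambda \mapsto \overline{\lambda}$, $\theta \in [\nu,\pi]$. For fixed $r$, as $\theta$ grows from $0$ to $\pi$ the quantity $|\lambda|^2 = w_0^2 + 2w_0 r\cos\theta + r^2$ is non-decreasing (since $w_0 < 0$) while $\Re\lambda = w_0 + r\cos\theta$ is non-increasing; hence $|\lambda| - \Re\lambda$ is minimized at the boundary $\theta = \nu$ of the sector. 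It then suffices to optimize the scalar function $r \mapsto g(r) \defeq |\lambda(r)| - \Re\lambda(r)$ on the ray $\theta = \nu$. A short computation of $g'(r)$ distinguishes two cases: if $\nu \ge \pi/2$ then $g$ is strictly increasing, so $g(r) \ge g(0) = 2|w_0|$ and $\Im\sqrt{\lambda} \ge \sqrt{|w_0|}$; if $\nu < \pi/2$ then $g$ attains its unique minimum at $r^* = 2|w_0|\cos\nu$, where $|\lambda| = |w_0|$ and $g(r^*) = 2|w_0|\sin^2\nu$, giving $\Im\sqrt{\lambda} \ge \sqrt{|w_0|}\,\sin\nu$.

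To conclude, I will compare the resulting bound with $\tfrac{2J_0}{d} = \sqrt{|w_0|\sin\nu}\sin(\nu/2)$: for $\nu \le \pi/2$ the inequality $\sqrt{|w_0|}\sin\nu \ge \sqrt{|w_0|\sin\nu}\sin(\nu/2)$ reduces, after squaring, to $\sin(\nu/2) \le 2\cos(\nu/2)$, which holds since $\tan(\nu/2) \le 1 < 2$; for $\nu > \pi/2$ the bound $\sqrt{|w_0|} \ge \sqrt{|w_0|\sin\nu}\sin(\nu/2)$ follows from $\sqrt{\sin\nu}\sin(\nu/2) \le 1$. Thus $\tfrac{d}{2}\Im\sqrt{\lambda} \ge J_0$ on $\bbU_{w_0,\nu}$, and inserting this into \eqref{eq:WeylEstLine} via monotonicity of $\coth$ produces \eqref{eq:WeylEstLine2}.
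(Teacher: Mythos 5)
Your derivation of \eqref{eq:WeylEstLine} and of part (i) is essentially identical to the paper's: both apply the Schur test to the explicit kernel \eqref{eq:Mline}, use $|x_m-x_n|\ge d\,|m-n|$ to sum a two-sided geometric series and obtain $\coth$, and for (i) combine $\mu-\lambda<-\lambda$ with the monotonicity of $\coth$.

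For part (ii) you take a genuinely different route. The paper uses $\Im\sqrt{\lambda}=\sqrt{|\lambda|}\,\sin\bigl(\tfrac12\arg\lambda\bigr)$, drops the sine factor to $\sin(\nu/2)$ on the relevant boundary ray, and then minimizes $|\lambda|$ along the ray (via the parametrization $y=\tan\nu\cdot(x-w_0)$), arriving at $\sqrt{|w_0|\sin\nu}$ and hence at $J_0$. You instead exploit the exact identity $(\Im\sqrt{\lambda})^2=\tfrac12\bigl(|\lambda|-\Re\lambda\bigr)$: after reducing to the boundary ray $\theta=\nu$ by a monotonicity argument in the polar angle about $w_0$, you minimize $g(r)=|\lambda|-\Re\lambda$ along the ray and obtain the exact infimum of $\Im\sqrt{\lambda}$ over $\bbU_{w_0,\nu}$ — namely $\sqrt{|w_0|}\sin\nu$ for $\nu\le\pi/2$ (attained at $r^*=2|w_0|\cos\nu$) and $\sqrt{|w_0|}$ for $\nu\ge\pi/2$ (attained at the vertex $w_0$). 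You then verify that this exact value dominates $2J_0/d=\sqrt{|w_0|\sin\nu}\,\sin(\nu/2)$ and conclude \eqref{eq:WeylEstLine2} from \eqref{eq:WeylEstLine}. Your computation is slightly longer, but it produces the sharp lower bound for $\Im\sqrt{\lambda}$ on the sector (strictly larger than $2J_0/d$), so it would actually yield a better constant in \eqref{eq:WeylEstLine2}; the paper's argument is more direct in targeting the stated $J_0$ but does not produce the exact infimum. Both arguments are elementary and correct.
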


\begin{proof}
Recall that for $\lambda \in \C \setminus [0, \infty)$ the
operator $M(\lambda)$ has the explicit representation~\eqref{eq:Mline}.
In order to estimate its norm, we make use of the Schur test;
see, e.g.\ \cite[Korollar~6.7]{Wei}.
For this note that $|x_n - x_m| \ge |n - m| d$ holds for all $n, m \in \Z$ and, thus,
\begin{align*}
  \sup_{m \in \Z} \sum_{n \in \Z} \left| e^{i\sqrt{\lambda} |x_n - x_m|} \right|
  &= \sup_{m \in \Z} \sum_{n \in \Z} e^{-\Im\sqrt{\lambda} |x_n - x_m|}
  \le \sup_{m \in \Z} \sum_{n \in \Z} e^{- \Im\sqrt{\lambda} |n - m| d} \\[0.5ex]
  &= \sum_{n \in \Z} e^{- \Im\sqrt{\lambda} |n| d}
  = \frac{1 + e^{- \imag \sqrt{\lambda} d}}{1 - e^{-\Im\sqrt{\lambda} d}}
  = \coth\biggl(\frac{d}{2}\Im\sqrt{\lambda}\,\biggr).
\end{align*}
Since the last term is finite and the same estimate holds by symmetry
when the roles of $m$ and $n$ are interchanged, the Schur test can be
applied and yields~\eqref{eq:WeylEstLine}.

The statement (i) is a direct consequence of the estimate in \eqref{eq:WeylEstLine}
and the monotonicity properties of the function $\coth$.
For the remaining statement (ii) we calculate
\begin{equation}\label{eq:J0}
  J_0 = J_0(w_0, \nu) \defeq \frac{d}{2} \min\bigl\{\imag\sqrt{\lambda}:
  \lambda \in \bbU_{w_0, \nu} \bigr\}.
\end{equation}
By symmetry it is clear that it suffices to consider
$\lambda \in \bbU_{w_0, \nu}$ with $\Im\lambda \geq 0$.
Since the function $\dC\setminus\{0\}\ni\lambda\mapsto \Im\sqrt{\lambda}$
has no local extremum, the minimum in \eqref{eq:J0} will be attained on the
boundary of~$\bbU_{w_0, \nu}$.  Let us first consider the
case when $\nu \in (0, \pi/2)$.  Writing $\lambda = x + i y$ with $x, y \in \R$,
for $\lambda \in \partial \bbU_{w_0, \nu}$ with $\Im\lambda \ge 0$ we have
\begin{equation}\label{eq:imCalc}
\begin{split}
  \imag \sqrt{\lambda} &= \imag \sqrt{x + i y}
  \ge \sqrt[4]{x^2 + y^2}\, \sin\Bigl(\frac{\nu}{2}\Bigr)
  \\[1ex]
  &= \sqrt[4]{x^2 + \tan^2 \nu \cdot (x - w_0)^2}\, \sin\Bigl(\frac{\nu}{2}\Bigr),
\end{split}
\end{equation}
and the right-hand side will be minimal if and only if
$x^2 + \tan^2 \nu \cdot (x - w_0)^2$ is minimal.
The latter happens for $x = (w_0 \tan^2 \nu) / (1 + \tan^2 \nu)$.
Plugging this into \eqref{eq:imCalc} and using elementary
trigonometric identities we obtain the claimed expression for $J_0$.
The case $\nu \in (\pi/2, \pi)$ can be treated analogously
with $\tan\nu$ replaced by $\tan(\pi-\nu)$, and for $\nu = \pi/2$ we have
\[
  \Im\sqrt{\lambda} \ge \sqrt[4]{w_0^2 + y^2} \sin\Bigl(\frac{\pi}{4}\Bigr)
  \ge \sqrt{|w_0|}\sin\Bigl(\frac{\pi}{4}\Bigr).
  \qedhere
\]
\end{proof}

\medskip

We are now able to formulate consequences of the results in
Section~\ref{sec:consequences}.  The assertions of the next theorem follow directly
from Lemma~\ref{lem:WeylDecayLine} in combination with~Corollary~\ref{obtcoraa},
Proposition~\ref{prop:specEst}\,(a), \cite[Proposition~1.4\,(i)]{DM95}
and the fact that $\{\ell^2(\Z),\Gamma_0,\Gamma_1\}$ is an ordinary boundary triple.

\begin{theorem}
Let $B$ be a closed operator in $\ell^2(\Z)$.
Then the operator $A_{[B]}$
\begin{equation}\label{eq:ABline}
\begin{split}
  A_{[B]} f &= - f'' \quad \text{on}~\R \setminus X, \\[0.5ex]
  \dom A_{[B]} &= \Big\{ f \in H^2(\R \setminus X) \cap H^1(\R) :
  \Gamma_0f =  B\Gamma_1f \Big\},
\end{split}
\end{equation}
in $L^2(\R)$ is closed, the resolvent formula
\[
  (A_{[B]} - \lambda)^{-1} = (A_0-\lambda)^{-1}
  + \gamma(\lambda)\big(I - BM(\lambda)\big)^{-1}B\gamma(\ov\lambda)^*
\]
holds for all $\lambda\in\rho(\AB)\cap\rho(A_0)$ and
the following assertions are true.
\begin{myenum}
\item
  If\, $B$ is self-adjoint, then $\AB$ is self-adjoint.
  If\, $B$ is maximal dissipative (maximal accumulative, respectively),
  then $\AB$ is maximal accumulative (maximal dissipative, respectively).
\item
  $A_{[B^*]} = \AB^*$.
\end{myenum}
Assume, additionally, that $B\in\cB(\ell^2(\dZ))$
and let $b\in\dR$ be such that
\[
  \Re(B\zeta,\zeta)_{\ell^2(\dZ)}\le b\|\zeta\|_{\ell^2(\dZ)}^2
  \qquad\text{for all}\;\; \zeta \in \ell^2(\dZ).
\]
Then the operator $\AB$ is m-sectorial;
in particular the inclusion $\sigma(\AB) \subset\ov{W(\AB)}$ holds,
and for any $\mu < 0$ and $C \defeq \frac12\coth(\frac{d}{2}\sqrt{-\mu})$
the following assertions are true.
\begin{myenuma}
\item 
If\, $b>0$, then for every $\xi < \mu-(Cb)^2$,
\[
  W(\AB) \subset \Bigl\{z \in \C : \Re z \ge \mu-(Cb)^2,\;
  |\Im z| \le K_\xi(\Re z-\xi)^{1/2}\Bigr\}, \hspace*{-3ex}
\]
where
\[
 K_\xi = \frac{2C\bigl\|\Im B\bigr\|}{1-\frac{Cb}{(\mu-\xi)^{1/2}}}\,.
\]
\item 
If\, $b\le0$, then
\[
  W(\AB) \subset \biggl\{z \in \C : \Re z \ge 0,\;
  |\Im z| \le \frac{2C\bigl\|\Im B\bigr\|(\Re z-\mu)}{(\Re z-\mu)^{1/2}-Cb}\biggr\}.
\]
\item
For any $w_0 < 0$
and each $\nu \in (0,\pi)$
\[
  \sigma(\AB) \cap \bbU_{w_0, \nu} \subset
  \Bigl\{z\in\bbU_{w_0,\nu}: \dist(z,\dR_+) \le \frac14\coth^2(J_0)\|B\|^2\Bigr\},
\]
where $\bbU_{w_0,\nu}$ is defined in \eqref{defUw0nu} and
$J_0 = \frac{d}{2} \sqrt{|w_0| \sin \nu}\, \sin(\frac{\nu}{2})$.
\end{myenuma}	
\end{theorem}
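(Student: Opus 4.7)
The plan is to invoke the abstract results of Sections~3--5 for the ordinary boundary triple $\{\ell^2(\Z),\Gamma_0,\Gamma_1\}$ of Proposition~\ref{prop:BTline}, with the Weyl-function bounds already supplied by Lemma~\ref{lem:WeylDecayLine}. For an arbitrary closed $B$ in $\ell^2(\Z)$, the closedness of $A_{[B]}$, the identity $A_{[B^*]}=A_{[B]}^*$, the validity of the Krein-type resolvent formula on $\rho(A_{[B]})\cap\rho(A_0)$, and the fact that the correspondence $B\mapsto A_{[B]}$ preserves self-adjointness, maximal dissipativity and maximal accumulativity (items~(i) and~(ii)) are all standard consequences of the ordinary-boundary-triple framework; see, e.g., \cite[Proposition~1.4]{DM95}.

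Now assume $B\in\cB(\ell^2(\Z))$ with $\Re(B\zeta,\zeta)\le b\|\zeta\|^2$. Both $A_0=-\Delta_{\R}$ and the $A_1$ of Proposition~\ref{prop:BTline}\,(i) are non-negative, hence bounded from below, and Lemma~\ref{lem:WeylDecayLine}\,(i) provides, for each $\mu<0$, the power-type decay $\|M(\lambda)\|\le C/(\mu-\lambda)^{1/2}$ on $(-\infty,\mu)$ with $C=\tfrac12\coth(\tfrac{d}{2}\sqrt{-\mu})$. Feeding this into Corollary~\ref{obtcoraa} with exponent $\beta=\tfrac12$ yields m-sectoriality of $A_{[B]}$ together with the enclosures stated in~(a) and~(b); the unified formula appearing in~(b) is precisely the specialisation to $\beta=\tfrac12$ of items~(b) and~(c) of Theorem~\ref{thm:parabola}, which collapse to a single expression in that case.

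For item~(c) I would use the sector bound Lemma~\ref{lem:WeylDecayLine}\,(ii), namely $\|M(\lambda)\|\le\tfrac12\coth(J_0)/\sqrt{|\lambda|}$ on $\bbU_{w_0,\nu}$. Combined with the trivial inequality $\dist(\lambda,\R_+)\le|\lambda|$, valid since $0\in\sigma(A_0)=\R_+$, this upgrades to $\|M(\lambda)\|\le\tfrac12\coth(J_0)/\sqrt{\dist(\lambda,\sigma(A_0))}$ throughout the sector, and Proposition~\ref{prop:specEst}\,(a) with $G=\bbU_{w_0,\nu}$ and $C=\tfrac12\coth(J_0)$ then delivers exactly the claimed enclosure $\dist(z,\R_+)\le\tfrac14\coth^2(J_0)\|B\|^2$. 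The range-inclusion hypotheses of Proposition~\ref{prop:specEst} are automatic because $\ran\Gamma_0=\ran\Gamma_1=\ell^2(\Z)$ for an ordinary boundary triple, so no genuine obstacle appears; once the decay rates of Lemma~\ref{lem:WeylDecayLine} are available, the remainder reduces to bookkeeping.
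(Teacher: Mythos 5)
Your approach is essentially the paper's own: the paper's proof consists precisely of citing Lemma~\ref{lem:WeylDecayLine}, Corollary~\ref{obtcoraa}, Proposition~\ref{prop:specEst}\,(a), \cite[Proposition~1.4\,(i)]{DM95}, and the fact that $\{\ell^2(\Z),\Gamma_0,\Gamma_1\}$ is an ordinary boundary triple, which is exactly the combination you invoke. Your bookkeeping is right: Lemma~\ref{lem:WeylDecayLine}\,(i) gives $\beta=\tfrac12$ with $C=\tfrac12\coth(\tfrac{d}{2}\sqrt{-\mu})$, under which $K'_{1/2}=2C\|\Im B\|$ makes the $b=0$ and $b<0$ bounds of Theorem~\ref{thm:parabola} coincide, and Lemma~\ref{lem:WeylDecayLine}\,(ii) together with $\dist(\lambda,\R_+)\le|\lambda|$ feeds Proposition~\ref{prop:specEst}\,(a) with $C=\tfrac12\coth(J_0)$ to give~(c).

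One small detail you have glossed over: in part~(b) the enclosure asserts $\Re z\ge 0$, whereas a single application of Theorem~\ref{thm:parabola}\,(b), (c) with the fixed $\mu<0$ only gives $\Re z\ge\mu$. The resolution, spelled out in the proof of Theorem~\ref{thm:ABelliptic} for the elliptic analogue, is to invoke $\Re z\ge\mu'$ for every $\mu'\in(\mu,0)$; since the real-part estimate does not involve the $\mu$-dependent constant $C$, these combine to give $\Re z\ge\min\sigma(A_0)=0$, while the $|\Im z|$ bound is retained for the originally chosen $\mu$. This is a minor addition but should be made explicit, as the corollary on its own produces only the weaker lower bound $\mu$ on the real part.
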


\medskip

\noindent
Finally, we remark that the class of Hamiltonians under consideration in this
section includes Schr\"odinger operators in $L^2(\RR)$ with local point
$\delta$-interactions supported on the set $X$, with possibly non-real coupling constants.
Such operators are obtained by choosing $B = \diag(\alpha_n)$
with $\alpha_n \in \dC$ for $n\in\dZ$.  The constant $\alpha_n$ can be viewed as
intensity (or strength) of the point $\delta$-interaction supported on $x_n$;
cf.\ \cite[Chapter~III.2]{AGHH}.

\section{Quantum graphs with $\delta$-type vertex couplings}
\label{sec:graphs}

\noindent
In this section we apply the results of the abstract part of this
paper to Laplacians on metric graphs. For a survey on this actively
developing field and references we refer the reader to the monograph~\cite{BK}
and the survey articles~\cite{B17,KS06,K04}.  In the present section we
consider the Laplacian on a finite, not necessarily compact metric graph,
equipped with $\delta$ or more general non-self-adjoint vertex couplings;
for further recent work on non-self-adjoint quantum graphs see~\cite{H14,HKS15,SSVW15}.
Furthermore, for the treatment of quantum graphs via boundary triples and
similar techniques we refer to, e.g.\ \cite{CKS17,EKK16,EK14,LSV14,P06,Post08}.

Let $G$ be a finite graph consisting of a finite set $V$ of vertices
and a finite set $E$ of edges, where we allow infinite edges,
i.e.\ edges `connecting a vertex to a point $\infty$'. Without loss of
generality we assume that there are no vertices of degree 0,
i.e.\ each vertex belongs to at least one edge, and that $G$ does not contain loops,
i.e.\ no edge connects a vertex to itself; this can always be achieved by
introducing additional vertices to the graph.
We equip each finite edge $e \in E$ with a length $L(e) > 0$ and
identify it with the interval $[0, L(e)]$.
Moreover, we identify each infinite edge with the interval $[0, \infty)$.
This identification gives rise to a natural metric on $G$ and to a natural $L^2$ space $L^2(G)$ on $G$.
For a vertex $v \in V$ and an edge $e \in E$ we write $v = o(e)$
or $v = t(e)$ if $e$ originates or terminates, respectively, at $v$,
and we occasionally simply write $v \sim e$ if one of these two properties holds.
For each vertex $v$ we denote by $\deg(v)$ the vertex degree,
that is, the number of edges which originate from or terminate at $v$.

In $\cH = L^2(G)$ we consider the Laplace differential expression
\[
  (- \Delta f)_e = - f_e'', \qquad e \in E,
\]
where $f_e$ denotes the restriction of $f$ to the edge $e \in E$.
In the following we write $\wt H^k(G) \defeq \bigoplus_{e \in E} H^k(0,L(e))$,
$k = 1,2,\dots$, for the orthogonal sum of the usual Sobolev spaces on the edges of $G$.
We say that a function $f \in \wt H^k(G)$ is continuous at a vertex $v$
whenever $v \sim e$ and $v \sim \hat e$ imply that the values of $f_e$
and $f_{\hat e}$ at $v$ coincide.  We define
\[
  H^1(G) \defeq \bigl\{f \in \wt H^1(G): f\;\;\text{is continuous at each}~v \in V \bigr\}.
\]
Note that for $f \in H^1(G)$ we can just write $f(v)$ for the evaluation of $f$
at a vertex~$v$.  For $f \in \widetilde H^2(G)$ and a vertex $v$ we write
\[
  \partial_\nu f(v) \defeq \sum_{t(e)=v} f_e'\bigl(L(e)\bigr) - \sum_{o(e)=v} f_e'(0).
\]
In order to construct an ordinary boundary triple let us consider the operators
\begin{equation}\label{eq:SGraph}
\begin{split}
 S f & = - \Delta f, \\
 \dom S & = \big\{ f \in H^1(G) \cap \widetilde H^2(G) : f(v) = \partial_\nu f(v) = 0~\text{for~all}~v \in V \big\},
\end{split}
\end{equation}
and
\begin{equation}\label{eq:TGraph}
  Tf = - \Delta f, \qquad \dom T = H^1(G) \cap \widetilde H^2(G),
\end{equation}
in $L^2(G)$.  Moreover, we choose an enumeration $V = \{v_1, \dots, v_{|V|}\}$
of the vertex set $V$ and define mappings
$\Gamma_0, \Gamma_1 : H^1(G) \cap \wt H^2(G) \to \C^{|V|}$ by
\[
  \begin{aligned}
    (\Gamma_0f)_j &= \partial_\nu f(v_j), \\[1ex]
    (\Gamma_1f)_j &= f(v_j),
  \end{aligned}
  \qquad j=1,\ldots,|V|,\;f\in\dom T.
\]

The mappings $\Gamma_0$ and $\Gamma_1$ give rise to an ordinary boundary triple
with finite-dimensional boundary space.  The following proposition is a
consequence of Theorem~\ref{thm:ratetheorem} and some elementary calculations.
It can also be derived from \cite[Lemma~2.14 and Theorem~2.16]{EK12}.
For the convenience of the reader we provide its proof below.

\begin{proposition}\label{prop:BTgraph}
The operator $S$ in \eqref{eq:SGraph} is closed, symmetric and densely defined
with $S^*=T$ for $T$ in \eqref{eq:TGraph}, and the triple $\{\CC^{|V|},\Gamma_0,\Gamma_1\}$
is an ordinary boundary triple for $S^*$ with the following properties.
\begin{myenum}
\item 
$A_0 \defeq S^* \upharpoonright \ker \Gamma_0$ coincides with the standard
(or Kirchhoff) Laplacian
\begin{equation}\label{eq:standardLaplace}
\begin{split}
  - \Delta_G f & = - \Delta f, \\
  \dom(- \Delta_G) & = \Bigl\{ f \in H^1(G) \cap \widetilde H^2(G) :
  \partial_\nu f(v) = 0~\text{for all}~v \in V \Bigr\},
\end{split}
\end{equation}
and $A_1 \defeq S^* \upharpoonright \ker \Gamma_1$ coincides with the Dirichlet Laplacian
\begin{align*}
  - \Delta_{\rm D} f &= - \Delta f, \\
  \dom(- \Delta_{\rm D}) &= \Bigl\{ f \in H^1(G) \cap \widetilde H^2(G) :
  f(v) = 0~\text{for all}~v \in V \Bigr\}.
\end{align*}
In particular, $A_0$ and $A_1$ are both self-adjoint and non-negative operators in $L^2(G)$.
\item 
For $\lambda \in \C \setminus \sigma(-\Delta_G)$, the corresponding $\gamma$-field is given by
\begin{equation}\label{eq:gammaGraph}
  \gamma(\lambda) \begin{pmatrix} \partial_\nu f(v_1) \\ \vdots \\ \partial_\nu f(v_{|V|}) \end{pmatrix}
  = f,
\end{equation}
where $f \in H^1(G) \cap \widetilde H^2(G)$ is any function that
satisfies $-\Delta f = \lambda f$, and the corresponding Weyl function is given by
\begin{equation}\label{eq:MGraph}
  M(\lambda) \begin{pmatrix} \partial_\nu f(v_1) \\ \vdots \\ \partial_\nu f(v_{|V|}) \end{pmatrix}
  = \begin{pmatrix} f(v_1) \\ \vdots \\ f(v_{|V|}) \end{pmatrix}.
\end{equation}
For each $\lambda \in \C \setminus\bigl(\sigma(-\Delta_G) \cup \sigma(-\Delta_{\rm D})\bigr)$
we have
\begin{equation}\label{eq:MInverseGraph}
  \bigl(M(\lambda)^{-1}\bigr)_{jk}
  = \begin{cases}
    \sqrt{\lambda}\!\! \sum\limits_{\substack{e \sim v_j \\[0.2ex] L(e) < \infty}}\!\!
    \cot\bigl(\sqrt{\lambda}L(e)\bigr) \\[4ex]
    \hspace*{10ex} - i\sqrt{\lambda}\,\big|\{e: o(e) = v_j, L(e) = \infty\}\big|,
    & j=k, \\[3ex]
    \sum\limits_{\substack{e \sim v_j, \\[0.2ex] e \sim v_k}}
    \dfrac{-\sqrt{\lambda}}{\sin\bigl(\sqrt{\lambda}L(e)\bigr)}\,, &
    j \ne k.
  \end{cases}
  \hspace*{-2ex}
\end{equation}
\end{myenum}
\end{proposition}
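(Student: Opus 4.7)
The strategy is to apply Theorem~\ref{thm:ratetheorem} to the pair $(\Gamma_0,\Gamma_1)$ and then to extract the explicit formulas for the $\gamma$-field and the Weyl function. First I would verify the abstract Green identity for $T$: for $f,g\in\dom T$ one integrates by parts edge-wise, so that $(Tf,g)-(f,Tg) = \sum_{e\in E} \bigl[f_e'\ov{g_e}-f_e\ov{g_e'}\bigr]_0^{L(e)}$; regrouping the boundary terms according to which vertex they sit at and using the continuity of $f$ and $g$ at each $v_j$ collapses the right-hand side to $\sum_{j=1}^{|V|}\bigl(\partial_\nu f(v_j)\ov{g(v_j)}-f(v_j)\ov{\partial_\nu g(v_j)}\bigr) = (\Gamma_1f,\Gamma_0g)-(\Gamma_0f,\Gamma_1g)$. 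Next, $\ker\Gamma_0\cap\ker\Gamma_1$ contains $C_0^\infty(G\setminus V)$ and is therefore dense in $L^2(G)$. For the range condition it is convenient to show directly that $(\Gamma_0,\Gamma_1)^\top$ is onto $\CC^{|V|}\times\CC^{|V|}$: pick, for each vertex $v_j$, edge-wise smooth cut-off functions supported near $v_j$ whose value and Kirchhoff derivative at $v_j$ can be prescribed independently and which vanish on all other vertices. Finally, $T\upharpoonright\ker\Gamma_0$ is exactly the standard (Kirchhoff) Laplacian $-\Delta_G$, which is well known to be self-adjoint (it is the operator associated with the closed, non-negative form $f\mapsto\sum_e\|f_e'\|_{L^2(0,L(e))}^2$ on $H^1(G)$).

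To upgrade from a quasi to an ordinary boundary triple via Theorem~\ref{thm:ratetheorem} it suffices to show that $T$ is closed, which follows from the boundedness of the trace maps $\wt H^2(G)\to\CC^{|V|}\times\CC^{|V|}$, $f\mapsto(f(v_j),\partial_\nu f(v_j))$, together with the fact that the continuity condition $f\in H^1(G)$ is preserved under graph-norm convergence. Together with the computation in the previous paragraph, Theorem~\ref{thm:ratetheorem} then gives $S^*=T$ and the ordinary boundary triple property, which is the content of the first half of the proposition. Assertion~(i) is immediate from the definitions of $\Gamma_0$ and $\Gamma_1$, and the formulas \eqref{eq:gammaGraph}--\eqref{eq:MGraph} are just the definitions of $\gamma(\lambda)$ and $M(\lambda)$ in Definition~\ref{gmdef} applied to the present triple.

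The remaining substantive step is the explicit formula \eqref{eq:MInverseGraph} for $M(\lambda)^{-1}$, which I obtain by inverting the correspondence $\vec a=(f(v_j))_j\mapsto (\partial_\nu f(v_j))_j$. Given $\vec a\in\CC^{|V|}$, on each finite edge $e\simeq[0,L(e)]$ joining $v_j=o(e)$ to $v_k=t(e)$ the unique solution of $-f_e''=\lambda f_e$ with $f_e(0)=a_j$ and $f_e(L(e))=a_k$ is
\[
  f_e(x) = a_j\,\frac{\sin\bigl(\sqrt\lambda(L(e)-x)\bigr)}{\sin(\sqrt\lambda L(e))}
  + a_k\,\frac{\sin(\sqrt\lambda x)}{\sin(\sqrt\lambda L(e))},
\]
which is well defined for $\lambda\notin\sigma(-\Delta_{\rm D})$, while on each infinite edge $e\simeq[0,\infty)$ with $o(e)=v_j$ the unique $L^2$-solution with $f_e(0)=a_j$ is $f_e(x)=a_j e^{i\sqrt\lambda x}$. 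Computing $f_e'(0)$ and $f_e'(L(e))$ and assembling $\partial_\nu f(v_j)=\sum_{t(e)=v_j}f_e'(L(e))-\sum_{o(e)=v_j}f_e'(0)$ yields a diagonal contribution $\sqrt\lambda\cot(\sqrt\lambda L(e))$ from each finite edge incident to $v_j$, a diagonal contribution $-i\sqrt\lambda$ from each infinite edge emanating from $v_j$, and an off-diagonal contribution $-\sqrt\lambda/\sin(\sqrt\lambda L(e))$ for every finite edge joining $v_j$ to another vertex $v_k$, in full agreement with \eqref{eq:MInverseGraph}.

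The main obstacle is bookkeeping rather than analysis: one has to treat infinite edges and multi-edges (several edges connecting the same pair of vertices) consistently, and one has to verify that the $f$ constructed edge-by-edge indeed belongs to $H^1(G)\cap\wt H^2(G)$ with $(f(v_j))_j=\vec a$, so that the right-hand side of \eqref{eq:MInverseGraph} genuinely represents $M(\lambda)^{-1}$ rather than just a right inverse. Injectivity of $M(\lambda)$ for $\lambda\notin\sigma(-\Delta_G)\cup\sigma(-\Delta_{\rm D})$ follows because $M(\lambda)\vec 0=\vec 0$ forces $f|_{\ker T-\lambda}\equiv 0$ by the uniqueness of the above edge-wise solutions and the choice of $\lambda$.
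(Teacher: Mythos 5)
Your plan is correct and follows essentially the same route as the paper's own proof: verify the abstract Green identity by edge-wise integration by parts, check that $T\upharpoonright\ker\Gamma_0$ is the (self-adjoint) standard Laplacian, establish surjectivity of $(\Gamma_0,\Gamma_1)^\top$, invoke Theorem~\ref{thm:ratetheorem}, and then obtain \eqref{eq:MInverseGraph} by solving $-f_e''=\lambda f_e$ on each edge with prescribed Dirichlet data and assembling the Kirchhoff derivatives (the paper packages this last step via the per-edge Dirichlet-to-Neumann matrices $m^e(\lambda)$, which is the same computation in slightly more abstract notation). One small remark: once you have shown $(\Gamma_0,\Gamma_1)^\top$ surjective, Theorem~\ref{thm:ratetheorem} already yields the ordinary boundary triple, so the separate closedness argument for $T$ is redundant — correct, but not needed.
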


\begin{proof}
Let us verify the conditions of Theorem~\ref{thm:ratetheorem}.
Note first that $T \upharpoonright \ker \Gamma_0$ clearly equals the standard
Laplacian \eqref{eq:standardLaplace}, which is self-adjoint in $L^2(G)$.
Moreover, it can easily be seen by explicit construction that the
pair $(\Gamma_0, \Gamma_1)^{\top} : \dom T \to \C^{|V|} \times \C^{|V|}$ is surjective.
Finally, let us verify the abstract Green identity.
For $f, g \in \dom T$ integration by parts yields
\begin{align*}
  & (Tf,g)_{L^2(G)} - (f,Tg)_{L^2(G)}
  \\[1ex]
  &= \sum_{e \in E} \Biggl(\int_0^{L(e)}
  \bigl(-f_e''(x)\bigr)\ov{g_e(x)}\,\drm x
  - \int_0^{L(e)} f_e(x)\bigl(\ov{-g_e''(x)}\bigr)\,\drm x \Biggr)
  \\[1ex]
  &= \sum_{e \in E} \Biggl(\int_0^{L(e)} f_e'(x)\ov{g_e'(x)}\,\drm x
  - \int_0^{L(e)} f_e'(x)\ov{g_e'(x)}\,\drm x
  \\[1ex]
  &\quad + f_e'(0)\ov{g_e(0)} -  f_e'\bigl(L(e)\bigr)\ov{g_e\bigl(L(e)\bigr)}
  - f_e(0)\ov{g_e'(0)} + f_e\bigl(L(e)\bigr)\ov{g_e'\bigl(L(e)\bigr)}\Biggr)
  \displaybreak[0]\\[1ex]
  &= \sum_{j = 1}^{|V|} f(v_j) \ov{\biggl(\sum_{t(e) = v_{j}} g_e'\bigl(L(e)\bigr)
  - \sum_{o(e) = v_j} g_e'(0)\biggr)}
  \\[1ex]
  &\quad - \sum_{j = 1}^{|V|} \biggl(\sum_{t(e) = v_j} f_e'\bigl(L(e)\bigr)
  - \sum_{o(e) = v_j} f_e'(0)\biggr)\ov{g(v_j)}
  \\[1ex]
  &= (\Gamma_1 f, \Gamma_0 g)_{\CC^{|V}} - (\Gamma_0 f, \Gamma_1 g)_{\CC^{|V}}.
\end{align*}
From Theorem~\ref{thm:ratetheorem} it follows that $S$ is closed, densely defined
and symmetric with $S^* = T$ and that $\{\C^{|V|},\Gamma_0,\Gamma_1\}$
is an ordinary boundary triple for $T = S^*$.
Assertion (i) and the identities \eqref{eq:gammaGraph}, \eqref{eq:MGraph}
are obvious from the definition of the mappings $\Gamma_0, \Gamma_1$.

It remains to verify the representation of $M(\lambda)^{-1}$ in \eqref{eq:MInverseGraph}.
To this end fix $\lambda \in \C\setminus(\sigma(-\Delta_G) \cup \sigma(-\Delta_{\rm D}))$
and denote by $m^e(\lambda)$ the Dirichlet-to-Neumann map corresponding to the
equation $-f'' = \lambda f$ on the interval $[0, L(e)]$;
if $e$ is finite then $m^e(\lambda)$ is the matrix satisfying
\begin{equation}\label{eq:TWInterval}
\begin{split}
  \begin{pmatrix} f'(0) \\[1ex] - f'\bigl(L(e)\bigr) \end{pmatrix}
  &= \begin{pmatrix} m_{11}^e(\lambda) & m_{12}^e(\lambda) \\[1ex]
  m_{21}^e(\lambda) & m_{22}^e(\lambda) \end{pmatrix}
  \begin{pmatrix} f(0) \\[1ex] f\bigl(L(e)\bigr) \end{pmatrix}
  \\[1ex]
  &= \begin{pmatrix} m_{11}^e(\lambda)f(0) + m_{12}^e(\lambda)f\bigl(L(e)\bigr) \\[1ex]
  m_{21}^e(\lambda)f(0) + m_{22}^e(\lambda)f\bigl(L(e)\bigr) \end{pmatrix}
\end{split}
\end{equation}
for each $f \in H^2(0, L(e))$ with $-f'' = \lambda f$;
if $e$ is infinite then $m^e$ is the scalar function satisfying
\begin{equation}\label{eq:mScalar}
  f'(0) = m^e(\lambda)f(0)
\end{equation}
for each $f \in H^2(0,\infty)$ with $-f'' = \lambda f$.
Let us define the matrix $\Lambda(\lambda)$ by
\begin{equation}\label{eq:DNGraph}
  (\Lambda(\lambda))_{jk}
  = \begin{cases}
    \sum\limits_{\substack{o(e) = v_j \\[0.2ex] L(e) < \infty}} m_{11}^e(\lambda)
    + \sum\limits_{\substack{t(e) = v_j \\[0.2ex] L(e) < \infty}} m_{22}^e(\lambda)
    + \sum\limits_{\substack{o(e) = v_j \\[0.2ex] L(e) = \infty}} m^e(\lambda),
    & j = k,
    \\[5ex]
    \sum\limits_{\substack{o(e) = v_j \\[0.2ex] t(e) = v_k}} m_{12}^e(\lambda)
    + \sum\limits_{\substack{o(e) = v_k \\[0.2ex] t(e) = v_j}} m_{21}^e(\lambda),
    & j \ne k.
  \end{cases}
\end{equation}
We show that $\Lambda(\lambda) = - M(\lambda)^{-1}$.  Indeed, let $f \in \ker(T-\lambda)$.
Then for $j = 1, \dots, |V|$ we have
\begin{align*}
  \bigl(\Lambda(\lambda)\Gamma_1 f\bigr)_j
  &= \sum_{k=1}^{|V|} (\Lambda(\lambda))_{jk}f(v_k)
  \\
  &= \sum_{k \ne j} \bigg(\sum_{\substack{o(e) = v_j \\[0.2ex] t(e) = v_k}} m_{12}^e(\lambda)
  + \sum_{\substack{o(e) = v_k \\[0.2ex] t(e) = v_j}} m_{21}^e(\lambda)\bigg)f(v_k)
  \\
  &\quad + \bigg(\sum_{\substack{o(e) = v_j \\[0.2ex] L(e) < \infty}} m_{11}^e(\lambda)
  + \sum_{\substack{t(e) = v_j \\[0.2ex] L(e) < \infty}} m_{22}^e(\lambda)
  + \sum_{\substack{o(e) = v_j \\[0.2ex] L(e) = \infty}} m^e(\lambda)\bigg)f(v_j)
  \displaybreak[0]\\
  &= \sum_{k \ne j} \biggl(\sum_{\substack{o(e) = v_j \\[0.2ex] t(e) = v_k}}
  \Bigl(m_{11}^e(\lambda)f(v_j) + m_{12}^e(\lambda)f(v_k)\Bigr)
  \\
  &\quad + \sum_{\substack{o(e) = v_k \\[0.2ex] t(e) = v_j}}
  \Bigl( m_{21}^e(\lambda)f(v_k) + m_{22}^e(\lambda)f(v_j)\Bigr)\biggr)
  + \sum_{\substack{o(e) = v_j \\[0.2ex] L(e) = \infty}} m^e(\lambda)f(v_j),
\end{align*}
where we have used that $G$ does not contain loops.
Taking~\eqref{eq:TWInterval} and \eqref{eq:mScalar} into account we obtain that
\begin{align*}
  \bigl(\Lambda(\lambda)\Gamma_1 f\bigr)_j
  &= \sum_{k \ne j} \biggl(\sum_{\substack{o(e) = v_j \\[0.2ex] t(e) = v_k}}f_e'(0)
  - \sum_{\substack{o(e) = v_k \\[0.2ex] t(e) = v_j}}f_e'\bigl(L(e)\bigr)\biggr)
  + \sum_{\substack{o(e) = v_j \\[0.2ex] L(e) = \infty}}f_e'(0)
  \\
  &= \sum_{o(e) = v_j} f_e'(0) - \sum_{t(e) = v_j} f_e'\bigl(L(e)\bigr)
  = - (\Gamma_0 f)_j,
\end{align*}
which implies that $\Lambda(\lambda) = - M(\lambda)^{-1}$.
Note that $m^e$ can be calculated explicitly and is given by the expressions
\[
  m^e(\lambda)
  = \begin{cases}
    \dfrac{\sqrt{\lambda}}{\sin\bigl(\sqrt{\lambda}L(e)\bigr)}
    \begin{pmatrix}
      - \cos\bigl(\sqrt{\lambda}L(e)\bigr) & 1 \\[1ex]
      1 & - \cos\bigl(\sqrt{\lambda} L(e)\bigr)
    \end{pmatrix}
    & \text{if}~L(e) < \infty,
    \\[4ex]
    i \sqrt{\lambda} & \text{if}~L(e) = \infty.
  \end{cases}
\]
Plugging these representations into~\eqref{eq:DNGraph} we arrive
at~\eqref{eq:MInverseGraph}.
\end{proof}

The next lemma provides a decay property of the Weyl function.

\begin{lemma}\label{lem:WeylDecayGraph}
Let $M$ be the Weyl function corresponding to the boundary triple
in Proposition~\ref{prop:BTgraph}.  Then for each $w_0 < 0$
and $\nu\in(0,\pi)$ there exists $C=C(w_0,\nu)>0$ such that
\begin{equation}\label{eq:WeylAsymptGraphAllg}
  \|M(\lambda)\| \le \frac{C}{\sqrt{|\lambda|}\,}
  \qquad\text{for all}\;\;\lambda \in \bbU_{w_0,\nu},
\end{equation}
where $\bbU_{w_0,\nu}$ is defined in \eqref{defUw0nu}.
\end{lemma}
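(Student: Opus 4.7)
The plan is to exploit the explicit formula \eqref{eq:MInverseGraph} for $M(\lambda)^{-1}$ and show that $\sqrt{\lambda}\,M(\lambda)$ is uniformly bounded on $\bbU_{w_0,\nu}$. Observe first that $\bbU_{w_0,\nu}\cap[0,\infty) = \emptyset$, because for $w_0<0$ any $z\ge 0$ satisfies $\arg(z-w_0)=0\notin[\nu,2\pi-\nu]$. Since $\sigma(-\Delta_G)\cup\sigma(-\Delta_{\rm D})\subset[0,\infty)$ by Proposition~\ref{prop:BTgraph}\,(i), both $M(\lambda)$ and the right-hand side of \eqref{eq:MInverseGraph} are well defined and mutually inverse on $\bbU_{w_0,\nu}$, and $M$ is continuous there.

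Set $N(\lambda)\defeq\frac{1}{\sqrt{\lambda}}M(\lambda)^{-1}$. By \eqref{eq:MInverseGraph} the entries of $N(\lambda)$ are finite sums of terms $\cot(\sqrt{\lambda}L(e))$ (on the diagonal, for each finite edge $e\sim v_j$) and $-1/\sin(\sqrt{\lambda}L(e))$ (off the diagonal, for each finite edge connecting $v_j$ and $v_k$), together with the integer constants $-i\bigl|\{e:o(e)=v_j,\,L(e)=\infty\}\bigr|$ on the diagonal. An elementary geometric estimate shows that for $|\lambda|$ large enough, $\arg\lambda$ differs from $\arg(\lambda-w_0)\in[\nu,2\pi-\nu]$ by at most $\nu/2$, so $\arg\lambda\in[\nu/2,2\pi-\nu/2]$ and, by the convention on the square root,
\[
  \Im\sqrt{\lambda}\ \ge\ \sqrt{|\lambda|}\,\sin(\nu/4)
\]
for all $\lambda\in\bbU_{w_0,\nu}$ with $|\lambda|\ge R_1$, where $R_1=R_1(w_0,\nu)>0$.

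Setting $L_{\min}\defeq\min\{L(e):e\in E,\,L(e)<\infty\}$ (which is positive when finite edges are present; the argument simplifies trivially when they are not), the identities $\cot z=i(e^{2iz}+1)/(e^{2iz}-1)$ and $1/\sin z=2ie^{iz}/(e^{2iz}-1)$ yield that $|\cot(\sqrt{\lambda}L(e))+i|$ and $|1/\sin(\sqrt{\lambda}L(e))|$ both decay exponentially in $\sqrt{|\lambda|}$, uniformly in $e$, as $|\lambda|\to\infty$ in $\bbU_{w_0,\nu}$. Consequently
\[
  N(\lambda)\ \longrightarrow\ N_\infty \defeq -i\diag\bigl(\deg(v_1),\ldots,\deg(v_{|V|})\bigr)
\]
in operator norm, where $\deg(v_j)$ counts all edges (finite and infinite) incident to $v_j$. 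Since the graph has no isolated vertices, $\deg(v_j)\ge 1$ and $N_\infty$ is invertible; a Neumann-series argument then provides $R_0\ge R_1$ and $C_1>0$ with $\|N(\lambda)^{-1}\|\le C_1$ whenever $\lambda\in\bbU_{w_0,\nu}$ and $|\lambda|\ge R_0$, giving $\|M(\lambda)\|=\|N(\lambda)^{-1}\|/\sqrt{|\lambda|}\le C_1/\sqrt{|\lambda|}$. On the compact set $\bbU_{w_0,\nu}\cap\{|\lambda|\le R_0\}$, continuity of $M$ yields a constant $C_2$ with $\|M(\lambda)\|\le C_2$, and since $|\lambda|$ is bounded away from $0$ on $\bbU_{w_0,\nu}$, this in turn gives $\|M(\lambda)\|\le C_2\sqrt{R_0}/\sqrt{|\lambda|}$; taking $C=\max\{C_1,C_2\sqrt{R_0}\}$ yields \eqref{eq:WeylAsymptGraphAllg}.

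The main obstacle is establishing the uniform lower bound $\Im\sqrt{\lambda}\ge c\sqrt{|\lambda|}$ on $\bbU_{w_0,\nu}\cap\{|\lambda|\ge R_1\}$ and coupling it with the exponential decay of $\cot+i$ and $1/\sin$; once these ingredients are in hand, the remaining steps—invertibility of the constant matrix $N_\infty$ and a finite-dimensional Neumann-series argument—are routine because of the finite dimension $|V|$ of the boundary space.
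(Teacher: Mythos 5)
Your proof is correct and follows essentially the same route as the paper's: both show, via the explicit formula \eqref{eq:MInverseGraph} together with the lower bound $\Im\sqrt{\lambda}\gtrsim\sqrt{|\lambda|}$ on $\bbU_{w_0,\nu}$, that $M(\lambda)^{-1}/\sqrt{\lambda}$ converges uniformly to the invertible constant matrix $-i\,\diag(\deg(v_j))$, and then handle the bounded remainder of the sector by continuity and compactness. The only (cosmetic) difference is in the final step: you invert via a Neumann-series perturbation of $N_\infty$, whereas the paper bounds the smallest eigenvalue of $M(\lambda)^{-1}\bigl(M(\lambda)^{-1}\bigr)^*$ from below; both are standard and equivalent ways to conclude.
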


\begin{proof}
Let $w_0 < 0$ and $\nu\in(0,\pi)$.  If $|\lambda|\to\infty$
for $\lambda \in \bbU_{w_0,\nu}$, then $\sqrt{\lambda}\to\infty$ within
the sector $\{re^{i\phi}: r>0, \phi\in(\nu/2,\pi-\nu/2)\}$.
In particular, $\Im\sqrt{\lambda}$ tends to $+\infty$, and thus
\[
  -\cot\bigl(\sqrt{\lambda}L(e)\bigr) \to i \qquad \text{and} \qquad
  \frac{1}{\sin\bigl(\sqrt{\lambda}L(e)\bigr)} \to 0
\]
for all $e$ as $|\lambda| \to \infty$, and the convergence is uniform in $\bbU_{w_0, \nu}$.
Hence it follows from~\eqref{eq:MInverseGraph} that
\[
  M(\lambda)^{-1} \to - \sqrt{\lambda} \diag\bigl(\deg(v_1)i,\dots,\deg(v_{|V|})i\bigr)
\]
uniformly as $|\lambda| \to \infty$, $\lambda \in \bbU_{w_0,\nu}$.  It follows that
\begin{equation}\label{eq:convergence}
  M(\lambda)^{-1} \big(M(\lambda)^{-1} \big)^*
  \to |\lambda| \diag\bigl(\deg(v_1)^2,\dots,\deg(v_{|V|})^2\bigr)
\end{equation}
uniformly as $|\lambda| \to \infty$, $\lambda \in \bbU_{w_0,\nu}$.
Let $C_1 > 1$ be arbitrary.  Since the matrix $\diag(\deg(v_1)^2,\dots,\deg(v_{|V|})^2)$
is positive definite with smallest eigenvalue greater than or equal to 1,
it follows from \eqref{eq:convergence} that there exists $r_0 > 0$ such
that the smallest eigenvalue of $M(\lambda)^{-1} \big(M(\lambda)^{-1} \big)^*$
satisfies
\[
  \lambda_1 \Bigl(M(\lambda)^{-1}\bigl(M(\lambda)^{-1}\bigr)^*\Bigr)
  \ge \frac{|\lambda|}{C_1^2}
\]
for all $\lambda \in \bbU_{w_0, \nu}$ with $|\lambda| > r_0$.  Thus we obtain that
\begin{equation}\label{eq:firstEst}
  \|M(\lambda)\|
  = \frac{1}{\sqrt{\lambda_1 \bigl(M(\lambda)^{-1}\bigl(M(\lambda)^{-1}\bigr)^*\bigr)}\,}
   \le \frac{C_1}{\sqrt{|\lambda|}\,}
\end{equation}
for all $\lambda \in \bbU_{w_0, \nu}$ with $|\lambda| > r_0$.
On the other hand, since $\lambda \mapsto \sqrt{|\lambda|}\|M(\lambda)\|$ is
continuous on the compact set
\[
  \bbU_{w_0,\nu}^0 \defeq \bigl\{\lambda\in\bbU_{w_0, \nu}: |\lambda| \le r_0\bigr\},
\]
there exists $C_2 > 0$ with
\begin{equation}\label{eq:secondEst}
  \|M(\lambda)\| \leq \frac{C_2}{\sqrt{|\lambda|}\,}\,,
  \qquad \lambda \in \bbU_{w_0,\nu}^0.
\end{equation}
With $C \defeq \max \{ C_1, C_2\}$ the claim of the lemma follows
from the inequalities~\eqref{eq:firstEst} and~\eqref{eq:secondEst}.
\end{proof}

The assertions of the following theorem are direct consequences of
Proposition~\ref{prop:BTgraph}, Lemma~\ref{lem:WeylDecayGraph} and Corollary~\ref{obtcoraa}.
For characterizations of self-adjoint vertex conditions for Laplacians on
metric graphs we refer the reader to \cite{C98,KS99}.

\begin{theorem}\label{thm:graphs}
Let $B \in \C^{|V| \times |V|}$. Then the operator
\begin{equation}\label{eq:ABgraph}
\begin{split}
  A_{[B]} f & = - \Delta f, \\
 \dom A_{[B]} & = \left\{ f \in H^1(G) \cap \widetilde H^2(G):
 \begin{pmatrix} \partial_\nu f(v_1) \\ \vdots \\ \partial_\nu f(v_{|V|}) \end{pmatrix}
 = B \begin{pmatrix} f(v_1) \\ \vdots \\ f(v_{|V|}) \end{pmatrix} \right\},
\end{split}
\end{equation}
in $L^2(G)$ is m-sectorial, one has $\sigma(A_{[B]}) \subset \ov{W(A_{[B]})}$,
the resolvent formula
\[
  (A_{[B]} - \lambda)^{-1} = (-\Delta_G - \lambda)^{-1} + \gamma(\lambda)
  \bigl(I-BM(\lambda)\bigr)^{-1}B\gamma(\ov\lambda)^*
\]
holds for all $\lambda \in \rho(A_{[B]}) \cap \rho(-\Delta_G)$
and the following assertions are true.
\begin{myenum}
 \item $A_{[B]}$ is self-adjoint if and only if the matrix $B$ is Hermitian.
 Moreover, $A_{[B]}$ is maximal dissipative (maximal accumulative, respectively)
 if and only if $B$ is accumulative (dissipative, respectively).
 \item $A_{[B^*]} = A_{[B]}^*$.
\end{myenum}
Assume in addition that $b \in \R$ is chosen such that
\[
  \Re(B \xi, \xi) \le b |\xi|^2 \qquad \text{for all}\;\; \xi \in \C^{|V|}.
\]
Then the following spectral enclosures hold.
\begin{myenuma}
\item
If $b > 0$ then there exists $C > 0$ such that for each $\xi < - (C b)^2$
\begin{equation*}
  W(A_{[B]})
  \subset \Biggl\{z\in\CC: \Re z \ge \xi,\;
  |\Im z| \le \frac{2C\|\Im B\|}{1-\frac{Cb}{(-\xi)^{1/2}}}\bigl(\Re z - \xi\bigr)^{1/2}\Biggr\}.
\end{equation*}
\item
If $b \le 0$ then there exists $C > 0$ such that
\[
  W(A_{[B]})
  \subset \biggl\{z\in\CC: \Re z \ge 0,\;
  |\Im z| \le \frac{2C\|\Im B\|(\Re z)}{(\Re z)^{1/2}-Cb}\biggr\}.
\]
\item
For each $w_0 < \min\sigma(A_{\rm N})$ and $\nu \in (0, \pi)$
there exists $C > 0$ such that
\[
  \sigma(A_{[B]}) \cap \bbU_{w_0, \nu}
  \subset \bigl\{z \in \bbU_{w_0,\nu}: |z| \le (C\|B\|)^2\bigr\},
\]
where $\bbU_{w_0,\nu}$ is defined in \eqref{defUw0nu}.
\end{myenuma}
\end{theorem}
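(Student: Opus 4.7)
The plan is to derive Theorem~\ref{thm:graphs} as a direct combination of Proposition~\ref{prop:BTgraph}, Lemma~\ref{lem:WeylDecayGraph}, Corollary~\ref{obtcoraa} and Proposition~\ref{prop:specEst}, since $\{\CC^{|V|},\Gamma_0,\Gamma_1\}$ is an ordinary boundary triple with finite-dimensional boundary space, and any matrix $B$ is automatically bounded on $\CC^{|V|}$.

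First I would note that by Proposition~\ref{prop:BTgraph} the self-adjoint extensions $A_0=-\Delta_G$ and $A_1=-\Delta_{\mathrm D}$ are both non-negative; in particular $\min\sigma(A_0)=0$. Since $B\in\cB(\CC^{|V|})$, assertion (ii) follows immediately from Corollary~\ref{gbtcor} (applied in the ordinary boundary triple situation) once the hypotheses of Corollary~\ref{obtcoraa} are verified with $B$ and with $B^*$ in turn; the pairing condition $(B\xi,\eta)=(\xi,B^*\eta)$ is trivial. Assertion (i) is obtained from the abstract Green identity \eqref{green2} exactly as in the proof of Theorem~\ref{thm:operators1}: using $\Gamma_0f=B\Gamma_1f$ one computes $\Im(\AB f,f)=-\Im(B\Gamma_1f,\Gamma_1f)$, so symmetry/dissipativity/accumulativity of $B$ transfers to $\AB$, and together with the non-empty resolvent set provided by Corollary~\ref{obtcoraa} this yields self-adjointness or maximal (dis)sipativity, respectively.

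Second, I would translate the sectorial decay in Lemma~\ref{lem:WeylDecayGraph} into the hypothesis of Corollary~\ref{obtcoraa}. Fixing some $w_0<0$ and $\nu\in(0,\pi)$ small enough so that $(-\infty,w_0]\subset\bbU_{w_0,\nu}$, the lemma provides $C>0$ with $\|M(\lambda)\|\le C/\sqrt{|\lambda|}$ on that sector. Setting $\mu=w_0\le0=\min\sigma(A_0)$, one has $\mu-\lambda\le|\lambda|$ for every $\lambda<\mu$, and therefore $\|M(\lambda)\|\le C/(\mu-\lambda)^{1/2}$ with $\beta=\tfrac12$. Corollary~\ref{obtcoraa} then yields m-sectoriality of $\AB$, the inclusion $\sigma(\AB)\subset\overline{W(\AB)}$, and the Krein-type resolvent formula, while the inequalities in items (a) and (b) follow from the parabolic enclosures in Theorem~\ref{thm:parabola}\,(a)--(c) for $\beta=\tfrac12$, with the constant $C>0$ (depending on the fixed $w_0$) absorbing the shift between $\mu=w_0$ and $0$ in the stated bounds.

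For the sectoral enclosure (c) I would apply Proposition~\ref{prop:specEst}\,(b) to $G=\bbU_{w_0,\nu}$ with $\mu=0$ and $\beta=\tfrac12$: the inclusion $G\subset\rho(A_0)$ holds because $\sigma(A_0)\subset[0,\infty)$, the sequence $\lambda_n=-n\in G$ satisfies $\dist(\lambda_n,\sigma(A_0))\to\infty$, and Lemma~\ref{lem:WeylDecayGraph} supplies precisely the required estimate $\|M(\lambda)\|\le C/|\lambda|^{1/2}$ on $G$; the proposition then delivers $\sigma(\AB)\cap\bbU_{w_0,\nu}\subset\{z\in\bbU_{w_0,\nu}:|z|\le(C\|B\|)^2\}$.

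The main obstacle I expect is the bookkeeping around the choice of $\mu$ in the decay estimate: Lemma~\ref{lem:WeylDecayGraph} does not give a decay bound that survives as $\lambda\to 0^-$ (indeed $\|M(\lambda)\|$ blows up there, since $0\in\sigma(-\Delta_G)$ for a connected finite graph without infinite edges), so one cannot take $\mu=0$ in the real-line decay hypothesis of Corollary~\ref{obtcoraa}. Choosing $\mu=w_0<0$ and absorbing the resulting shift and the $w_0$-dependent constant into the single constant $C$ that appears in the statements of (a) and (b) is the one delicate point; once this is handled the theorem is a routine specialization of the abstract machinery.
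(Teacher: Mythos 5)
Your route is the one the paper intends: $\{\CC^{|V|},\Gamma_0,\Gamma_1\}$ is an ordinary boundary triple with $A_0=-\Delta_G\ge0$ and $A_1=-\Delta_{\rm D}\ge0$, any $B\in\CC^{|V|\times|V|}$ is automatically in $\cB(\CC^{|V|})$, and the theorem is obtained by feeding Lemma~\ref{lem:WeylDecayGraph} into Corollary~\ref{obtcoraa} and Proposition~\ref{prop:specEst} (the latter for item (c), which the paper's one-line citation implicitly relies on too). Items (i), (ii) and the resolvent formula are routine in the ordinary-boundary-triple setting, exactly as you describe, and you also correctly pinpoint the delicate issue: $\min\sigma(-\Delta_G)=0$ and $\|M(\lambda)\|\to\infty$ as $\lambda\to0^-$ on a compact graph, so the decay hypothesis of Corollary~\ref{obtcoraa} holds only with some $\mu=w_0<0$, never with $\mu=0$.

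The genuine gap is the claim that this shift can be ``absorbed into the constant $C$''. Applying Theorem~\ref{thm:parabola}\,(b) with $\mu=w_0<0$, $\beta=\tfrac12$ and $b=0$ gives $|\Im z|\le 2C_0\|\Im B\|(\Re z-w_0)^{1/2}$, which at $\Re z=0$ equals the positive number $2C_0\|\Im B\|\,|w_0|^{1/2}$, whereas the bound printed in Theorem~\ref{thm:graphs}\,(b) reduces for $b=0$ to $2C\|\Im B\|(\Re z)^{1/2}$, which vanishes at $\Re z=0$. No enlargement of $C$ that is independent of $B$ can bridge this for $\Re z$ near $0$, so the absorption fails. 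This is not merely a defect of the derivation: on the single-edge graph $G=[0,L]$ with $B=\diag(ic,0)$, $c>0$ small, one has $b=0$, $\|\Im B\|=c$, and a direct computation (from $k\tan(kL)=-ic$) gives an eigenvalue $\lambda_1=\tfrac{c^2}{3}-\tfrac{ic}{L}+\rmO(c^3)\in\sigma(A_{[B]})\subset\ov{W(A_{[B]})}$; the stated bound would require $\tfrac{c}{L}\le 2Cc\,(\tfrac{c^2}{3})^{1/2}$, i.e.\ $C\ge\tfrac{\sqrt3}{2Lc}$, which blows up as $c\to0$. The same phenomenon makes the admissible range of $\xi$ in item (a) depend on $b$. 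What your $\mu=w_0<0$ argument actually yields are the shifted enclosures in the form of Theorem~\ref{thm:ABelliptic}\,(iv)--(v), with $\Re z-w_0$ and $w_0-\xi$ in place of $\Re z$ and $-\xi$; the formulas in Theorem~\ref{thm:graphs}\,(a), (b) appear to have dropped this shift, and your absorption step does not recover them.
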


\begin{remark}
Note that the operator $A_{[B]}$ satisfies local matching conditions at all
vertices if and only if the matrix $B$ is diagonal, $B = \diag(b_1, \dots b_{|V|})$.
In this case $\dom A_{[B]}$ consists of all functions $f \in H^1(G) \cap \wt H^2(G)$
such that
\[
  \partial_\nu f(v_j) = b_j f(v_j)
\]
holds for $j = 1, \dots, |V|$.  These conditions describe $\delta$-couplings
of strengths $b_j$.  They have been studied extensively in the literature in
the self-adjoint case, i.e.\ for real $b_1, \dots, b_{|V|}$;
see, e.g.\ \cite{BK,EK12,EJ12,KKT16,K04}.
\end{remark}

\begin{remark}
In more specific situations the spectral estimates in Theorem~\ref{thm:graphs}
can be made more explicit.  Let, for instance, $G$ be combinatorially equal
to the complete graph $K_n$ with $n = |V| \geq 2$ vertices, that is,
each two vertices are connected by precisely one edge;
in particular, $\deg(v_j) = n - 1$ for $j = 1, \dots, |V|$.
Moreover, let $G$ be equilateral with $L(e) = 1$ for all $e \in E$.
It follows from \eqref{eq:MInverseGraph} that the Weyl function $M$ corresponding
to the boundary triple in Proposition~\ref{prop:BTgraph} satisfies
\[
  \bigl(M(\lambda)\bigr)^{-1} = \frac{\sqrt{\lambda}}{\sin\sqrt{\lambda}\,}
  \begin{pmatrix}
    (n-1)\cos\sqrt{\lambda} & -1 & \cdots & -1 \\[0.5ex]
    -1  & \ddots & \ddots & \vdots \\[0.5ex]
    \vdots & \ddots & & -1 \\[0.5ex]
    -1 & \dots & -1 & (n-1)\cos\sqrt{\lambda}
  \end{pmatrix}.
\]
A straightforward calculation yields that $M$ is given by
\[
  M(\lambda) = \frac{1}{\alpha(n, \lambda)}
  \begin{pmatrix}
    d(n, \lambda) & 1 & \cdots & 1 \\
    1  & \ddots & \ddots & \vdots \\
    \vdots & \ddots & & 1 \\
    1 & \dots & 1 & d(n, \lambda)
  \end{pmatrix},
\]
where
\begin{align*}
  \alpha(n,\lambda) &= \frac{\sqrt{\lambda}}{\sin\sqrt{\lambda}\,}
  \biggl[\biggl((n-1)\cos\sqrt{\lambda} - \frac{n-2}{2}\biggr)^2 - \frac{n^2}{4}\biggr],
  \\[1ex]
  d(n,\lambda) &= (n-1)\cos\sqrt{\lambda} - (n-2).
\end{align*}
Since in this case $M(\lambda)$ is a special case of a circulant matrix,
its norm can be calculated and estimated explicitly for $\lambda \in \dU_{w_0, \nu}$.
\end{remark}

The following example shows that the abstract spectral estimate in
Corollary~\ref{cor:ganzTolleWeylfunktion} cannot be improved in general.

\begin{example}
Let $G$ be a star graph consisting of $|E|$ infinite edges,
i.e.\ each edge of $G$ can be parameterized by the interval $[0, \infty)$
and there exists only one vertex $v$, which satisfies $o(e) = v$ for all $e \in E$.
Then for $B \in \C$ the functions in the domain of the operator $A_{[B]}$
in \eqref{eq:ABgraph} are continuous at $v$ and satisfy the condition
\[
  - \sum_{e \in E} f_e'(0) = Bf(v).
\]
If $B \notin \dR$ with $\Re B > 0$ then $A_{[B]}$ has $- B^2 / |E|^2$ as its
only non-real eigenvalue, as an explicit calculation shows.
On the other hand, by Proposition~\ref{prop:BTgraph}\,(ii) we
obtain that $M(\lambda) = i |E| / \sqrt{\lambda}$
for all $\lambda \in \C \setminus \R$, and
Corollary~\ref{cor:ganzTolleWeylfunktion} yields that
\[
  \sigma(A_{[B]}) \cap \big( \C \setminus [0, \infty) \big)
  \subset \biggl\{z \in \C \setminus [0, \infty): |z| \le \frac{|B|^2}{|E|^2}\biggr\}.
\]
This shows that Corollary~\ref{cor:ganzTolleWeylfunktion} is sharp.
\end{example}

\section*{Acknowledgements}
\noindent
JB, VL and JR gratefully acknowledge financial support by
the Austrian Science Fund (FWF), grant no.~P~25162-N26.
VL acknowledges the support of the Czech Science Foundation (GA\v{C}R),
grant no.~17-01706S
and also of the support of the Austria--Czech Republic Mobility Programme,
grant no.~7AMB17AT022.


\end{document}